\documentclass[10pt,a4paper,reqno]{amsart}

\usepackage{lmodern}
\usepackage[T1]{fontenc}
\usepackage[utf8]{inputenc}
\usepackage[english]{babel}
\usepackage{microtype} 
\usepackage[yyyymmdd,hhmmss]{datetime}
\usepackage{float}

\usepackage{slashed,url,bm,amssymb,mathrsfs,mathtools,todonotes,xparse,booktabs,graphicx}
\usepackage[centertableaux]{ytableau}
\makeatletter
\edef\boxframe@normal@YT{\boxframe@YT}
\usepackage[pdftex]{hyperref}
\usepackage{float,placeins}

\usepackage{blindtext} 
\usepackage{epigraph} 

\usepackage{array}
\usepackage[most]{tcolorbox}
\newtcolorbox{editbox}{colback=yellow!10, colframe=red!60!black, title={\small\bfseries Edit}, fonttitle=\sffamily, breakable}

\usepackage{fancyhdr}
\usepackage[font={small,sl}]{caption}
\newtheorem{theorem}{Theorem}[section]

\newtheorem{lemma}[theorem]{Lemma}
\newtheorem{definition}[theorem]{Definition}
\newtheorem{proposition}[theorem]{Proposition}
\newtheorem{corollary}[theorem]{Corollary}
\newtheorem{conjecture}[theorem]{Conjecture}
\newtheorem{observation}[theorem]{Observation}
\newtheorem{example}[theorem]{Example}

\definecolor{pBlue}{RGB}{86,139,190}
\definecolor{pCyan}{RGB}{149,186,201}
\definecolor{pSand}{RGB}{184,166,121}
\definecolor{pAlgae}{RGB}{87,115,135}
\definecolor{pSkin}{RGB}{236,216,167}
\definecolor{pGray}{RGB}{156,175,156}
\definecolor{pPink}{RGB}{215,114,127}
\definecolor{pOrange}{RGB}{211,153,80}
\definecolor{pDarkGreen}{RGB}{43,140,51}

\definecolor{boxL}{RGB}{153,211,80}
\definecolor{boxU}{RGB}{80,153,211}
\definecolor{boxS}{RGB}{236,216,167}
\definecolor{boxG}{RGB}{240,240,240}
\definecolor{boxB}{RGB}{0,0,0}
\definecolor{boxGr}{RGB}{205,205,205}

\newcommand{\setZ}{\mathbb{Z}}

\newcommand{\jackjLR}{g}
\newcommand{\JackjLR}{{\bm{g}}}

\newcommand{\addset}{\mathcal{O}}
\newcommand{\remset}{\mathcal{I}}

\newcommand{\bmD}{{\bm {D}}}

\newcommand{\bmA}{{\bm {A}}}
\newcommand{\bmB}{{\bm {B}}}
\newcommand{\bmX}{{\bm {X}}}

\newcommand{\bmss}{{\bm {s}}}
\newcommand{\bmf}{{\bm {f}}}
\newcommand{\bbeta}{{\bf{\beta}}}

\newcommand{\al}{\alpha}

\newcommand{\simpterm}{{\bm {E}}}

\renewcommand{\*}{\mathbin{\star}}

\newcommand{\bigstanleysum}{{ \bm{g}_{\star}  }}
\newcommand{\stanleypoly}{{ {\bm G} }}
\newcommand{\bigstanleypoly}{{ {\stanleypoly}_{\star} }}

\newcommand{\hkAut}{{ \mathrm{hkAut}}}

\newcommand{\bh}[0]{{\bm{h}}}

\newcommand{\bmk}{{\bm{k}}}

\newcommand{\sU}{{\scriptscriptstyle{\mathrm U}}}
\newcommand{\sL}{{\scriptscriptstyle{\mathrm L}}}
\newcommand{\sA}{{\scriptscriptstyle{\mathrm A}}}
\newcommand{\sB}{{\scriptscriptstyle{\mathrm B}}}

\DeclareMathSymbol{\shortminus}{\mathbin}{AMSa}{"39}

\newcommand{\BZ}{\mathbb{Z}}
\newcommand{\BQ}{{\mathbb{Q}}}
\newcommand{\CO}{\mathscr{O}}

\newcommand{\StD}{\mathsf{SD}}

\newcommand{\StS}{\mathsf{SS}}
\newcommand{\StDR}{\mathsf{R}}

\newcommand{\petvertrep} {{ V_{\mathrm{P}} }}
\newcommand{\johnvertrep} {{ V_{\mathrm{J}} }}
\newcommand{\petersen} {{ \mathrm{P} }}
\newcommand{\johnson} {{ \mathrm{J} }}

\newcommand{\windfact}{\bm{F}}

\newcommand{\claw}{{\mathrm{cl}}}

\newcommand{\sgn}{{\mathrm {sgn}}}
\newcommand{\id}{{\mathrm {id}}}
\newcommand{\Sym}{{\mathrm {Sym}}}
\newcommand{\Aut}{{\mathrm {Aut}}}

\newcommand{\windowfamily}{{\mathcal{W}}}

\newcommand{\BC}{{\mathbb{C}}}

\newcommand{\allvertices}{{ \mathrm{H} }}

\newcommand{\orbitsum}{{ \mathcal{T} }}

\tikzset{
  graphlet/.style={baseline=-0.6ex, x=1.2em, y=1.2em, line width=0.35pt},
  vtx/.style={circle, fill=black, inner sep=1.2pt},
  ghostedge/.style={draw=black, opacity=0.25, line width=0.35pt},
  ghost/.style={circle, draw=black, draw opacity=0.25, fill=black, fill opacity=0.10, inner sep=1.2pt},
  edge/.style={line width=0.9pt},
  fivegraph/.style={baseline=-0.6ex, x=1.2em, y=1.2em},
}
\usetikzlibrary{calc}
\usetikzlibrary{cd}

\newcommand{\gPathTwo}{%
  \tikz[graphlet]{%
    \node[vtx] (a) at (0,0) {};
    \node[vtx] (b) at (0.5,0) {};
    \draw[edge] (a)--(b);
  }%
}

\newcommand{\gIsoTwo}{%
  \tikz[graphlet]{%
    \node[vtx] (a) at (0,0) {};
    \node[ghost] (b) at (0.5,0) {};
    \node[vtx] (c) at (1.0,0) {};
    \draw[ghostedge] (a)--(b)--(c);
  }%
}

\newcommand{\gEdgeIsoThree}{%
  \tikz[fivegraph]{%
    \node[vtx] (a) at (0,0) {};
    \node[vtx] (b) at (0.5,0) {};
    \node[vtx] (c) at (1.0,0) {};
    \draw[edge] (a)--(b);
  }%
}

\newcommand{\gEdgeIsoThreeGhost}{%
   \tikz[fivegraph,scale=0.6]{%
    \node[vtx] (a) at (90:0.9) {};
    \node[ghost] (b) at (18:0.9) {};
    \node[vtx] (c) at (-54:0.9) {};
    \node[vtx] (d) at (-126:0.9) {};
    \node[ghost] (e) at (162:0.9) {};
    \draw[thick] (c)--(d);
    \draw[ghostedge] (d)--(e)--(a)--(b)--(c);
  }%
}

\newcommand{\gPathThree}{%
  \tikz[graphlet]{%
    \node[vtx] (a) at (0,0) {};
    \node[vtx] (b) at (0.5,0) {};
    \node[vtx] (c) at (1.0,0) {};
    \draw[edge] (a)--(b)--(c);
  }%
}

\newcommand{\gStarGhostThree}{%
  \tikz[graphlet]{%
    \node[vtx]   (x1) at (-0.6,0.3) {};
    \node[vtx]   (x2) at ( 0.0,0.3) {};
    \node[vtx]   (x3) at ( 0.6,0.3) {};
    \node[ghost] (c)  at ( 0.0,-0.3) {};
    \draw[ghostedge] (c)--(x1) (c)--(x2) (c)--(x3);
  }%
}

\newcommand{\gthreeIsoGhost}{%
  \tikz[graphlet]{%
    \node[vtx]   (x1) at (-0.6,0.3) {};
    \node[vtx]   (x2) at ( 0.0,0.3) {};
    \node[vtx]   (x3) at ( 0.6,0.3) {};
    \node[ghost] (c)  at ( -0.3,-0.3) {};
    \node[ghost] (d)  at ( 0.3,-0.3) {};
    \draw[ghostedge] (c)--(x1) (c)--(x2) (c)--(d) (d)--(x3);
  }%
}

\newcommand{\gFourStar}{%
  \tikz[graphlet]{%
    \node[vtx]   (x1) at (-0.6,0.3) {};
    \node[vtx]   (x2) at ( 0.0,0.3) {};
    \node[vtx]   (x3) at ( 0.6,0.3) {};
    \node[vtx]   (c)  at ( 0.0,-0.3) {};
    \draw[edge] (c)--(x1) (c)--(x2) (c)--(x3);
  }%
}

\newcommand{\gPFour}{%
  \tikz[fivegraph,scale=0.7]{%
    \node[ghost] (a) at (90:0.9) {};
    \node[vtx] (b) at (18:0.9) {};
    \node[vtx] (c) at (-54:0.9) {};
    \node[vtx] (d) at (-126:0.9) {};
    \node[vtx] (e) at (162:0.9) {};
    \draw[edge] (b)--(c)--(d)--(e);
    \draw[ghostedge] (a)--(b) (a)--(e);
  }%
}

\newcommand{\gPthreeIso}{%
  \tikz[fivegraph]{%
    \node[vtx] (c)  at (0,0) {};
    \node[vtx] (l1) at (-0.7,0.4) {};
    \node[vtx] (l2) at (-0.7,-0.4) {};
    \node[ghost] (m)  at (0.7,0.0) {};
    \node[vtx] (l3) at (1.4,0.4) {};
    \draw[edge] (c)--(l1) (c)--(l2);
    \draw[ghostedge] (m)--(l3)  (c)--(m);
  }%
}

\newcommand{\gPtwoIsotwo}{%
  \tikz[fivegraph]{%
    \node[ghost] (c)  at (0,0) {};
    \node[vtx] (l1) at (-0.7,0.4) {};
    \node[vtx] (l2) at (-0.7,-0.4) {};
    \node[vtx] (m)  at (0.7,0.0) {};
    \node[vtx] (l3) at (1.4,0.4) {};
    \draw[edge] (m)--(l3)  ;
    \draw[ghostedge] (c)--(l1) (c)--(l2) (c)--(m);
  }%
}

\newcommand{\gtwotwo}{%
  \tikz[fivegraph,scale=0.7]{%
    \node[vtx] (a) at (0:0.9) {};
    \node[vtx] (b) at (60:0.9) {};
    \node[ghost] (c) at (120:0.9) {};
    \node[vtx] (d) at (180:0.9) {};
    \node[vtx] (e) at (240:0.9) {};
    \node[ghost] (f) at (300:0.9) {};
    \draw[edge] (a)--(b) (d)--(e);
     \draw[ghostedge] (a)--(f)--(e) (b)--(c)--(d);
  }%
}

\newcommand{\gFourIso}{%
  \tikz[fivegraph]{%
    \node[ghost] (c)  at (0,0) {};
    \node[vtx] (l1) at (-0.7,0.4) {};
    \node[vtx] (l2) at (-0.7,-0.4) {};
    \node[ghost] (m)  at (0.7,0.0) {};
    \node[vtx] (l3) at (1.4,0.4) {};
    \node[vtx] (g) at (1.4,-0.4) {};
    \draw[edge] ;
    \draw[ghostedge] (m)--(g)  (m)--(l3)  (c)--(m) (c)--(l1) (c)--(l2);
  }%
}

\newcommand{\gFiveCycle}{%
  \tikz[fivegraph,scale=0.7]{%
    \node[vtx] (a) at (90:0.9) {};
    \node[vtx] (b) at (18:0.9) {};
    \node[vtx] (c) at (-54:0.9) {};
    \node[vtx] (d) at (-126:0.9) {};
    \node[vtx] (e) at (162:0.9) {};
    \draw[edge] (a)--(b)--(c)--(d)--(e)--(a);
  }%
}

\newcommand{\gFivePath}{%
  \tikz[fivegraph,scale=0.7]{%
    \node[vtx] (a) at (0:0.9) {};
    \node[vtx] (b) at (60:0.9) {};
    \node[vtx] (c) at (120:0.9) {};
    \node[vtx] (d) at (180:0.9) {};
    \node[vtx] (e) at (240:0.9) {};
    \node[ghost] (f) at (300:0.9) {};
    \draw[edge] (a)--(b)--(c)--(d)--(e);
     \draw[ghostedge] (f)--(a) (f)--(e);
  }%
}

\newcommand{\gFiveTreeA}{%
  \tikz[fivegraph]{%
    \node[vtx] (c)  at (0,0) {};
    \node[vtx] (l1) at (-0.7,0.4) {};
    \node[vtx] (l2) at (-0.7,-0.4) {};
    \node[vtx] (m)  at (0.7,0.0) {};
    \node[vtx] (l3) at (1.4,0.4) {};
    \node[ghost] (g) at (1.4,-0.4) {};
    \draw[edge] (c)--(l1) (c)--(l2) (c)--(m) (m)--(l3);
    \draw[ghostedge] (m)--(g);
  }%
}

\newcommand{\gFivePFourPlusIso}{%
  \tikz[fivegraph,scale=0.7]{%
    \node[ghost] (a) at (90:0.9) {};
    \node[vtx] (b) at (18:0.9) {};
    \node[vtx] (c) at (-54:0.9) {};
    \node[vtx] (d) at (-126:0.9) {};
    \node[vtx] (e) at (162:0.9) {};
    \node[vtx] (f) at (0,0) {};
    \draw[edge] (b)--(c)--(d)--(e);
    \draw[ghostedge] (f)--(a) (a)--(b) (a)--(e);
  }%
}

\newcommand{\gPthreeTwoIso}{%
  \tikz[fivegraph]{%
    \node[vtx] (c)  at (0,0) {};
    \node[vtx] (l1) at (-0.7,0.4) {};
    \node[vtx] (l2) at (-0.7,-0.4) {};
    \node[ghost] (m)  at (0.7,0.0) {};
    \node[vtx] (l3) at (1.4,0.4) {};
    \node[vtx] (g) at (1.4,-0.4) {};
    \draw[edge] (c)--(l1) (c)--(l2);
    \draw[ghostedge] (m)--(g)  (m)--(l3)  (c)--(m);
  }%
}

\newcommand{\gTwoEdgesIso}{%
  \tikz[fivegraph,scale=0.7]{%
    \node[ghost] (a) at (0:0.9) {};
    \node[vtx] (b) at (60:0.9) {};
    \node[vtx] (c) at (120:0.9) {};
    \node[ghost] (d) at (180:0.9) {};
    \node[vtx] (e) at (240:0.9) {};
    \node[vtx] (f) at (300:0.9) {};
    \node[vtx] (t) at (0,0) {};
    \draw[edge] (b)--(c) (f)--(e);
    \draw[ghostedge] (a)--(b) (c)--(d) (e)--(d)  (f)--(a) (t)--(a) (t)--(d);
  }%
}

\newcommand{\gsixO}{%
  \tikz[fivegraph,scale=0.7]{%
    \node[vtx] (a) at (0:0.9) {};
    \node[vtx] (b) at (60:0.9) {};
    \node[vtx] (c) at (120:0.9) {};
    \node[vtx] (d) at (180:0.9) {};
    \node[vtx] (e) at (240:0.9) {};
    \node[vtx] (f) at (300:0.9) {};
    \draw[edge] (b)--(c) (f)--(e) (c)--(d) (e)--(d) (a)--(b)    (f)--(a)  ;
    \draw[ghostedge]  ;
  }%
}
\newcommand{\gsixXi}{%
  \tikz[fivegraph,scale=0.7]{%
    \node[vtx] (a) at (0:0.45) {};
    \node[vtx] (b) at (60:0.9) {};
    \node[vtx] (c) at (120:0.9) {};
    \node[vtx] (d) at (180:0.45) {};
    \node[vtx] (e) at (240:0.9) {};
    \node[vtx] (f) at (300:0.9) {};
    \draw[edge] (b)--(c) (f)--(e)    (a)--(d)  ;
    \draw[ghostedge]  ;
  }%
}
\newcommand{\gsixH}{%
  \tikz[fivegraph,scale=0.7]{%
    \node[vtx] (a) at (0:0.45) {};
    \node[vtx] (b) at (60:0.9) {};
    \node[vtx] (c) at (120:0.9) {};
    \node[vtx] (d) at (180:0.45) {};
    \node[vtx] (e) at (240:0.9) {};
    \node[vtx] (f) at (300:0.9) {};
    \draw[edge] (c)--(d)--(e) (f)--(a)--(b) (d)--(a)  ;
    \draw[ghostedge]  ;
  }%
}
\newcommand{\gsixE}{%
  \tikz[fivegraph,scale=0.7]{%
    \node[ghost] (a) at (0:0.9) {};
    \node[vtx] (b) at (60:0.9) {};
    \node[vtx] (c) at (120:0.9) {};
    \node[vtx] (d) at (180:0.9) {};
    \node[vtx] (e) at (240:0.9) {};
    \node[vtx] (f) at (300:0.9) {};
    \node[vtx] (t) at (0,0) {};
    \draw[edge] (b)--(c) (f)--(e) (c)--(d) (e)--(d) (t)--(d);
    \draw[ghostedge] (a)--(b)    (f)--(a) (t)--(a) ;
  }%
}
\newcommand{\gsixC}{%
  \tikz[fivegraph,scale=0.7]{%
    \node[ghost] (a) at (0:0.9) {};
    \node[vtx] (b) at (60:0.9) {};
    \node[vtx] (c) at (120:0.9) {};
    \node[vtx] (d) at (180:0.9) {};
    \node[vtx] (e) at (240:0.9) {};
    \node[vtx] (f) at (300:0.9) {};
    \node[vtx] (t) at (0,0) {};
    \draw[edge] (b)--(c) (f)--(e) (c)--(d) (e)--(d) ;
    \draw[ghostedge] (a)--(b)  (f)--(a) ;
  }%
}
\newcommand{\gsixQ}{%
  \tikz[fivegraph,scale=0.7]{%
    \node[vtx] (a) at (90:0.9) {};
    \node[vtx] (b) at (18:0.9) {};
    \node[vtx] (c) at (-54:0.9) {};
    \node[vtx] (d) at (-126:0.9) {};
    \node[vtx] (e) at (162:0.9) {};
    \node[vtx] (f) at (0,0) {};
    \draw[edge] (b)--(c)--(d)--(e)  (f)--(a) (a)--(b) (a)--(e);
    \draw[ghostedge] ;
  }%
}

\newcommand{\gsixOa}{
\begin{tikzpicture}[scale=0.4, every node/.style={circle, draw, inner sep=1.2pt}, baseline={([yshift=-1.0ex]current bounding box.center)}
]
  \node (o1) at ( 90:1.6) {$$};
  \node[very thick, fill] (o2) at ( 18:1.6) {$$};
  \node[red, very thick, fill] (o3) at (-54:1.6) {$$};
  \node (o4) at (-126:1.6) {$$};
  \node (o5) at (162:1.6) {$$};
  \node[red, very thick, fill] (i1) at ( 90:0.9) {$$};
  \node[red, very thick, fill] (i2) at ( 18:0.9) {$$};
  \node[very thick, fill] (i3) at (-54:0.9) {$$};
  \node[very thick, fill] (i4) at (-126:0.9) {$$};
  \node (i5) at (162:0.9) {$$};
  \draw (o1)--(o2);
  \draw[very thick, fill] (o2)--(o3);
  \draw (o5)--(o1);
  \draw (o3)--(o4);
  \draw (o4)--(o5);
  \draw (o1)--(i1);
  \draw[very thick, fill] (o2)--(i2);
  \draw[very thick, fill] (o3)--(i3);
  \draw (o4)--(i4);
  \draw (o5)--(i5);
  \draw[very thick, fill] (i1)--(i3);
  \draw (i3)--(i5);
  \draw (i5)--(i2);
  \draw[very thick, fill] (i2)--(i4);
  \draw[very thick, fill] (i4)--(i1);
\end{tikzpicture}
}

\newcommand{\gsixOi}{
\begin{tikzpicture}[scale=0.4, every node/.style={circle, draw, inner sep=1.2pt}, baseline={([yshift=-1.0ex]current bounding box.center)}
]
  \node (o1) at (18:1.6) {$$};
  \node[red, very thick, fill] (o2) at (-54:1.6) {$$};
  \node[red, very thick, fill] (o3) at (-126:1.6) {$$};
  \node (o4) at (162:1.6) {$$};
  \node (o5) at (90:1.6) {$$};
  \node[red, very thick, fill] (i1) at ( 18:0.9) {$$};
  \node[very thick, fill] (i2) at (-54:0.9) {$$};
  \node[very thick, fill] (i3) at (-126:0.9) {$$};
  \node[red, very thick, fill] (i4) at (162:0.9) {$$};
  \node (i5) at ( 90:0.9) {$$};
  \draw (o1)--(o2);
  \draw[red, very thick, fill] (o2)--(o3);
  \draw (o5)--(o1);
  \draw (o3)--(o4);
  \draw (o4)--(o5);
  \draw (o1)--(i1);
  \draw[very thick, fill] (o2)--(i2);
  \draw[very thick, fill] (o3)--(i3);
  \draw (o4)--(i4);
  \draw (o5)--(i5);
  \draw[very thick, fill] (i1)--(i3);
  \draw (i3)--(i5);
  \draw (i5)--(i2);
  \draw[ very thick, fill] (i2)--(i4);
  \draw[red, very thick, fill] (i4)--(i1);
\end{tikzpicture}
}

\newcommand{\gsixXIa}{
\begin{tikzpicture}[scale=0.4, every node/.style={circle, draw, inner sep=1.2pt}, baseline={([yshift=-1.0ex]current bounding box.center)}]
  \node[red, very thick, fill] (o1) at ( 90:1.6) {$$};
  \node[very thick, fill] (o2) at ( 18:1.6) {$$};
  \node (o3) at (-54:1.6) {$$};
  \node[red, very thick, fill] (o4) at (-126:1.6) {$$};
  \node (o5) at (162:1.6) {$$};
  \node (i1) at ( 90:0.9) {$$};
  \node (i2) at ( 18:0.9) {$$};
  \node[very thick, fill] (i3) at (-54:0.9) {$$};
  \node[very thick, fill] (i4) at (-126:0.9) {$$};
  \node[red, very thick, fill] (i5) at (162:0.9) {$$};
  \draw[ very thick, fill] (o1)--(o2);
  \draw (o2)--(o3);
  \draw (o5)--(o1);
  \draw (o3)--(o4);
  \draw (o4)--(o5);
  \draw (o1)--(i1);
  \draw (o2)--(i2);
  \draw (o3)--(i3);
  \draw[very thick, fill] (o4)--(i4);
  \draw (o5)--(i5);
  \draw (i1)--(i3);
  \draw[very thick, fill] (i3)--(i5);
  \draw (i5)--(i2);
  \draw (i2)--(i4);
  \draw (i4)--(i1);
\end{tikzpicture}
}

\newcommand{\gsixXIi}{
\begin{tikzpicture}[scale=0.4, every node/.style={circle, draw, inner sep=1.2pt}, baseline={([yshift=-1.0ex]current bounding box.center)}]
  \node[red, very thick, fill] (o1) at (-54 :1.6) {$$};
  \node[red, very thick, fill] (o2) at (-126 :1.6) {$$};
  \node (o3) at (162:1.6) {$$};
  \node[red, very thick, fill] (o4) at (90:1.6) {$$};
  \node (o5) at (18:1.6) {$$};
  \node (i1) at ( -54:0.9) {$$};
  \node (i2) at ( -126:0.9) {$$};
  \node[red, very thick, fill] (i3) at (162:0.9) {$$};
  \node[red, very thick, fill] (i4) at ( 90 :0.9) {$$};
  \node[red, very thick, fill] (i5) at (18:0.9) {$$};
  \draw[red, very thick, fill] (o1)--(o2);
  \draw (o2)--(o3);
  \draw (o5)--(o1);
  \draw (o3)--(o4);
  \draw (o4)--(o5);
  \draw (o1)--(i1);
  \draw (o2)--(i2);
  \draw (o3)--(i3);
  \draw[red, very thick, fill] (o4)--(i4);
  \draw (o5)--(i5);
  \draw (i1)--(i3);
  \draw[red, very thick, fill] (i3)--(i5);
  \draw (i5)--(i2);
  \draw (i2)--(i4);
  \draw (i4)--(i1);
\end{tikzpicture}
}

\newcommand{\gsixEi }{
 \begin{tikzpicture}[scale=0.4, every node/.style={circle, draw, inner sep=1.2pt}, baseline={([yshift=-1.0ex]current bounding box.center)}
]
  \node (o1) at ( 90:1.6) {$$};
  \node[very thick, fill] (o2) at ( 18:1.6) {$$};
  \node[red, very thick, fill] (o3) at (-54:1.6) {$$};
  \node (o4) at (-126:1.6) {$$};
  \node (o5) at (162:1.6) {$$};
  \node[red, very thick, fill] (i1) at ( 90:0.9) {$$};
  \node (i2) at ( 18:0.9) {$$};
  \node[very thick, fill] (i3) at (-54:0.9) {$$};
  \node[very thick, fill] (i4) at (-126:0.9) {$$};
  \node[red, very thick, fill] (i5) at (162:0.9) {$$};
  \draw (o1)--(o2);
  \draw[very thick, fill] (o2)--(o3);
  \draw (o3)--(o4);
  \draw (o4)--(o5);
  \draw (o5)--(o1);
  \draw (o1)--(i1);
  \draw (o2)--(i2);
  \draw[very thick, fill] (o3)--(i3);
  \draw (o4)--(i4);
  \draw (o5)--(i5);
  \draw[very thick, fill] (i1)--(i3);
  \draw[very thick, fill] (i3)--(i5);
  \draw (i5)--(i2);
  \draw (i2)--(i4);
  \draw[very thick, fill] (i4)--(i1);
\end{tikzpicture} 
}

\newcommand{\gsixCione}{
\begin{tikzpicture}[scale=0.4, every node/.style={circle, draw, inner sep=1.2pt}, baseline={([yshift=-1.0ex]current bounding box.center)}]
  \node[red, very thick, fill] (o1) at ( 90:1.6) {$$};
  \node[very thick, fill] (o2) at ( 18:1.6) {$$};
  \node (o3) at (-54:1.6) {$$};
  \node[red, very thick, fill] (o4) at (-126:1.6) {$$};
  \node (o5) at (162:1.6) {$$};
  \node (i1) at ( 90:0.9) {$$};
  \node[red, very thick, fill] (i2) at ( 18:0.9) {$$};
  \node[very thick, fill] (i3) at (-54:0.9) {$$};
  \node[very thick, fill] (i4) at (-126:0.9) {$$};
  \node (i5) at (162:0.9) {$$};
  \draw[very thick, fill] (o1)--(o2);
  \draw (o2)--(o3);
  \draw (o3)--(o4);
  \draw (o4)--(o5);
  \draw (o5)--(o1);
  \draw (o1)--(i1);
  \draw[very thick, fill] (o2)--(i2);
  \draw (o3)--(i3);
  \draw[very thick, fill] (o4)--(i4);
  \draw (o5)--(i5);
  \draw (i1)--(i3);
  \draw (i3)--(i5);
  \draw (i5)--(i2);
  \draw[very thick, fill] (i2)--(i4);
  \draw (i4)--(i1);
\end{tikzpicture}
}

\newcommand{\gsixEii}{
\begin{tikzpicture}[scale=0.4, every node/.style={circle, draw, inner sep=1.2pt}, baseline={([yshift=-1.0ex]current bounding box.center)}
]
  \node[red, very thick, fill] (o1) at ( 90:1.6) {$$};
  \node (o2) at ( 18:1.6) {$$};
  \node[red, very thick, fill] (o3) at (-54:1.6) {$$};
  \node[red, very thick, fill] (o4) at (-126:1.6) {$$};
  \node (o5) at (162:1.6) {$$};
  \node[red, very thick, fill] (i1) at ( 90:0.9) {$$};
  \node[very thick, fill] (i2) at ( 18:0.9) {$$};
  \node (i3) at (-54:0.9) {$$};
  \node[very thick, fill] (i4) at (-126:0.9) {$$};
  \node (i5) at (162:0.9) {$$};
  \draw (o1)--(o2);
  \draw (o2)--(o3);
  \draw[red, very thick, fill] (o3)--(o4);
  \draw (o4)--(o5);
  \draw (o5)--(o1);
  \draw[red, very thick, fill] (o1)--(i1);
  \draw (o2)--(i2);
  \draw (o3)--(i3);
  \draw[very thick, fill] (o4)--(i4);
  \draw (o5)--(i5);
  \draw (i1)--(i3);
  \draw (i3)--(i5);
  \draw (i5)--(i2);
  \draw[very thick, fill] (i2)--(i4);
  \draw[very thick, fill] (i4)--(i1);
\end{tikzpicture} 
}

\newcommand{\gsixCii}{
\begin{tikzpicture}[scale=0.4, every node/.style={circle, draw, inner sep=1.2pt}, baseline={([yshift=-1.0ex]current bounding box.center)}]
  \node[red, very thick, fill] (o1) at ( 90:1.6) {$$};
  \node (o2) at ( 18:1.6) {$$};
  \node[red, very thick, fill] (o3) at (-54:1.6) {$$};
  \node[red, very thick, fill] (o4) at (-126:1.6) {$$};
  \node (o5) at (162:1.6) {$$};
  \node[red, very thick, fill] (i1) at ( 90:0.9) {$$};
  \node (i2) at ( 18:0.9) {$$};
  \node (i3) at (-54:0.9) {$$};
  \node[very thick, fill] (i4) at (-126:0.9) {$$};
  \node[red, very thick, fill] (i5) at (162:0.9) {$$};
  \draw (o1)--(o2);
  \draw (o2)--(o3);
  \draw[red, very thick, fill] (o3)--(o4);
  \draw (o4)--(o5);
  \draw (o5)--(o1);
  \draw[red, very thick, fill] (o1)--(i1);
  \draw (o2)--(i2);
  \draw (o3)--(i3);
  \draw[very thick, fill] (o4)--(i4);
  \draw (o5)--(i5);
  \draw (i1)--(i3);
  \draw (i3)--(i5);
  \draw (i5)--(i2);
  \draw (i2)--(i4);
  \draw[very thick, fill] (i4)--(i1);
\end{tikzpicture}
}

\newcommand{\gsixCaii}{
\begin{tikzpicture}[scale=0.4, every node/.style={circle, draw, inner sep=1.2pt}, baseline={([yshift=-1.0ex]current bounding box.center)}]
  \node[red, very thick, fill] (o1) at ( 90:1.6) {$$};
  \node (o2) at ( 18:1.6) {$$};
  \node (o3) at (-54:1.6) {$$};
  \node[red, very thick, fill] (o4) at (-126:1.6) {$$};
  \node[very thick, fill] (o5) at (162:1.6) {$$};
  \node[red, very thick, fill] (i1) at ( 90:0.9) {$$};
  \node[red, very thick, fill] (i2) at ( 18:0.9) {$$};
  \node[very thick, fill] (i3) at (-54:0.9) {$$};
  \node (i4) at (-126:0.9) {$$};
  \node (i5) at (162:0.9) {$$};
  \draw (o1)--(o2);
  \draw (o2)--(o3);
  \draw (o3)--(o4);
  \draw[very thick, fill] (o4)--(o5);
  \draw[very thick, fill] (o5)--(o1);
  \draw[red, very thick, fill] (o1)--(i1);
  \draw (o2)--(i2);
  \draw (o3)--(i3);
  \draw (o4)--(i4);
  \draw (o5)--(i5);
  \draw[very thick, fill] (i1)--(i3);
  \draw (i3)--(i5);
  \draw (i5)--(i2);
  \draw (i2)--(i4);
  \draw (i4)--(i1);
\end{tikzpicture}
}

\newcommand{\gsixCiii}{
\begin{tikzpicture}[scale=0.4, every node/.style={circle, draw, inner sep=1.2pt}, baseline={([yshift=-1.0ex]current bounding box.center)}]
  \node[red, very thick, fill] (o1) at ( 90:1.6) {$$};
  \node[very thick, fill] (o2) at ( 18:1.6) {$$};
  \node (o3) at (-54:1.6) {$$};
  \node (o4) at (-126:1.6) {$$};
  \node[very thick, fill] (o5) at (162:1.6) {$$};
  \node (i1) at ( 90:0.9) {$$};
  \node[red, very thick, fill] (i2) at ( 18:0.9) {$$};
  \node[very thick, fill] (i3) at (-54:0.9) {$$};
  \node[very thick, fill] (i4) at (-126:0.9) {$$};
  \node (i5) at (162:0.9) {$$};
  \draw[very thick, fill] (o1)--(o2);
  \draw (o2)--(o3);
  \draw (o3)--(o4);
  \draw (o4)--(o5);
  \draw[very thick, fill] (o5)--(o1);
  \draw (o1)--(i1);
  \draw[very thick, fill] (o2)--(i2);
  \draw (o3)--(i3);
  \draw (o4)--(i4);
  \draw (o5)--(i5);
  \draw (i1)--(i3);
  \draw (i3)--(i5);
  \draw (i5)--(i2);
  \draw[very thick, fill] (i2)--(i4);
  \draw (i4)--(i1);
\end{tikzpicture}
}

\newcommand{\gsixQi}{
\begin{tikzpicture}[scale=0.4, every node/.style={circle, draw, inner sep=1.2pt}, baseline={([yshift=-1.0ex]current bounding box.center)}
]
  \node (o1) at ( 90:1.6) {$$};
  \node (o2) at ( 18:1.6) {$$};
  \node (o3) at (-54:1.6) {$$};
  \node (o4) at (-126:1.6) {$$};
  \node[very thick, fill] (o5) at (162:1.6) {$$};
  \node[red, very thick, fill] (i1) at ( 90:0.9) {$$};
  \node[red, very thick, fill] (i2) at ( 18:0.9) {$$};
  \node[very thick, fill] (i3) at (-54:0.9) {$$};
  \node[very thick, fill] (i4) at (-126:0.9) {$$};
  \node[red, very thick, fill] (i5) at (162:0.9) {$$};
  \draw (o1)--(o2);
  \draw (o2)--(o3);
  \draw (o3)--(o4);
  \draw (o4)--(o5);
  \draw (o5)--(o1);
  \draw (o1)--(i1);
  \draw (o2)--(i2);
  \draw (o3)--(i3);
  \draw (o4)--(i4);
  \draw[very thick, fill] (o5)--(i5);
  \draw[very thick, fill] (i1)--(i3);
  \draw[very thick, fill] (i3)--(i5);
  \draw[red, very thick, fill] (i5)--(i2);
  \draw[very thick, fill] (i2)--(i4);
  \draw[very thick, fill] (i4)--(i1);
\end{tikzpicture} 
}

\newcommand{\gsixQaa}{
\begin{tikzpicture}[scale=0.4, every node/.style={circle, draw, inner sep=1.2pt}, baseline={([yshift=-1.0ex]current bounding box.center)}]
  \node[red, very thick, fill] (o1) at ( 90:1.6) {$$};
  \node (o2) at ( 18:1.6) {$$};
  \node[red, very thick, fill] (o3) at (-54:1.6) {$$};
  \node (o4) at (-126:1.6) {$$};
  \node[very thick, fill] (o5) at (162:1.6) {$$};
  \node[red, very thick, fill] (i1) at ( 90:0.9) {$$};
  \node (i2) at ( 18:0.9) {$$};
  \node[very thick, fill] (i3) at (-54:0.9) {$$};
  \node (i4) at (-126:0.9) {$$};
  \node[red, very thick, fill] (i5) at (162:0.9) {$$};
  \draw (o1)--(o2);
  \draw (o2)--(o3);
  \draw (o3)--(o4);
  \draw (o4)--(o5);
  \draw[very thick, fill] (o5)--(o1);
  \draw[red, very thick, fill] (o1)--(i1);
  \draw (o2)--(i2);
  \draw[very thick, fill] (o3)--(i3);
  \draw (o4)--(i4);
  \draw[red, very thick, fill] (o5)--(i5);
  \draw[very thick, fill] (i1)--(i3);
  \draw[very thick, fill] (i3)--(i5);
  \draw (i5)--(i2);
  \draw (i2)--(i4);
  \draw (i4)--(i1);
\end{tikzpicture}
}

\newcommand{\gsixQiii}{
\begin{tikzpicture}[scale=0.4, every node/.style={circle, draw, inner sep=1.2pt}, baseline={([yshift=-1.0ex]current bounding box.center)}]
  \node[red, very thick, fill] (o1) at ( 90:1.6) {$$};
  \node[very thick, fill] (o2) at ( 18:1.6) {$$};
  \node (o3) at (-54:1.6) {$$};
  \node (o4) at (-126:1.6) {$$};
  \node[very thick, fill] (o5) at (162:1.6) {$$};
  \node[red, very thick, fill] (i1) at ( 90:0.9) {$$};
  \node[red, very thick, fill] (i2) at ( 18:0.9) {$$};
  \node (i3) at (-54:0.9) {$$};
  \node (i4) at (-126:0.9) {$$};
  \node[red, very thick, fill] (i5) at (162:0.9) {$$};
  \draw[very thick, fill] (o1)--(o2);
  \draw (o2)--(o3);
  \draw (o3)--(o4);
  \draw (o4)--(o5);
  \draw[very thick, fill] (o5)--(o1);
  \draw[red, very thick, fill] (o1)--(i1);
  \draw[very thick, fill] (o2)--(i2);
  \draw (o3)--(i3);
  \draw (o4)--(i4);
  \draw[very thick, fill] (o5)--(i5);
  \draw (i1)--(i3);
  \draw (i3)--(i5);
  \draw[red, very thick, fill] (i5)--(i2);
  \draw (i2)--(i4);
  \draw (i4)--(i1);
\end{tikzpicture}
}

\newcommand{\gsixEiii}{
\begin{tikzpicture}[scale=0.4, every node/.style={circle, draw, inner sep=1.2pt}, baseline={([yshift=-1.0ex]current bounding box.center)}]
  \node[red, very thick, fill] (o1) at ( 90:1.6) {$$};
  \node (o2) at ( 18:1.6) {$$};
  \node (o3) at (-54:1.6) {$$};
  \node (o4) at (-126:1.6) {$$};
  \node[very thick, fill] (o5) at (162:1.6) {$$};
  \node (i1) at ( 90:0.9) {$$};
  \node[red, very thick, fill] (i2) at ( 18:0.9) {$$};
  \node[very thick, fill] (i3) at (-54:0.9) {$$};
  \node[very thick, fill] (i4) at (-126:0.9) {$$};
  \node[red, very thick, fill] (i5) at (162:0.9) {$$};
  \draw (o1)--(o2);
  \draw (o2)--(o3);
  \draw (o3)--(o4);
  \draw (o4)--(o5);
  \draw[very thick, fill] (o5)--(o1);
  \draw (o1)--(i1);
  \draw (o2)--(i2);
  \draw (o3)--(i3);
  \draw (o4)--(i4);
  \draw[very thick, fill] (o5)--(i5);
  \draw (i1)--(i3);
  \draw[very thick, fill] (i3)--(i5);
  \draw[red, very thick, fill] (i5)--(i2);
  \draw[very thick, fill] (i2)--(i4);
  \draw (i4)--(i1);
\end{tikzpicture}
}

\newcommand{\gsixEaaa}{
\begin{tikzpicture}[scale=0.4, every node/.style={circle, draw, inner sep=1.2pt}, baseline={([yshift=-1.0ex]current bounding box.center)}]
  \node[red, very thick, fill] (o1) at ( 90:1.6) {$$};
  \node (o2) at ( 18:1.6) {$$};
  \node[red, very thick, fill] (o3) at (-54:1.6) {$$};
  \node (o4) at (-126:1.6) {$$};
  \node[very thick, fill] (o5) at (162:1.6) {$$};
  \node (i1) at ( 90:0.9) {$$};
  \node[red, very thick, fill] (i2) at ( 18:0.9) {$$};
  \node[very thick, fill] (i3) at (-54:0.9) {$$};
  \node (i4) at (-126:0.9) {$$};
  \node[red, very thick, fill] (i5) at (162:0.9) {$$};
  \draw (o1)--(o2);
  \draw (o2)--(o3);
  \draw (o3)--(o4);
  \draw (o4)--(o5);
  \draw[very thick, fill] (o5)--(o1);
  \draw (o1)--(i1);
  \draw (o2)--(i2);
  \draw[very thick, fill] (o3)--(i3);
  \draw (o4)--(i4);
  \draw[very thick, fill] (o5)--(i5);
  \draw (i1)--(i3);
  \draw[very thick, fill] (i3)--(i5);
  \draw[red, very thick, fill] (i5)--(i2);
  \draw (i2)--(i4);
  \draw (i4)--(i1);
\end{tikzpicture}
}

\newcommand{\gsixHIi}{
\begin{tikzpicture}[scale=0.4, every node/.style={circle, draw, inner sep=1.2pt}, baseline={([yshift=-1.0ex]current bounding box.center)}]
  \node (o1) at ( 90:1.6) {$$};
  \node[very thick, fill] (o2) at ( 18:1.6) {$$};
  \node[red, very thick, fill] (o3) at (-54:1.6) {$$};
  \node[red, very thick, fill] (o4) at (-126:1.6) {$$};
  \node[very thick, fill] (o5) at (162:1.6) {$$};
  \node (i1) at ( 90:0.9) {$$};
  \node (i2) at ( 18:0.9) {$$};
  \node[very thick, fill] (i3) at (-54:0.9) {$$};
  \node[very thick, fill] (i4) at (-126:0.9) {$$};
  \node (i5) at (162:0.9) {$$};
  \draw (o1)--(o2);
  \draw[very thick, fill] (o2)--(o3);
  \draw[red, very thick, fill] (o3)--(o4);
  \draw[very thick, fill] (o4)--(o5);
  \draw (o5)--(o1);
  \draw (o1)--(i1);
  \draw (o2)--(i2);
  \draw[very thick, fill] (o3)--(i3);
  \draw[very thick, fill] (o4)--(i4);
  \draw (o5)--(i5);
  \draw (i1)--(i3);
  \draw (i3)--(i5);
  \draw (i5)--(i2);
  \draw (i2)--(i4);
  \draw (i4)--(i1);
\end{tikzpicture}
}

\newcommand{\gsixHIii}{
\begin{tikzpicture}[scale=0.4, every node/.style={circle, draw, inner sep=1.2pt}, baseline={([yshift=-1.0ex]current bounding box.center)}]
  \node (o1) at ( 90:1.6) {$$};
  \node (o2) at ( 18:1.6) {$$};
  \node[red, very thick, fill] (o3) at (-54:1.6) {$$};
  \node[red, very thick, fill] (o4) at (-126:1.6) {$$};
  \node[very thick, fill] (o5) at (162:1.6) {$$};
  \node[red, very thick, fill] (i1) at ( 90:0.9) {$$};
  \node[red, very thick, fill] (i2) at ( 18:0.9) {$$};
  \node (i3) at (-54:0.9) {$$};
  \node[very thick, fill] (i4) at (-126:0.9) {$$};
  \node (i5) at (162:0.9) {$$};
  \draw (o1)--(o2);
  \draw (o2)--(o3);
  \draw[red, very thick, fill] (o3)--(o4);
  \draw[very thick, fill] (o4)--(o5);
  \draw (o5)--(o1);
  \draw (o1)--(i1);
  \draw (o2)--(i2);
  \draw (o3)--(i3);
  \draw[very thick, fill] (o4)--(i4);
  \draw (o5)--(i5);
  \draw (i1)--(i3);
  \draw (i3)--(i5);
  \draw (i5)--(i2);
  \draw[very thick, fill] (i2)--(i4);
  \draw[very thick, fill] (i4)--(i1);
\end{tikzpicture}
}

\newcommand{\geightii}{
\begin{tikzpicture}[scale=0.4, every node/.style={circle, draw, inner sep=1.2pt}, baseline={([yshift=-1.0ex]current bounding box.center)}
]
  \node (o1) at ( 90:1.6) {$$};
  \node[very thick, fill] (o2) at ( 18:1.6) {$$};
  \node[red, very thick, fill] (o3) at (-54:1.6) {$$};
  \node[red, very thick, fill] (o4) at (-126:1.6) {$$};
  \node (o5) at (162:1.6) {$$};
  \node[red,very thick, fill] (i1) at ( 90:0.9) {$$};
  \node[red, very thick, fill] (i2) at ( 18:0.9) {$$};
  \node[ very thick, fill] (i3) at (-54:0.9) {$$};
  \node[ very thick, fill] (i4) at (-126:0.9) {$$};
  \node[red, very thick, fill] (i5) at (162:0.9) {$$};
  \draw (o1)--(o2);
  \draw[very thick, fill] (o2)--(o3);
  \draw[red, very thick, fill] (o3)--(o4);
  \draw (o4)--(o5);
  \draw (o5)--(o1);
  \draw (o1)--(i1);
  \draw[very thick, fill] (o2)--(i2);
  \draw[very thick, fill] (o3)--(i3);
  \draw[very thick, fill] (o4)--(i4);
  \draw (o5)--(i5);
  \draw[very thick, fill] (i1)--(i3);
  \draw[very thick, fill] (i3)--(i5);
  \draw[red, very thick, fill] (i5)--(i2);
  \draw[very thick, fill] (i2)--(i4);
  \draw[very thick, fill] (i4)--(i1);
\end{tikzpicture} 
}

\newcommand{\geightiii}{
\begin{tikzpicture}[scale=0.4, every node/.style={circle, draw, inner sep=1.2pt}, baseline={([yshift=-1.0ex]current bounding box.center)}]
  \node[red, very thick, fill] (o1) at ( 90:1.6) {$$};
  \node[very thick, fill] (o2) at ( 18:1.6) {$$};
  \node[red, very thick, fill] (o3) at (-54:1.6) {$$};
  \node[red, very thick, fill] (o4) at (-126:1.6) {$$};
  \node (o5) at (162:1.6) {$$};
  \node (i1) at ( 90:0.9) {$$};
  \node[red, very thick, fill] (i2) at ( 18:0.9) {$$};
  \node[very thick, fill] (i3) at (-54:0.9) {$$};
  \node[very thick, fill] (i4) at (-126:0.9) {$$};
  \node[red, very thick, fill] (i5) at (162:0.9) {$$};
  \draw[very thick, fill] (o1)--(o2);
  \draw[very thick, fill] (o2)--(o3);
  \draw[red, very thick, fill] (o3)--(o4);
  \draw (o4)--(o5);
  \draw (o5)--(o1);
  \draw (o1)--(i1);
  \draw[very thick, fill] (o2)--(i2);
  \draw[very thick, fill] (o3)--(i3);
  \draw[very thick, fill] (o4)--(i4);
  \draw (o5)--(i5);
  \draw (i1)--(i3);
  \draw[very thick, fill] (i3)--(i5);
  \draw[red, very thick, fill] (i5)--(i2);
  \draw[very thick, fill] (i2)--(i4);
  \draw (i4)--(i1);
\end{tikzpicture}
}

\newcommand{\geightaa}{
\begin{tikzpicture}[scale=0.4, every node/.style={circle, draw, inner sep=1.2pt}, baseline={([yshift=-1.0ex]current bounding box.center)}]
  \node[red, very thick, fill] (o1) at (-54 :1.6) {$$};
  \node[red, very thick, fill] (o2) at (-126 :1.6) {$$};
  \node (o3) at (162:1.6) {$$};
  \node[red, very thick, fill] (o4) at (90:1.6) {$$};
  \node (o5) at (18:1.6) {$$};
  \node[ very thick, fill] (i1) at ( -54:0.9) {$$};
  \node[ very thick, fill] (i2) at ( -126:0.9) {$$};
  \node[red, very thick, fill] (i3) at (162:0.9) {$$};
  \node[red, very thick, fill] (i4) at ( 90 :0.9) {$$};
  \node[red, very thick, fill] (i5) at (18:0.9) {$$};
  \draw[red, very thick, fill] (o1)--(o2);
  \draw (o2)--(o3);
  \draw (o5)--(o1);
  \draw (o3)--(o4);
  \draw (o4)--(o5);
  \draw[ very thick, fill] (o1)--(i1);
  \draw[ very thick, fill] (o2)--(i2);
  \draw (o3)--(i3);
  \draw[red, very thick, fill] (o4)--(i4);
  \draw (o5)--(i5);
  \draw[ very thick, fill] (i1)--(i3);
  \draw[red, very thick, fill] (i3)--(i5);
  \draw[ very thick, fill] (i5)--(i2);
  \draw[ very thick, fill] (i2)--(i4);
  \draw[ very thick, fill] (i4)--(i1);
\end{tikzpicture}
}

\newcommand{\geighta}{
\begin{tikzpicture}[scale=0.4, every node/.style={circle, draw, inner sep=1.2pt}, baseline={([yshift=-1.0ex]current bounding box.center)}
]
  \node[very thick, fill] (o1) at (18:1.6) {$$};
  \node (o2) at (-54:1.6) {$$};
  \node (o3) at (-126:1.6) {$$};
  \node[very thick, fill] (o4) at (162:1.6) {$$};
  \node[red, very thick, fill] (o5) at (90:1.6) {$$};
  \node[red, very thick, fill] (i1) at ( 18:0.9) {$$};
  \node[very thick, fill] (i2) at (-54:0.9) {$$};
  \node[very thick, fill] (i3) at (-126:0.9) {$$};
  \node[red, very thick, fill] (i4) at (162:0.9) {$$};
  \node[red, very thick, fill] (i5) at ( 90:0.9) {$$};
  \draw (o1)--(o2);
  \draw (o2)--(o3);
  \draw[very thick, fill] (o5)--(o1);
  \draw (o3)--(o4);
  \draw[very thick, fill](o4)--(o5);
  \draw[very thick, fill] (o1)--(i1);
  \draw (o2)--(i2);
  \draw (o3)--(i3);
  \draw[very thick, fill] (o4)--(i4);
  \draw[red, very thick, fill] (o5)--(i5);
  \draw[very thick, fill] (i1)--(i3);
  \draw[very thick, fill] (i3)--(i5);
  \draw[very thick, fill] (i5)--(i2);
  \draw[ very thick, fill] (i2)--(i4);
  \draw[red, very thick, fill] (i4)--(i1);
\end{tikzpicture}
}

\title[Hidden structure of Jack L-R Coefficients]{Hidden structure of Jack Littlewood-Richardson Coefficients}

\author[R. Mickler]{Ryan Mickler}
\address{Singulariti Research, 55 University Street, Carlton, 3053, Australia}
\email{ry.mickler@gmail.com}

\begin{document}

\begin{abstract}

We argue that Jack Littlewood-Richardson coefficients $g_{\mu\nu}^{\lambda}(\al)$ are specialisations of certain novel polynomials. For the triple of partitions $(\mu,\nu,\lambda)=(21,21,321)$, we prove the corresponding polynomial is invariant under $S_6 \times \BZ_2$, which is identified as the automorphism group of the Johnson graph $J(6,3)$. We conjecture that these polynomials exhibit a factorization property on certain hyperplanes, which is a consequence of compatibility relations between polynomials associated to adjacent triples in the Young graph. As a consequence of this, we conjecture that the difference of adjacent Jack Littlewood-Richardson coefficients is divisible by the shared hook length.  \\
\end{abstract}

\maketitle


\ytableausetup{boxsize=1.1em}

\section{Background}

In this work, we continue the project begun in \cite{Mickler:2024stanley} of investigating the structure of Jack Littlewood-Richardson coefficients $g_{\mu\nu}^{\lambda}$, particularly for the case of $\mu=\{2,1\}$, $\nu=\{2,1\}$, $\lambda=\{3,2,1\}$ which is given by
\[ g_{21,21}^{321} = \frac{6\al(2+11\al+2\al^2)}{(1+2\al)(2+\al)(2+3\al)(3+2\al)}. \]
That the numerator of this coefficient is a \emph{non-negative} polynomial in $\al$ is the content of the so-called Stanley conjecture of \cite{Stanley:1989}. It this this conjecture that has motivated the present investigation.

This work culminates in a conjecture that reveals hidden structure in these coefficients.

\begin{conjecture}[see \eqref{conj:congruence}]
For two triples of partitions $(\mu_1,\nu_1,\lambda_1)$, $(\mu_2,\nu_2,\lambda_2)$ that differ by a single box move (a `pivot') in one of the pairs in the triples, say at $b_1 \in \sigma_1$, $b_2 \in \sigma_2$, so that
\[ h_{\sigma_1}^\sU({b_1}) = h_{\sigma_2}^{\sL}({b_2}),\]
then we have 
\begin{equation}\label{eq:adjmod}
 g_{\mu_1\nu_1;\lambda_1}(\al) \equiv \, g_{\mu_2\nu_2;\lambda_2}(\al) \mod h_{\sigma_1}^\sU({b_1}). 
\end{equation}
\end{conjecture}

For example, we look at the coefficients for two triples that differ by a single box move (for the box $b=(0,0)$ in $\{321\}$)
\begin{eqnarray}
g_{21,21;321}(\al)  &=&   6\al^4(1+2\al)(2+\al)(2+11\al+2\al^2), \\
g_{21,21;2211}(\al) &=&  4 \al^4(1 + 2\al)^2(3 + \al)(4 + \al).
\end{eqnarray}

We see that the \emph{difference} of these polynomials
\[g_{21,21;321}(\al)-g_{21,21;2211}(\al) = 2 \al^4 (1 + 2 \al) {\color{red} (3 + 2 \al)} (-4 + 6 \al + \al^2) \]
is divisible by $h_{2211}^{\sU}(b)=h_{321}^{\sL}(b)={\color{red} (3+2\al)}$.

\subsection{Notation}

We use the notation from \cite{Mickler:2024stanley} which we briefly summarize here. Let $\sA$ represent \emph{hook choice} of either upper ($\sU$) or lower ($\sL$), let $\bar \sA$ or $(-)\sA$ denote the opposite choice. 
For a box $b \in \sigma$ and a choice $\sA$, we consider a \emph{hook symbol} to be the formal symbol $\bh_\sigma^\sA(b)$ (or $\bh_b^\sA$ for short). The evaluation map ($\mathrm{ev}$, or $[\cdot]$) sends symbols to their corresponding $\al$-hook lengths\footnote{Other common notations are $h^*_\lambda(b) = h_\lambda'(b) = h_\lambda^\sU(b)$, and $h^\lambda_*(b) = h_\lambda(b) = h_\lambda^\sL(b)$.}.
\begin{equation}\label{evalmap}
\mathrm{ev} : \bh_\sigma^\sA(b) \mapsto [\bh_\sigma^\sA(b)]:=h^\sA_\sigma(b) \in \BZ[\al] 
 \end{equation}

\begin{definition}[\cite{Mickler:2024stanley}]
For any triple of partitions $\mu,\nu, \lambda$, a \emph{\bf{Stanley Diagram}} $\bm{D} \in \StD_{\mu,\nu}^{\lambda}$ is an assignment $b \to \bmD_b \in \{\sU,\sL\}$ for every box $b$ in each of $\mu,\nu$ and $\lambda$. 
We draw such assignments as a fraction e.g.
\[ 
\ytableausetup{boxsize=0.8em}
\bm{D} =  \frac{ \begin{ytableau}
*(boxU) \sU \\
*(boxL)  \sL  &  *(boxL) \sL
\end{ytableau}\,\,\begin{ytableau}
*(boxU) \sU  \\
*(boxL)  \sL &  *(boxU) \sU
\end{ytableau} }{ \begin{ytableau}
*(boxU) \sU \\
*(boxL) \sL  & *(boxU) \sU \\
*(boxL) \sL & *(boxU)\sU  & *(boxL) \sL
\end{ytableau} } \in \StD_{21,21}^{321}
\ytableausetup{boxsize=0.8em}
\] 
Inside a tableau, we denote upper hook symbol by $\begin{ytableau}*(boxU) \sU \end{ytableau}$ and lower hook symbol by $\begin{ytableau}*(boxL) \sL \end{ytableau}$.
Note that the hook choices for the boxes in $\lambda$ are in the denominator. 
Equivalently, a Stanley diagram is a rational function of the hook symbols,
\[ \bmD \leftrightarrow \frac{\left( \prod_{b\in \mu} \bh_b^{\bmD_b}  \right) \left( \prod_{b\in \nu} \bh_b^{\bmD_b}  \right)}{ \left( \prod_{b\in \lambda} \bh_b^{\bmD_b}  \right) }. \]

For a diagram $\bmD$, we let $\bmD'$ be the polynomial in the hook symbols given by $\bmD' := {\bf{j}}_\lambda \, \bmD$, where ${\bf{j}}_\lambda = \prod_{b\in \lambda} (\bh_b^{\sU} \bh_b^{\sL} )$. That  is,
\[ \bmD' = \left( \prod_{b\in \mu} \bh_b^{\bmD_b}  \right) \left( \prod_{b\in \nu} \bh_b^{\bmD_b}  \right) \left( \prod_{b\in \lambda} \bh_b^{\overline \bmD_b}  \right).  \]

\end{definition}

The motivating conjecture for this investigation is due to Richard Stanley, for whom we've named the constructions above.

 \begin{conjecture}[{``Strong'' Stanley Conjecture} \protect{\cite[C8.5]{Stanley:1989}}]\label{conj:strongstanley2}
If $c_{\mu\nu}^\lambda =1$, then there exists a Stanley diagram $\bm{D}\in \StD_{\mu\nu}^{\lambda}$ 
 such that the corresponding Jack LR coefficient is computed by evaluation
\begin{equation*}\label{formula:strongstanley2}
\jackjLR_{\mu \nu}^{\lambda} = [\bm{D}].
\end{equation*}
Furthermore, $\bmD$ may be taken to have an equal number of upper (and hence, lower) hooks in the numerator and denominator of the diagram.
\end{conjecture}


\ytableausetup{boxsize=0.8em}

The following generalization of Stanley's conjecture \eqref{conj:strongstanley2} to all cases $c_{\mu\nu}^\lambda>1$ was given in \cite{Mickler:2024stanley}.

\begin{definition}
A \emph{{\bf{Stanley Sum}}} $\bmss \in \StS_{\mu\nu}^{\lambda}$ is an element in the $\setZ$-span of Stanley Diagrams 
\[ \bmss  = \sum_{\bm{D}\, \in \StD_{\mu\nu}^{\lambda}} c_\bmD\,  \bm{D} \in \StS_{\mu\nu}^{\lambda},\]
with $c_\bmD \in \BZ$. The evaluation map extends linearly to such sums. 
\end{definition}

\begin{conjecture}[General Structure, \cite{Mickler:2024stanley}]\label{conj:generalstructure}
For each triple of partitions $\mu,\nu,\lambda$, there exists a Stanley sum, 
\begin{equation}\label{ex:stanleyruleform}
\JackjLR_{\mu\nu}^{\lambda} = \sum_{\bm{D} \in \StD_{\mu\nu}^{\lambda}} c_\bmD \, \bm{D}, \qquad c_\bmD \in \BZ,
\end{equation}
such that the corresponding Jack LR coefficient is computed by evaluation,
\begin{equation}
\jackjLR_{\mu\nu}^{\lambda} = [\JackjLR_{\mu\nu}^{\lambda}].
\end{equation}
We call such a Stanley sum $\JackjLR_{\mu\nu}^{\lambda}$ a \emph{\bf{Rule}} for $\jackjLR_{\mu\nu}^{\lambda}$.
\end{conjecture}
As was discussed in \cite{Mickler:2024stanley}, 
the form \eqref{ex:stanleyruleform} is not overly restrictive as there are $2^{|\lambda|+|\mu|+|\nu|}$ free integer coefficients $c_\bmD$, leaving many degrees of freedom.

We define $\JackjLR_{\mu\nu;\lambda} := \left(\JackjLR_{\mu\nu}^{\lambda}\right)'$ as a polynomial form of the rule ($[\JackjLR_{\mu\nu;\lambda}]$ is sometimes referred to as a Stanley Coefficient).

\begin{example}
The Jack LR rule is for the triple $\{1,1,1^2\}$ is given by the single diagram,
\begin{equation}
\JackjLR_{1,1}^{1^2} = \frac{\begin{ytableau}
*(boxU) a 
\end{ytableau} \times
\begin{ytableau}
*(boxU) b  
\end{ytableau}
}{
\begin{ytableau}
 *(boxU) c \\
 *(boxU) d   
\end{ytableau}}= \frac{\bh_\mu^\sU(a)\bh_\nu^\sU(b)}{\bh_{\lambda}^\sU(c)\bh_{\lambda}^\sU(d)} \in \StD_{\mu\nu}^{\lambda}
\end{equation}
which evaluates to
\[ [\JackjLR_{1,1}^{1^2}]=\jackjLR_{1,1}^{1^2} = \frac{\al^2}{\al(1+\al)} = \frac{\al}{1+\al}. \]
This gives the polynomial
\begin{equation}
\JackjLR_{1,1;1^2} = \begin{ytableau}
*(boxU)  \\
\end{ytableau}\,\,\begin{ytableau}
*(boxU)   \\
\end{ytableau} \,\, \begin{ytableau}
*(boxL) \\
*(boxL) 
\end{ytableau}  = \bh_\mu^\sU(a)\bh_\nu^\sU(b)\bh_\lambda^\sL(c)\bh_\lambda^\sL(d) \in \StD_{\mu\nu;\lambda},
\end{equation}
with corresponding Stanley coefficient 
\[ \jackjLR_{1,1;1^2} = \al^2(1)(2) = 2\al^2. \]
\end{example}

\subsubsection{The Main Example}
The previous work \cite{Mickler:2024stanley} was mainly focussed on the 
smallest triple of partitions $\{\mu,\nu,\lambda\}$ such that the Schur LR coefficient $c_{\mu\nu}^{\lambda}=2$ is greater than $1$, namely $\{21,21;321\}$. For this triple, we label the boxes as
\ytableausetup{boxsize=1.0em}
\begin{equation}
 \frac{\begin{ytableau}
*(boxS) a_1 \\
 *(boxS) a_2 &*(boxS)  a_3 
\end{ytableau} \,\,\, \begin{ytableau}
*(boxS) b_1  \\
*(boxS) b_2  & *(boxS) b_3
\end{ytableau}
}{\begin{ytableau}
*(boxS) c_1\\
*(boxS) c_2  & *(boxS) c_3  \\
*(boxS) c_4  &*(boxS)  c_5  & *(boxS) c_6 
\end{ytableau}}
\end{equation}

The main result of \cite{Mickler:2024stanley} was the discovery of the following 26-term Stanley sum $\bigstanleysum = \sum_{\bmD} c_{\bmD}\cdot \bmD \in \StS_{21,21}^{321}$,

\begin{eqnarray}\label{eq:generalsolution}
\ytableausetup{boxsize=0.4em}
\bigstanleysum &:= & 7 \left( \frac{ \begin{ytableau}
*(boxU)  \\
*(boxU)    &  *(boxU) 
\end{ytableau}\,\,\begin{ytableau}
*(boxU)   \\
*(boxU)   &  *(boxU) 
\end{ytableau} }{ \begin{ytableau}
*(boxU) \\
*(boxU)   & *(boxL)  \\
*(boxU)  & *(boxU)   & *(boxL) 
\end{ytableau} } \right) \quad - 2 \left( \frac{ \begin{ytableau}
*(boxU)  \\
*(boxU)    &  *(boxU) 
\end{ytableau}\,\,\begin{ytableau}
*(boxU)   \\
*(boxU)   &  *(boxU) 
\end{ytableau} }{ \begin{ytableau}
*(boxU) \\
*(boxU)   & *(boxU)  \\
*(boxU)  & *(boxU)   & *(boxL) 
\end{ytableau} } \right) \\
&& - 2 \left( \frac{ \begin{ytableau}
*(boxL)  \\
*(boxU)    &  *(boxU) 
\end{ytableau}\,\,\begin{ytableau}
*(boxU)   \\
*(boxU)   &  *(boxU) 
\end{ytableau} }{ \begin{ytableau}
*(boxU) \\
*(boxU)   & *(boxL)  \\
*(boxU)  & *(boxU)   & *(boxL) 
\end{ytableau} } +\frac{ \begin{ytableau}
*(boxU)  \\
*(boxU)    &  *(boxL) 
\end{ytableau}\,\,\begin{ytableau}
*(boxU)   \\
*(boxU)   &  *(boxU) 
\end{ytableau} }{ \begin{ytableau}
*(boxU) \\
*(boxU)   & *(boxL)  \\
*(boxU)  & *(boxU)   & *(boxL) 
\end{ytableau} } +\frac{ \begin{ytableau}
*(boxU)  \\
*(boxU)    &  *(boxU) 
\end{ytableau}\,\,\begin{ytableau}
*(boxL)   \\
*(boxU)   &  *(boxU) 
\end{ytableau} }{ \begin{ytableau}
*(boxU) \\
*(boxU)   & *(boxL)  \\
*(boxU)  & *(boxU)   & *(boxL) 
\end{ytableau} } +\frac{ \begin{ytableau}
*(boxU)  \\
*(boxU)    &  *(boxU) 
\end{ytableau}\,\,\begin{ytableau}
*(boxU)   \\
*(boxU)   &  *(boxL) 
\end{ytableau} }{ \begin{ytableau}
*(boxL) \\
*(boxU)   & *(boxL)  \\
*(boxU)  & *(boxU)   & *(boxU) 
\end{ytableau} }+\frac{ \begin{ytableau}
*(boxU)  \\
*(boxU)    &  *(boxU) 
\end{ytableau}\,\,\begin{ytableau}
*(boxU)   \\
*(boxU)   &  *(boxU) 
\end{ytableau} }{ \begin{ytableau}
*(boxU) \\
*(boxL)   & *(boxL)  \\
*(boxU)  & *(boxU)   & *(boxL) 
\end{ytableau} } +\frac{ \begin{ytableau}
*(boxU)  \\
*(boxU)    &  *(boxU) 
\end{ytableau}\,\,\begin{ytableau}
*(boxU)   \\
*(boxU)   &  *(boxU) 
\end{ytableau} }{ \begin{ytableau}
*(boxU) \\
*(boxU)   & *(boxL)  \\
*(boxU)  & *(boxL)   & *(boxL) 
\end{ytableau} }  \right) \nonumber \\
&&  - 2 \left( \frac{ \begin{ytableau}
*(boxU)  \\
*(boxL)    &  *(boxU) 
\end{ytableau}\,\,\begin{ytableau}
*(boxU)   \\
*(boxU)   &  *(boxU) 
\end{ytableau} }{ \begin{ytableau}
*(boxU) \\
*(boxU)   & *(boxL)  \\
*(boxU)  & *(boxU)   & *(boxL) 
\end{ytableau} }+
 \frac{ \begin{ytableau}
*(boxU)  \\
*(boxU)    &  *(boxU) 
\end{ytableau}\,\,\begin{ytableau}
*(boxU)   \\
*(boxL)   &  *(boxU) 
\end{ytableau} }{ \begin{ytableau}
*(boxU) \\
*(boxU)   & *(boxL)  \\
*(boxU)  & *(boxU)   & *(boxL) 
\end{ytableau} } + 
 \frac{ \begin{ytableau}
*(boxU)  \\
*(boxU)    &  *(boxU) 
\end{ytableau}\,\,\begin{ytableau}
*(boxU)   \\
*(boxU)   &  *(boxU) 
\end{ytableau} }{ \begin{ytableau}
*(boxU) \\
*(boxU)   & *(boxL)  \\
*(boxL)  & *(boxU)   & *(boxL) 
\end{ytableau} } \right) \nonumber \\
&& +1 \left(  \frac{ \begin{ytableau}
*(boxU)  \\
*(boxL)    &  *(boxU) 
\end{ytableau}\,\,\begin{ytableau}
*(boxU)   \\
*(boxU)   &  *(boxU) 
\end{ytableau} }{ \begin{ytableau}
*(boxU) \\
*(boxU)   & *(boxU)  \\
*(boxU)  & *(boxU)   & *(boxL) 
\end{ytableau} } +  \frac{ \begin{ytableau}
*(boxU)  \\
*(boxU)    &  *(boxU) 
\end{ytableau}\,\,\begin{ytableau}
*(boxU)   \\
*(boxL)   &  *(boxU) 
\end{ytableau} }{ \begin{ytableau}
*(boxU) \\
*(boxU)   & *(boxU)  \\
*(boxU)  & *(boxU)   & *(boxL) 
\end{ytableau} }+  \frac{ \begin{ytableau}
*(boxU)  \\
*(boxU)    &  *(boxU) 
\end{ytableau}\,\,\begin{ytableau}
*(boxU)   \\
*(boxU)   &  *(boxU) 
\end{ytableau} }{ \begin{ytableau}
*(boxU) \\
*(boxU)   & *(boxU)  \\
*(boxL)  & *(boxU)   & *(boxL) 
\end{ytableau} }\right) \nonumber \\
&& +1 \left( \frac{ \begin{ytableau}
*(boxL)  \\
*(boxL)    &  *(boxU) 
\end{ytableau}\,\,\begin{ytableau}
*(boxU)   \\
*(boxU)   &  *(boxU) 
\end{ytableau} }{ \begin{ytableau}
*(boxU) \\
*(boxU)   & *(boxL)  \\
*(boxU)  & *(boxU)   & *(boxL) 
\end{ytableau} } +\frac{ \begin{ytableau}
*(boxU)  \\
*(boxL)    &  *(boxL) 
\end{ytableau}\,\,\begin{ytableau}
*(boxU)   \\
*(boxU)   &  *(boxU) 
\end{ytableau} }{ \begin{ytableau}
*(boxU) \\
*(boxU)   & *(boxL)  \\
*(boxU)  & *(boxU)   & *(boxL) 
\end{ytableau} } +\frac{ \begin{ytableau}
*(boxU)  \\
*(boxU)    &  *(boxU) 
\end{ytableau}\,\,\begin{ytableau}
*(boxL)   \\
*(boxL)   &  *(boxU) 
\end{ytableau} }{ \begin{ytableau}
*(boxU) \\
*(boxU)   & *(boxL)  \\
*(boxU)  & *(boxU)   & *(boxL) 
\end{ytableau} } +\frac{ \begin{ytableau}
*(boxU)  \\
*(boxU)    &  *(boxU) 
\end{ytableau}\,\,\begin{ytableau}
*(boxU)   \\
*(boxL)   &  *(boxL) 
\end{ytableau} }{ \begin{ytableau}
*(boxL) \\
*(boxU)   & *(boxL)  \\
*(boxU)  & *(boxU)   & *(boxU) 
\end{ytableau} }+\frac{ \begin{ytableau}
*(boxU)  \\
*(boxU)    &  *(boxU) 
\end{ytableau}\,\,\begin{ytableau}
*(boxU)   \\
*(boxU)   &  *(boxU) 
\end{ytableau} }{ \begin{ytableau}
*(boxU) \\
*(boxL)   & *(boxL)  \\
*(boxL)  & *(boxU)   & *(boxL) 
\end{ytableau} } +\frac{ \begin{ytableau}
*(boxU)  \\
*(boxU)    &  *(boxU) 
\end{ytableau}\,\,\begin{ytableau}
*(boxU)   \\
*(boxU)   &  *(boxU) 
\end{ytableau} }{ \begin{ytableau}
*(boxU) \\
*(boxU)   & *(boxL)  \\
*(boxL)  & *(boxL)   & *(boxL) 
\end{ytableau} }  \right) \nonumber \\
&& +1 \left( \frac{ \begin{ytableau}
*(boxL)  \\
*(boxU)    &  *(boxU) 
\end{ytableau}\,\,\begin{ytableau}
*(boxU)   \\
*(boxU)   &  *(boxL) 
\end{ytableau} }{ \begin{ytableau}
*(boxL) \\
*(boxU)   & *(boxL)  \\
*(boxU)  & *(boxU)   & *(boxU) 
\end{ytableau} } +
 \frac{ \begin{ytableau}
*(boxU)  \\
*(boxU)    &  *(boxL) 
\end{ytableau}\,\,\begin{ytableau}
*(boxL)   \\
*(boxU)   &  *(boxU) 
\end{ytableau} }{ \begin{ytableau}
*(boxU) \\
*(boxU)   & *(boxL)  \\
*(boxU)  & *(boxU)   & *(boxL) 
\end{ytableau} } +
 \frac{ \begin{ytableau}
*(boxL)  \\
*(boxU)    &  *(boxU) 
\end{ytableau}\,\,\begin{ytableau}
*(boxU)   \\
*(boxU)   &  *(boxU) 
\end{ytableau} }{ \begin{ytableau}
*(boxU) \\
*(boxU)   & *(boxL)  \\
*(boxU)  & *(boxL)   & *(boxL) 
\end{ytableau} }+
 \frac{ \begin{ytableau}
*(boxU)  \\
*(boxU)    &  *(boxL) 
\end{ytableau}\,\,\begin{ytableau}
*(boxU)   \\
*(boxU)   &  *(boxU) 
\end{ytableau} }{ \begin{ytableau}
*(boxU) \\
*(boxL)   & *(boxL)  \\
*(boxU)  & *(boxU)   & *(boxL) 
\end{ytableau} }+
 \frac{ \begin{ytableau}
*(boxU)  \\
*(boxU)    &  *(boxU) 
\end{ytableau}\,\,\begin{ytableau}
*(boxL)   \\
*(boxU)   &  *(boxU) 
\end{ytableau} }{ \begin{ytableau}
*(boxU) \\
*(boxU)   & *(boxL)  \\
*(boxU)  & *(boxL)   & *(boxL) 
\end{ytableau} }+
 \frac{ \begin{ytableau}
*(boxU)  \\
*(boxU)    &  *(boxU) 
\end{ytableau}\,\,\begin{ytableau}
*(boxU)   \\
*(boxU)   &  *(boxL) 
\end{ytableau} }{ \begin{ytableau}
*(boxL) \\
*(boxL)   & *(boxL)  \\
*(boxU)  & *(boxU)   & *(boxU) \nonumber
\end{ytableau} }
\right).
\end{eqnarray}
\ytableausetup{boxsize=0.9em}

Beyond being a rule for $\{21,21,321\}$, this Stanley sum was conjectured to be a rule simultaneously for a 7-parameter family of triples. The following rests on definitions in the next section.
\begin{conjecture}[\cite{Mickler:2024stanley}]
The Stanley sum \eqref{eq:generalsolution} computes the Jack Little-Richardson coefficients for the $7$-parameter window family $\windowfamily=\{W\}$ given by \eqref{eq:7paramwindowronly}, that is
\begin{equation}\label{eq:bigequation}
F_W \,[\bigstanleysum]_W = \jackjLR_{W}.
\end{equation}
where $F_W$ is the window factor given by \eqref{eq:7paramwindowronly}. That is, $\bigstanleysum$ is a Rule,
\[ \bigstanleysum = \JackjLR_{21,21}^{321}. \]
\end{conjecture}

We don't assume this conjecture holds in this present work, rather we just take the Stanley sum \eqref{eq:generalsolution} as given, and investigate its properties. In section \ref{section:discussion}, we develop novel reasoning as to why this sum should compute the Jack LR coefficients.

\subsection{Observations}
We begin this paper by observing the following properties of the Stanley sum \eqref{eq:generalsolution}. 
\begin{observation}
For each diagram $\bmD$ in the support (i.e. $c_\bmD\neq 0$) of $\bigstanleysum$, we have
\begin{equation}\label{eq:tildeb3def}
\bmD_{b_3} = \bmD_{c_1}= \overline\bmD_{c_6}.
\end{equation}
\end{observation}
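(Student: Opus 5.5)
The statement is a finite check over the support of $\bigstanleysum$, so I would verify it directly from \eqref{eq:generalsolution}, one diagram at a time. First I fix how the labelling of the main example sits inside the drawn tableaux. The tableaux are drawn in French convention, with the longest row at the bottom. So $b_3$ is the right-hand box of the bottom row of $\nu$, $c_1$ is the single box of the top row of $\lambda$, and $c_6$ is the right-most box of the bottom row of $\lambda$. For each of the $26$ diagrams $\bmD$ with $c_\bmD\neq 0$ I then read off the triple $(\bmD_{b_3},\bmD_{c_1},\bmD_{c_6})$. The claim is that this triple is always $(\sU,\sU,\sL)$ or $(\sL,\sL,\sU)$.

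To keep the check short I sort the diagrams by the value of $\bmD_{b_3}$. Scanning the second tableau of each numerator, $\bmD_{b_3}=\sL$ occurs in exactly four diagrams:
\begin{itemize}
\item the fourth diagram of the first coefficient $-2$ group;
\item the fourth diagram of the six-term $+1$ group;
\item the first and the last diagram of the final $+1$ group.
\end{itemize}
In each of these four I confirm that the denominator has top box $\sL$ and bottom row ending in $\sU$, so $\bmD_{c_1}=\sL$ and $\bmD_{c_6}=\sU$.

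For the remaining $22$ diagrams, $\bmD_{b_3}=\sU$. In each of them I check that the denominator has top box $\sU$ and bottom-right box $\sL$. This includes the two isolated terms with coefficients $7$ and $-2$.

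There is no conceptual obstacle here. The only real risk is misreading the French-convention tableaux, in particular confusing $b_3$ with $b_2$, or $c_6$ with $c_4$. To guard against this I would re-check the four exceptional diagrams against the orientation fixed by the labelling display, and confirm that these four are the only diagrams with $c_1=\sL$. That cross-check shows the correspondence $\bmD_{b_3}\leftrightarrow\bmD_{c_1}$ is a bijection on the support, not just an implication in one direction.
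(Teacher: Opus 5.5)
Your proposal is correct and is essentially the paper's own justification: the paper records this as an observation read off by inspecting the 26 displayed diagrams, and your reading of the French-convention tableaux and your list of exceptional diagrams are accurate. The four diagrams with $\bmD_{b_3}=\sL$ are exactly $\bmX^{\{\tilde b_3\}}$, $\bmX^{\{b_2,\tilde b_3\}}$, $\bmX^{\{a_1,\tilde b_3\}}$ and $\bmX^{\{c_2,\tilde b_3\}}$, namely the single flip plus the three Petersen edges at $\tilde b_3$. Two minor points: your ``six-term $+1$ group'' should be read as the first of the two six-term $+1$ groups, and the closing bijection cross-check is redundant, since checking both classes already gives the equality.
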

Thus, we consider a new virtual box, $\tilde b_3$, that refers to the value of all three. 
We black out the boxes at $c_1$ and $c_6$, as they are determined by the hook assignment at $b_3$. From now on we only refer to the remaining ten boxes.

\ytableausetup{boxsize=1.1em}
\begin{equation}
 \frac{\begin{ytableau}
\none   &*(boxS) a_1 \\
\none    &*(boxS) a_2 &*(boxS)  a_3   \\
\none  &\none  & \none 
\end{ytableau} \,\, \begin{ytableau}
\none & *(boxS) b_1  \\
\none &  *(boxS) b_2  & *(boxS) b_3  \\
\none & \none   & \none 
\end{ytableau}
}{\begin{ytableau}
\none \\
\none & *(boxB) c_1\\
\none & *(boxS) c_2  & *(boxS) c_3  \\
\none & *(boxS) c_4  &*(boxS)  c_5  & *(boxB) c_6 \\
\none & \none   & \none  & \none 
\end{ytableau}}
\end{equation}

The second thing we observe is that each diagram in the support is at most two box flips away from the following reference diagram,

\begin{equation}\label{eq:Xdef}
\bmX:=\frac{ \begin{ytableau}
*(boxU)  \\
*(boxU)    &  *(boxU) 
\end{ytableau}\,\,\begin{ytableau}
*(boxU)   \\
*(boxU)   &  *(boxU) 
\end{ytableau} }{ \begin{ytableau}
*(boxB) \\
*(boxU)   & *(boxL)  \\
*(boxU)  & *(boxU)   & *(boxB) 
\end{ytableau} }
\end{equation}

For a subset of boxes $J$, let $\bmD^J$ be the diagram $\bmD$ however with the hooks flipped at all boxes in $J$.

\begin{observation}\label{lemma:petersengraph}
The Stanley sum \eqref{eq:generalsolution} has the following decomposition,
\begin{equation}\label{eq:721sum}
\bigstanleysum = 7 \bmX - 2\sum_{b} \bmX^{\{b\}} + \sum_{\{a,b\} \in \Theta} \bmX^{\{a,b\}},
\end{equation}
where $\Theta$ is a set of fifteen pairs of boxes.
The pairs $\Theta$ constitute the edges of the Petersen graph $\petersen$, with vertices labelled as 
\begin{equation}\label{diag:petersongraph}
\begin{tikzpicture}[scale=1.1, every node/.style={circle, draw, inner sep=1.2pt}]
  \node (o1) at ( 90:1.6) {$c_4$};
  \node (o2) at ( 18:1.6) {$c_2$};
  \node (o3) at (-54:1.6) {$\tilde b_3$};
  \node (o4) at (-126:1.6) {$a_1$};
  \node (o5) at (162:1.6) {$c_5$};
  \node (i1) at ( 90:0.9) {$c_3$};
  \node (i2) at ( 18:0.9) {$ a_3$};
  \node (i3) at (-54:0.9) {$b_2$};
  \node (i4) at (-126:0.9) {$a_2$};
  \node (i5) at (162:0.9) {$ b_1$};
  \draw (o1)--(o2)--(o3)--(o4)--(o5)--(o1);
  \draw (o1)--(i1);
  \draw (o2)--(i2);
  \draw (o3)--(i3);
  \draw (o4)--(i4);
  \draw (o5)--(i5);
  \draw (i1)--(i3)--(i5)--(i2)--(i4)--(i1);
\end{tikzpicture}
\end{equation}
This identification defines an action of $\Aut(\petersen) = S_5$ on boxes.
\end{observation}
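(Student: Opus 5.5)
This is a finite verification, so the plan is to work term by term through the 26 diagrams of \eqref{eq:generalsolution}. I first normalise each diagram using the previous Observation \eqref{eq:tildeb3def}. Every diagram in the support satisfies $\bmD_{b_3}=\bmD_{c_1}=\overline\bmD_{c_6}$, so it is determined by its hook choices on the ten remaining boxes $a_1,a_2,a_3,b_1,b_2,\tilde b_3,c_2,c_3,c_4,c_5$. I encode each diagram as the subset $J(\bmD)$ of these ten boxes on which it differs from the reference diagram $\bmX$ of \eqref{eq:Xdef}. Note that $\bmX$ itself has $b_3=\sU$, $c_1=\sU$, $c_6=\sL$, which is consistent with the rule $\bmD_{c_1}=\overline\bmD_{c_6}$. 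Hence $\bmD=\bmX^{J(\bmD)}$, where flipping $\tilde b_3$ means flipping $b_3,c_1,c_6$ simultaneously.

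Next I read off $J(\bmD)$ for each group in \eqref{eq:generalsolution}.
\begin{enumerate}
\item The coefficient-$7$ term is $\bmX$ itself, so $J=\emptyset$.
\item The ten coefficient-$(-2)$ terms should give exactly the ten singletons. The first of them flips $c_3$ from $\sL$ to $\sU$, giving $J=\{c_3\}$. The bracket of seven terms flips, in order, $a_1$, $a_3$, $b_1$, $\tilde b_3$ (visible because $c_1,c_6$ also change), $c_2$, $c_4$ and $c_5$. The bracket of three terms flips $a_2$, $b_2$ and the remaining box.
\item The fifteen coefficient-$(+1)$ terms should each give a two-element set $J$. One example is $\{a_2,c_3\}$.
\end{enumerate}
Throughout I only need to check that the ten singletons are distinct and exhaust the ten boxes, and that the fifteen pairs are distinct. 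This yields \eqref{eq:721sum}, with $\Theta$ the set of fifteen pairs.

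It remains to identify $\Theta$ with the edge set of the Petersen graph under the labelling \eqref{diag:petersongraph}. The outer cycle is $c_4c_2\tilde b_3a_1c_5$, the spokes are $c_4c_3$, $c_2a_3$, $\tilde b_3b_2$, $a_1a_2$, $c_5b_1$, and the inner pentagram is $c_3b_2b_1a_3a_2$. I match the fifteen extracted pairs against this list directly. As a sanity check, each vertex should appear in exactly three pairs and no pair should contain both endpoints of a non-edge.

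Once the edge sets agree, the final sentence follows formally. $\Aut(\petersen)\cong S_5$ acts on the vertices, hence on the ten boxes, and it preserves the decomposition \eqref{eq:721sum}, since it fixes $\bmX$ combinatorially, permutes the singletons, and permutes $\Theta$.

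The main obstacle is purely bookkeeping. The diagrams in the paper are drawn with $\lambda$ flipped upside down relative to the labelling, so I will carefully fix the correspondence between tableau positions and the labels $c_i$, $a_i$, $b_i$ before reading the flips. A misreading at $c_2$ versus $c_4$, or $c_3$ versus $c_5$, would scramble the graph. To guard against this I would also check the result against the known evaluation $g_{21,21}^{321}$.
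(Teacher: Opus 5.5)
Your approach is correct and is the same check the paper relies on. The paper states this as an observation read off from \eqref{eq:generalsolution} and gives no separate argument. Decoding each of the 26 terms as $\bmX^{J}$, with a flip of $\tilde b_3$ meaning $b_3,c_1,c_6$ flip together, and then matching the two-element sets $J$ against the Petersen edge list is exactly what is needed.

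One of your readings is wrong. The bracket after the lone $c_3$ term has six terms, not seven, and they flip $a_1,a_3,b_1,\tilde b_3,c_2,c_5$. The box $c_4$ is the third term of the three-term bracket, after $a_2$ and $b_2$. As you describe it you would get $1+7+3=11$ singletons, which your own exhaustiveness check would flag.

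No change of orientation is needed either. The labelling tableau and the terms of the sum are both drawn with $c_1$ on top and $c_4c_5c_6$ as the bottom row.

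For the record, the fifteen $+1$ terms give, bracket by bracket:
\begin{itemize}
\item $\{a_2,c_3\}$, $\{b_2,c_3\}$, $\{c_4,c_3\}$;
\item $\{a_1,a_2\}$, $\{a_2,a_3\}$, $\{b_1,b_2\}$, $\{b_2,\tilde b_3\}$, $\{c_2,c_4\}$, $\{c_4,c_5\}$;
\item $\{a_1,\tilde b_3\}$, $\{a_3,b_1\}$, $\{a_1,c_5\}$, $\{a_3,c_2\}$, $\{b_1,c_5\}$, $\{\tilde b_3,c_2\}$.
\end{itemize}
These are exactly the outer cycle, spokes and pentagram of \eqref{diag:petersongraph}.

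Your closing claim that $\Aut(\petersen)$ preserves \eqref{eq:721sum} "since it fixes $\bmX$" holds only for the induced action on flip-sets, $\bmX^J\mapsto\bmX^{g.J}$. If you instead relabel the boxes of a diagram directly, the lone lower hook of $\bmX$ moves from $c_3$ to $g.c_3$, so $\bmX$ is not fixed. The paper makes the invariance precise later using the signed maps $\bh_b^\sA\mapsto x_{g.b/b}\bh_{g.b}^{x_{g.b/b}\sA}$. For the observation itself you only need the permutation action on the ten boxes.
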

This surprising structure underlines many of the constructions in this work. The Petersen graph $\petersen$ is most usefully described at the Kneser graph $K(5,2)$, in which vertices are 2-subsets $I=(ij)$ of a set with five elements, $[5]$, and $I\sim J$ if $I$ and $J$ are disjoint. The $\Aut(\petersen) = S_5$ action is then the permutation action on $[5]$ acting on 2-subsets.

\section{The Hook Space}

\subsection{Windows}

We begin by recalling the definitions of windows from the previous work \cite{Mickler:2024stanley}.

\begin{definition}
For a fixed triple of partitions $T=(\mu,\nu;\lambda)$, a \emph{\bf{window}} $W$ is a triple of partitions $(\mu_W,\nu_W; \lambda_W)$ such that $W$ reduces to the \emph{\bf{root}} triple $T$ under a sequence of three moves, 

\begin{itemize}
\item King-Tollu-Toumazet \cite{King:2009} factorization on the triple,
\item Windowing on the pair $\mu\subset \lambda$,
\item Windowing on the pair $\nu \subset \lambda$.
\end{itemize}
Thus a window $W$ comes with an inclusion of boxes from $b\in T \mapsto b_W \in W$. A \emph{\bf{window family}} $\windowfamily = \{W_i\}_{i\in V}$ is a parametrised collection of windows.
\end{definition}

\begin{example}\label{ex:window1112}
For the root $T=\{1,1,1^2\}$, consider the window family  $\windowfamily = \{W_{m,n}\}$ with
\[ W_{m,n} = \{1^{m+1},(n+1),(n+1)1^{m+1}\}, \]
where the four boxes $\{a,b,c,d\}$ of $T$ are included as
\begin{equation}
\frac{\begin{ytableau}
\none[m] &  \\
\none    & *(boxS) a \\
\none[] & \none  & \none 
\end{ytableau} \times
\begin{ytableau}
\none   \\
\none & *(boxS) b &  \\
\none[] & \none  & \none[n] 
\end{ytableau}
}{
\begin{ytableau}
\none & *(boxS) c \\
\none[m] &  \\
\none    & *(boxS) d &   \\
\none[] & \none  & \none[n]
\end{ytableau}}.
\end{equation}
Note there is no KTT factorization in this example.
\end{example}

\begin{conjecture}[Jack Windowing, \cite{Mickler:2024stanley}] Let $W$ be a window for a triple $T$. There exists a rule $\JackjLR_T$, such that a rule $\JackjLR_W$ can be constructed with the following properties.
\begin{itemize}
\item The boxes outside of $T \subset W$ are assigned a specific choice of hooks $\windfact_W$
\item 
The boxes inside $T \subset W$ are assigned the same rule $\JackjLR_T$ (extended by linearity). 
\end{itemize} Symbolically,
\begin{equation}
\JackjLR_W = \windfact_W \circ (\JackjLR_T)_W.
\end{equation}
\end{conjecture}

\begin{example}
Returning to the window in example \eqref{ex:window1112}, we find

\begin{equation*}
\JackjLR_{1^{m+1},n+1}^{(n+1)1^{m+1}} = \windfact_{W_{m,n}} \circ (\JackjLR_{1,1}^{1^2})_{W_{m,n}} = \frac{\begin{ytableau}
\none[m] & *(boxU) \\
\none    & *(boxS)  \\
\none[] & \none  & \none 
\end{ytableau} \,\,
\begin{ytableau}
\none   \\
\none & *(boxS)  & *(boxL) \\
\none[] & \none  & \none[n] 
\end{ytableau}
}{
\begin{ytableau}
\none & *(boxS)  \\
\none[m] & *(boxU) \\
\none    & *(boxS)  & *(boxL)  \\
\none[] & \none  & \none[n]
\end{ytableau}} 
\circ 
\frac{\begin{ytableau}
*(boxU)  
\end{ytableau} \,\,
\begin{ytableau}
*(boxU)  
\end{ytableau}
}{
\begin{ytableau}
 *(boxU)  \\
 *(boxU)    
\end{ytableau}} = 
\frac{\begin{ytableau}
\none[m] & *(boxU) \\
\none    & *(boxU)  \\
\none[] & \none  & \none 
\end{ytableau} \,\,
\begin{ytableau}
\none   \\
\none & *(boxU)  & *(boxL) \\
\none[] & \none  & \none[n] 
\end{ytableau}
}{
\begin{ytableau}
\none & *(boxU)  \\
\none[m] & *(boxU) \\
\none    & *(boxU)  & *(boxL)  \\
\none[] & \none  & \none[n]
\end{ytableau}}.
\end{equation*}

\end{example}

\ytableausetup{boxsize=1.0em}

\begin{example}\label{example:g21212211}
We consider the following sequence of three moves which reduces a $7$-parameter family down to the $c_{\mu\nu}^{\lambda}=1$ root $\{21,21,2211\}$.

\[
 \frac{\begin{ytableau}
\none[m_1\,]  &     \\
\none   &  & \\
\none[r_1]  &*(boxL)  & *(boxL) \\
\none[m_2\,]   &   &   &   \\
\none    &  &  &   &    \\
\none  &\none & \none[n_1]  & \none & \none[n_2] 
\end{ytableau} \,\, \begin{ytableau}
\none \\
\none[m_3\,\,] &   \\
\none &     \\
\none[r_1] & *(boxL)  & \none & \none[]  \\
\none &     &   &    \\
\none & \none   & \none & \none[n_4]
\end{ytableau}
}{\begin{ytableau}
\none   \\
\none &  \\
\none[m_3\,\,] &   \\
\none[m_1\,] &      \\
\none[r_1] & *(boxL) & *(boxL) & *(boxL) \\
\none &   &   &     \\
\none[m_2\,\,] &   &    &   \\
\none &    &    &   &    &   \\
\none & \none  & \none[n_1] & \none  & \none[n_2] & \none[n_4] & \none 
\end{ytableau}}
\to
 \frac{\begin{ytableau}
\none[m_1\,]  &*(boxU)   \\
\none   &   &*(boxL) \\
\none[m_2\,]   &*(boxU)  & *(boxG)  & *(boxU)  \\
\none    &  &*(boxL)  &  & *(boxL)   \\
\none  &\none & \none[n_1]  & \none & \none[n_2] 
\end{ytableau} \,\, \begin{ytableau}
\none \\
\none[m_3\,\,] &  \\
\none &   \\
\none &   &   &   \\
\none & \none   & \none & \none[n_4]
\end{ytableau}
}{\begin{ytableau}
\none &  \\
\none &  \\
\none[m_3\,\,] &   \\
\none[m_1\,] & *(boxU)   \\
\none &  &*(boxL)  &   \\
\none[m_2\,\,] & *(boxU) & *(boxG) &*(boxU)   \\
\none &  &*(boxL)  &  & *(boxL)  &     \\
\none & \none  & \none[n_1] & \none  & \none[n_2] & \none[n_4] & \none 
\end{ytableau}}
\to
 \frac{\begin{ytableau}
\none  \\
\none   &   \\
\none   &    &  \\
\none  &\none   & \none 
\end{ytableau} \,\, \begin{ytableau}
\none[m_3\,\,] & *(boxU)    \\
\none &   & \none & \none[4]  \\
\none &   &   & *(boxL) \\
\none & \none   & \none & \none[n_4]
\end{ytableau}
}{\begin{ytableau}
\none &   \\
\none &  \\
\none[m_3\,\,] &*(boxU)  \\
\none &   &    \\ 
\none &   &   &  *(boxL)  \\
\none & \none  & \none  & \none[n_4] & \none 
\end{ytableau}}
\]
For this example we find the rule (which in this case is fixed by the above properties)
\ytableausetup{boxsize=1.1em}
\begin{equation}\label{eq:window21212211}
 \JackjLR_{W} = \frac{\begin{ytableau}
\none[m_1\,]  &*(boxU)    \\
\none   &*(boxS)  &*(boxL) \\
\none[r_1]  &*(boxL)  & *(boxL) \\
\none[m_2\,]   &*(boxU)  & *(boxG)  & *(boxU)  \\
\none    &*(boxS) &*(boxL)  &*(boxS) & *(boxL)   \\
\none  &\none & \none[n_1]  & \none & \none[n_2] 
\end{ytableau} \,\, \begin{ytableau}
\none[m_3\,\,] & *(boxU) \\
\none & *(boxS)  \\
\none[r_1] & *(boxL)  & \none & \none[]  \\
\none &  *(boxS)  & *(boxS) & *(boxL) \\
\none & \none   & \none & \none[n_4]
\end{ytableau}
}{\begin{ytableau}
\none & *(boxS) \\
\none & *(boxS)\\
\none[m_3\,\,] & *(boxU) \\
\none[m_1\,] & *(boxU)    \\
\none[r_1] & *(boxL) & *(boxL) & *(boxL) \\
\none & *(boxS) &*(boxL)  & *(boxS)  \\
\none[m_2\,\,] & *(boxU) & *(boxG) &*(boxU)  \\
\none & *(boxS) &*(boxL)  &*(boxS)  & *(boxL)  &  *(boxL)  \\
\none & \none  & \none[n_1] & \none  & \none[n_2] & \none[n_4] & \none 
\end{ytableau}} \circ
\frac{\begin{ytableau}
*(boxU)   \\
*(boxU)  &   *(boxU) 
\end{ytableau} \,\, \begin{ytableau}
*(boxU)   \\
*(boxU)    & *(boxU) 
\end{ytableau}
}{\begin{ytableau}
*(boxU)   \\
*(boxU)  \\
*(boxU)   & *(boxU)      \\
*(boxU)  & *(boxU)  
\end{ytableau}}=: \windfact_{W} \circ (\JackjLR_{21,21}^{2211} )_W
\end{equation} 


\end{example}

\subsection{Hook spaces}

To proceed, we introduce \emph{hook spaces}. These are essentially the space of Stanley sums modulo the kernel of the evaluation map for a given window family. 


\begin{example}
We return to the window family $W_{m,n}$ of \eqref{ex:window1112} for the triple $\{1,1,1^2\}$,
\begin{equation}
\frac{\begin{ytableau}
\none[m] &  \\
\none    & *(boxU) a \\
\none[\mu] & \none  & \none 
\end{ytableau} \times
\begin{ytableau}
\none   \\
\none & *(boxU) b &  \\
\none[\nu] & \none  & \none[n] 
\end{ytableau}
}{
\begin{ytableau}
\none & *(boxU) c \\
\none[m] &  \\
\none    & *(boxU) d &   \\
\none[\lambda] & \none  & \none[n]
\end{ytableau}}
\end{equation}
Here, we have the hook lengths
\begin{eqnarray*}
h_\lambda^\sU(d) = m+1 + (n+1)\al ,&  h_\mu^\sL(a) = m+1 + (0)\al, \\
h_\nu^\sU(b) = 0 + (n+1)\al,\,\qquad & h_\lambda^\sU(c) = \al.\qquad\qquad\quad
\end{eqnarray*}
We note the single relation holds for all $m,n$
\[ h_\lambda^\sU(d) - h_\mu^\sL(a)-h_\nu^\sU(b) = 0 \]
Thus the linear sum 
\begin{equation}\label{eq:simplerel}
 {\bf{r}}_1 := \bh_\lambda^\sU(d) - \bh_\mu^\sL(a)-\bh_\nu^\sU(b),
 \end{equation}
is in the kernel of the evaluation map.
\end{example}

The relation \eqref{eq:simplerel} captures the configuration of the family $W_{m,n}$ as imposed on the hook symbols of the root of the family. We introduce a formal variable $\bbeta$, which evaluates to $[\bbeta] = \al-1$.

\begin{definition} 
For a root triple $\{\mu,\nu,\lambda\}$ with a window family $\windowfamily$, define the \emph{\bf{hook space}} $\StDR_{\mu\nu,\lambda}(\windowfamily)$ as the intersection of the kernels of the evaluation map $ev_W$ for each window $W \in \windowfamily$ acting on homogeneous polynomials in the hook variables and $\beta$.
\[ \StDR_{\mu\nu,\lambda}(\windowfamily) = \bigcap_{W \in \windowfamily} \mathrm{Ker}\left( \mathrm{ev}_W : \Sym( \bh,\bbeta )  \to \BQ[\al] \right). \]

\end{definition}
By design, the evaluation map $\StS_{\mu\nu,\lambda} \to \mathbb{Z}[\al]$ factors through the hook space. Let $\iota^* : \StS_{\mu\nu,\lambda} \to \StDR_{\mu\nu,\lambda}(\windowfamily)$ be the pullback to the hook space.
\[
\begin{tikzcd}
\StS_{\mu\nu,\lambda} \arrow[r, "\iota^*"] \arrow[dr, "\mathrm{ev}"'] & \StDR_{\mu\nu,\lambda}(\windowfamily) \arrow[d, "\overline{\mathrm{ev}}"] \\
 & \BQ[\al]
\end{tikzcd}
\]

Since $h^\sU_\mu(b)-h^\sL_\mu(b) = \al-1$ for any box $b$, we always have the basic linear relations
\begin{equation}\label{betarelation}
\bh^\sU_{\sigma}(b)-\bh^\sL_{\sigma}(b)-\bbeta \in \StDR_{\mu\nu,\lambda}(\windowfamily).
\end{equation}
Note that if $\windowfamily' \subset \windowfamily$ is a smaller family of windows, then
\[ \StDR_{\mu\nu,\lambda}(\windowfamily) \twoheadrightarrow \StDR_{\mu\nu,\lambda}(\windowfamily') \twoheadrightarrow \StDR_{\mu\nu,\lambda}(\windowfamily_0)  \]
where $\windowfamily_0 := \{\mu\nu,\lambda\}$ is the family consisting of only the trivial window of the root triple. This is generated by the many relations between the usual hook lengths.

In general, there are many relations in the hook space. For example, for any two boxes $a<b$ such that $a\wedge b \in\sigma$, we have
\[ \bh^\sL_\sigma(a\vee b) - \bh^\sL_\sigma(a)-\bh^\sL_\sigma(b) + \bh^\sL_\sigma(a\wedge b) = 0.\]


\begin{example}
Returning to example \ref{ex:window1112} of $\{1,1,1^2\}$, where we have

\begin{equation}
\JackjLR_{1,1;1^2} = \begin{ytableau}
*(boxU)  \\
\end{ytableau}\,\,\begin{ytableau}
*(boxU)   \\
\end{ytableau} \,\, \begin{ytableau}
*(boxL) \\
*(boxL) 
\end{ytableau}  = \bh_\mu^\sU(a)\bh_\nu^\sU(b)\bh_\lambda^\sL(c)\bh_\lambda^\sL(d) \in \StD_{\mu\nu;\lambda} 
\end{equation}
The hook space is given by
\begin{equation}
\StDR_{1,1}^{1^2}(\windowfamily) = \{ \bh_\lambda^\sU(d),\bh_\lambda^\sU(c),\bh_\nu^\sU(b),\bh_\mu^\sU(a) \}[\beta]/ \langle \bh_\lambda^\sU(d) -\bh_\nu^\sU(b) -\bh_\mu^\sU(a)+ \bbeta \rangle
\end{equation}
so we have
\[ \JackjLR_{1,1;1^2} = \bh_\mu^\sU(a)\bh_\nu^\sU(b)(\bh_\lambda^\sU(c)-\beta)(\bh_\lambda^\sU(d)-\beta) \]

\end{example}

For the rest of this paper, we will be focusing on the following object.

\begin{definition}
The \emph{\bf{Jack Littlewood-Richardson Polynomial}} $\stanleypoly$ for a rule $\JackjLR_{\mu\nu}^{\lambda}$ that solves a window family $\windowfamily$ is the pullback of $\JackjLR_{\mu\nu;\lambda} := \bf{j}_\lambda \cdot \JackjLR_{\mu\nu}^{\lambda}$ to the hook space.
\[ \stanleypoly:= \iota^* \JackjLR_{\mu\nu;\lambda} \in \StDR_{\mu\nu}^{\lambda}(\windowfamily). \]
\end{definition} 

The objective of this paper is to demonstrate that the polynomial $\stanleypoly$ has properties that are not manifest in the Stanley sum $\JackjLR_{\mu\nu;\lambda}$.

\subsection{Particular window family}

The solution \eqref{eq:generalsolution} was computed for the following window family with root $\{21,21,321\}$.

\begin{example}
Consider the union of the two 7-parameter families of windows.
\ytableausetup{boxsize=1.1em}
\begin{equation}\label{eq:7paramwindowronly}
\windfact_{W_{12|3}} = \frac{\begin{ytableau}
\none[m_1\,]  &*(boxU)  & \none[1]  \\
\none   &*(boxS) a_1 &*(boxL) \\
\none[r_1]  &*(boxL)  & *(boxL) \\
\none[m_2\,]   &*(boxU)  & *(boxG)  & *(boxU) & \none[2] \\
\none    &*(boxS) a_2 &*(boxL)  &*(boxS)  a_3& *(boxL)   \\
\none[r_2] &*(boxL) & *(boxL)   &*(boxL)  & *(boxL)  \\
\none  &\none & \none[n_1]  & \none & \none[n_2]
\end{ytableau} \,\, \begin{ytableau}
\none \\
\none[m_3\,] & *(boxU) & \none[3] \\
\none & *(boxS) b_1  \\
\none[r_1] & *(boxL)  \\
\none[r_2] & *(boxL)  \\
\none &  *(boxS) b_2 & *(boxS) b_3  \\
\none & \none  & \none 
\end{ytableau}
}{\begin{ytableau}
\none & *(boxS) c_1\\
\none[m_3\,] & *(boxU) & \none[3] \\
\none[m_1\,] & *(boxU)  & \none[1]  \\
\none[r_1] & *(boxL) & *(boxL) & *(boxL) \\
\none & *(boxS) c_2 &*(boxL)  & *(boxS) c_3  \\
\none[m_2\,] & *(boxU) & *(boxG) &*(boxU) &\none[2]   \\
\none[r_2] & *(boxL)&*(boxL)  &*(boxL) & *(boxL)   & *(boxL)  \\
\none & *(boxS) c_4&*(boxL)  &*(boxS)  c_5 & *(boxL)  & *(boxS) c_6 \\
\none & \none  & \none[n_1] & \none  & \none[n_2] & \none
\end{ytableau}}\qquad
 \windfact_{W_{12|34}} = \frac{\begin{ytableau}
\none[m_1\,]  &*(boxU)  & \none[1]  \\
\none   &*(boxS) a_1 &*(boxL) \\
\none[r_1]  &*(boxL)  & *(boxL) \\
\none[m_2\,]   &*(boxU)  & *(boxG)  & *(boxU) & \none[2] \\
\none    &*(boxS) a_2 &*(boxL)  &*(boxS)  a_3& *(boxL)   \\
\none  &\none & \none[n_1]  & \none & \none[n_2] 
\end{ytableau} \,\, \begin{ytableau}
\none \\
\none[m_3\,\,] & *(boxU) & \none[3] \\
\none & *(boxS) b_1  \\
\none[r_1] & *(boxL)  & \none & \none[4]  \\
\none &  *(boxS) b_2  & *(boxS) b_3 & *(boxL) \\
\none & \none   & \none & \none[n_4]
\end{ytableau}
}{\begin{ytableau}
\none & *(boxS) c_1\\
\none[m_3\,\,] & *(boxU)& \none[3] \\
\none[m_1\,] & *(boxU)  & \none[1]  \\
\none[r_1] & *(boxL) & *(boxL) & *(boxL) \\
\none & *(boxS) c_2 &*(boxL)  & *(boxS) c_3  \\
\none[m_2\,\,] & *(boxU) & *(boxG) &*(boxU) &\none[2]&\none[4]   \\
\none & *(boxS) c_4&*(boxL)  &*(boxS)  c_5 & *(boxL)  &  *(boxL)  & *(boxS) c_6 \\
\none & \none  & \none[n_1] & \none  & \none[n_2] & \none[n_4] & \none 
\end{ytableau}}
\end{equation}

The hook lengths for the triple $\{21,21,321\}$ in the families \eqref{eq:7paramwindowronly} are given by (with $r_2n_4=0$): 
\begin{eqnarray}\label{lemma:hooklengths}
h_{a_1}^\sU: && m_1 + (n_1+1)\al  \nonumber \\
h_{a_2}^\sU: && m_1+m_2+r_1+1+(n_1+n_2+2)\al \nonumber\\
h_{a_3}^\sU: && m_2+(n_2+1)\al \nonumber\\
h_{b_1}^\sU: && m_3+\al \nonumber\\
h_{b_2}^\sU: && m_3+r_1+r_2+1+(n_4+2)\al \nonumber\\
h_{b_3}^\sU: && (n_4+1)\al \\
h_{c_1}^\sU: && \al \nonumber\\
h_{c_2}^\sU: && m_1+m_3+r_1+1+(n_1+2)\al \nonumber\\
h_{c_3}^\sU: && r_1+\al \nonumber\\
h_{c_4}^\sU: && m_1+m_2+m_3+r_1+r_2+2+(n_1+n_2+n_4+3)\al \nonumber\\
h_{c_5}^\sU: && m_2+r_1+r_2+1+(n_2+n_4+2)\al \nonumber\\
h_{c_6}^\sU: && r_2+\al. \nonumber
\end{eqnarray}
For the virtual hook $\tilde b_3$ motivated by \eqref{eq:tildeb3def}, we define
\[ h_{\tilde b_3}^\sU := \alpha^{-1} h_{b_3}^\sU h_{c_1}^\sL h_{c_6}^\sU \equiv r_2+ (n_4+1)\al   \mod r_2 n_4.\]
\end{example}

In particular, for our special rule $\bigstanleysum$ \eqref{eq:generalsolution} for this family \eqref{eq:7paramwindowronly}, we define $\bigstanleypoly:=\iota^* \bigstanleysum'$. We analyse the structure of this particular hook space by finding an abundance of four-term relations between the various hooks. 

\begin{example}
Consider the vertex $a_2$,  adjacent to $\{a_1,a_3, c_3\}$, we find
\[ -h^U_{a_2} + h^U_{a_1}+h^U_{a_3}+h^U_{c_3}  -\beta = 0. \]

\end{example}
We write $a\sim b$ for two boxes that are adjacent in the Petersen graph \eqref{diag:petersongraph}.
Let $x_b := (-1)^{\bmX_b/\sU}$, where $\bmX$ is given by \eqref{eq:Xdef}. Thus
\[ \bmX =\prod_b \bh_b^{\bmX_b} = \prod_b \bh_b^{x_b \sU} \]
\begin{proposition}
For each $b$ and each window in the family $W \in \windowfamily$ \eqref{eq:7paramwindowronly}, the relation $[\bmf_b]_W=0$ holds, where
\begin{equation}\label{eq:fourtermkernel}
\bmf_b := -x_b \bh_b^{\bmX_b} + \sum_{a:a\sim b} x_a \bh_a^{\bmX_a} -\beta.
\end{equation}
The ten relations $\bmf_b$ span $\StDR_{21,21;321}(\windowfamily)$, which is $5$-dimensional.
\end{proposition}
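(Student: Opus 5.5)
The plan is to reduce the statement to linear algebra over $\BQ[\al]$ by working with the eleven linear symbols: the ten $y_b := x_b\bh_b^{\bmX_b}$ for the Petersen vertices $b$ of \eqref{diag:petersongraph}, together with $\bbeta$. I read ``span $\StDR_{21,21;321}(\windowfamily)$, which is $5$-dimensional'' as a statement about the degree-one part of the hook space, namely the linear forms in these symbols killed by every $\mathrm{ev}_W$. The higher-degree part is then the ideal they generate.

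\textbf{Step 1 (evaluation table).} Using the list \eqref{lemma:hooklengths}, I would write down $[y_b]_W$ for every vertex as an affine function of $(m_1,m_2,m_3,r_1,r_2,n_1,n_2,n_4)$ with coefficients in $\BQ[\al]$.
\begin{itemize}
\item Since $\bmX$ carries $\sL$ only at $c_3$ among the ten surviving boxes, we have $y_b = \bh^\sU_b$ for all $b\neq c_3$.
\item At $c_3$ we have $y_{c_3} = -\bh^\sL_{c_3}$, which evaluates to $-(r_1+1)$.
\item At $\tilde b_3$ we use $h^\sU_{\tilde b_3}=r_2+(n_4+1)\al$. This is legitimate on the union of the two families because $r_2n_4=0$ holds on both: $W_{12|3}$ has $n_4=0$ and $W_{12|34}$ has $r_2=0$.
\end{itemize}

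\textbf{Step 2 (the ten relations hold).} For each vertex $b$ I would check $[-y_b+\sum_{a\sim b}y_a]_W=\al-1$ by direct substitution. This is ten short computations, each comparing coefficients of the eight parameters, of $1$, and of $\al$. The example at $a_2$ is the template. The only delicate cases are those involving $c_3$ (sign and $\sL$ shift) and $\tilde b_3$ (the $r_2n_4=0$ reduction), and I would do these separately for $W_{12|3}$ and $W_{12|34}$.

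\textbf{Step 3 (rank of the $\bmf_b$).} The coefficient matrix of $(\bmf_b)$ in the basis $(y,\bbeta)$ is $[\,A-I \mid -\mathbf 1\,]$, where $A$ is the Petersen adjacency matrix. The spectrum of $A$ is $3,1^{(5)},(-2)^{(4)}$, so $A-I$ has eigenvalues $2,0^{(5)},(-3)^{(4)}$ and rank $5$. The column $\mathbf 1$ is the eigenvalue-$2$ eigenvector of the symmetric matrix $A-I$, so it lies in the column space of $A-I$. Hence adjoining it does not raise the rank, and the $\bmf_b$ span a $5$-dimensional space of linear relations.

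\textbf{Step 4 (no further relations).} It remains to show that the common kernel of the $\mathrm{ev}_W$ on the $11$-dimensional space of linear forms has dimension at most $5$. Equivalently, the combined evaluation map has rank at least $6$. I would exhibit six symbols, say $\bbeta, y_{b_1}, y_{c_3}, y_{a_1}, y_{a_3}, y_{\tilde b_3}$, whose evaluations are linearly independent as affine functions of the parameters over $\BQ(\al)$. Their distinguishing features are:
\begin{itemize}
\item $\bbeta$ is the constant $\al-1$;
\item $y_{b_1}$ involves $m_3$;
\item $y_{c_3}$ involves $r_1$;
\item $y_{a_1}$ involves $n_1$;
\item $y_{a_3}$ involves $n_2$;
\item $y_{\tilde b_3}$ involves $r_2$ and $n_4$.
\end{itemize}
A triangular argument on these parameters gives independence. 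Combined with Step 3, the kernel is exactly the span of the $\bmf_b$.

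I expect the main obstacle to be the bookkeeping around $\tilde b_3$. Its evaluation is only defined modulo $r_2n_4$, so the ``hook space'' for the union of the two families must be interpreted consistently, with $r_2$ and $n_4$ never simultaneously nonzero. The relations at the neighbours of $\tilde b_3$ (namely $c_2$, $a_1$, $b_2$) have to be verified on each family separately.
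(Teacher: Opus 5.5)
\paragraph{The gap: Step 1 uses the wrong hook assignment, so Step 2 fails.}
You read $x_b$ and $\bh_b^{\bmX_b}$ off the fraction $\bmX$, so that $c_3$ is the only $\sL$. The relations, however, concern the hooks that occur as factors of the polynomial form $\bmX'=\mathbf{j}_\lambda\bmX$; the paper writes $\bmX=\prod_b\bh_b^{\bmX_b}$ as a product. Passing to $\bmX'$ flips every box of $\lambda$. The correct assignment is therefore:
\begin{itemize}
\item $y_{c_3}=+\bh^\sU_{c_3}\mapsto r_1+\al$;
\item $y_{c_j}=-\bh^\sL_{c_j}$ for $j=2,4,5$;
\item $y_b=\bh^\sU_b$ only on the six boxes of $\mu,\nu$, including $\tilde b_3$.
\end{itemize}
This matches the definition of $h^\sU_{\tilde b_3}$ through $h^\sL_{c_1}h^\sU_{c_6}$, which are flipped relative to the fraction $\bmX$. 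It also matches the paper's $a_2$ example, which uses $+h^\sU_{c_3}$. So the template you cite in Step 2 actually contradicts your Step 1.

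With your assignment the verification fails. At $a_2$ you get $[-y_{a_2}+y_{a_1}+y_{a_3}+y_{c_3}-\bbeta]_W=-(r_1+1)-(r_1+1)-(\al-1)=-2r_1-1-\al\neq 0$. More generally, your sign vector differs from the correct one by $-1$ exactly on $\{c_2,c_3,c_4,c_5\}=\claw(c_4)$. So the relation at $c_4$ is the only one of the ten that survives; at the other nine vertices (e.g.\ $b_2$, $c_2$) the check fails.

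\paragraph{The fix, and the rest of your argument.}
Correct Step 1 as above, so that $y_{c_2}\mapsto-(m_1+m_3+r_1+2+(n_1+1)\al)$, $y_{c_4}\mapsto-(m_1+m_2+m_3+r_1+r_2+3+(n_1+n_2+n_4+2)\al)$ and $y_{c_5}\mapsto-(m_2+r_1+r_2+2+(n_2+n_4+1)\al)$. Then all ten substitutions give exactly $\al-1$.

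Steps 3 and 4 go through unchanged. The spectral rank argument for $[A-I\mid-\mathbf 1]$, with $\mathbf 1$ in the column space, is a clean alternative to the paper's route. The paper instead eliminates $a_2,b_2,c_2,c_4,c_5$ in favour of $a_1,a_3,b_1,\tilde b_3,c_3$ and $\bbeta$. Your six independent evaluations are exactly those five free hooks plus $\bbeta$, and $y_{c_3}$ is still the only one of them involving $r_1$.
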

\begin{proof}
We verify this by direct calculation, referencing the table \eqref{lemma:hooklengths}. 

To show the spanning property, we use the claw relations to express the five hooks $\{a_2, b_2, c_2, c_4, c_5\}$ in terms of the five independent hooks $\{a_1, a_3, b_1, \tilde b_3, c_3\}$ and $\beta$. These five are clearly linearly independent. The relations at $\bmf_{a_2}$ and $\bmf_{b_2}$ immediately give
\begin{eqnarray*}
h^\sU_{a_2} &=& h^\sU_{a_1} + h^\sU_{a_3} + h^\sU_{c_3} - \beta,\\
h^\sU_{b_2} &=& h^\sU_{b_1} + h^\sU_{c_3} + h^\sU_{\tilde b_3} - \beta.
\end{eqnarray*}
Similarly, the remaining three are resolved from the relations:
\begin{eqnarray*}
h^\sU_{c_5} &=& h^\sU_{a_3} + h^\sU_{c_3} + h^\sU_{\tilde b_3} - \beta,\\
h^\sU_{c_4} &=& h^\sU_{a_1} + h^\sU_{a_3} + h^\sU_{b_1} + h^\sU_{c_3} + h^\sU_{\tilde b_3} - 2\beta,\\
h^\sU_{c_2} &=& h^\sU_{a_1} + h^\sU_{b_1} + h^\sU_{c_3} - \beta.
\end{eqnarray*}
Thus all ten hooks are determined by $\{h^\sU_{a_1}, h^\sU_{a_3}, h^\sU_{b_1}, h^\sU_{\tilde b_3}, h^\sU_{c_3}\}$ and $\beta$, so $\StDR_{21,21;321}(\windowfamily)$ is at most $5$-dimensional. Since the five independent hooks are unconstrained, the dimension is exactly $5$.
\end{proof}

The above result is a more concrete manifestation of the Petersen graph, as it captures the relations between the hooks.

\subsection{Summary}

The primary object of our attention is the particular Jack
Littlewood-Richardson polynomial that lives in the Hook Space,
\begin{equation}
\bigstanleypoly:=\iota^* \bigstanleysum \in \StDR_{21,21}^{321}(\windowfamily).
\end{equation}

\section{Symmetries}

We will also consider maps between hook rings. We start with automorphisms.

\begin{definition}\label{def:hookmap}
A \emph{\bf{hook automorphism}} $\psi \in \hkAut(\StDR)$
of a hook space $\StDR$ is a signed map between hook variables $\psi:\bh_b^\sA \mapsto \pm \bh_{b'}^{\sB}$ that preserves all kernel relations. Given the relations \eqref{betarelation}, for each $b\in \sigma, \sA \in \{\sU,\sL\}$ such a map must be of the form
\[ \rho : \bh^\sA_{\sigma}(b) \mapsto \bh^\sA_{\sigma'}(b') \]
or 
\[\rho :\bh^\sA_{\sigma}(b) \mapsto -\bh^{\bar\sA}_{\sigma'}(b'). \]

\end{definition}

\begin{lemma}
The action of $\Aut(\petersen) = S_5$ on boxes from Observation \ref{lemma:petersengraph} lifts to hook automorphisms given by
\begin{equation}
 g: \bh_b^\sA \mapsto x_{g.b/b} \bh_{g.b}^{x_{g.b/b}\sA},
\end{equation}
That is, 
\[ \Aut(\petersen) \subset \hkAut(\StDR_{21,21}^{321}(\windowfamily)).\]
\end{lemma}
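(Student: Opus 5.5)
The plan is to work in the normalized variables $y_b := x_b\,\bh_b^{\bmX_b}$. For $x_b=+1$ this is $\bh_b^{\sU}$. For $x_b=-1$ it is $-\bh_b^{\sL}$, which by \eqref{betarelation} equals $-\bh_b^{\sU}+\bbeta$. The map $g$ sends $\bh_b^{\bmX_b}\mapsto x_{g.b}x_b\,\bh_{g.b}^{x_{g.b}x_b\bmX_b}=x_{g.b}x_b\,\bh_{g.b}^{\bmX_{g.b}}$, using $x_{g.b}x_b\bmX_b=\bmX_{g.b}$ (with $\sU,\sL$ identified with $\pm$). Hence $g(y_b)=x_b\cdot x_{g.b}x_b\,\bh_{g.b}^{\bmX_{g.b}}=y_{g.b}$, so on the $\bmX$-hooks $g$ acts simply as the permutation $y_b\mapsto y_{g.b}$. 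On the opposite hooks $\bh_b^{\overline{\bmX}_b}$ the formula gives the analogous statement: $\bh_b^{\overline\bmX_b}$ goes to $x_{g.b}x_b\,\bh_{g.b}^{\overline\bmX_{g.b}}$. We take $g(\bbeta)=\bbeta$; this is forced once we check that $g$ respects the $\bbeta$-relations.

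The hook space is the span of the relations, so preserving all kernel relations means $g$ maps $\StDR_{21,21;321}(\windowfamily)$ into itself. By the Proposition, this space is spanned by the ten relations $\bmf_b$ together with the basic relations \eqref{betarelation}. (That the $\bmf_b$ span the space is stated there "modulo" \eqref{betarelation}, and I would include those explicitly.) It therefore suffices to check two things.

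First, $g$ maps each basic relation $\bh^\sU_b-\bh^\sL_b-\bbeta$ to a relation. If $x_{g.b}x_b=+1$, the image is $\bh^\sU_{g.b}-\bh^\sL_{g.b}-\bbeta$. If $x_{g.b}x_b=-1$, the image is $-\bh^\sL_{g.b}+\bh^\sU_{g.b}-\bbeta$, which is again the same basic relation. This is exactly the sign rule in Definition \ref{def:hookmap}.

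Second, $g(\bmf_b)=-y_{g.b}+\sum_{a\sim b}y_{g.a}-\bbeta$. Since $g\in\Aut(\petersen)$, the neighbours of $g.b$ are exactly $\{g.a : a\sim b\}$, so $g(\bmf_b)=\bmf_{g.b}$. Thus $g$ permutes the generating relations, and the hook space is preserved. Compatibility with the group law follows since $g\mapsto$ (permutation of the $y_b$) is a homomorphism; the cocycle $x_{g.b}/x_b$ is a coboundary, so it is automatically multiplicative.

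The one subtle point is the virtual box $\tilde b_3$. Its hook symbol stands for the triple $(b_3,c_1,c_6)$ with the tied assignment \eqref{eq:tildeb3def}. I would verify that $g$ acts on it via the ten-vertex picture, i.e. treating $\bh_{\tilde b_3}$ as a single variable of the hook space as the Proposition already does, and that no relation involving $c_1,c_6$ separately survives. I expect this to be the main obstacle: making precise that the hook space is generated in the ten variables $\{a_i,b_1,b_2,\tilde b_3,c_2,\dots,c_5\}$ and $\bbeta$. Beyond that, the argument reduces to the graph-automorphism check above and needs no case analysis over $S_5$.
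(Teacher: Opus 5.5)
Your proposal is correct and follows essentially the same route as the paper, whose proof consists of asserting that $g.\bmf_b=\bmf_{g.b}$, so the claw relations are permuted and the action descends to the hook space. Your normalized variables $y_b=x_b\bh_b^{\bmX_b}$ make this check uniform over all of $S_5$, since it then follows from $g$ preserving adjacency. Your explicit treatment of the relations \eqref{betarelation} and of the virtual box $\tilde b_3$ spells out what the paper leaves implicit.
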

\begin{proof}
We directly check that this $S_5$ action on $\bh_b$ permutes the relations \eqref{eq:fourtermkernel}, i.e. $g.\bmf_b = \bmf_{g.b}$, and thus descends to an action on the hook space.
\end{proof}

A set of generators of this action, each of which flip three or four pairs of boxes, are

\[ R_1 := 
\frac{\begin{ytableau}
a_1 \\
a_2 & a_3
\end{ytableau}\,\, \begin{ytableau}
\color{pGray}{b_1} \\
\color{pGray}{b_2} & \color{pGray}{b_2}
\end{ytableau} 
}{\begin{ytableau}
*(boxB)  \\
c_2   & \color{pGray}{c_2} \\
c_4 & c_5 & *(boxB) \nonumber\\
\end{ytableau}} 
\to 
\frac{\begin{ytableau}
*(boxS)\bar c_2 \\
*(boxS)\bar c_4 & *(boxS) \bar c_5
\end{ytableau}\,\, \begin{ytableau}
\color{pGray}{b_1} \\
\color{pGray}{b_2} & \color{pGray}{b_3}
\end{ytableau} 
}{\begin{ytableau}
*(boxB)  \\
*(boxS)\bar a_1   & \color{pGray}{c_2} \\
*(boxS)\bar a_2  & *(boxS)\bar a_3  & *(boxB) \nonumber\\
\end{ytableau}} 
\]

\[ R_2 := 
\frac{\begin{ytableau}
\color{pGray}{a_1} \\
\color{pGray}{a_2} & \color{pGray}{a_3}
\end{ytableau}\,\, \begin{ytableau}
b_1 \\
b_2 & b_3
\end{ytableau} 
}{\begin{ytableau}
*(boxB)  \\
c_2   & \color{pGray}{c_3} \\
c_4 & c_5 & *(boxB) \nonumber\\
\end{ytableau}} 
\to 
\frac{\begin{ytableau}
\color{pGray}{a_1} \\
\color{pGray}{a_2} & \color{pGray}{a_3}
\end{ytableau}\,\, \begin{ytableau}
*(boxS)\bar c_2 \\
*(boxS)\bar c_4 & *(boxS) \bar c_5
\end{ytableau} 
}{\begin{ytableau}
*(boxB)  \\
*(boxS)\bar b_1   & \color{pGray}{c_3} \\
*(boxS)\bar b_2  & *(boxS)\bar b_3  & *(boxB) \nonumber\\
\end{ytableau}} 
\]

\[ T := 
\frac{\begin{ytableau}
a_1 \\
\color{pGray}{a_2} & a_3
\end{ytableau}\,\, \begin{ytableau}
b_1 \\
\color{pGray}{b_2} & b_3
\end{ytableau} 
}{\begin{ytableau}
*(boxB)  \\
c_2   & \color{pGray}{c_3} \\
\color{pGray}{c_4} & c_5 & *(boxB) \nonumber\\
\end{ytableau}} 
\to 
\frac{\begin{ytableau}
a_3 \\
\color{pGray}{a_2} & a_1
\end{ytableau}\,\, \begin{ytableau}
b_3 \\
\color{pGray}{b_2} & b_1
\end{ytableau} 
}{\begin{ytableau}
*(boxB)  \\
c_5  & \color{pGray}{c_3} \\
\color{pGray}{c_4} & c_2 & *(boxB) \nonumber\\
\end{ytableau}} 
\]

\[ R_3 := 
\frac{\begin{ytableau}
a_1 \\
a_2 & \color{pGray}{a_3}
\end{ytableau}\,\, \begin{ytableau}
\color{pGray}{b_1} \\
b_2 & b_3
\end{ytableau} 
}{\begin{ytableau}
*(boxB)  \\
c_2   & c_3 \\
c_4   & c_5 & *(boxB) \nonumber\\
\end{ytableau}} 
\to 
\frac{\begin{ytableau}
b_3 \\
*(boxS)\bar c_2 & \color{pGray}{a_3}
\end{ytableau}\,\, \begin{ytableau}
\color{pGray}{b_1} \\
*(boxS) \bar c_5 & a_1
\end{ytableau} 
}{\begin{ytableau}
*(boxB)  \\
*(boxS)\bar a_2   & *(boxS)\bar c_4 \\
*(boxS)\bar c_3 & *(boxS)\bar b_2 & *(boxB) \nonumber\\
\end{ytableau}} 
\]

\begin{lemma}
The Stanley sum $\bigstanleysum'$ is invariant under this $S_5$ action, and thus so is the Jack LR polynomial $\bigstanleypoly:=\iota^* \bigstanleysum'$.
\end{lemma}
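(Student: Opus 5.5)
The plan is to use the decomposition of Observation \ref{lemma:petersengraph}:
\[ \bigstanleysum' = 7\bmX' - 2\sum_b (\bmX^{\{b\}})' + \sum_{\{a,b\}\in\Theta} (\bmX^{\{a,b\}})'. \]
By the previous lemma, the lifted action $g:\bh_b^\sA \mapsto x_{g.b/b}\,\bh_{g.b}^{x_{g.b/b}\sA}$ is a hook automorphism. So it is enough to show that each of the three pieces is carried to itself, up to an overall sign which must be $+1$.

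\textbf{Reference term.} First we check that $g.\bmX' = \bmX'$. We have $\bmX = \prod_b \bh_b^{x_b\sU}$ over the ten effective boxes, with the blacked-out boxes $c_1,c_6$ absorbed into the virtual box $\tilde b_3$. In the polynomial form $\bmX'$, the $\lambda$-boxes carry the opposite hook.

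Under $g$, the factor at $b$ becomes $x_{g.b/b}\,\bh_{g.b}^{x_{g.b/b}x_b\sU}$. Here $x_{g.b/b}$ is the relative sign $x_{g.b}x_b$, so the exponent is $x_{g.b}\sU$, which is exactly $\bmX$'s assignment at $g.b$. Since $g$ permutes the ten boxes bijectively, the product of factors is again $\bmX'$, up to the global sign $\prod_b x_{g.b/b}$.

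That sign is a product of $x_{g.b}x_b$ over a permutation, so it telescopes to $\prod_b x_b^2=1$. One subtlety must be checked: the numerator/denominator convention, under which $\lambda$-boxes appear in $\bmX'$ with $\overline{\bmX}_b$. I would absorb this into the definition of $x_b$ for $\lambda$-boxes and verify it on the generators $R_1,R_2,R_3,T$.

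\textbf{Flipped terms.} Next, flipping the hook at a set $J$ commutes with $g$, because $g$ acts on the hook choice by multiplication with a sign. Hence $g.(\bmX^J)' = (g.\bmX)^{g.J\,\prime} = (\bmX^{g.J})'$, with the same global sign $+1$ as before.

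It follows that the single-flip sum $\sum_b \bmX^{\{b\}}$ is permuted among its own terms. The double-flip sum over $\Theta$ is mapped to the sum over $g.\Theta$. Since $\Theta$ is the edge set of the Petersen graph and $g\in\Aut(\petersen)$, we have $g.\Theta=\Theta$. So all three pieces are invariant, and therefore $\bigstanleysum'$ is invariant. Invariance of $\bigstanleypoly=\iota^*\bigstanleysum'$ then follows because $g$ descends to the hook space (previous lemma) and commutes with $\iota^*$.

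\textbf{Main obstacle.} The delicate step is the sign and hook-type bookkeeping at the virtual box $\tilde b_3$ and at the $\lambda$-boxes. For $\tilde b_3$ we must confirm that the Observation $\bmD_{b_3}=\bmD_{c_1}=\overline\bmD_{c_6}$ is respected by the lifted action, so that moving $\tilde b_3$ to a genuine box is consistent. The cleanest route is to verify this explicitly on the four generators $R_1,R_2,R_3,T$: check that each sends $\bmX'$ to itself, each $\bmX^{\{b\}}$ to some $\bmX^{\{b'\}}$, and each $\bmX^{\{a,b\}}$ with $\{a,b\}\in\Theta$ to another such term. If this holds, the statement follows for all of $S_5$.
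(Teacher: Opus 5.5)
Your proposal is correct and follows the paper's proof. Both use the decomposition \eqref{eq:721sum}, show that $g.\bmX' = \bigl(\prod_b x_{g.b/b}\bigr)\bmX'$ with trivial sign, and observe that $g$ permutes the single-flip terms and, since $g.\Theta=\Theta$, the edge double-flip terms. Your telescoping argument $\prod_b x_{g.b}x_b=\bigl(\prod_b x_b\bigr)^2=1$ is a slightly cleaner, generator-free version of the paper's remark that each generator flips hooks in pairs.
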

\begin{proof}
We look at each of the terms in the expression \eqref{eq:721sum}. Firstly, we show that $\bmX$ is invariant under this action.

\[ g.\bmX = g.\prod_b \bh_b^{x_b \sU} = \prod_b x_{g.b/b}\bh_b^{x_{g.b} \sU} = \left( \prod_b x_{g.b/b} \right)\bmX   \]

As each generator of $S_5$ flips hooks in pairs, we get $g.\bmX = (-1)^{2k}\bmX$. Similarly, $g.\bmX^{\{b\}} = \bmX^{\{g.b\}}$.
\end{proof}

Thus we can write the expression \eqref{eq:721sum} as sum of three orbits of this $S_5$ action
\begin{equation}\label{eq:s5orbitsum}
 \bigstanleysum = 7\CO_{\emptyset} -2 \CO_{\bullet} + 1 \CO_{\gPathTwo}.
 \end{equation}
where the subscript indicates the subgraph structure of $\petersen$ that has flipped hooks (relative to $\bmX$). Thus this Petersen structure is manifest in the Stanley sum $ \bigstanleysum'$ \eqref{eq:721sum}. However, we will find that the Stanley \emph{polynomial} $\bigstanleypoly:=\iota^* \bigstanleysum'$ has a \emph{non-manifest enlargement} of this Automorphism group to $S_6 \times \BZ_2$, which is the Automorphism group of the Johnson graph $J(6,3)$. To demonstrate this, we need a change of basis in the hook space.

\subsection{$\ell$ basis}


We consider the change of variables that is more suited to the Petersen symmetry.

\begin{definition}
For a fixed Stanley diagram $\bmD$, we define the $\ell$ basis relative to $\bmD$ as \
\begin{equation}
\ell_b := (-1)^{\bmD_b/\sU} (\bh_b^\sU+\bh_b^\sL) = (-1)^{\bmD_b/\sU}  (2\bh_b^\sA-(-1)^{\sA/\sU}\beta).
\end{equation}
If not mentioned, then we take the reference diagram $\bmD$ to be all upper hooks, $\bmD_b = \sU, \forall b$.
\end{definition}

When considering the main triple $\{21,21,321\}$, we work relative to the diagram $\bmX$, defined in \eqref{eq:Xdef}. We can express the hook variables as
\begin{equation}\label{eq:varsell321}
 x_b \bh_b^\sA = \tfrac{1}{2}(\ell_b+x_b(-1)^{\sA/\sU}\beta).
 \end{equation}

\begin{corollary}
With the change of coordinates \eqref{eq:varsell321}, the action of $S_5$ on the $\ell$ variables is the \emph{permutation representation} $\petvertrep := \oplus_{b\in \petersen} \BC\{\ell_b\}$ of the Petersen graph, 
\[ g.\ell_b = \ell_{g.b}.\]
The four-term hook relations \eqref{eq:fourtermkernel} are expressed as the \emph{\bf{claw}} relations
\begin{equation}\label{def:lkernel}
\bmf_b := \ell_b - \sum_{a\sim b} \ell_a= (1\text{-}A)\ell_b,
\end{equation}
where $A$ is the adjacency matrix of the Petersen graph $\petersen$,
\[ A.\ell_b :=  \sum_{a:a\sim b} \ell_a. \]
\end{corollary}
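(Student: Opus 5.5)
The plan is to substitute the change of variables \eqref{eq:varsell321} directly into the four-term relations \eqref{eq:fourtermkernel} and into the lifted action of $\Aut(\petersen)$, and to check that all $\beta$ contributions cancel.

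\textbf{The claw relations.} By \eqref{eq:varsell321}, the term $x_b\bh_b^{\bmX_b}$ has $\sA=\bmX_b$. Since $x_b=(-1)^{\bmX_b/\sU}$, we get $x_b(-1)^{\bmX_b/\sU}=x_b^2=1$. Hence every term of the form $x_b\bh_b^{\bmX_b}$ equals $\tfrac12(\ell_b+\beta)$, independent of $b$. Substituting,
\[ \bmf_b = -\tfrac12(\ell_b+\beta) + \sum_{a\sim b}\tfrac12(\ell_a+\beta) - \beta .\]
The Petersen graph is $3$-regular, so the $\beta$ coefficient is $-\tfrac12+\tfrac32-1=0$. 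What remains is $-\tfrac12(\ell_b-\sum_{a\sim b}\ell_a)$. Up to the nonzero scalar $-\tfrac12$, which does not change the span of the relations in the hook space, this is $(1-A)\ell_b$, as claimed.

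\textbf{The permutation action.} Under $g$, each hook is sent to $\bh_b^\sA\mapsto \epsilon\,\bh_{g.b}^{\epsilon\sA}$ with $\epsilon=x_{g.b/b}=x_{g.b}x_b$. Here $\epsilon\sA$ means the same hook type when $\epsilon=1$ and the opposite type when $\epsilon=-1$. Then
\[ g.(\bh_b^\sU+\bh_b^\sL)=\epsilon(\bh_{g.b}^{\sU}+\bh_{g.b}^{\sL}) ,\]
because flipping types merely swaps the two summands. Since the reference diagram is $\bmX$, we have $\ell_b=x_b(\bh_b^\sU+\bh_b^\sL)$. Therefore
\[ g.\ell_b = x_b\,x_{g.b}x_b(\bh_{g.b}^\sU+\bh_{g.b}^\sL)=\ell_{g.b}, \]
using $x_b^2=1$. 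We also need $\beta$ to be fixed by $g$. This follows from the relation \eqref{betarelation}: it is mapped to $\epsilon(\bh^{\epsilon\sU}-\bh^{\epsilon\sL})-\beta$, which equals $\bh^\sU-\bh^\sL-\beta$ in either case. So the action on $\mathrm{span}\{\ell_b\}$ is exactly the permutation representation $\petvertrep$.

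\textbf{Main obstacle.} The one delicate point is sign bookkeeping. We need to confirm that the convention $x_{g.b/b}$ means $x_{g.b}x_b$, and that the $\ell$-basis sign $(-1)^{\bmD_b/\sU}$ matches $x_b$. Once these are fixed, the whole argument reduces to the identity $x^2=1$ together with $3$-regularity of $\petersen$.
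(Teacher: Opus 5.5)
Your proposal is correct and is the direct substitution the paper intends. The paper states this corollary without a separate proof, as an immediate consequence of \eqref{eq:varsell321}, the $3$-regularity of $\petersen$, and the preceding lemma on the lifted $S_5$ action with $x_{g.b/b}=x_{g.b}x_b$; your overall factor $-\tfrac12$ in front of $(1\text{-}A)\ell_b$ is harmless. One small wording fix: your check of \eqref{betarelation} already assumes $\beta\mapsto\beta$, so it is cleaner to note that $g$ sends $\bh_b^\sU-\bh_b^\sL$ to $\bh_{g.b}^\sU-\bh_{g.b}^\sL$ for either sign, which shows $\beta$ is fixed in the hook space.
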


Thus, a function $F$ on $\petersen$ is in the linear span of the claw relations \eqref{def:lkernel} if $F = (1\text{-}A)G$, for another function $G$.

Any graph automorphism group preserves edges and so naturally commutes with the adjacency matrix. The spectrum of $A$ is well known (c.f. \cite[\S 9.1]{godsil01}), and the action of $S_5$ on the $\ell$ basis commutes with the action of the adjacency matrix $A$, and decomposes as $\mathrm{Im}(1\text{-}A)\oplus \ker(1\text{-}A)$ 

\begin{equation}\label{eq:petersenrepdecomp}
\petvertrep  = \left( V^{(3)}_{\{5\}} \oplus V^{(-2)}_{\{4,1\}}\right) \oplus V^{(1)}_{\{3,2\}}
\end{equation}
Where $V^k_\lambda$ carries $A$-eigenvalue $k$ and $S_5$-representation corresponding to the partition $\lambda$.
Here $V^{(3)}_{\{5\}}$ is the one-dimensional representation generated by 
\begin{equation}
\sum_b \bmf_b = -2\sum_b \ell_b.
\end{equation}

\begin{corollary}
The hook space of the triple $\{21,21,321\}$ is 
\begin{equation}\label{eq:petersenhookspace}
 \StDR_{21,21}^{321} \cong \Sym^{10}(V^{(1)}_{\{3,2\}}[\bbeta]).
\end{equation}
\end{corollary}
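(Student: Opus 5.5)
The plan is to read the corollary as saying that the hook space (in the degree-10 homogeneous component relevant to $\bigstanleypoly$) is the symmetric algebra on the quotient of the linear hook variables by the linear relations. The four steps below establish this in order.

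First, pass to the $\ell$ basis \eqref{eq:varsell321}. It is an invertible linear change of variables (over $\BQ$) between $\{\bh_b^\sA\}\cup\{\bbeta\}$ and $\{\ell_b\}\cup\{\bbeta\}$. It absorbs the basic relations \eqref{betarelation}, since each $\bh_b^\sU,\bh_b^\sL$ is expressed through the single variable $\ell_b$ together with $\bbeta$. After this step, the polynomial ring in hook symbols modulo \eqref{betarelation} is $\Sym(\petvertrep\oplus\BC\bbeta)$.

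Second, identify the remaining linear relations. By the Proposition, the ten relations $\bmf_b$ span the linear kernel, and in the $\ell$ basis they are $(1\text{-}A)\ell_b$ by \eqref{def:lkernel}. Their span is therefore $\mathrm{Im}(1\text{-}A)$.

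Third, use the spectral decomposition \eqref{eq:petersenrepdecomp}. Since $A$ is symmetric, we have $\petvertrep=\mathrm{Im}(1\text{-}A)\oplus\ker(1\text{-}A)$, with $\mathrm{Im}(1\text{-}A)=V^{(3)}_{\{5\}}\oplus V^{(-2)}_{\{4,1\}}$. The quotient $\petvertrep/\mathrm{Im}(1\text{-}A)$ is then canonically, and $S_5$-equivariantly, isomorphic to $V^{(1)}_{\{3,2\}}$. This space is $5$-dimensional, since the eigenvalue-$1$ eigenspace of the Petersen graph has multiplicity $5$, which matches the dimension count in the Proposition.

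Fourth, pass from linear relations to the full kernel. Because the kernel is generated by linear forms, the quotient of a symmetric algebra by the ideal generated by a subspace $U$ is $\Sym(V/U)$. Taking the degree-$10$ piece, which is the degree of $\bmD'$ (ten boxes after blacking out $c_1,c_6$), yields \eqref{eq:petersenhookspace}.

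The main obstacle is this last step: justifying that the ideal $\bigcap_W \mathrm{Ker}(\mathrm{ev}_W)$ on polynomials is generated by the linear relations alone. I would argue it as follows. The evaluation $\mathrm{ev}_W$ factors through the linear map sending the five independent hooks $\{a_1,a_3,b_1,\tilde b_3,c_3\}$ and $\bbeta$ to affine-linear forms in the parameters $(m_i,r_i,n_i)$ and $\al$. As the window parameters range over the family, the images of these six variables are algebraically independent: for instance $m_1+(n_1+1)\al$, $m_2+(n_2+1)\al$, $m_3+\al$, $r_2+(n_4+1)\al$, $r_1+\al$ and $\al-1$ are independent on a Zariski-dense set of parameters, once the constraint $r_2n_4=0$ is handled by restricting to either branch. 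A polynomial that vanishes on all windows therefore vanishes identically in those six variables. Hence the polynomial kernel is exactly the ideal generated by the linear kernel, and the corollary follows.
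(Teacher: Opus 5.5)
Your proposal is correct and follows the paper's route: pass to the $\ell$ basis, recognise the claw relations $\bmf_b$ as $(1\text{-}A)\ell_b$, and use the spectral splitting $\petvertrep=\mathrm{Im}(1\text{-}A)\oplus\ker(1\text{-}A)$ to identify the linear quotient with $V^{(1)}_{\{3,2\}}$. Your fourth step shows that the full polynomial kernel is the ideal generated by these linear relations. Stated precisely, it says that with $n_1=n_2=n_4=0$ the evaluation points $(m_1+\al,m_2+\al,m_3+\al,r_2+\al,r_1+\al,\al-1)$ form a Zariski-dense subset of six-dimensional space, so only the zero polynomial vanishes on them. This is a justification the paper leaves implicit, since its Proposition only argues that the five free hooks are linearly unconstrained.
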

This simple algebraic description justifies the definition of the hook space. We remind the reader of the distinction between two polynomials,
\[ \bigstanleysum' \in \Sym^{10}(V_{\petersen}[\bbeta])^{S_5}, \qquad \bigstanleypoly \in \Sym^{10}(V^{(1)}_{\{3,2\}}[\bbeta])^{S_5} \]
We will demonstrate that $\bigstanleypoly$ has additional symmetries that are not shared with $\bigstanleysum'$.

\subsection{Conjugation Symmetry}
We've noted that the hook space $\StDR_{21,21}^{321}$ has a $S_5=\Aut(\petersen)$ symmetry. In addition to this, we observe all the relations \eqref{def:lkernel} are linear homogeneous in the $\ell$ variables.
\begin{corollary}
The hook space has the additional $\BZ_2$ conjugation symmetry 
\begin{equation}\label{eq:conjoperation}
 C: \ell_b \to -\ell_b.
\end{equation}
Thus, the hook automorphism group contains $S_5 \times \BZ_2 \subseteq \hkAut(\StDR_{21,21}^{321})$.
\end{corollary}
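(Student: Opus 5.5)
The plan is to check directly that $C:\ell_b\mapsto -\ell_b$, with $\bbeta$ fixed, is a hook automorphism in the sense of Definition \ref{def:hookmap}, that it commutes with the $S_5$ action, and that it is not already an element of $S_5$. Together these give $S_5\times\BZ_2\subseteq\hkAut(\StDR_{21,21}^{321})$.

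First I would translate $C$ back into hook variables using \eqref{eq:varsell321}. Since $x_b\bh_b^\sA=\tfrac12(\ell_b+x_b(-1)^{\sA/\sU}\bbeta)$, replacing $\ell_b$ by $-\ell_b$ gives
\[ \tfrac12(-\ell_b+x_b(-1)^{\sA/\sU}\bbeta)=-\tfrac12(\ell_b+x_b(-1)^{\bar\sA/\sU}\bbeta)=-x_b\bh_b^{\bar\sA}. \]
So $C:\bh_b^\sA\mapsto-\bh_b^{\bar\sA}$ for every box $b$. This is precisely the second admissible form in Definition \ref{def:hookmap}, a signed map with $b'=b$ and the hook choice flipped. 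It also preserves the basic relations \eqref{betarelation}:
\[ \bh^\sU-\bh^\sL-\bbeta\mapsto -\bh^\sL+\bh^\sU-\bbeta. \]

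Next I would check that $C$ preserves the remaining kernel relations. By the Proposition, these are spanned by the claw relations, which in the $\ell$ basis read $\bmf_b=(1\text{-}A)\ell_b$ by \eqref{def:lkernel}. They are linear and homogeneous in the $\ell$ variables, with no $\bbeta$ term. Hence $C\bmf_b=-\bmf_b$, so $C$ maps the span of the claw relations to itself. It therefore descends to the quotient, giving an automorphism of $\StDR_{21,21}^{321}$ that acts as $-1$ on $V^{(1)}_{\{3,2\}}$ and fixes $\bbeta$.

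Finally, the $S_5$ action is the permutation action $g.\ell_b=\ell_{g.b}$, and this commutes with the scalar $-1$, so $\langle S_5,C\rangle$ is a quotient of $S_5\times\BZ_2$. To see that it is the full direct product, I need $C\notin S_5$ as an automorphism of the hook space. Every element of $S_5$ fixes the nonzero invariant $\sum_b\ell_b$ up to relations, whereas $C$ negates it. Alternatively, $C$ acts as $-1$ on the irreducible $V^{(1)}_{\{3,2\}}$, while $S_5$ acts faithfully there, and $-1$ is central but not in the image of $S_5$, since $S_5$ has trivial center.

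The one step needing care is the sign bookkeeping: confirming that the $\bbeta$ term in \eqref{eq:varsell321} transforms so that $C$ is exactly a hook flip rather than something outside the allowed forms. Everything else is immediate from linear homogeneity of \eqref{def:lkernel}.
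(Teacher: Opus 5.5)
Your proposal is correct and is essentially the paper's argument: the claw relations \eqref{def:lkernel} are linear and homogeneous in the $\ell_b$, so $C$ preserves them. Your computation that $C$ acts as $\bh_b^\sA\mapsto-\bh_b^{\bar\sA}$ is exactly the paper's remark that flipping all boxes is a symmetry of \eqref{eq:fourtermkernel}.

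One correction: your first reason for $C\notin S_5$ fails. The sum $\sum_b\ell_b$ lies in the span of the claw relations (the paper notes $\sum_b\bmf_b=-2\sum_b\ell_b$), so it is zero in the hook space. Keep only your faithfulness and trivial-centre argument. Alternatively, note that $C$ fixes every box but flips every hook, which no element of $S_5$ does.
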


That is, flipping all boxes is a symmetry of the relations \eqref{eq:fourtermkernel}.
We will spend considerable effort proving the following theorem.
\begin{theorem}\label{thm:z2invs}
The Jack LR polynomial $\bigstanleypoly \in \StDR_{21,21}^{321}$ is invariant under this additional $\BZ_2$.
\end{theorem}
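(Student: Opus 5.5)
We have to show that $C\,\bigstanleypoly=\bigstanleypoly$, where $C:\ell_b\mapsto-\ell_b$. The plan is to rewrite $\bigstanleysum'$ entirely in the $\ell$ variables and $\bbeta$ using \eqref{eq:varsell321}. Since $C$ fixes $\bbeta$ and acts on the grading in $\ell$, the claim is equivalent to: every component of $\bigstanleypoly$ of odd degree in the $\ell$ variables vanishes in the hook space $\Sym(V^{(1)}_{\{3,2\}}[\bbeta])$.

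Each diagram in the support of \eqref{eq:721sum} is $\bmX^J$ with $|J|\le 2$. Its polynomial form is a product over the ten Petersen vertices, together with the blacked-out boxes handled through the virtual hook $\tilde b_3$ and bookkeeping factors. Each factor has the form $x_b\bh_b^{\sA}=\tfrac12(\ell_b\pm\bbeta)$, with the sign $+$ if $b\notin J$ and $-$ if $b\in J$, up to a global sign fixed by $\bmX$. Write $e_k$ for the $k$-th elementary symmetric polynomial in $\ell_1,\dots,\ell_{10}$, and $e_k^{(J)}$ for the analogous sums with the variables in $J$ removed. Expanding in powers of $\bbeta$, the orbit sums $\CO_\emptyset$, $\CO_\bullet$, $\CO_{\gPathTwo}$ become explicit $S_5$-invariant combinations of $e_k$, $\sum_b \ell_b e_{k}^{(b)}$, and $\sum_{a\sim b}\ell_a\ell_b e_k^{(ab)}$. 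Using \eqref{eq:s5orbitsum} we then collect the coefficient of $\bbeta^{10-k}$ for each odd $k$.

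For each odd $k$ we reduce that coefficient modulo the claw relations \eqref{def:lkernel}. Concretely, we work in the quotient where $\ell$ lies in $\ker(1-A)=V^{(1)}_{\{3,2\}}$, so that we may impose $\sum_{a\sim b}\ell_a=\ell_b$ for every $b$; in particular $\sum_b\ell_b=0$. The key identities come from the Kneser description $K(5,2)$. Write $\ell_{ij}$ for the variable at the vertex labelled by the 2-subset $\{i,j\}$. Then $V^{(1)}_{\{3,2\}}$ can be parametrised as the space of functions $\ell_{ij}$ satisfying $\sum_{j}\ell_{ij}=0$ for each $i$, which are the relations coming from $A\ell=\ell$ together with $\sum\ell=0$. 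This gives a five-dimensional parametrisation: either by $\{a_1,a_3,b_1,\tilde b_3,c_3\}$ as in the spanning proposition, or more symmetrically as the quotient of $\mathrm{Sym}^2$-type data on $[5]$. Substituting, each odd coefficient becomes an $S_5$-invariant polynomial of odd degree $k\le 9$ on $V_{\{3,2\}}$, and we must show it vanishes.

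The main obstacle is this vanishing step. My first approach is invariant-theoretic. I would compute the low-degree $S_5$-invariants of $V_{\{3,2\}}$ via Molien series to see which odd degrees admit invariants at all (for instance, degree $1$ has none since $V_{\{3,2\}}$ is nontrivial). For the degrees where invariants do exist (e.g.\ $3,5,\dots$), I would evaluate the coefficient on enough explicit points, parametrised by the five free hooks, to certify that it is zero against a basis of invariants. If that becomes unwieldy, the fallback is a direct computer-algebra expansion in the five free variables $\{h_{a_1},h_{a_3},h_{b_1},h_{\tilde b_3},h_{c_3}\}$ and $\bbeta$. In that approach we check that $\bigstanleypoly(\ell,\bbeta)-\bigstanleypoly(-\ell,\bbeta)$ expands to zero; the weights $7,-2,1$ should be exactly what makes the odd parts cancel. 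A structural explanation might come from a hidden $S_6$ acting on $V_{\{3,2\}}$ (via the outer automorphism of $S_6$, so that $V_{\{3,2\}}$ corresponds to a six-point representation, in line with $J(6,3)$) with $C$ realised as complementation. I would not rely on this; it would only be used to guide which checks suffice.
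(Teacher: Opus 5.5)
Your reduction is the same as the paper's. The claw relations $(1-A)\ell_b$ are linear and $\bbeta$-free, so $C$-invariance is equivalent to every odd-$\ell$-degree coefficient vanishing on $V^{(1)}_{\{3,2\}}=\{\ell:\sum_j\ell_{ij}=0\}$, which is exactly Theorem~\ref{thm:oddorbitsumvanish}.

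The routes differ in how that vanishing is certified. The paper passes through $K$-values to get the weights $w_J$ and groups terms into $S_5$-orbit sums $\orbitsum_k$. For each odd $k$ it writes $\orbitsum_k$ explicitly as a combination of kernel elements $Q(V,F)$, each an orbit monomial on $V$ times $\sum_b F_b\ell_b$ with $F\in\mathrm{Im}(1-A)$ supported off $V$, by solving small overdetermined linear systems. You instead expand the $26$ diagrams directly and test vanishing on the five-dimensional parametrisation, either by Molien counting plus point evaluation or by brute force. Both are finite verifications of a true identity, so your plan will succeed. The paper's version gives human-checkable certificates and a mechanism it reuses later (the same $Q(V,F)$ elements drive the $\sigma_{56}$ argument). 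Yours is mechanical, but it avoids the orbit, stabiliser and $[O\!:\!V]$ bookkeeping.

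Your framework can be pushed further by hand than you suggest. Direct expansion shows the $\bbeta^{10-k}$ coefficient is proportional to $(21-5k)e_k+2F_k$, where $F_k=\sum_{|J|=k}|E(J)|\,\ell_J=\sum_{a\sim b}\ell_a\ell_b\,e^{(ab)}_{k-2}$. The pieces you omitted, $\sum_{a\sim b}(\ell_a+\ell_b)e^{(ab)}_{k-1}$ and $\sum_{a\sim b}e^{(ab)}_k$, reduce to $e_k$ and $F_k$ by $3$-regularity. Using $p_1=0$ and $A\ell=\ell$, three degrees are immediate:
\begin{itemize}
\item For $k=1$ the coefficient is $16p_1=0$.
\item For $k=3$, $F_3=\sum_{a\sim b}\ell_a\ell_b(p_1-\ell_a-\ell_b)=-\sum_a\ell_a^2\sum_{b\sim a}\ell_b=-p_3$, so $2F_3=-2p_3$ cancels $6e_3=2p_3$.
\item For $k=9$, $2F_9=24e_9$ cancels $(21-45)e_9$.
\end{itemize}
So only $F_5=2e_5$ and $F_7=7e_7$ on $V^{(1)}_{\{3,2\}}$ need genuine computation. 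That is exactly where the paper's degree-$5$ and degree-$7$ certificates do their work.

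You are also right not to lean on the $S_6$ picture. The paper's $\sigma_{56}$-invariance proof assumes $S_5\times\BZ_2$-invariance and only examines even degrees. It therefore takes the $\BZ_2$-invariance as input, and using it here would be circular.
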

It is manifestly clear that $\bigstanleysum'$ does not have this symmetry, as $7\bmX \in \bigstanleysum$, however the coefficient of $\bar \bmX$ is zero.

We prove Theorem \eqref{thm:z2invs} via an equivalent statement \eqref{thm:oddorbitsumvanish} in a later section. First, we need to expand the Stanley sum $\bigstanleysum'$ in the $\ell$ basis.

\subsection{Orbit sums}

In this section we produce an expression of the form 
\begin{equation}
\bigstanleysum' = \sum_I \beta^{|I|} w_I \ell_{\allvertices/I},
\end{equation}
in terms of the monomials $\ell_J = \prod_{j\in J} \ell_j$, for $J$ a subset of the vertices of $\allvertices$.
In order to do this, we introduce some intermediate constructions.

\subsubsection{$K$-values}

It was noted in \cite{Mickler:2024stanley} that for a rule $\JackjLR_{\mu\nu}^{\lambda} = \sum_\bmD c_\bmD \, \bmD$, the sum of all the coefficients $c_\bmD$, henceforth denoted $K_\emptyset$, recovers the Schur Littlewood-Richardson coefficient. That is
\begin{eqnarray}
K_\emptyset(\JackjLR_{\mu\nu}^{\lambda}) &:=& \sum_{\bmD} c_{\bmD} \\
&=& c_{\mu\nu}^{\lambda}.
\end{eqnarray}
In the case of our Stanley sum \eqref{eq:721sum} we have
\[ K_\emptyset(\bigstanleysum) = 7(1)-2(10)+(15) = 2 = c_{21,21}^{321}. \]
We situate $K_\emptyset$ as the first term in a sequence of integers, computed via a linear transform.
\begin{definition} Fix a choice of a reference Stanley diagram $\bmB$. For each subset $I$ of the set of boxes $\allvertices$, define the $K$\emph{\bf{-values}} of a Stanley sum $\bmss = \sum_\bmD c_\bmD \, \bmD$ as
\begin{equation}\label{eq:ktransform} 
K_I^\bmB(\bmss) :=  \sum_{\bmD\,:\,\bmD_I = \bmB_I} c_\bmD.
\end{equation}
That is, sum all the coefficients $c_\bmD$ for those $\bmD$ that agree with the reference $\bmB$ on the subset $I$.
In this way, we map the integer coefficients $\{c_\bmD\}$ to the set of integer coefficients $\{K^\bmB_I\}$ indexed by the subsets of the set of boxes.
\end{definition}

We need to fix some notation before progressing. We use $\bar\bmB$ to denote the diagram with all flipped hooks of $\bmB$. Let $\bmB^J$ be the diagram with boxes $b\in J$ flipped relative to $\bmB$:
\begin{equation}\label{eq:flippeddiagram}
\bmB^J := \prod_{b \notin J} \bh_b^{\bmB_b} \prod_{b \in J} \bh_b^{\bar\bmB_b}.
\end{equation}
Note $\bar\bmB = \bmB^\allvertices$.  
Let $\bmB \wedge \bmD = \{ i \in \allvertices : \bmD_i = \bmB_i \}$ be the set of boxes where $\bmD$ agrees with $\bmB$. I.e. $ \bmB\wedge \bar\bmB^J  = J$.
Let $\bmB_I$ be just the subset hook choices associated to the subset of boxes $I \subseteq \allvertices$.
We introduce the sign notation
\begin{equation}
(-1)^{\bmA_{I}/\bmB_I} := (-1)^{|\{i \in I : \bmA_i \neq \bmB_i\}|}
\end{equation}
Recall the following basic result.
\begin{lemma}[M\"obius Inversion - c.f \protect{\cite[\S 3.8.3]{stanley1999enumerativev1}}]\label{lemma:mobinv}
Let $H$ be a finite set, and let $f, g : \mathcal{P}(H) \to \mathbb{R}$ be functions on the power set of $H$. If $g$ is the sum of $f$ over supersets:
\[ g(I) = \sum_{J \supseteq I} f(J), \]
then $f$ can be recovered from $g$ via the alternating sum:
\[ f(I) = \sum_{J \supseteq I} (-1)^{|J\setminus I|} g(J). \]
\end{lemma}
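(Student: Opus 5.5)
This is Möbius inversion on the Boolean lattice $\mathcal{P}(H)$, and I will prove it directly by substitution. Assuming $g(I)=\sum_{J\supseteq I} f(J)$ for all $I$, I substitute this expression for $g(J)$ into the right-hand side of the claimed formula:
\[
\sum_{J\supseteq I}(-1)^{|J\setminus I|}g(J)=\sum_{J\supseteq I}(-1)^{|J\setminus I|}\sum_{K\supseteq J}f(K).
\]
Then I exchange the order of summation. Since $H$ is finite, all sums are finite and the exchange is harmless. The result is
\[
\sum_{K\supseteq I} f(K)\sum_{I\subseteq J\subseteq K}(-1)^{|J\setminus I|}.
\]

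The key step is evaluating the inner sum. Subsets $J$ with $I\subseteq J\subseteq K$ correspond bijectively to subsets $S=J\setminus I$ of $K\setminus I$. Writing $n=|K\setminus I|$, the inner sum becomes
\[
\sum_{S\subseteq K\setminus I}(-1)^{|S|}=\sum_{k=0}^{n}\binom{n}{k}(-1)^k=(1-1)^n,
\]
which equals $1$ if $n=0$ and $0$ otherwise. Hence only $K=I$ survives, and the expression collapses to $f(I)$, as required.

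There is no real obstacle here. The only point needing care is the convention $0^0=1$ in the binomial identity, i.e. the case $K=I$, which I would state explicitly.

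Alternatively, I could phrase the argument in incidence-algebra language: the zeta function $\zeta(J,K)=1$ of the Boolean lattice has inverse $\mu(J,K)=(-1)^{|K\setminus J|}$, as in \cite[\S 3.8.3]{stanley1999enumerativev1}. The direct computation above is self-contained, however, so I would present that.
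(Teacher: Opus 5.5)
Your proof is correct. Substituting, swapping the finite sums, and using $\sum_{S\subseteq K\setminus I}(-1)^{|S|}=(1-1)^{|K\setminus I|}$, which is $1$ if $K=I$ and $0$ otherwise, gives exactly $f(I)$. The case $K=I$ needs no $0^0$ convention, since the empty set is the only subset of $\emptyset$ and contributes $1$.

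The paper gives no proof of its own and defers to Stanley. There the statement is the dual form of Möbius inversion on a locally finite poset, combined with the Boolean-lattice computation $\mu(S,T)=(-1)^{|T\setminus S|}$. That computation comes from $\mathcal{P}(H)\cong\mathbf{2}^{|H|}$ and the product theorem for Möbius functions.

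Your route is the elementary, self-contained version of the same fact. The binomial-theorem step is exactly the check that $(-1)^{|T\setminus S|}$ inverts the zeta function of $\mathcal{P}(H)$, so you avoid the incidence-algebra machinery. Your argument also shows that the superset-sum map has a left inverse on a finite-dimensional space, hence is bijective. That gives the ``uniqueness of M\"obius inversion'' the paper later invokes in the proof of Theorem~\ref{thm:kernelidentity}. The cited approach instead places the lemma inside the general poset framework.
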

We use this to show
\begin{proposition}
For each Stanley diagram $\bmB$, the values 
\[K^\bmB : \{c_\bmD : \bmD \in \mathcal{H} \} \to \{K^\bmB_{I} : I \subseteq \allvertices \}\] is invertible. 
The Inverse map is given by
\begin{equation}\label{eq:inversektransform} 
c_{\bmD} = \sum_{ J \supseteq \bmB \wedge \bmD } (-1)^{|J| - |\bmB \wedge \bmD|} K_{J}^{\bmB}
\end{equation}
\end{proposition}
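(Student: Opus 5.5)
The plan is to recast the $K$-transform as a sum over supersets and then apply Lemma \ref{lemma:mobinv} directly. Fix the reference diagram $\bmB$. Every Stanley diagram is of the form $\bmD = \bmB^{L}$ for a unique subset $L \subseteq \allvertices$, namely the set of boxes where $\bmD$ disagrees with $\bmB$. Put $S := \bmB\wedge\bmD = \allvertices\setminus L$ for the agreement set, so that $\bmD \mapsto \bmB\wedge\bmD$ is a bijection between the Stanley diagrams on the boxes $\allvertices$ and $\mathcal{P}(\allvertices)$. Define $f:\mathcal{P}(\allvertices)\to\BZ$ by $f(S) := c_{\bmD}$, where $\bmD$ is the unique diagram with $\bmB\wedge\bmD = S$.

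Next I rewrite definition \eqref{eq:ktransform}. The condition $\bmD_I=\bmB_I$ says exactly that $I \subseteq \bmB\wedge\bmD$. Reindexing the sum by $J = \bmB\wedge\bmD$ gives
\[ K^{\bmB}_I = \sum_{J\supseteq I} f(J), \]
so $g := K^{\bmB}$ is the superset-sum of $f$, which is precisely the hypothesis of Lemma \ref{lemma:mobinv} with $H=\allvertices$.

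Lemma \ref{lemma:mobinv} then gives $f(I) = \sum_{J\supseteq I}(-1)^{|J\setminus I|}K^{\bmB}_J$. Substituting $I = \bmB\wedge\bmD$ and using $|J\setminus I| = |J|-|I|$ for $J\supseteq I$ yields \eqref{eq:inversektransform}. For invertibility, the forward map is linear from $\BZ^{\mathcal{P}(\allvertices)}$ (coefficients indexed by diagrams, via the bijection) to $\BZ^{\mathcal{P}(\allvertices)}$, and the formula just derived is a two-sided inverse. Composing in the other order is the dual Möbius identity: substitute the inverse formula into the superset sum and use $\sum_{I\subseteq T\subseteq J}(-1)^{|J|-|T|}=\delta_{I,J}$. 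Both maps have integer entries, so the transform is invertible over $\BZ$.

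There is no real obstacle here. The only care needed is the bookkeeping of the bijection between diagrams and agreement sets, in particular that the condition ``agree on $I$'' translates into containment in the agreement set rather than the disagreement set. With that set up correctly, the sign exponent comes out as $|J|-|\bmB\wedge\bmD|$ as stated.
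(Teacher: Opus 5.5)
Your proposal is correct and follows essentially the paper's own proof: both index diagrams by their agreement set $\bmB\wedge\bmD$ (the paper writes the diagram with agreement set $J$ as $\bar\bmB^J$), observe that $K^\bmB_I$ is the superset sum of the coefficients, and apply Lemma~\ref{lemma:mobinv}. Your explicit check of the other composition via $\sum_{I\subseteq T\subseteq J}(-1)^{|J|-|T|}=\delta_{I,J}$, which establishes the two-sided inverse, is a small addition that the paper leaves implicit.
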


\begin{proof}
We have
\[ K_I^\bmB = \sum_{\bmD : \bmB \wedge \bmD \supseteq I} c_\bmD = \sum_{J:J \supseteq I} c_{\bar\bmB^J}, \] 
Applying Lemma \eqref{lemma:mobinv} with $g(I) = K_I^\bmB$ and $f(J) = c_{\bar\bmB^J}$ yields the result. I.e.
\[ c_{\bar\bmB^I} =  \sum_{J \supseteq I} (-1)^{|J\setminus I|}K^B_J.\]
\end{proof}
The dependence of $K_I$ on $\bmB$ is expressed in the following Lemma. 
\begin{lemma}\label{lemma:deponB}
Let $\bmA^b$ be equal to $\bmA$ but with a single hook at $b$ flipped, then
\begin{equation}  K_{I}^{\bmA^b} = \begin{cases}
- K_{I}^{\bmA} + K_{I\setminus b}^{\bmA}, \quad \text{ for } b \in I \\
 K_{I}^{\bmA}, \quad\quad\quad\quad\quad \text{ for } b \notin I \\
\end{cases} \end{equation}
Thus, the values $\{K_{J}^{\bmA}\}$ can be determined from $\{K_{I}^{\bmB}\}$ by the formula
\begin{equation}\label{eq:Kinversion}
K_{J}^{\bmA} =\sum_{I \subseteq J \,: \,J\setminus I \subseteq \bmA/\bmB}   (-1)^{\bmA_{I}/\bmB_{I}} K_{I}^{\bmB},
\end{equation}
where
\[ \bmA/\bmB := \{i: \bmA_i \neq \bmB_i \}.\]
\end{lemma}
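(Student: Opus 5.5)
The plan is to prove the single-flip formula directly from the definition \eqref{eq:ktransform}, and then obtain \eqref{eq:Kinversion} by an inclusion--exclusion expansion. Iterating the single-flip rule one box at a time would give the same result.

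For the first claim, fix $b$ and $I$. If $b\notin I$, then $(\bmA^b)_I=\bmA_I$, so the defining sums for $K_I^{\bmA^b}$ and $K_I^{\bmA}$ run over the same diagrams and agree. If $b\in I$, I would start from $K^{\bmA}_{I\setminus b}$, which sums $c_\bmD$ over all $\bmD$ agreeing with $\bmA$ on $I\setminus b$. These diagrams split according to the single value $\bmD_b\in\{\sU,\sL\}$. Those with $\bmD_b=\bmA_b$ contribute exactly $K_I^{\bmA}$. Those with $\bmD_b=\bar\bmA_b=(\bmA^b)_b$ contribute exactly $K_I^{\bmA^b}$, since $\bmA^b$ agrees with $\bmA$ on $I\setminus b$. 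Hence $K^{\bmA}_{I\setminus b}=K^{\bmA}_I+K^{\bmA^b}_I$, which rearranges to the stated formula.

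For \eqref{eq:Kinversion}, I would write $K_J^{\bmA}=\sum_\bmD c_\bmD\prod_{j\in J}[\bmD_j=\bmA_j]$ and split $J$ into $J_0=J\setminus(\bmA/\bmB)$ and $J_1=J\cap(\bmA/\bmB)$. For $j\in J_0$ we have $\bmA_j=\bmB_j$, so the indicator is $[\bmD_j=\bmB_j]$. For $j\in J_1$, each box carries only two hook choices, so $[\bmD_j=\bmA_j]=1-[\bmD_j=\bmB_j]$. Expanding the product over $J_1$ gives a sum over subsets $S\subseteq J_1$ of $(-1)^{|S|}\prod_{j\in J_0\cup S}[\bmD_j=\bmB_j]$. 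Summing against $c_\bmD$, each term yields $(-1)^{|S|}K^{\bmB}_{J_0\cup S}$.

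Finally I reindex by $I=J_0\cup S$. As $S$ ranges over subsets of $J_1$, the set $I$ ranges exactly over the subsets $I\subseteq J$ with $J\setminus I\subseteq \bmA/\bmB$. Moreover, $|S|=|I\cap(\bmA/\bmB)|$, which is precisely the exponent in $(-1)^{\bmA_I/\bmB_I}$. This gives \eqref{eq:Kinversion}.

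There is no real obstacle here. The only point needing care is the bookkeeping in this last step: checking that the reindexing is a bijection and that the sign matches the paper's notation. As a consistency check, taking $\bmA=\bmB^b$ recovers the single-flip formula.
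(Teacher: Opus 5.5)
Your proof is correct. The single-flip part is exactly the paper's argument: split $K^{\bmA}_{I\setminus b}$ according to the value of $\bmD_b$, so that $K^{\bmA}_{I\setminus b}=K^{\bmA}_I+K^{\bmA^b}_I$.

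For the general formula the paper only says it "follows from multiple single flips." Your expansion of $\prod_{j\in J\cap(\bmA/\bmB)}\bigl(1-[\bmD_j=\bmB_j]\bigr)$ is the same idea carried out in one step. It has the advantage of making explicit both the index set $\{I\subseteq J : J\setminus I\subseteq \bmA/\bmB\}$ and the sign $(-1)^{|I\cap(\bmA/\bmB)|}$, which the paper leaves to the reader.
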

\begin{proof}
Then by using the definition \eqref{eq:ktransform}, for $I$ with $b \in I$, we have
\[ K_{I\setminus b}^{\bmA} = K_{I}^{\bmA} + K_{I}^{\bmA^b} \]
So
\[ K_{I}^{\bmA^b} = - K_{I}^{\bmA} + K_{I\setminus b}^{\bmA} \]
However, for $I$ with $b \notin I$, we have
\[ K_{I}^{\bmA^b} = K_{I}^{\bmA} \]
The result follows from multiple single flips.
\end{proof}

\begin{corollary}\label{cor:inversek}
A Stanley sum $\bmss$ can be expressed in terms of its $K$-values as
\begin{eqnarray}\label{eq:inversek}
\bmss &=& \sum_{\bmD} \left( \sum_{ I \supseteq \bmB \wedge \bmD } (-1)^{|I| - |\bmB \wedge \bmD|} K_{I}^{\bmB}(\bmss) \right) \bmD\\
&=& \sum_{J \subseteq \allvertices}\left( \sum_{I\supseteq J}(-1)^{|I\setminus J|}  K_{I}^{\bmB}(\bmss) \right) \bar \bmB^J.
\end{eqnarray}
\end{corollary}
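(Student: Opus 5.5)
The statement to prove is Corollary \ref{cor:inversek}. It is a direct consequence of the invertibility formula \eqref{eq:inversektransform}. I would first substitute \eqref{eq:inversektransform} into $\bmss = \sum_\bmD c_\bmD \bmD$. This gives the first line immediately, because the coefficient of each diagram $\bmD$ is replaced by its expression in terms of the $K^\bmB_I(\bmss)$, summed over $I \supseteq \bmB\wedge\bmD$.

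For the second line I would reindex the outer sum over diagrams by subsets. The map $J \mapsto \bar\bmB^J$ is a bijection from $\mathcal{P}(\allvertices)$ onto the Stanley diagrams. Its inverse is $\bmD \mapsto \bmB\wedge\bmD$, since by definition $\bmB\wedge\bar\bmB^J = J$: the diagram $\bar\bmB^J$ flips $\bar\bmB$ exactly on $J$, so it agrees with $\bmB$ precisely on $J$. Setting $J = \bmB\wedge\bmD$, the exponent $|I|-|\bmB\wedge\bmD|$ becomes $|I\setminus J|$, because $J\subseteq I$. This yields
\[ \bmss = \sum_{J\subseteq\allvertices}\Big(\sum_{I\supseteq J}(-1)^{|I\setminus J|}K^\bmB_I(\bmss)\Big)\bar\bmB^J .\]

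The only point requiring care is the convention in \eqref{eq:flippeddiagram}. We must check that $\bar\bmB^J$ has the hook choice $\bmB_b$ for $b\in J$ and $\bar\bmB_b$ for $b\notin J$, so that the identity $\bmB\wedge\bar\bmB^J=J$ holds. This is also the form used in the proof of the preceding proposition, $c_{\bar\bmB^I}=\sum_{J\supseteq I}(-1)^{|J\setminus I|}K^\bmB_J$. So the second line is simply that formula, multiplied by $\bar\bmB^I$ and summed over $I$ after renaming indices. Beyond this bookkeeping there is no real obstacle.
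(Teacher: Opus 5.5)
Your proposal is correct and is essentially the paper's own argument: substitute the inversion formula \eqref{eq:inversektransform} into $\bmss=\sum_{\bmD}c_{\bmD}\bmD$, then reindex diagrams by $J=\bmB\wedge\bmD$ through the bijection $J\mapsto\bar\bmB^J$, exactly as in the proof of the preceding proposition. Your check of the convention also goes through: $(\bar\bmB)^J$ and $\overline{\bmB^J}$ are the same diagram, which agrees with $\bmB$ precisely on $J$, so the notation $\bar\bmB^J$ is unambiguous.
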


\subsubsection{Kernel functions}

\begin{theorem}[Kernel functions]\label{thm:kernelidentity}
Fix a Stanley diagram $\bmB$ and a linear polynomial in hook variables and $\beta$,
\[ \bmf := \sum_{b \in Y} a_b \bh_b^{\bmB_b} + d\,\beta, \qquad Y \subseteq \allvertices. \]
Set $\sigma_b := (-1)^{\bmB_b/\sU}$ ($+1$ for $\bmB_b=\sU$, $-1$ for $\bmB_b=\sL$), encoding the $\beta$-shift $\bh_b^{\bar\bmB_b} = \bh_b^{\bmB_b} + \sigma_b\beta$, and define the \emph{kernel function} as the $Y$-supported Stanley sum
\begin{equation}\label{eq:kernelsumformula}
 \bmk_\bmf^\bmB \;:=\; \sum_{L\subseteq Y} c_L\,\bmB^L, \qquad c_L := (-1)^{|L|}\Bigl(d - \sum_{b\in Y\setminus L}\sigma_b a_b\Bigr).
\end{equation}
Then, the following relation holds as polynomials in the variables $\bh,\beta$:
\begin{equation}\label{eq:kernelidentity}
\bmk_\bmf^\bmB = (-1)^{|Y|}\,\sigma_Y\,\bh_{\bar Y}^\bmB \cdot \beta^{|Y|-1} \cdot \bmf,
\end{equation}
where $\sigma_Y = \prod_{b\in Y}\sigma_b$ and $\bh_{\bar Y}^\bmB = \prod_{b\in\bar Y}\bh_b^{\bmB_b}$.

In particular, if $[\bmf] = 0$ in the evaluation ring, then $[\bmk_\bmf^\bmB]=0$ as well.
\end{theorem}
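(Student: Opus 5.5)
The plan is to prove \eqref{eq:kernelidentity} by expanding every diagram $\bmB^L$ in the variables $u_b := \bh_b^{\bmB_b}$ and $\beta$, and then summing over $L$ with a product-expansion argument, like a binomial expansion. The only input from earlier is the $\beta$-shift $\bh_b^{\bar\bmB_b} = u_b + \sigma_b\beta$, which comes from \eqref{betarelation} with the sign $\sigma_b$ as in the statement. Using it, for $L\subseteq Y$,
\[ \bmB^L = \bh^\bmB_{\bar Y}\,\prod_{b\in Y\setminus L} u_b \prod_{b\in L}(u_b+\sigma_b\beta). \]
The boxes outside $Y$ are never flipped, so the common factor $\bh^\bmB_{\bar Y}$ can be pulled out of $\bmk_\bmf^\bmB$. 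It then suffices to prove
\[ \sum_{L\subseteq Y} c_L \prod_{b\in Y\setminus L}u_b\prod_{b\in L}(u_b+\sigma_b\beta) = (-1)^{|Y|}\sigma_Y\beta^{|Y|-1}\Big(\sum_{b\in Y}a_bu_b + d\beta\Big). \]

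I will split $c_L$ into its two parts, $(-1)^{|L|}d$ and $-(-1)^{|L|}\sum_{b\in Y\setminus L}\sigma_ba_b$, and treat each separately.

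For the $d$-part, the sum factorises over the boxes of $Y$:
\[ d\prod_{b\in Y}\bigl(u_b-(u_b+\sigma_b\beta)\bigr) = d\,(-1)^{|Y|}\sigma_Y\beta^{|Y|}. \]
This is exactly the $d\beta$ contribution on the right-hand side.

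For the $a$-part, I will interchange the two sums. For a fixed $b\in Y$, the contribution runs over all $L\subseteq Y\setminus\{b\}$. In this range box $b$ always contributes the factor $u_b$, and each other box $c$ contributes $u_c-(u_c+\sigma_c\beta) = -\sigma_c\beta$. The $b$-term therefore equals
\[ -\sigma_ba_bu_b\prod_{c\in Y\setminus b}(-\sigma_c\beta) = (-1)^{|Y|}\sigma_Y\beta^{|Y|-1}a_bu_b, \]
using $\sigma_b^2=1$. Summing over $b\in Y$ gives the remaining terms of the right-hand side, and multiplying back by $\bh^\bmB_{\bar Y}$ gives \eqref{eq:kernelidentity}.

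The final claim follows because the evaluation map is a ring homomorphism, extended linearly to Stanley sums. Hence $[\bmk_\bmf^\bmB] = (-1)^{|Y|}\sigma_Y[\bh^\bmB_{\bar Y}][\beta]^{|Y|-1}[\bmf]$, which vanishes whenever $[\bmf]=0$.

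There is no real obstacle here. The only point that needs care is the bookkeeping of signs, namely $(-1)^{|L|}$ against $(-1)^{|Y|-1}$ and the use of $\sigma_b^2=1$, in the interchange-of-sums step. I would double-check it on the case $|Y|=1$, where $c_\emptyset = d-\sigma_ba_b$ and $c_{\{b\}} = -d$ combine directly to $-\sigma_b(a_bu_b+d\beta)$.
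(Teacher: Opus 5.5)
Your proof is correct, and it takes a different and more direct route than the paper. Both arguments pull out $\bh^\bmB_{\bar Y}$ and expand each $\bmB^L$ using the $\beta$-shift; they differ after that.

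\textbf{The paper's route.} It expands the products fully into monomials $\beta^{|T|}\sigma_T\prod_{b\in Y\setminus T}\bh_b^{\bmB_b}$ and swaps the order of summation. The coefficient of each monomial is then the superset sum $\tilde c^{(T)}=\sum_{L\supseteq T}c_L$. The paper asserts a closed form for $\tilde c^{(T)}$: it is zero for $|T|\le|Y|-2$, equals $(-1)^{|Y|}\sigma_b a_b$ for $T=Y\setminus\{b\}$, and equals $(-1)^{|Y|}d$ for $T=Y$. It certifies this by checking that its M\"obius inverse reproduces $c_L$.

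\textbf{Your route.} You use that $c_L$ is linear in $(d,a_b)$. Each resulting piece is an alternating sum over a Boolean lattice with multiplicative weights, so it factors into a product of terms $u_c-(u_c+\sigma_c\beta)=-\sigma_c\beta$. This computes the answer rather than verifying an asserted one. It also makes the vanishing of all lower $\beta$-powers immediate, since every factor is a pure multiple of $\beta$.

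\textbf{Comparison.} The paper's version fits the M\"obius/$K$-value bookkeeping used throughout that section, such as \eqref{eq:kernelKvalues} and the weights $w_J$. It also lists explicitly which superset sums survive. Yours is shorter and more transparent.

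One caution on signs. \eqref{betarelation} says $\bh^\sU-\bh^\sL=\beta$, so for $\bmB_b=\sU$ the shift is $\bh_b^{\bar\bmB_b}=\bh_b^{\bmB_b}-\beta$. The shift convention therefore forces $\sigma_b=-1$ for $\sU$, contrary to the parenthetical in the statement. Your remark that the shift ``comes from \eqref{betarelation} with the sign $\sigma_b$ as in the statement'' is therefore not literally accurate. Your argument, like the paper's, uses only the shift relation, so you should say explicitly that $\sigma_b$ is \emph{defined} by it. With the parenthetical sign instead, your own $|Y|=1$ check would give $-\sigma_b(a_bu_b-d\beta)$, which is not a multiple of $\bmf$. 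The shift-defined convention is also the one that makes the paper's later claw computation ($\sigma_b a_b=+1$, $\sigma_a a_a=-1$, $d=-1$) come out correctly.
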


Its $K$-values follow from the definition: since $K^\bmB_I(\bmB^L) = [I\cap L = \emptyset]$, only $L\subseteq Y\setminus I$ contribute, and the resulting alternating sum vanishes for $|Y\setminus I|\ge 2$. Concretely,
\begin{equation}\label{eq:kernelKvalues}
 K^\bmB_I(\bmk_\bmf^\bmB) = \begin{cases} d - \sum_{b\in Y}\sigma_b a_b & I\supseteq Y,\\ -\sigma_b a_b & I\cap Y = Y\setminus\{b\}\ (b\in Y),\\ 0 & \text{otherwise}.\end{cases}
\end{equation}

\begin{proof}[Proof of Theorem~\ref{thm:kernelidentity}]
Write $\bmk := \bmk_\bmf^\bmB$.

Using $\bh_b^{\bar\bmB_b} = \bh_b^{\bmB_b} + \sigma_b\beta$, each diagram $\bmB^L$ ($L\subseteq Y$) factors as
\[ \bmB^{L} = \bh_{\bar Y}^\bmB \cdot \prod_{b\in Y\setminus L} \bh_b^{\bmB_b} \cdot \prod_{b\in L}(\bh_b^{\bmB_b} + \sigma_b \beta). \]

Expanding the $L$-product:
\[ \prod_{b\in L}(\bh_b^{\bmB_b} + \sigma_b\beta) = \sum_{T\subseteq L} \beta^{|T|}\sigma_T \prod_{b\in L\setminus T} \bh_b^{\bmB_b}, \]
so
\[ \frac{\bmk}{\bh_{\bar Y}^\bmB} = \sum_{L\subseteq Y} c_{L} \sum_{T\subseteq L} \beta^{|T|}\sigma_T \prod_{b\in Y\setminus T} \bh_b^{\bmB_b} = \sum_{T\subseteq Y} \beta^{|T|}\sigma_T\,\tilde c^{(T)}\prod_{b\in Y\setminus T} \bh_b^{\bmB_b}, \]
where we swapped the order of summation and collected the sum $\tilde c^{(T)} = \sum_{L\supseteq T} c_{L}$.

By Lemma~\ref{lemma:mobinv} on the power set of $Y$, the sum $\tilde c^{(T)}$ is uniquely characterised by the inverse relation
\[ c_{L} = \sum_{T\supseteq L}(-1)^{|T\setminus L|}\,\tilde c^{(T)}. \]
We claim:
\begin{equation}\label{eq:tildec-claim}
\tilde c^{(T)} = \begin{cases} 0 & |T|\le |Y|-2,\\ (-1)^{|Y|}\,\sigma_b a_b & T = Y\setminus\{b\},\\ (-1)^{|Y|}\,d & T = Y. \end{cases}
\end{equation}
By the uniqueness of Möbius inversion, it suffices to verify that \eqref{eq:tildec-claim} satisfies the inverse relation, i.e.\ reproduces $c_L$. For $L\subseteq Y$, only $T = Y$ and $T = Y\setminus\{b\}$ for $b\notin L$ contribute:
\begin{align*}
\sum_{T\supseteq L}(-1)^{|T\setminus L|}\tilde c^{(T)}
&= (-1)^{|Y\setminus L|}(-1)^{|Y|}d + \sum_{b\notin L}(-1)^{|Y\setminus L|-1}(-1)^{|Y|}\sigma_b a_b\\
&= (-1)^{|L|}\Bigl(d - \!\!\sum_{b\in Y\setminus L}\!\sigma_b a_b\Bigr) = c_L.
\end{align*}
Hence only $|T|\in\{|Y|-1,|Y|\}$ contribute to $\bmk/\bh_{\bar Y}^\bmB$, giving
\[ \beta^{|Y|-1}\!\sum_{b\in Y}\sigma_{Y\setminus\{b\}}(-1)^{|Y|}\sigma_b a_b\,\bh_b^{\bmB_b} + \beta^{|Y|}\sigma_Y(-1)^{|Y|}d = (-1)^{|Y|}\sigma_Y\beta^{|Y|-1}\Bigl(\sum_{b\in Y}a_b\bh_b^{\bmB_b} + d\,\beta\Bigr), \]
using $\sigma_{Y\setminus\{b\}}\sigma_b = \sigma_Y$. The bracket is $\bmf$, proving~\eqref{eq:kernelidentity}.
\end{proof}

\begin{lemma}\label{cor:clawkernel}
For the claw relations \eqref{eq:fourtermkernel} with $Y = \claw(b) = \{b\}\cup\{a : a\sim b\}$ and any $\bmB$ with $\bmB_Y = \bmX_Y$, the kernel sum $\bmk^\bmB_{\bmf_b}$ is given by the ten-term Stanley sum
\begin{equation}\label{def:kernelfunction}
 \bmk^\bmB_{\bmf_b} \;:=\; k_f^0 - (k_f^0)^{\claw(b)}, \qquad k_f^0 \;:=\; -2\bar\bmB^{\{b\}}+\sum_{a \sim b} \bar\bmB^{\{b,a\}} -\bar\bmB^{Y}.
\end{equation}
\end{lemma}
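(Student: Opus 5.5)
The plan is to specialise Theorem~\ref{thm:kernelidentity} to the claw relation $\bmf_b$ of \eqref{eq:fourtermkernel}, compute the coefficients $c_L$ explicitly, and then regroup the resulting $Y$-supported Stanley sum into the form $k_f^0-(k_f^0)^{\claw(b)}$.

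\emph{Step 1: read off the data of the theorem.} Take $Y=\claw(b)$, so $|Y|=4$, and read the data from \eqref{eq:fourtermkernel}:
\begin{itemize}
\item the coefficient at the centre is $a_b=-x_b$;
\item the coefficient at each neighbour $a\sim b$ is $a_a=x_a$;
\item the constant is $d=-1$.
\end{itemize}
Since $\bmB_Y=\bmX_Y$, we have $\sigma_c=(-1)^{\bmX_c/\sU}=x_c$ for every $c\in Y$. Hence $\sigma_b a_b=-1$ at the centre and $\sigma_a a_a=+1$ at each neighbour, and all signs $x_c$ drop out. This is exactly why the lemma requires only $\bmB_Y=\bmX_Y$.

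\emph{Step 2: compute $c_L$.} Substituting into \eqref{eq:kernelsumformula}, $c_L$ depends only on two pieces of data: whether $b\in L$, and the number $k$ of neighbours of $b$ lying in $L$.
\begin{itemize}
\item If $b\in L$, then $c_L=(-1)^{1+k}\bigl(-1-(3-k)\bigr)=(-1)^k(4-k)$.
\item If $b\notin L$, then $c_L=(-1)^k\bigl(-1+1-(3-k)\bigr)=(-1)^k(k-3)$.
\end{itemize}
In particular $c_{Y\setminus\{b\}}=0$. This gives an explicit table of the sixteen coefficients, indexed by $(b\in L?,\,k)$, and the weights are invariant under permuting the three neighbours, as the lemma's form requires.

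\emph{Step 3: regroup.} The operation $(\cdot)^{\claw(b)}$ acts on diagrams supported in $Y$ by $L\mapsto Y\setminus L$. So the difference $k_f^0-(k_f^0)^{\claw(b)}$ assigns to $\bmB^L$ the coefficient of $L$ in $k_f^0$ minus the coefficient of $Y\setminus L$ in $k_f^0$. I would translate each diagram $\bar\bmB^{J}$ appearing in $k_f^0$ into the form $\bmB^{L}$ with $L\subseteq Y$, using the convention $\bmB\wedge\bar\bmB^J=J$ restricted to $Y$. I would then compare the four orbit types against the table of Step~2, pairing $L$ with $Y\setminus L$.

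A useful safeguard is that the sixteen polynomials $\bmB^L$ ($L\subseteq Y$) are linearly independent: after dividing by $\bh^\bmB_{\bar Y}$, they are a triangular basis of the multilinear polynomials in the four $\bh_c^{\bmB_c}$ with $\beta$-coefficients. Consequently the representation of $\bmk^\bmB_{\bmf_b}$ is unique. It therefore suffices to match coefficients, or equivalently to check \eqref{eq:kernelidentity} directly for the displayed ten-term sum, which amounts to comparing the $K$-values \eqref{eq:kernelKvalues}.

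\emph{Main obstacle.} I expect the hard part to be Step~3, namely fixing the flip and complement conventions so that the displayed $k_f^0$ reproduces the coefficients of Step~2. The counting looks off: for generic $\bmB$ I obtain fifteen nonzero $c_L$, not ten. If the direct match fails, I would first try reading $\bar\bmB^J$ relative to $\bar\bmB$ restricted to $Y$. Failing that, I would verify the lemma at the level of $K$-values, which is the form in which the kernel sums are used later.
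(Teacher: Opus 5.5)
Your route is the paper's: specialise Theorem~\ref{thm:kernelidentity} to $Y=\claw(b)$, compute $c_L$, and pair $L$ with $Y\setminus L$. However, Step~1 uses the wrong sign for $\sigma_c$, and that is the real source of your fifteen terms, not a flip or complement convention.

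\emph{The sign.} The proof of Theorem~\ref{thm:kernelidentity} uses $\sigma_c$ only through the shift $\bh_c^{\bar\bmB_c}=\bh_c^{\bmB_c}+\sigma_c\beta$. By \eqref{betarelation} ($\bh^\sU-\bh^\sL=\beta$) one has $x_c\bh_c^{\bar\bmX_c}=x_c\bh_c^{\bmX_c}-\beta$; compare \eqref{eq:varsell321}, which gives $\tfrac12(\ell_c-\beta)$ versus $\tfrac12(\ell_c+\beta)$. So on $Y$ the shift coefficient is $\sigma_c=-x_c$, not the $+x_c$ suggested by the parenthetical ``$+1$ for $\bmB_b=\sU$''. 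Hence $\sigma_ba_b=+1$, $\sigma_aa_a=-1$ and $d=-1$, which are exactly the values the paper's proof uses.

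\emph{Your sum is not a kernel element.} Put $u_c:=x_c\bh_c^{\bmX_c}$ and $x_Y:=\prod_{c\in Y}x_c$. Your coefficients then assemble to $x_Y\,\bh^\bmB_{\bar Y}\,\beta^3\bigl(u_b-\sum_{a\sim b}u_a-\beta\bigr)$. Since $\bmf_b=-u_b+\sum_{a\sim b}u_a-\beta$ vanishes in the hook space, this equals $-2\beta^4x_Y\bh^\bmB_{\bar Y}\neq0$ there. Your fallbacks cannot repair this. Complementing $L\mapsto Y\setminus L$ preserves the number of nonzero coefficients. Checking $K$-values through \eqref{eq:kernelKvalues} reuses the same wrong products $\sigma_ca_c$.

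\emph{The corrected computation.} With $\sigma_c=-x_c$, your Step~2 becomes $c_L=(-1)^{\epsilon+k}(1+\epsilon-k)$, where $\epsilon=[b\in L]$ and $k$ is the number of neighbours in $L$. This vanishes exactly when $k=1+\epsilon$, i.e.\ on the three singletons $\{a\}$ and the three triples $Y\setminus\{a\}$. The ten surviving coefficients are $c_\emptyset=1$, $c_{\{b\}}=-2$, $c_{\{b,a\}}=1$, $c_{\{a,a'\}}=-1$, $c_{Y\setminus\{b\}}=2$ and $c_Y=-1$. They satisfy $c_{Y\setminus L}=-c_L$. This is precisely the shape $k_f^0-(k_f^0)^{\claw(b)}$, with $k_f^0=-2\bmB^{\{b\}}+\sum_{a\sim b}\bmB^{\{b,a\}}-\bmB^{Y}$ being the part with $b\in L$. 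How one reads $\bar\bmB^J$ then affects only an overall sign, which is harmless for a kernel element. Your linear-independence remark makes the identification conclusive.
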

\begin{proof}
Apply Theorem~\ref{thm:kernelidentity} with $Y = \claw(b)$. Under the natural frame $\bmB_Y = \bmX_Y$, the claw relation \eqref{eq:fourtermkernel} gives $\sigma_b a_b = +1$ and $\sigma_a a_a = -1$ for $a\sim b$, with $d = -1$. Evaluating the coefficients \eqref{eq:kernelsumformula} on each $L\subseteq Y$, write $\epsilon := [b\in L]$ and $k := |\{a\sim b : a\notin L\}|$:
\[ \sum_{b'\in Y\setminus L}\!\sigma_{b'}a_{b'} = (1-\epsilon) - k, \qquad c_L = (-1)^{|L|}(\epsilon + k - 2). \]
Hence $c_L = 0$ exactly when $\epsilon + k = 2$: the three leaf-singletons $L = \{a\}$ ($\epsilon=0,k=2$) and the three triples $L = Y\setminus\{a\}$ ($\epsilon=1,k=1$), $a\sim b$. The remaining ten $L\subseteq Y$ carry the coefficients
\[ c_\emptyset = +1,\quad c_{\{b\}} = -2,\quad c_{\{b,a\}} = +1,\quad c_{\{a,a'\}} = -1,\quad c_{Y\setminus\{b\}} = +2,\quad c_Y = -1, \]
which assemble as $k_f^0 - (k_f^0)^{\claw(b)}$ in~\eqref{def:kernelfunction}. 
\end{proof}

\begin{example}
We may take $\bmB = \bmX$ and for $b = a_2$ with kernel sum 
\[ \bmf_{a_2} = h^\sU_{a_2} - h^\sU_{a_1} - h^\sU_{a_3} - h^\sL_{c_3}, \]
the kernel Stanley sum \eqref{def:kernelfunction} reduces to the $10$ non-zero terms shown below
\begin{eqnarray*}
\ytableausetup{boxsize=0.6em}
\bmk_{\bm{f}_{a_2}}^\bmX &=&
- 2\cdot
\frac{ \begin{ytableau}
*(boxU)  \\
*(boxL)    &  *(boxU) 
\end{ytableau}\,\,\begin{ytableau}
  \\
  &  
\end{ytableau} }{ \begin{ytableau}
 \\
    &  *(boxL) \\
   &    &  
\end{ytableau} }
+\,1 \cdot \frac{ \begin{ytableau}
*(boxL)  \\
*(boxL)    &  *(boxU) 
\end{ytableau}\,\,\begin{ytableau}
  \\
  &  
\end{ytableau} }{ \begin{ytableau}
 \\
    &  *(boxL) \\
   &    &  
\end{ytableau} }  
+\,1 \cdot \frac{ \begin{ytableau}
*(boxU)  \\
*(boxL)    &  *(boxL) 
\end{ytableau}\,\,\begin{ytableau}
  \\
  &  
\end{ytableau} }{ \begin{ytableau}
 \\
    &  *(boxL) \\
   &    &  
\end{ytableau} }  
+1 \cdot
\frac{ \begin{ytableau}
*(boxU)  \\
*(boxL)    &  *(boxU) 
\end{ytableau}\,\,\begin{ytableau}
  \\
  &  
\end{ytableau} }{ \begin{ytableau}
 \\
    &  *(boxU) \\
   &    &  
\end{ytableau} }
-1 \cdot \frac{ \begin{ytableau}
*(boxL)  \\
*(boxL)    &  *(boxL) 
\end{ytableau}\,\,\begin{ytableau}
  \\
  &  
\end{ytableau} }{ \begin{ytableau}
 \\
    &  *(boxU) \\
   &    &  
\end{ytableau} } \\
&&
+2 \cdot
\frac{ \begin{ytableau}
*(boxL)  \\
*(boxU)    &  *(boxL) 
\end{ytableau}\,\,\begin{ytableau}
  \\
  &  
\end{ytableau} }{ \begin{ytableau}
 \\
    &  *(boxU) \\
   &    &  
\end{ytableau} }
-1 \cdot \frac{ \begin{ytableau}
*(boxU)  \\
*(boxU)    &  *(boxL) 
\end{ytableau}\,\,\begin{ytableau}
  \\
  &  
\end{ytableau} }{ \begin{ytableau}
 \\
    &  *(boxU) \\
   &    &  
\end{ytableau} }
-1 \cdot \frac{ \begin{ytableau}
*(boxL)  \\
*(boxU)    &  *(boxU) 
\end{ytableau}\,\,\begin{ytableau}
  \\
  &  
\end{ytableau} }{ \begin{ytableau}
 \\
    &  *(boxU) \\
   &    &  
\end{ytableau} }
-1\cdot
\frac{ \begin{ytableau}
*(boxL)  \\
*(boxU)    &  *(boxL) 
\end{ytableau}\,\,\begin{ytableau}
  \\
  &  
\end{ytableau} }{ \begin{ytableau}
 \\
    &  *(boxL) \\
   &    &  
\end{ytableau} }  
+ 1 \cdot \frac{ \begin{ytableau}
*(boxU)  \\
*(boxU)    &  *(boxU) 
\end{ytableau}\,\,\begin{ytableau}
  \\
  &  
\end{ytableau} }{ \begin{ytableau}
 \\
    &  *(boxL) \\
   &    &  
\end{ytableau} } 
\end{eqnarray*}
\ytableausetup{boxsize=0.8em}
where the empty boxes are filled with the fixed choice $\bmX_{\bar Y}$.
\end{example}

Note: It is not  hard to show that the $K$-value notation extends naturally to our setting with the blacked-out boxes $c_1, c_6$ (which are determined by $\tilde b_3$ via \eqref{eq:tildeb3def}). The $K$-transform acts only on the ten remaining free boxes, and the reference diagram $\bmB$ assigns hook choices to these ten boxes.

We return now to our main object of focus, the Stanley sum $\bigstanleysum$. 
\begin{lemma}\label{lemma:Kvaluess}
The K-values of $\bigstanleysum$ with reference to $\bar \bmX$ are
\[ K_I^{\bar\bmX}(\bigstanleysum) = \begin{cases} 2 & I = \emptyset \\ 1 & |I| = 1 \\ 1 & I =\{a,b\} \text{ with } \, a\sim b\\ 0 & \text{otherwise}. \end{cases} \]
\end{lemma}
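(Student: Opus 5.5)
We compute the $K$-values directly from the decomposition \eqref{eq:721sum}, $\bigstanleysum = 7\bmX - 2\sum_b \bmX^{\{b\}} + \sum_{\{a,b\}\in\Theta}\bmX^{\{a,b\}}$, using linearity of $K_I^{\bar\bmX}$ in the coefficients. By definition, $K_I^{\bar\bmX}(\bmD)$ is $1$ exactly when $\bmD$ agrees with $\bar\bmX$ on every box of $I$, and $0$ otherwise. A diagram $\bmX^J$ agrees with $\bar\bmX$ precisely on the boxes of $J$, so $\bar\bmX\wedge\bmX^J = J$. Hence
\[ K_I^{\bar\bmX}(\bmX^J) = [\,I\subseteq J\,], \]
and therefore
\[ K_I^{\bar\bmX}(\bigstanleysum) = 7\,[I=\emptyset] - 2\sum_{b}[I\subseteq\{b\}] + \sum_{\{a,b\}\in\Theta}[I\subseteq\{a,b\}]. \]
Here we work on the ten free boxes, using the convention noted above that $c_1,c_6$ are slaved to $\tilde b_3$.

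We then split by $|I|$.

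For $I=\emptyset$, every term contributes, giving $7-2\cdot 10+15=2$. This matches the already computed $K_\emptyset = c_{21,21}^{321}$.

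For $I=\{b\}$, the $\bmX$ term does not contribute. The singleton flips contribute only $\bmX^{\{b\}}$, giving $-2$. The pair flips contribute once for each Petersen edge containing $b$, and since $\petersen$ is $3$-regular this is $3$. The total is $-2+3=1$.

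For $|I|=2$, only $\bmX^{I}$ with $I\in\Theta$ contributes, giving $1$ when $I$ is an edge of $\petersen$ (i.e. $I=\{a,b\}$ with $a\sim b$) and $0$ otherwise.

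For $|I|\ge 3$, no diagram in the support has three or more flips relative to $\bmX$, so the value is $0$.

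The only substantive input is Observation \ref{lemma:petersengraph}: the support of $\bigstanleysum$ consists exactly of $\bmX$, the ten single flips, and the fifteen edge flips of $\petersen$, with coefficients $7,-2,1$. Everything else is bookkeeping. The one point to check carefully is that the $K$-transform correctly handles the virtual box $\tilde b_3$. Agreement of $\bmD$ with $\bar\bmX$ at $\tilde b_3$ must mean simultaneous agreement at $b_3,c_1,c_6$. This is consistent because both $\bmD$ and $\bar\bmX$ satisfy \eqref{eq:tildeb3def}, so a single hook choice at $\tilde b_3$ determines all three boxes. With that verified, the regularity count $\deg=3$ in the Petersen graph is the only nontrivial fact used, and the lemma follows.
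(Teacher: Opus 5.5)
Your proposal is correct and follows the paper's own proof essentially line for line. Both arguments apply linearity of $K_I^{\bar\bmX}$ to the decomposition \eqref{eq:721sum}, use that $\bmX^J$ agrees with $\bar\bmX$ exactly on $J$, and use the $3$-regularity of $\petersen$ to get $-2+3=1$ for singletons. Your remark on the virtual box $\tilde b_3$ only makes explicit the convention the paper states in its note on the blacked-out boxes $c_1,c_6$.
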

\begin{proof}
We have
\[ K_\emptyset = \sum_\bmD c_\bmD = 7(1)-2(10)+1(15) = 2 \]
\[ K_{\{b\}}^{\bar\bmX} = \sum_{\bmD:\bmD_b =\bar\bmX_b}  c_\bmD = 7(0)-2(1)+1(\sum_{a:b\sim a} 1) = 1 \]
\[ K_{\{a,b\}}^{\bar\bmX} = \sum_{\bmD:...} c_\bmD = 7(0)-2(0)+1(\delta_{a \sim b}) = \delta_{a\sim b}. \]
As no diagram in the sum \eqref{eq:721sum} differs from $\bmX$ at more than two boxes, all higher $K^{\bar \bmX}$ vanish.
\end{proof}

For the conjugate Stanley sum $\bar \bigstanleysum$, we find a simple formula.
\begin{lemma}
The $K$-values of the conjugate $\bar\bigstanleysum$ is
\[ K_I^{\bar \bmX}(\bar \bigstanleysum) = 2-|I|+e_I, \]
where $e_I$ is the number of edges of the induced graph on $I$.
\end{lemma}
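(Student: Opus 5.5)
We need $K_I^{\bar\bmX}(\bar\bigstanleysum)$. Conjugation sends each diagram $\bmD$ to $\bar\bmD$, so $\bar\bigstanleysum = \sum_\bmD c_\bmD \bar\bmD$. A diagram $\bar\bmD$ agrees with $\bar\bmX$ on $I$ exactly when $\bmD$ agrees with $\bmX$ on $I$. Hence
\[ K_I^{\bar\bmX}(\bar\bigstanleysum) = K_I^{\bmX}(\bigstanleysum). \]
We can then compute directly from the decomposition \eqref{eq:721sum}, $\bigstanleysum = 7\bmX - 2\sum_b \bmX^{\{b\}} + \sum_{\{a,b\}\in\Theta}\bmX^{\{a,b\}}$. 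The condition $\bmD_I = \bmX_I$ for $\bmD = \bmX^J$ means $J\cap I=\emptyset$.

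The plan is to count, for each of the three orbits, the diagrams with flipped set disjoint from $I$:
\begin{itemize}
\item $\bmX$ always contributes $7$.
\item The singletons contribute $-2(10-|I|)$.
\item The Petersen edges contribute the number of edges of $\petersen$ avoiding $I$, which is $15 - (\text{edges meeting } I)$.
\end{itemize}
Since $\petersen$ is $3$-regular, the edges meeting $I$ number $3|I| - e_I$: the sum of degrees over $I$ counts each induced edge twice. Summing,
\[ 7 - 20 + 2|I| + 15 - 3|I| + e_I = 2 - |I| + e_I. \]

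The only subtle point, and the one I would state carefully, is the first step. The $K$-values here are taken over the ten free boxes, with $c_1,c_6$ slaved to $\tilde b_3$ via \eqref{eq:tildeb3def}. Conjugation flips $b_3$, $c_1$ and $c_6$ together, which preserves the relation $\bmD_{b_3}=\bmD_{c_1}=\overline\bmD_{c_6}$. So the identification $K^{\bar\bmX}_I(\bar\bmss)=K^{\bmX}_I(\bmss)$ holds on the ten-box set $\allvertices$, with $\tilde b_3$ treated as a single vertex.

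As a consistency check, applying Lemma~\ref{lemma:deponB} to all of $\allvertices$ with $\bmA=\bmX$, $\bmB=\bar\bmX$ should give the same answer from Lemma~\ref{lemma:Kvaluess}. It does: at $I=\emptyset$ both give $2$, and at $|I|=1$ we get $K_{\{b\}}^{\bmX} = K_\emptyset - K_{\{b\}}^{\bar\bmX} = 1 = 2-1+0$.
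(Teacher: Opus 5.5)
Your proof is correct. Its first step, $K_I^{\bar\bmX}(\bar\bigstanleysum)=K_I^{\bmX}(\bigstanleysum)$, is exactly the paper's first equality. The paper states it without comment, and your justification supplies the missing reason: $\bar\bmD$ agrees with $\bar\bmX$ on $I$ if and only if $\bmD$ agrees with $\bmX$ on $I$, and the conjugation is compatible with the slaved boxes $c_1,c_6$.

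From there the two routes diverge.
\begin{itemize}
\item The paper changes reference diagram using \eqref{eq:Kinversion} with $\bmA=\bmX$, $\bmB=\bar\bmX$, so that $\bmA/\bmB=\allvertices$. This gives the inclusion--exclusion sum $K_I^{\bmX}=\sum_{J\subseteq I}(-1)^{|J|}K_J^{\bar\bmX}$. It then inserts the values $2,\,1,\,\delta_{a\sim b},\,0$ from Lemma~\ref{lemma:Kvaluess}.
\item You count directly in \eqref{eq:721sum} the flipped sets disjoint from $I$. You use the $3$-regularity of $\petersen$ to count the edges meeting $I$ as $3|I|-e_I$.
\end{itemize}

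The two computations are the same inclusion--exclusion seen from opposite ends. In the paper, $3$-regularity enters only implicitly, through $K^{\bar\bmX}_{\{b\}}=-2+3=1$. Your consistency check for $|I|\le 1$ is literally the paper's argument truncated at those terms.

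Your version is more elementary and self-contained: it needs neither Lemma~\ref{lemma:deponB} nor Lemma~\ref{lemma:Kvaluess}, only the explicit three-orbit structure. The paper's version instead illustrates the general change-of-reference machinery. That machinery shows the answer is determined entirely by the low-order $K^{\bar\bmX}$-values, and it would apply unchanged to any Stanley sum whose $K$-values relative to one reference are known.
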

\begin{proof}
Using the inversion formula \eqref{eq:Kinversion}, we have
\begin{align*}
K_I^{\bar\bmX}(\bar\bigstanleysum) &=K_I^{\bmX}(\bigstanleysum)  \\
&= \sum_{J \subseteq I} (-1)^{|J|} K_J^{\bar\bmX}(\bigstanleysum) \\
&= (-1)^0\cdot2   + \sum_{b \in I}(-1)^1\cdot  1  + \sum_{\{a,b\} \subseteq I, \{a,b\} \in \Theta}  (-1)^2 \cdot 1  \\
&= 2 - |I| + e_I. \qedhere
\end{align*}
\end{proof}

Comparing with \eqref{lemma:Kvaluess}, we find
\begin{corollary} For $|I|<3$, we have
\[ K_I^{\bar\bmX}(\bigstanleysum) = K_I^{\bar\bmX}(\bar \bigstanleysum). \] 
\end{corollary}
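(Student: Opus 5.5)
The statement to be proved is the final corollary: for $|I|<3$, $K_I^{\bar\bmX}(\bigstanleysum) = K_I^{\bar\bmX}(\bar\bigstanleysum)$. The plan is to compare the two lemmas just established, case by case in $|I|$. Lemma \ref{lemma:Kvaluess} gives the left-hand side explicitly. The preceding lemma gives the right-hand side as $2-|I|+e_I$, where $e_I$ is the number of Petersen edges inside $I$.

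The case check runs as follows:
\begin{itemize}
\item For $I=\emptyset$: $e_\emptyset=0$, so the right side is $2$, matching $K_\emptyset=2$.
\item For $|I|=1$: there are no edges, so the right side is $2-1+0=1$, matching the value $1$ on singletons.
\item For $I=\{a,b\}$ with $a\sim b$: $e_I=1$, so the right side is $2-2+1=1$, matching the value $1$ on adjacent pairs.
\item For $I=\{a,b\}$ with $a\not\sim b$: $e_I=0$, so the right side is $0$, matching the ``otherwise'' value $0$.
\end{itemize}
This covers all $I$ with $|I|\le 2$, so the two $K$-value functions agree there.

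The only point needing care is that both lemmas use the same reference diagram $\bar\bmX$ and the same ten free boxes. The blacked-out boxes $c_1,c_6$ are tied to $\tilde b_3$, per the note preceding Lemma \ref{lemma:Kvaluess}. We should also confirm that the identity $K_I^{\bar\bmX}(\bar\bigstanleysum)=K_I^{\bmX}(\bigstanleysum)$, used in the conjugate lemma, is legitimate. It is, since conjugating every diagram and the reference simultaneously preserves the agreement condition $\bmD_I=\bmB_I$.

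No real obstacle is expected. It is worth remarking that the agreement fails at $|I|=3$: for a path $a\sim b\sim c$ we get $2-3+2=1\neq 0$. So the bound $|I|<3$ is sharp, and the higher $K$-values will have to be handled separately in the proof of Theorem \ref{thm:z2invs}.
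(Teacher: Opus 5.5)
Your proposal is correct and matches the paper's own argument. The paper also proves this by comparing Lemma~\ref{lemma:Kvaluess} with the formula $K_I^{\bar\bmX}(\bar\bigstanleysum)=2-|I|+e_I$ from the conjugate lemma. Your two side remarks are also accurate: the conjugation identity $K_I^{\bar\bmX}(\bar\bigstanleysum)=K_I^{\bmX}(\bigstanleysum)$ holds, and a path on three vertices shows the bound $|I|<3$ is sharp.
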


However, for $|I|\geq 3$, there is a difference in the $K$-values of the two Stanley sums. One of the objectives of this work is to prove that not only are $\bigstanleysum$ and $\bar\bigstanleysum$ \emph{both} solutions to \eqref{eq:bigequation}, they are indeed the \emph{same} solution once descended to the hook space.

\begin{proposition}
For a Stanley sum $\bmss$ with $K$-values $K^\bmB_I(\bmss)$ relative to $\bmB$, the polynomial form $\bmss' = \sum_\bmD c_\bmD \bmD'$ admits the $\tilde\ell_b = \bh_b^\sU + \bh_b^\sL$ expansion
\begin{equation}\label{eq:ellexp}
\bmss' = 2^{-|\allvertices|}\sum_{I\subseteq \allvertices} \beta^{|I|}\, w_I\, \ell_{\allvertices/I},
\end{equation}
with weights
\[ w_J(K_\bullet^\bmB)  = (-1)^{|J|}\sum_{I \subseteq J}  K_I^\bmB(\bmss) \cdot (-2)^{|I|}.  \]
Furthermore,
\[ w_J(K_\bullet^{\bmA})= (-1)^{\bmA_J/\bmB_J} w_J(K_\bullet^{\bmB}). \]
\end{proposition}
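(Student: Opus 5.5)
The plan is to expand each diagram box by box, then sum over diagrams. Write $\tilde\ell_b=\bh_b^\sU+\bh_b^\sL$, and recall from \eqref{betarelation} that $\bh_b^\sU-\bh_b^\sL=\beta$. Then for either choice $\sA$,
\[ \bh_b^\sA=\tfrac12\bigl(\tilde\ell_b+(-1)^{\sA/\sU}\beta\bigr). \]
Set $\sigma_b:=(-1)^{\bmB_b/\sU}$ and $\ell_b=\sigma_b\tilde\ell_b$, the $\ell$ basis relative to $\bmB$. This gives
\[ \bh_b^\sA=\tfrac12\sigma_b\bigl(\ell_b+\epsilon_b\beta\bigr),\qquad \epsilon_b=\begin{cases}+1 & \sA=\bmB_b,\\ -1 & \sA\neq\bmB_b,\end{cases} \]
which is exactly the identity \eqref{eq:varsell321} when $\bmB=\bmX$.

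\emph{Convention for $\lambda$-boxes.} In $\bmD'$ the boxes of $\lambda$ carry $\overline\bmD_b$. I will regard the hook choice recorded for such a box as the one that actually appears in $\bmD'$; the $K$-values are defined with respect to this recorded choice. Equivalently, this replaces $\bmB_b$ by $\overline\bmB_b$ on $\lambda$, which is consistent because the $K$-transform and $\bmB\wedge\bmD$ only compare the two diagrams.

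\textbf{Step 1 (single diagram).} Multiplying the box factors gives
\[ \bmD'=2^{-|\allvertices|}\sigma_\allvertices\prod_{b\in\allvertices}\bigl(\ell_b+\epsilon_b\beta\bigr),\qquad \epsilon_b=+1\iff b\in\bmB\wedge\bmD. \]
Expanding the product, the coefficient of $\beta^{|J|}\ell_{\allvertices/J}$ is
\[ \prod_{b\in J}\epsilon_b=(-1)^{|J\setminus(\bmB\wedge\bmD)|}. \]
The global sign $\sigma_\allvertices$ is a fixed constant; I absorb it into the normalisation of the $\ell$ basis, as the paper's statement implicitly does.

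\textbf{Step 2 (sum over diagrams).} Rewrite the sign as
\[ \prod_{b\in J}\bigl(-1+2[b\in\bmB\wedge\bmD]\bigr)=\sum_{I\subseteq J}(-1)^{|J|-|I|}\,2^{|I|}\,[I\subseteq\bmB\wedge\bmD]. \]
Multiply by $c_\bmD$ and sum over $\bmD$. By definition \eqref{eq:ktransform}, $\sum_\bmD c_\bmD[I\subseteq\bmB\wedge\bmD]=K_I^\bmB(\bmss)$. Hence
\[ w_J=(-1)^{|J|}\sum_{I\subseteq J}(-2)^{|I|}K_I^\bmB(\bmss), \]
which is the claimed formula.

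\textbf{Step 3 (change of reference).} Pass from $\bmB$ to $\bmA$. The polynomial $\bmss'$ does not depend on the reference, but each $\ell_b$ with $b\in\bmA/\bmB$ changes sign. The monomial $\ell_{\allvertices/J}$ therefore picks up $(-1)^{|(\bmA/\bmB)\setminus J|}$. The absorbed prefactor $\sigma_\allvertices$ picks up $(-1)^{|\bmA/\bmB|}$. The product of these two signs is $(-1)^{|(\bmA/\bmB)\cap J|}=(-1)^{\bmA_J/\bmB_J}$. Comparing coefficients of $\beta^{|J|}\ell_{\allvertices/J}$ in the two expansions then gives $w_J(K^\bmA_\bullet)=(-1)^{\bmA_J/\bmB_J}w_J(K^\bmB_\bullet)$.

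As a cross-check, the same identity should follow from Lemma~\ref{lemma:deponB} by composing single flips. Flipping one box $b\in J$ changes $K_I\mapsto -K_I+K_{I\setminus b}$ for $I\ni b$. Pairing the terms $I$ and $I\setminus b$ inside $\sum_{I\subseteq J}(-2)^{|I|}K_I$ gives $(-2)K_{I\setminus b}+K_{I\setminus b}\cdot(-2)\cdot(-1)\ldots$, and these should collapse to an overall sign flip.

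The only real obstacle is sign bookkeeping. This means keeping the $\lambda$-box inversion, the global factor $\sigma_\allvertices$, and the reference-dependent $\ell_b$ consistent. The combinatorial content is just the binomial expansion in Step 2.
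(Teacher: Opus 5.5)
Your proof is correct, but it takes a different route from the paper in both halves.

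\textbf{The weight formula.} The paper expands only the basis diagrams $\bar\bmB^J$. It then substitutes the M\"obius-inverted coefficients $c_{\bar\bmB^J}=\sum_{L\supseteq J}(-1)^{|L|-|J|}K^\bmB_L$ of Corollary~\ref{cor:inversek} and collapses a double alternating sum. You instead expand $\prod_{b\in J}\bigl(-1+2[b\in\bmB\wedge\bmD]\bigr)$ for an arbitrary $\bmD$ and read off $K^\bmB_I$ directly from the definition \eqref{eq:ktransform}. No inversion is needed, so your version is shorter and more transparent.

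\textbf{The covariance.} The paper works entirely with $K$-values: it feeds the change-of-reference formula \eqref{eq:Kinversion} into the weight formula and uses $(1-2)^m=(-1)^m$. You instead use reference-independence of $\bmss'$ and compare coefficients. This is the fragile step, for two reasons.
\begin{itemize}
\item It needs the global sign $\sigma_{\allvertices}$ that the displayed expansion suppresses. If the expansion were read literally for both references, it would give $(-1)^{|(\bmA/\bmB)\setminus J|}$, which is wrong whenever $|\bmA/\bmB|$ is odd.
\item The comparison of coefficients must be done in the ring where only the relations \eqref{betarelation} are imposed. In $\StDR$ the claw relations make the $\ell_b$ linearly dependent, so comparing coefficients there would be invalid.
\end{itemize}
You track the first point explicitly, and the second is implicit in your Step~1. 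The paper's argument avoids both issues because it never leaves the $K$-values.

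You can bypass Step~3 altogether. Your Step~2 already shows $w_J=\sum_\bmD c_\bmD\,(-1)^{|J\setminus(\bmB\wedge\bmD)|}$, a signed count of disagreements with $\bmB$ on $J$. Replacing $\bmB$ by $\bmA$ toggles the disagreement status of exactly the boxes in $J\cap(\bmA/\bmB)$, which gives the covariance in one line with no sign bookkeeping.

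Your ``cross-check'' pairing is garbled as written. Done correctly, for $b\in J$ and $I'\subseteq J\setminus\{b\}$, the pair $(-2)^{|I'|}K^{\bmA^b}_{I'}+(-2)^{|I'|+1}K^{\bmA^b}_{I'\cup\{b\}}$ equals $(-2)^{|I'|}(1-2)K^{\bmA}_{I'}-(-2)^{|I'|+1}K^{\bmA}_{I'\cup\{b\}}$. That is minus the corresponding pair for $\bmA$, which is just the single-flip case of the paper's computation.
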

\begin{proof}
Fix the reference $\bmB$. By \eqref{eq:varsell321}, each hook factor satisfies $x_b \bh_b^\sA = \tfrac{1}{2}(\ell_b + x_b(-1)^{\sA/\sU}\beta)$, with the convention that $x_b := (-1)^{\bmB_b/\sU}$ so that $x_b\bh_b^{\bmB_b} = \tfrac{1}{2}(\ell_b+\beta)$ and $x_b\bh_b^{\bar\bmB_b} = \tfrac{1}{2}(\ell_b-\beta)$.

For $J\subseteq\allvertices$, let $\bar\bmB^J$ denote the diagram agreeing with $\bmB$ on $J$ and with $\bar\bmB$ outside $J$. Substituting into the polynomial form $(\bar\bmB^J)'$, each factor becomes $\tfrac{1}{2}(\ell_b + \epsilon_b^J\beta)$ with $\epsilon_b^J = +1$ for $b\in J$ and $-1$ otherwise, so
\[ (\bar\bmB^J)' = 2^{-|\allvertices|}\prod_b(\ell_b + \epsilon_b^J\beta) = 2^{-|\allvertices|}\sum_{I\subseteq\allvertices}\beta^{|I|}\,(-1)^{|I\setminus J|}\,\ell_{\allvertices/I}, \]
using $\prod_{b\in I}\epsilon_b^J = (-1)^{|I\setminus J|}$.

By Corollary~\ref{cor:inversek}, $\bmss' = \sum_J c_{\bar\bmB^J}(\bar\bmB^J)'$ with $c_{\bar\bmB^J} = \sum_{L\supseteq J}(-1)^{|L|-|J|}K_L^\bmB$. Collecting the coefficient of $\beta^{|I|}\ell_{\allvertices/I}$:
\[ w_I = \sum_J c_{\bar\bmB^J}(-1)^{|I\setminus J|} = \sum_L K_L^\bmB \sum_{J\subseteq L}(-1)^{|L|-|J|+|I\setminus J|}. \]
For the inner sum, decompose $J = A\sqcup B$ with $A\subseteq L\cap I$, $B\subseteq L\setminus I$. Then $|J| = |A|+|B|$ and $|I\setminus J| = |I|-|A|$, so the exponent is $|L|+|I|-2|A|-|B|$, and
\[ \sum_{J\subseteq L}(-1)^{|L|-|J|+|I\setminus J|} = (-1)^{|L|+|I|}\,\underbrace{\sum_{A\subseteq L\cap I}1}_{2^{|L\cap I|}}\cdot\underbrace{\sum_{B\subseteq L\setminus I}(-1)^{|B|}}_{[L\subseteq I]}. \]
The second factor vanishes unless $L\subseteq I$, in which case $|L\cap I|=|L|$. Hence
\[ w_I = (-1)^{|I|}\sum_{L\subseteq I} K_L^\bmB(-2)^{|L|}. \]

For the sign covariance, let $D := \{b : \bmA_b \neq \bmB_b\}$, so $|J\cap D| = \bmA_J/\bmB_J$ counts the disagreements on $J$. By \eqref{eq:Kinversion}, $K_L^\bmA = \sum_{I\subseteq L,\,L\setminus I\subseteq D}(-1)^{|I\cap D|}K_I^\bmB$. Substituting and reindexing $L = I\sqcup S$ with $S\subseteq (J\setminus I)\cap D$:
\[ w_J(K_\bullet^\bmA) = (-1)^{|J|}\sum_{I\subseteq J}(-1)^{|I\cap D|}K_I^\bmB(-2)^{|I|}\!\sum_{S\subseteq (J\setminus I)\cap D}\!(-2)^{|S|}. \]
The inner sum equals $(1-2)^{|(J\setminus I)\cap D|} = (-1)^{|(J\setminus I)\cap D|}$, and combines with $(-1)^{|I\cap D|}$ to give $(-1)^{|J\cap D|}$, independent of $I$. Therefore $w_J(K_\bullet^\bmA) = (-1)^{\bmA_J/\bmB_J}\,w_J(K_\bullet^\bmB)$.
\end{proof}

Thus if we changed reference diagram $\bmB$ by flipping a single box at $b$, these weights only accrue a sign, compared to the more complicated behaviour of $K_I^{\bmB}$ given by \eqref{lemma:deponB}.

\begin{lemma}
For $K^I_X(\bigstanleysum)$ given by \eqref{lemma:Kvaluess}, the weights $w_J$ are given by 
\begin{equation}\label{eq:cvalues}
w_J = 2(-1)^{|J|}(2e_J-|J|+1).
\end{equation}
\end{lemma}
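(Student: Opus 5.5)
The plan is to substitute the $K$-values of Lemma~\ref{lemma:Kvaluess} directly into the weight formula of the preceding proposition,
\[ w_J = (-1)^{|J|}\sum_{I\subseteq J} K_I^{\bar\bmX}(\bigstanleysum)\,(-2)^{|I|}, \]
taking the reference diagram to be $\bmB=\bar\bmX$. I will carry out the sum over $I\subseteq J$ by splitting it according to $|I|$. Only three kinds of subsets contribute, since Lemma~\ref{lemma:Kvaluess} says $K_I^{\bar\bmX}$ vanishes for every other $I$.

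The three contributions are:
\begin{itemize}
\item $I=\emptyset$, which contributes $2\cdot 1=2$;
\item the singletons $I=\{b\}$ with $b\in J$, which contribute $|J|\cdot 1\cdot(-2)=-2|J|$;
\item the pairs $I=\{a,b\}\subseteq J$ with $a\sim b$, which contribute $e_J\cdot 1\cdot 4=4e_J$.
\end{itemize}
In the last item I use that the number of Petersen edges with both ends in $J$ is, by definition, the edge count $e_J$ of the induced subgraph on $J$. All subsets with $|I|\ge 3$, and non-adjacent pairs, contribute zero.

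Adding these gives
\[ w_J=(-1)^{|J|}\bigl(2-2|J|+4e_J\bigr)=2(-1)^{|J|}\bigl(2e_J-|J|+1\bigr), \]
which is \eqref{eq:cvalues}.

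The only delicate point is the convention for the reference diagram. The weights here are taken relative to $\bar\bmX$, the diagram with respect to which the $K$-values in Lemma~\ref{lemma:Kvaluess} were computed. If one prefers the $\ell$ basis relative to $\bmX$, the sign-covariance part of the proposition changes $w_J$ by $(-1)^{\bar\bmX_J/\bmX_J}=(-1)^{|J|}$. I will state this explicitly, so that \eqref{eq:cvalues} is read with the $\bar\bmX$ convention, or with the alternative sign $2(2e_J-|J|+1)$ in the $\bmX$ frame. Beyond this bookkeeping I expect no real obstacle: the computation is a finite, three-term sum.
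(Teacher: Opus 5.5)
Your proposal is correct and matches the paper's own proof: substitute the $K^{\bar\bmX}$-values of Lemma~\ref{lemma:Kvaluess} into $w_J=(-1)^{|J|}\sum_{I\subseteq J}K_I^{\bar\bmX}(-2)^{|I|}$, and collect the $I=\emptyset$, singleton, and adjacent-pair terms to get $(-1)^{|J|}(2-2|J|+4e_J)$. Your remark on the reference frame is also accurate, and it explains the paper's later unannounced switch to the unsigned weights $2(2e_J-|J|+1)$ in \eqref{weightsdef}, which are the $\bmX$-frame weights.
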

\begin{proof}
We substitute the $K$-values from Lemma \ref{lemma:Kvaluess} into the weight formula $w_J = (-1)^{|J|}\sum_{I \subseteq J} K_I^{\bar\bmX} (-2)^{|I|}$. Since $K_I^{\bar\bmX}=0$ for $|I|\geq 3$, only subsets of size $0,1,2$ contribute:
\begin{align*}
w_J &= (-1)^{|J|}\left[K_\emptyset\cdot(-2)^{0} + \sum_{b \in J} K_{\{b\}} \cdot (-2)^1 + \sum_{\substack{\{a,b\} \subseteq J \\ a\sim b}} K_{\{a,b\}}\cdot(-2)^2\right] \\
&= (-1)^{|J|}\left[2\cdot (1) + |J|\cdot (-2) + e_J\cdot 4\right] \\
&= 2(-1)^{|J|}(1 - |J| + 2e_J). \qedhere
\end{align*}
\end{proof}

\subsubsection{Summary}

We summarize the results of the above as follows. For $\petersen$ the Petersen graph, let $\ell_b$ be a variable for each of the 10 vertices  $b\in P$, $\petvertrep=\BZ[P]$ is the dimension 10 permutation module of $Aut(P)=S_5$. Consider the following  $S_5$-invariant polynomial, in the variables $\ell_b, \beta$, expressed as the sum over all subsets $I$ of the vertices of $\petersen$.
\[ 2^{|\petersen|}\bigstanleysum' := \sum_{I\subseteq \petersen} w_{P/I} \ell_I \beta^{|\petersen/I|} \,\, \in \Sym^{10}(V_\petersen[\bbeta])^{S_5}, \]
where $\ell_I = \prod_{b\in I} \ell_b$, and the $S_5$-invariant weight $c_J \in \BZ$ for a subset $J$ is given by 
\begin{equation}\label{weightsdef}
w_{J}:=2(2e_J - |J|+1),
\end{equation}
where $e_J$ is the number of edges in the subgraph of $\petersen$ on the vertices $J$. We have that
\[ 2^{|\petersen|} \bigstanleysum' = 2(\prod_{ b\in \petersen} \ell_b) \beta^0 + \ldots + 42 \beta^{10}. \]
Note that $f$ has coefficients of both signs, e.g. for the subset $I\subset \petersen$ of four isolated vertices, then $c_I = -6$.

\subsection{Orbit Sums}

Since the weights $w_I$ are invariant under $S_5$, we group the terms of \eqref{eq:ellexp} according to the $S_5$ orbits of subsets of vertices.  Let $\CO_k$ be the set of $S_5$-orbits of $k$-subsets of vertices of $\petersen$. For $O \in \CO_k$ an orbit, define the $S_5$-invariant \emph{orbit contribution}
\begin{equation}
L_{O}:= \sum_{I \in O} \ell_I.
\end{equation}
We then define the degree-$k$ \emph{orbit sum} as the weighted sum of these contributions,
\[ \orbitsum_k := \sum_{O\in \CO_{k}} w_{\bar O} L_O, \]
and express \eqref{eq:ellexp} as the expansion in these orbit sums 
\begin{eqnarray*}
2^{10} \bigstanleysum' &=& \sum_{k=1}^{10} \beta^{10-k} \orbitsum_k\\
&=&  2\ell_\petersen \,\,+ \,\,\cdots\,\, +\,\, 42 \beta^{10} \ell_{\emptyset} 
\end{eqnarray*}
where $42 = w_{\petersen}$. The ellipsis represents a complicated collection of terms.

We note that the conjugation involution \eqref{eq:conjoperation} acts as $C:\ell_b \to -\ell_b$, and so $C:\orbitsum_k \to (-1)^{k} \orbitsum_k$. Thus, theorem \eqref{thm:z2invs} is equivalent to the following.

\begin{theorem}\label{thm:oddorbitsumvanish}
For $k$ odd, the orbit sum $\orbitsum_k$ lies in the kernel of the pullback to the hook space. That is
\begin{equation}
\orbitsum_k \in \ker\left(\iota : \Sym(\petvertrep) \to \Sym(V^{(1)}_{\{3,2\}})\right), \quad \text{ for } k \text{ odd }.
\end{equation}

\end{theorem}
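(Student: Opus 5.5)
Since the kernel of $\iota:\Sym(\petvertrep)\to\Sym(V^{(1)}_{\{3,2\}})$ is the ideal generated by the claw relations $\bmf_b=(1\text{-}A)\ell_b$, the plan is to work in the quotient where $\ell$ is constrained to $\ker(1\text{-}A)=V^{(1)}_{\{3,2\}}$. Concretely, I would impose $\ell_b=\sum_{a\sim b}\ell_a$ for every $b$, and show that each odd-degree orbit sum $\orbitsum_k$ vanishes identically as a polynomial function on the $5$-dimensional space $V^{(1)}_{\{3,2\}}$. Equivalently, I would prove that $\orbitsum_k$ restricted to $V^{(1)}$ is zero. Since the restriction is $S_5$-invariant, it suffices to show that the restriction is odd under some symmetry acting trivially on the relevant invariants, or to compute directly in a basis.

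First, I would use the parametrisation from the proof of the spanning proposition. All ten $\ell_b$ are linear in five free coordinates: in the $\ell$ basis these are $\ell_{a_1},\ell_{a_3},\ell_{b_1},\ell_{\tilde b_3},\ell_{c_3}$, with no $\beta$ shift since the claw relations are homogeneous. The cleanest model uses the Kneser description $K(5,2)$. The $1$-eigenspace $V_{\{3,2\}}$ is spanned by functions $\ell_{ij}=\phi(i,j)$ of a specific form. One standard realisation: for an edge-weighting $u:[5]\to$ vectors with $\sum u_i=0$, the function $\ell_{ij}=u_i\cdot u_j$ lies in $V_{\{4,1\}}\oplus\ldots$. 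So instead I will take the explicit basis and verify $A\ell=\ell$ directly. Using this, each $L_O$ for $|O|=k$ odd becomes an explicit $S_5$-invariant polynomial of degree $k$ on $V_{\{3,2\}}$.

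Second, the key structural input is the enlarged symmetry. The representation $V_{\{3,2\}}$ of $S_5$ extends to the irreducible $5$-dimensional representation of $S_6$ (via the outer automorphism, $\{3,2\}\mapsto\{2,2,2\}$ or $\{3,3\}$ type), realised on $J(6,3)$. The invariant ring of $S_6$ on that module, twisted by the sign character where appropriate, has a known small Hilbert series. I would show that the weighted orbit sum $\orbitsum_k$, restricted to $V^{(1)}$, is invariant under an extra involution $\tau\in S_6\setminus S_5$ that acts on $V^{(1)}$ as $-1$ times an $S_5$-element. Then $\orbitsum_k|_{V^{(1)}}=(-1)^k\orbitsum_k|_{V^{(1)}}$, which forces vanishing for odd $k$. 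To find $\tau$, I would read off from the claw relations that $-\mathrm{id}$ on $V_{\{3,2\}}$ coincides with some element of the extended group acting on the $J(6,3)$ realisation; the complementation map on $3$-subsets of $[6]$ is the natural candidate.

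The main obstacle is establishing that $\orbitsum_k|_{V^{(1)}}$ is genuinely $S_6$-invariant (or at least $\tau$-invariant), since the weights $w_J=2(2e_J-|J|+1)$ are defined on the Petersen graph and not obviously on $J(6,3)$. As a fallback, I would simply compute: for each odd $k\in\{1,3,5,7,9\}$, enumerate the $S_5$-orbits of $k$-subsets with their $e_J$, substitute the five-parameter parametrisation, and check that the coefficients cancel. Invariant theory would reduce this to checking a handful of coefficients, because the space of degree-$k$ $S_5$-invariants on $V_{\{3,2\}}$ is small for $k\le 9$, and each invariant is determined by its values at a few generic points. So the final step would be a finite evaluation at enough random points in $\ker(1\text{-}A)$ to pin down the zero polynomial.
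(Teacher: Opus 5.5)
\textbf{Gap: the symmetry step is circular.} On $V^{(1)}_{\{3,2\}}$ the only element of $\Aut(\johnson)=S_6\times\BZ_2$ acting as $-\mathrm{id}$ is the conjugation $C$. So any $\tau$ acting as $-g$ with $g$ in the Petersen $S_5$ is necessarily $\tau=Cg$. Such elements do exist in the plain permutation copy of $S_6$: for example $(46)(15)(23)=C\,s_{12}$, since the paper has $s_{12}=-(46)(15)(23)$. They do not lie in the enlarged group generated by the $s_{ij}$ and $s_{56}$, which does not contain $C$.

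For an $S_5$-invariant $p$ we have $(Cg)\cdot p=C\cdot p$. Hence ``$\orbitsum_k|_{V^{(1)}}$ is $\tau$-invariant'' says exactly ``$\orbitsum_k|_{V^{(1)}}$ is $C$-invariant'', and for odd $k$ that \emph{is} the statement to be proved (this theorem is the degree-by-degree form of Theorem~\ref{thm:z2invs}). Your proposal gives no independent reason for this invariance. The Stanley sum $\bigstanleysum'$ is not itself $C$-invariant ($\bmX$ has coefficient $7$, $\bar\bmX$ has $0$), so the invariance can only hold modulo the claw ideal, which is precisely the content of the theorem.

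You also cannot borrow the $S_6$ enlargement from later in the paper. The proof that $\sigma_{56}$ preserves $\bigstanleypoly$ takes $S_5\times\BZ_2$-invariance as input and only examines degrees $6,8,10$. Genuine $s_{56}$-invariance would in fact suffice: odd-degree invariants of that sign-twisted $S_6$ on $V^{(1)}$ are $S_6$-alternating in $x_1,\dots,x_6$, hence of degree at least $15$. But proving $s_{56}$-invariance of the odd parts directly is the same kind of ideal-membership computation you were trying to avoid.

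\textbf{The fallback is what actually proves anything, and it needs tightening.} It is acceptable as a computer verification once made deterministic. Random sampling only certifies vanishing with high probability. Either expand symbolically after the homogeneous substitution ($\ell_{a_2}=\ell_{a_1}+\ell_{a_3}+\ell_{c_3}$, $\ell_{c_4}=-(\ell_{a_1}+\ell_{a_3}+\ell_{b_1}+\ell_{\tilde b_3}+\ell_{c_3})$, etc.), or evaluate exactly on the grid $\{0,1,\dots,k\}^5$ in the five free coordinates. Your ``few generic points'' shortcut would additionally need a computed basis of the degree-$k$ $S_5$-invariants and a checked nonsingular evaluation matrix.

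\textbf{Comparison with the paper.} The paper's proof is different in kind. For each odd $k$ it exhibits an explicit ideal-membership certificate $\orbitsum_k=\sum_i c_i\,Q(V_i,F_i)$, where $Q(V,F)=\sum_{I\in V}\ell_I\sum_{b\notin I}F_b\ell_b$. Here $V$ is an $S_5$-orbit of $(k-1)$-subsets and $F\in\mathrm{Im}(1\text{-}A)$ is supported off $V$ and stabiliser-invariant, so each $Q(V,F)$ lies in the claw ideal by construction. This reduces each degree to a small linear system over orbit types:
\begin{itemize}
\item degree $1$: $\orbitsum_1$ is a multiple of $\sum_b\ell_b\in\mathrm{Im}(1\text{-}A)$;
\item degree $3$: an overdetermined $4\times 3$ system;
\item degree $5$: a $6\times 5$ system with a one-parameter family of solutions;
\item degree $7$: two relations;
\item degree $9$: $\orbitsum_9=0$ because its only weight vanishes.
\end{itemize}
That argument is hand-checkable and also shows why the even degrees genuinely fail. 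A brute-force check yields only the yes/no answer, without that structure.
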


In the following subsections, we prove \eqref{thm:oddorbitsumvanish} this statement in each degree individually (\eqref{prop:degree1}, \eqref{prop:degree3}, \eqref{prop:degree5}, \eqref{prop:degree7}), by showing  formulas of the form (for each odd $k$)
\[ \orbitsum_{k} := \sum_{O\in \CO_{k}} w_{\bar O} L_O = \sum_i Q_i
\]
where $w_{\bar O}:= w_{\petersen/I}$, for any representative subset $I \in O$, and each $Q_i \in \ker(\iota)$ is a kernel element given by the following construction.  
For $V\in \CO_{k}$, let $F$ be a function on $\petersen$ with support on $\bar V$ that is invariant under the $V$-stabilizer $\Aut(\petersen)^V$. Then the following degree $k+1$ polynomial the $\ell$ variables is $\Aut(\petersen)$ invariant
\begin{equation} Q(V,F):= \sum_{I\in V} \ell_{ V} \sum_{b\in \bar V} F_b \ell_b = \sum_{O\in \CO_{k+1}: V\subset O } F_{O/V}\, [O\!:\! V]\, L_O 
\end{equation}
where $[O\!:\! V]$ counts the number of inclusions $V\hookrightarrow  O$. This is easily expressed in terms of the orbit sums. Note, if such a function $F \in \mathrm{image}(1\text{-}A)$, then
\[\sum_{b\in \bar V} F_b \ell_b \in \ker(\iota), \]
We denote the space of these functions $F$ as $H(V)$.

\begin{corollary}
For $F \in H(V)$, the function $Q(V,F)$ lies in the kernel of $\iota$
\end{corollary}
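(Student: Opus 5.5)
The corollary follows from the factorised form of $Q(V,F)$ together with the fact that $\ker(\iota)$ is an ideal. I will work directly from the defining expression
\[ Q(V,F)=\sum_{I\in V}\ell_I\Big(\sum_{b\in \bar I}F^{(I)}_b\,\ell_b\Big), \]
where for each $I$ in the orbit $V$ the function $F^{(I)} := g.F$ is the transported copy of $F$, with $g\in S_5$ any element carrying the base representative to $I$. This transport is well defined because $F$ is invariant under the stabiliser $\Aut(\petersen)^V$.

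The first step is to show that each inner linear form lies in $\ker(\iota)$. The hypothesis $F\in H(V)$ means $F=(1\text{-}A)G$ for some function $G$ on $\petersen$. By linearity,
\[ \sum_b F_b\ell_b=\sum_b G_b\,(1\text{-}A)\ell_b = \sum_b G_b\,\bmf_b, \]
using that $A$ is symmetric and the claw relations \eqref{def:lkernel}. Each $\bmf_b$ vanishes in the hook space, so this linear form lies in $\ker(\iota)$. The same holds for every transported copy $F^{(I)}=g.F$: the $S_5$ action commutes with $A$ (graph automorphisms commute with the adjacency matrix), so $g.F=(1\text{-}A)(g.G)$ is again in the image. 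Equivalently, $g$ permutes the claw relations, as in the Lemma lifting $\Aut(\petersen)$ to hook automorphisms.

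The second step uses that $\iota:\Sym(\petvertrep)\to\Sym(V^{(1)}_{\{3,2\}})$ is a ring homomorphism, namely the quotient by the ideal generated by the linear claw relations. Its kernel is therefore an ideal, so each product $\ell_I\cdot\big(\sum_b F^{(I)}_b\ell_b\big)$ lies in $\ker(\iota)$. Summing over $I\in V$ keeps us in $\ker(\iota)$, which proves the corollary. If one also wants the orbit-sum form $\sum_O F_{O/V}[O\!:\!V]L_O$, it comes from regrouping the monomials $\ell_I\ell_b$ by the $S_5$-orbit of $I\cup\{b\}$; this step only matters for later bookkeeping, not for kernel membership.

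The only delicate point is confirming that the support condition and the translation of $F$ across the orbit are compatible with membership in $\mathrm{Im}(1\text{-}A)$. I would address this by noting that $\mathrm{Im}(1\text{-}A)=V^{(3)}_{\{5\}}\oplus V^{(-2)}_{\{4,1\}}$ by \eqref{eq:petersenrepdecomp}, which is an $S_5$-subrepresentation. It is therefore stable under every $g$, and the support restriction plays no role in the kernel argument.
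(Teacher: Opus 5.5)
Your argument is correct and is essentially the paper's. Since $F\in\mathrm{Im}(1\text{-}A)$, the inner linear form $\sum_b F_b\ell_b$ is a combination of the claw relations $\bmf_b$ and so lies in $\ker(\iota)$; because $\ker(\iota)$ is an ideal (the kernel of the algebra map induced by $\petvertrep\to\petvertrep/\mathrm{Im}(1\text{-}A)$), multiplying by $\ell_I$ and summing over the orbit stays in the kernel. Your remarks on transporting $F$ across the orbit and on the $S_5$-stability of $\mathrm{Im}(1\text{-}A)$ make explicit what the paper leaves implicit.
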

This corollary will be used extensively in what follows. The functions $F$ we will use will be in the span one of the following kernel functions,
\[
f_1:=\begin{tikzpicture}[scale=0.6,transform shape, every node/.style={circle, draw, inner sep=1.2pt}, baseline={([yshift=-1.0ex]current bounding box.center)}]
  \node (o1) at ( 90:1.6) {$1$};
  \node (o2) at ( 18:1.6) {$0$};
  \node (o3) at (-54:1.6) {$0$};
  \node (o4) at (-126:1.6) {$0$};
  \node (o5) at (162:1.6) {$0$};
  \node (i1) at ( 90:0.9) {$\text{-}1$};
  \node (i2) at ( 18:0.9) {$0$};
  \node (i3) at (-54:0.9) {$1$};
  \node (i4) at (-126:0.9) {$1$};
  \node (i5) at (162:0.9) {$0$};
  \draw (o1)--(o2)--(o3)--(o4)--(o5)--(o1);
  \draw (o1)--(i1);
  \draw (o2)--(i2);
  \draw (o3)--(i3);
  \draw (o4)--(i4);
  \draw (o5)--(i5);
  \draw (i1)--(i3)--(i5)--(i2)--(i4)--(i1);
\end{tikzpicture}
=(1\text{-}A)
\begin{tikzpicture}[scale=0.6,transform shape, every node/.style={circle, draw, inner sep=1.2pt}, baseline={([yshift=-1.0ex]current bounding box.center)}]
  \node (o1) at ( 90:1.6) {$0$};
  \node (o2) at ( 18:1.6) {$0$};
  \node (o3) at (-54:1.6) {$0$};
  \node (o4) at (-126:1.6) {$0$};
  \node (o5) at (162:1.6) {$0$};
  \node (i1) at ( 90:0.9) {$\text{-}1$};
  \node (i2) at ( 18:0.9) {$0$};
  \node (i3) at (-54:0.9) {$0$};
  \node (i4) at (-126:0.9) {$0$};
  \node (i5) at (162:0.9) {$0$};
  \draw (o1)--(o2)--(o3)--(o4)--(o5)--(o1);
  \draw (o1)--(i1);
  \draw (o2)--(i2);
  \draw (o3)--(i3);
  \draw (o4)--(i4);
  \draw (o5)--(i5);
  \draw (i1)--(i3)--(i5)--(i2)--(i4)--(i1);
\end{tikzpicture}
\]

\[
f_2:=\begin{tikzpicture}[scale=0.6,transform shape, every node/.style={circle, draw, inner sep=1.2pt}, baseline={([yshift=-1.0ex]current bounding box.center)}]
  \node (o1) at ( 90:1.6) {$0$};
  \node (o2) at ( 18:1.6) {$1$};
  \node (o3) at (-54:1.6) {$0$};
  \node (o4) at (-126:1.6) {$0$};
  \node (o5) at (162:1.6) {$1$};
  \node (i1) at ( 90:0.9) {$0$};
  \node (i2) at ( 18:0.9) {$0$};
  \node (i3) at (-54:0.9) {$1$};
  \node (i4) at (-126:0.9) {$1$};
  \node (i5) at (162:0.9) {$0$};
  \draw (o1)--(o2)--(o3)--(o4)--(o5)--(o1);
  \draw (o1)--(i1);
  \draw (o2)--(i2);
  \draw (o3)--(i3);
  \draw (o4)--(i4);
  \draw (o5)--(i5);
  \draw (i1)--(i3)--(i5)--(i2)--(i4)--(i1);
\end{tikzpicture}
=(1\text{-}A)
\begin{tikzpicture}[scale=0.6,transform shape, every node/.style={circle, draw, inner sep=1.2pt}, baseline={([yshift=-1.0ex]current bounding box.center)}]
  \node (o1) at ( 90:1.6) {$\text{-}1$};
  \node (o2) at ( 18:1.6) {$0$};
  \node (o3) at (-54:1.6) {$0$};
  \node (o4) at (-126:1.6) {$0$};
  \node (o5) at (162:1.6) {$0$};
  \node (i1) at ( 90:0.9) {$\text{-}1$};
  \node (i2) at ( 18:0.9) {$0$};
  \node (i3) at (-54:0.9) {$0$};
  \node (i4) at (-126:0.9) {$0$};
  \node (i5) at (162:0.9) {$0$};
  \draw (o1)--(o2)--(o3)--(o4)--(o5)--(o1);
  \draw (o1)--(i1);
  \draw (o2)--(i2);
  \draw (o3)--(i3);
  \draw (o4)--(i4);
  \draw (o5)--(i5);
  \draw (i1)--(i3)--(i5)--(i2)--(i4)--(i1);
\end{tikzpicture}
\]
\[f_3:=
\begin{tikzpicture}[scale=0.6,transform shape, every node/.style={circle, draw, inner sep=1.2pt}, baseline={([yshift=-1.0ex]current bounding box.center)}]
  \node (o1) at ( 90:1.6) {$1$};
  \node (o2) at ( 18:1.6) {$0$};
  \node (o3) at (-54:1.6) {$1$};
  \node (o4) at (-126:1.6) {$1$};
  \node (o5) at (162:1.6) {$0$};
  \node (i1) at ( 90:0.9) {$1$};
  \node (i2) at ( 18:0.9) {$1$};
  \node (i3) at (-54:0.9) {$0$};
  \node (i4) at (-126:0.9) {$0$};
  \node (i5) at (162:0.9) {$1$};
  \draw (o1)--(o2)--(o3)--(o4)--(o5)--(o1);
  \draw (o1)--(i1);
  \draw (o2)--(i2);
  \draw (o3)--(i3);
  \draw (o4)--(i4);
  \draw (o5)--(i5);
  \draw (i1)--(i3)--(i5)--(i2)--(i4)--(i1);
\end{tikzpicture} = (1\text{-}A)\begin{tikzpicture}[scale=0.6,transform shape, every node/.style={circle, draw, inner sep=1.2pt}, baseline={([yshift=-1.0ex]current bounding box.center)}]
  \node (o1) at ( 90:1.6) {$\text{-}1$};
  \node (o2) at ( 18:1.6) {$\text{-}1$};
  \node (o3) at (-54:1.6) {$0$};
  \node (o4) at (-126:1.6) {$0$};
  \node (o5) at (162:1.6) {$\text{-}1$};
  \node (i1) at ( 90:0.9) {$0$};
  \node (i2) at ( 18:0.9) {$0$};
  \node (i3) at (-54:0.9) {$0$};
  \node (i4) at (-126:0.9) {$0$};
  \node (i5) at (162:0.9) {$0$};
  \draw (o1)--(o2)--(o3)--(o4)--(o5)--(o1);
  \draw (o1)--(i1);
  \draw (o2)--(i2);
  \draw (o3)--(i3);
  \draw (o4)--(i4);
  \draw (o5)--(i5);
  \draw (i1)--(i3)--(i5)--(i2)--(i4)--(i1);
\end{tikzpicture}.
\]
\begin{equation} f_\Gamma:= \begin{tikzpicture}[scale=0.6,transform shape, every node/.style={circle, draw, inner sep=1.2pt}, baseline={([yshift=-1.0ex]current bounding box.center)}]
  \node (o1) at ( 90:1.6) {$1$};
  \node (o2) at ( 18:1.6) {$1$};
  \node (o3) at (-54:1.6) {$1$};
  \node (o4) at (-126:1.6) {$1$};
  \node (o5) at (162:1.6) {$1$};
  \node (i1) at ( 90:0.9) {$1$};
  \node (i2) at ( 18:0.9) {$1$};
  \node (i3) at (-54:0.9) {$1$};
  \node (i4) at (-126:0.9) {$1$};
  \node (i5) at (162:0.9) {$1$};
  \draw (o1)--(o2)--(o3)--(o4)--(o5)--(o1);
  \draw (o1)--(i1);
  \draw (o2)--(i2);
  \draw (o3)--(i3);
  \draw (o4)--(i4);
  \draw (o5)--(i5);
  \draw (i1)--(i3)--(i5)--(i2)--(i4)--(i1);
\end{tikzpicture} = (1\text{-}A)\tfrac{\text{-}1}{2} f_\Gamma
\end{equation}
Note that $A f_\Gamma=3 f_\Gamma$.
\begin{example}\label{ex:qvf}
For example we look at $Q(\gPathTwo, f_2)$, which corresponds to
\[ \begin{tikzpicture}[scale=0.6,transform shape, every node/.style={circle, draw, inner sep=1.2pt}, baseline={([yshift=-1.0ex]current bounding box.center)}]
  \node (o1) at ( 90:1.6) {$0$};
  \node (o2) at ( 18:1.6) {$1$};
  \node[blue, thick, fill] (o3) at (-54:1.6) {$0$};
  \node[blue, thick, fill] (o4) at (-126:1.6) {$0$};
  \node (o5) at (162:1.6) {$1$};
  \node (i1) at ( 90:0.9) {$0$};
  \node (i2) at ( 18:0.9) {$0$};
  \node (i3) at (-54:0.9) {$1$};
  \node (i4) at (-126:0.9) {$1$};
  \node (i5) at (162:0.9) {$0$};
  \draw (o1)--(o2)--(o3) (o4)--(o5)--(o1);
  \draw[blue, very thick] (o3)--(o4);
  \draw (o1)--(i1);
  \draw (o2)--(i2);
  \draw (o3)--(i3);
  \draw (o4)--(i4);
  \draw (o5)--(i5);
  \draw (i1)--(i3)--(i5)--(i2)--(i4)--(i1);
\end{tikzpicture} \]
We see that the function $f_2$ has support away from the (blue) subgraph $V=\gPathTwo$, and is invariant under the automorphism subgroup that fixes $V$, and thus $f_2 \in H(V)$.

Including each of the vertices in the support of the function $f_2$, we get the four contributions
\[  \begin{tikzpicture}[scale=0.4, every node/.style={circle, draw, inner sep=1.2pt}, baseline={([yshift=-1.0ex]current bounding box.center)}]
  \node (o1) at ( 90:1.6) {$$};
  \node (o2) at ( 18:1.6) {$$};
  \node[red, thick, fill] (o3) at (-54:1.6) {$$};
  \node[red, thick, fill] (o4) at (-126:1.6) {$$};
  \node[thick, fill] (o5) at (162:1.6) {$$};
  \node (i1) at ( 90:0.9) {$$};
  \node (i2) at ( 18:0.9) {$$};
  \node (i3) at (-54:0.9) {$$};
  \node (i4) at (-126:0.9) {$$};
  \node (i5) at (162:0.9) {$$};
  \draw (o1)--(o2)--(o3) (o5)--(o1);
  \draw[red, very thick] (o3)--(o4);
  \draw[ very thick] (o4)--(o5);
  \draw (o1)--(i1);
  \draw (o2)--(i2);
  \draw (o3)--(i3);
  \draw (o4)--(i4);
  \draw (o5)--(i5);
  \draw (i1)--(i3)--(i5)--(i2)--(i4)--(i1);
\end{tikzpicture} + 
\begin{tikzpicture}[scale=0.4, every node/.style={circle, draw, inner sep=1.2pt}, baseline={([yshift=-1.0ex]current bounding box.center)}]
  \node (o1) at ( 90:1.6) {$$};
  \node (o2) at ( 18:1.6) {$$};
  \node[red, thick, fill] (o3) at (-54:1.6) {$$};
  \node[red, thick, fill] (o4) at (-126:1.6) {$$};
  \node (o5) at (162:1.6) {$$};
  \node (i1) at ( 90:0.9) {$$};
  \node (i2) at ( 18:0.9) {$$};
  \node (i3) at (-54:0.9) {$$};
  \node[thick, fill] (i4) at (-126:0.9) {$$};
  \node (i5) at (162:0.9) {$$};
  \draw (o1)--(o2)--(o3) (o4)--(o5)--(o1);
  \draw[red, very thick] (o3)--(o4);
    \draw[ very thick] (o4)--(i4);
  \draw (o1)--(i1);
  \draw (o2)--(i2);
  \draw (o3)--(i3);
  \draw (o4)--(i4);
  \draw (o5)--(i5);
  \draw (i1)--(i3)--(i5)--(i2)--(i4)--(i1);
\end{tikzpicture}+
\begin{tikzpicture}[scale=0.4, every node/.style={circle, draw, inner sep=1.2pt}, baseline={([yshift=-1.0ex]current bounding box.center)}]
  \node (o1) at ( 90:1.6) {$$};
  \node (o2) at ( 18:1.6) {$$};
  \node[red, thick, fill] (o3) at (-54:1.6) {$$};
  \node[red, thick, fill] (o4) at (-126:1.6) {$$};
  \node (o5) at (162:1.6) {$$};
  \node (i1) at ( 90:0.9) {$$};
  \node (i2) at ( 18:0.9) {$$};
  \node[thick, fill] (i3) at (-54:0.9) {$$};
  \node (i4) at (-126:0.9) {$$};
  \node (i5) at (162:0.9) {$$};
  \draw (o1)--(o2)--(o3) (o4)--(o5)--(o1);
  \draw[red, very thick] (o3)--(o4);
  \draw[ very thick] (o3)--(i3);
  \draw (o1)--(i1);
  \draw (o2)--(i2);
  \draw (o3)--(i3);
  \draw (o4)--(i4);
  \draw (o5)--(i5);
  \draw (i1)--(i3)--(i5)--(i2)--(i4)--(i1);
\end{tikzpicture}+
\begin{tikzpicture}[scale=0.4, every node/.style={circle, draw, inner sep=1.2pt}, baseline={([yshift=-1.0ex]current bounding box.center)}]
  \node (o1) at ( 90:1.6) {$$};
  \node[thick, fill] (o2) at ( 18:1.6) {$$};
  \node[red, thick, fill] (o3) at (-54:1.6) {$$};
  \node[red, thick, fill] (o4) at (-126:1.6) {$$};
  \node (o5) at (162:1.6) {$$};
  \node (i1) at ( 90:0.9) {$$};
  \node (i2) at ( 18:0.9) {$$};
  \node (i3) at (-54:0.9) {$$};
  \node (i4) at (-126:0.9) {$$};
  \node (i5) at (162:0.9) {$$};
  \draw (o1)--(o2)--(o3) (o4)--(o5)--(o1);
  \draw[red, very thick] (o3)--(o4);
  \draw[ very thick] (o3)--(o2);
  \draw (o1)--(i1);
  \draw (o2)--(i2);
  \draw (o3)--(i3);
  \draw (o4)--(i4);
  \draw (o5)--(i5);
  \draw (i1)--(i3)--(i5)--(i2)--(i4)--(i1);
\end{tikzpicture} \]
Adding in each of these vertices augments $\gPathTwo \to \gPathThree$, and thus we conclude that 
\[ Q(\gPathTwo,f_2) = (1) \times [\,\gPathTwo:\gPathThree\, ] \times \gPathThree = 2\, \gPathThree. \]
\end{example}

We now repeat this process of finding such linear combinations of orbit sums that lie in the kernel one degree at a time.

NOTE: From here on we rescale the weights \eqref{weightsdef} by a factor of $\tfrac{1}{2}$, for cleaner formulas.

\subsubsection{Degree 1}

\[
\centering
\begin{tabular}{c|c|c|c|c}
Orbit $O$ & $|O|$ & $e(O)$   & $w_{\bar O}$\\
\hline
$\bullet$ & 10 & 0 & 2(15-3)-(10-1)+1=16 
\end{tabular}
\]

\begin{proposition}\label{prop:degree1}
The degree 1 orbit sum, 
\begin{equation}
\orbitsum_1:=\sum_{O\in \CO_1} w_{\bar O} L_O = 16\,\bullet,
\end{equation}
 lies in the kernel.
\end{proposition}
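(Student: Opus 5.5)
The plan is to observe that $\orbitsum_1 = 16\sum_{b\in\petersen}\ell_b$ is a constant multiple of the linear form attached to the all-ones function $f_\Gamma$ on $\petersen$. It therefore suffices to show that $\sum_b \ell_b$ lies in the linear span of the claw relations \eqref{def:lkernel}, i.e.\ that $f_\Gamma\in\mathrm{Im}(1\text{-}A)$.

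First I would double-check the weight $w_{\bar O}=16$ in the table. The complement of a single vertex has $|J|=9$ vertices. Removing one vertex of the $3$-regular Petersen graph deletes $3$ of its $15$ edges, so $e_J=12$. With the rescaled weight $2e_J-|J|+1$ this gives $24-9+1=16$, and hence $\orbitsum_1=16\sum_b\ell_b$.

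Next, since $\petersen$ is $3$-regular, $A f_\Gamma = 3f_\Gamma$, so
\[
(1\text{-}A)\left(-\tfrac12 f_\Gamma\right) = -\tfrac12(1-3)f_\Gamma = f_\Gamma.
\]
Translated to polynomials, summing the claw relations over all vertices gives
\[
\sum_b \bmf_b = \sum_b \ell_b - \sum_b\sum_{a\sim b}\ell_a = (1-3)\sum_b\ell_b = -2\sum_b\ell_b,
\]
which is the identity already noted after \eqref{eq:petersenrepdecomp}. Consequently
\[
\orbitsum_1 = -8\sum_b \bmf_b .
\]
Each $\bmf_b$ is in $\ker(\iota)$ by the Proposition establishing the four-term relations \eqref{eq:fourtermkernel}, rewritten in the $\ell$ basis as in the Corollary giving \eqref{def:lkernel}. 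So $\orbitsum_1\in\ker(\iota)$. Equivalently, $\orbitsum_1$ spans the trivial summand $V^{(3)}_{\{5\}}$, which lies in $\mathrm{Im}(1\text{-}A)$ and is killed in \eqref{eq:petersenhookspace}.

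The only point needing care is that the claw relations in the $\ell$ variables are homogeneous, with no leftover $\beta$ term. Dividing by $2$ when inverting $1\text{-}A$ is harmless, since we work over $\BQ$. The Proposition and its Corollary already handle the $\beta$ bookkeeping, so I expect no real obstacle.
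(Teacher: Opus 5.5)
Your proof is correct and matches the paper's argument. The paper also computes $w_{\bar\bullet}=16$ and uses $(1\text{-}A)f_\Gamma=-2f_\Gamma$ to write $\orbitsum_1=-8\,Q(\emptyset,f_\Gamma)$, which is your identity $\orbitsum_1=-8\sum_b\bmf_b$ in different notation.
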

\begin{proof}

Here we only have one orbit $O\in \CO_1$ corresponding to $ \bullet$, for which 
\begin{equation}
w_{\bar \bullet} = 2(15-3)-9+1 =16
\end{equation}

There is only one orbit of size $k-1=0$, namely $\emptyset$. There is only a single function on $\bar \emptyset=\Gamma$ that is invariant under $G^{\emptyset} = G$, that is, the constant function $f_\Gamma$. 
\begin{equation}
-2 \, \bullet  = -2 \sum_b \ell_b = (1\text{-}A) f_\Gamma.
\end{equation}
Thus we have shown that
\[ \sum_{O \in \CO_1} w_{\bar O} L_O = -8 \times Q(\emptyset, \,f_\Gamma). \]
\end{proof}

\subsubsection{Degree 3}

Here we have the weights

\[
\begin{tabular}{c|c|c|c|c|c}
Orbit $O$ & $|O|$ & $e(O)$ & $w_{\bar O}$\\
\hline
\gPathThree & 30 & 2  & 10 \\
\gEdgeIsoThreeGhost & 60 & 1  & 8\\
\gStarGhostThree & 10 & 0 &  6\\
\gthreeIsoGhost & 20 & 0 &  6\\
\end{tabular}
\]

\begin{proposition}\label{prop:degree3}
The degree 3 orbit sum lies in the kernel, specifically
\begin{eqnarray}
\orbitsum_3 &:=& 10\,\gPathThree + 8\,\gEdgeIsoThreeGhost+6\,\gStarGhostThree+6\,\gthreeIsoGhost\\
&=& 2\,Q(\gPathTwo, f_2)+2\,Q(\gIsoTwo, f_1)+4\,Q(\gIsoTwo, f_3).
\end{eqnarray}
\end{proposition}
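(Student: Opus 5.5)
The plan is to use the Corollary for $Q(V,F)$ directly. We write $\orbitsum_3$ in the basis of the four degree-3 orbit contributions $L_{\gPathThree}, L_{\gEdgeIsoThreeGhost}, L_{\gStarGhostThree}, L_{\gthreeIsoGhost}$. Then we expand each of $Q(\gPathTwo,f_2)$, $Q(\gIsoTwo,f_1)$ and $Q(\gIsoTwo,f_3)$ in the same basis, and compare coefficients. Since each $Q(V,F)$ with $F\in H(V)$ lies in $\ker(\iota)$, the identity $\orbitsum_3 = 2Q(\gPathTwo,f_2)+2Q(\gIsoTwo,f_1)+4Q(\gIsoTwo,f_3)$ immediately gives the kernel statement.

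\textbf{Step 1: the orbit table.} First I check that the table is complete and the weights are right. The $3$-subsets of $\petersen$ fall into four $S_5$-orbits: paths ($30$), an edge plus a vertex nonadjacent to both ends ($60$), the three neighbours of a vertex ($10$), and independent triples with no common neighbour ($20$). The sizes sum to $120=\binom{10}{3}$. In the Kneser model these orbits are easy to recognise by the union pattern of the three $2$-subsets. The weights $w_{\bar O}$ follow from the rescaled \eqref{weightsdef}, $w_{\bar O} = 2e_{\bar O}-7+1$. The complement edge count is $e_{\bar O} = 15 - 3\cdot 3 + e_O$, because each vertex has degree $3$ and edges inside $O$ were subtracted twice. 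This reproduces the entries $10,8,6,6$.

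\textbf{Step 2: membership $F\in H(V)$.} Each $f_i$ is already exhibited as $(1\text{-}A)G$, so it lies in $\mathrm{Im}(1\text{-}A)$. What remains is to place $V$ so that $\operatorname{supp}F\subseteq \bar V$ and $F$ is invariant under $\Aut(\petersen)^V$. For $f_2$ this is done in Example \ref{ex:qvf}, with $V$ the edge $o_3o_4$. For $f_1$ and $f_3$, $V$ is a nonadjacent pair, whose stabiliser has order $4$. I would locate $V$ as the pair of zero-valued vertices that is fixed by the symmetry of the labelling: for $f_1$, a pair among the zeros symmetric about the axis through $o_1,i_1$; for $f_3$, the pair $\{i_3,i_4\}$ or $\{o_2,o_5\}$. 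I then check invariance on the generators of the stabiliser.

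\textbf{Step 3: expansion.} For each pair $(V,F)$ I add each vertex $b\in\operatorname{supp}F$ to $V$, record the orbit type of $V\cup\{b\}$ with weight $F_b$, and multiply by the inclusion number $[O\!:\!V]$. For $O$ a $3$-set, this counts the $2$-subsets of $O$ lying in the orbit of $V$. An edge orbit $V$ sits once in $\gPathThree$ paths counted twice, giving $Q(\gPathTwo,f_2)=2\,\gPathThree$. A nonadjacent pair sits once in $\gEdgeIsoThreeGhost$, twice in $\gPathThree$, and three times in either independent orbit. This yields explicit vectors, for instance $Q(\gIsoTwo,f_1)=\alpha_1\gPathThree+\beta_1\gEdgeIsoThreeGhost+\gamma_1\gStarGhostThree+\delta_1\gthreeIsoGhost$ with small integers, and similarly for $f_3$. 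The final step is to check that $2(2,0,0,0)+2(\alpha_1,\dots)+4(\alpha_3,\dots)=(10,8,6,6)$.

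\textbf{Main obstacle.} The delicate part is Step 3 for $f_1$ and $f_3$: choosing the correct placement of $V$ relative to the drawn labels, and correctly classifying each augmented triple, especially distinguishing $\gStarGhostThree$ from $\gthreeIsoGhost$. A single misclassification breaks the coefficient match. To guard against this I would translate everything into Kneser labels $(ij)$, using the diagram \eqref{diag:petersongraph} with outer $5$-cycle and inner pentagram. I would also cross-check totals: the $F$-weighted sum of coefficients should equal $\sum_b F_b$ times $|V\text{-orbit}|$, divided by the orbit-size normalisations. If the stated coefficients then fail to match, I would solve the $4\times 3$ linear system for the correct multipliers instead.
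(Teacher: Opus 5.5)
Your plan is the paper's own proof: expand $Q(\gPathTwo,f_2)$, $Q(\gIsoTwo,f_1)$ and $Q(\gIsoTwo,f_3)$ in the four orbit contributions (order $\gPathThree,\gEdgeIsoThreeGhost,\gStarGhostThree,\gthreeIsoGhost$), then match against $(10,8,6,6)$. Steps 1 and 2 are fine. For $f_1$, the only nonadjacent zero pair symmetric about the $o_1i_1$ axis is $\{o_2,o_5\}$, which is the paper's choice. For $f_3$, either $\{o_2,o_5\}$ or $\{i_3,i_4\}$ works: in Kneser labels $f_3$ is the indicator of the $2$-subsets avoiding one fixed point of $[5]$, and both pairs consist of $2$-subsets through that point.

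\textbf{Step 3 fails as written, because you have swapped the inclusion numbers of a nonadjacent pair.} Since $\petersen$ is triangle-free, a path $a\sim b\sim c$ contains exactly one non-edge, namely $\{a,c\}$. An edge plus a vertex nonadjacent to both ends contains two non-edges. So $[\gPathThree:\gIsoTwo]=1$ and $[\gEdgeIsoThreeGhost:\gIsoTwo]=2$, with $3$ for both independent orbits. With your numbers you would get $Q(\gIsoTwo,f_1)=(2,0,-3,3)$ and $Q(\gIsoTwo,f_3)=(2,1,3,0)$, and the combination $2Q+2Q+4Q$ comes out as $(16,4,6,6)$.

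Your fallback of re-solving for multipliers does not rescue this, because the defect is in the vectors, not the multipliers. With those vectors, the $\gthreeIsoGhost$ and $\gStarGhostThree$ rows force multipliers $2$ and $4$ on the two $\gIsoTwo$ terms, but the $\gEdgeIsoThreeGhost$ row then demands $8$. So the $4\times 3$ system is inconsistent. Your own total-count check would flag this: for $f_3$ you need $\sum_O c_O|O|=30\cdot 6=180$, while your vector gives $150$.

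The safe recipe is
$$c_O=\tfrac{|V|}{|O|}\sum_{b\,:\,V_0\cup\{b\}\in O}F_b .$$
Equivalently, $c_O$ is $[O\!:\!V]$ times the value of $F$ on the vertices completing $V_0$ to $O$ (constant here), not the sum over those vertices; your $f_2$ computation already uses this implicitly. With $V_0=\{o_2,o_5\}$, the completions are:
\begin{itemize}
\item $o_1$, giving a path;
\item $o_3,o_4,i_2,i_5$, giving an edge plus a vertex;
\item $i_1$, giving a star;
\item $i_3,i_4$, giving an independent non-star triple.
\end{itemize}
On these four groups $f_1$ takes the values $1,0,-1,1$, so $Q(\gIsoTwo,f_1)=(1,0,-3,3)$. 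Likewise $f_3$ takes the values $1,1,1,0$, so $Q(\gIsoTwo,f_3)=(1,2,3,0)$. Then $2(2,0,0,0)+2(1,0,-3,3)+4(1,2,3,0)=(10,8,6,6)$, which is exactly the paper's overdetermined system with its unique solution $(2,2,4)$.
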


\begin{proof}

First, we determine the $V_i$ and $F_i \in H(V_i)$. There are two $V_i$, 
\[ V_1 =\gPathTwo, \qquad V_2 =\gIsoTwo. \]
We find $H(\gPathTwo) = \mathrm{span}\{ f_2 \}$. Similarly, $H(\gIsoTwo) = \mathrm{span}\{ f_1, f_3 \}$. So we have a total of three dimensions of kernel sums, whilst we have four orbit terms $L_{O_j}$. Thus, the system 
\begin{equation}\label{eq:lsumeqn}
 \sum_{O\in \CO_3} w_{\bar O} L_O= \sum_{i=1}^{3} Q(V_i,f_i) \times v_i 
 \end{equation}
is over determined.

In the example \eqref{ex:qvf}, we showed that
\[ Q(\gPathTwo, f_2) =  2\, \gPathThree. \]

Next, the graph for $Q(\gIsoTwo, f_1)$ is

\begin{equation}
\begin{tikzpicture}[scale=0.6,transform shape, baseline=(current bounding box.center), every node/.style={circle, draw, inner sep=1.2pt}]
  \node (o1) at ( 90:1.6) {$1$};
  \node[blue, thick, fill] (o2) at ( 18:1.6) {$0$};
  \node (o3) at (-54:1.6) {$0$};
  \node (o4) at (-126:1.6) {$0$};
  \node[blue, thick, fill] (o5) at (162:1.6) {$0$};
  \node (i1) at ( 90:0.9) {$\text{-}1$};
  \node (i2) at ( 18:0.9) {$0$};
  \node (i3) at (-54:0.9) {$1$};
  \node (i4) at (-126:0.9) {$1$};
  \node (i5) at (162:0.9) {$0$};
  \draw (o1)--(o2)--(o3)--(o4)--(o5)--(o1);
  \draw (o1)--(i1);
  \draw (o2)--(i2);
  \draw (o3)--(i3);
  \draw (o4)--(i4);
  \draw (o5)--(i5);
  \draw (i1)--(i3)--(i5)--(i2)--(i4)--(i1);
\end{tikzpicture}
\qquad
Q(\gIsoTwo, F_1) = 1\,\gPathThree+0\,\gEdgeIsoThreeGhost-3\,\gStarGhostThree+3\,\gthreeIsoGhost.
\end{equation}

Lastly, we look at the graph for $Q(\gIsoTwo, f_3)$, which is
\begin{equation}
\begin{tikzpicture}[scale=0.6,transform shape, baseline=(current bounding box.center), every node/.style={circle, draw, inner sep=1.2pt}]
  \node (o1) at ( 90:1.6) {$1$};
  \node[blue, thick, fill] (o2) at ( 18:1.6) {$0$};
  \node (o3) at (-54:1.6) {$1$};
  \node (o4) at (-126:1.6) {$1$};
  \node[blue, thick, fill] (o5) at (162:1.6) {$0$};
  \node (i1) at ( 90:0.9) {$1$};
  \node (i2) at ( 18:0.9) {$1$};
  \node (i3) at (-54:0.9) {$0$};
  \node (i4) at (-126:0.9) {$0$};
  \node (i5) at (162:0.9) {$1$};
  \draw (o1)--(o2)--(o3)--(o4)--(o5)--(o1);
  \draw (o1)--(i1);
  \draw (o2)--(i2);
  \draw (o3)--(i3);
  \draw (o4)--(i4);
  \draw (o5)--(i5);
  \draw (i1)--(i3)--(i5)--(i2)--(i4)--(i1);
\end{tikzpicture}
\qquad
Q(\gIsoTwo, F_3) = 1\,\gPathThree+2\,\gEdgeIsoThreeGhost+3\,\gStarGhostThree+0\,\gthreeIsoGhost.
\end{equation}

With these three vectors determined we find that the overdetermined matrix equation \eqref{eq:lsumeqn}, and its unique solution $v$, are given by 
\begin{equation}
\left(\begin{matrix}
10 \\
8 \\
6\\
6
\end{matrix}\right) = 
\left(\begin{matrix}
2 & 1 & 1 \\
0 & 0 & 2 \\
0 & -3 & 3\\
0 & 3 & 0 
\end{matrix}\right)
\left(\begin{matrix}
2 \\
2 \\
4
\end{matrix}\right).
\end{equation}
Thus we have shown that the degree three orbit sum lies in the span of the kernel

\begin{equation}
\sum_{O\in \CO_3} w_{\bar O} L_O = 2\,Q(\gPathTwo, f_2)+2\,Q(\gIsoTwo, f_1)+4\,Q(\gIsoTwo, f_3).
\end{equation}

\end{proof}

\subsubsection{Degree 5}

Here, we find

\[
\centering
\begin{tabular}{c|c|c|c|c}
Orbit $O$ & $|O|$ & $e(O)$ &$\bar O$ & $w_{\bar O}$\\
\hline
\gFiveCycle & 12 & 5 &  \gFiveCycle & 6 \\
\gFivePath & 60 & 4 &  \gFiveTreeA & 4\\
\gFiveTreeA & 60 & 4 &  \gFivePath & 4\\
\gFivePFourPlusIso & 60 & 3 &  \gFivePFourPlusIso & 2\\
\gPthreeTwoIso & 30 & 2 & \gTwoEdgesIso & 0\\
\gTwoEdgesIso & 30 & 2 &  \gPthreeTwoIso & 0 \\
\end{tabular}
\]

\begin{proposition}\label{prop:degree5}
The degree 5 orbit sum lies in the kernel. Specifically,
\begin{eqnarray*}
\orbitsum_5 &:=& 6\,\gFiveCycle + 4\,\gFivePath+4\,\gFiveTreeA+2\,\gFivePFourPlusIso+0\,\gPthreeTwoIso+0\,\gTwoEdgesIso \\
&=& \tfrac{1}{5} \big(\, 6 \,Q(\gPFour, f_1) + 0\, Q(\gPthreeIso, f_1)  + 8 \,Q(\gPtwoIsotwo, f_2) + \\
&&  \quad\,\, 20\, Q(\gPtwoIsotwo, f_2) + 16\, Q(\gFourIso,f_2) \,\big).
\end{eqnarray*}

\end{proposition}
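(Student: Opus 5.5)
We follow the template of Proposition \ref{prop:degree3}. We first list the $S_5$-orbits $V\in\CO_4$ of $4$-subsets of $\petersen$ that appear on the right-hand side: $V_1=\gPFour$, $V_2=\gPthreeIso$, $V_3=\gPtwoIsotwo$ and $V_4=\gFourIso$. For each $V_i$ we take a representative and determine the space $H(V_i)$. This is the space of functions supported on $\bar V_i$ that are invariant under the stabiliser $\Aut(\petersen)^{V_i}$ and lie in $\mathrm{Im}(1\text{-}A)$. We expect it to be spanned by suitable translates of $f_1$ and $f_2$, i.e.\ by images of the same functions under elements of $S_5$, positioned so that their support avoids $V_i$. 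For each $V_i$ we check three things directly: the support condition, the stabiliser invariance, and an explicit preimage under $(1\text{-}A)$, just as displayed for $f_1,f_2$.

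Next we compute each $Q(V_i,F)$ as a vector in the six-dimensional space spanned by the orbit contributions $L_O$, $O\in\CO_5$. These are the orbits listed in the degree-5 table: $\gFiveCycle$, $\gFivePath$, $\gFiveTreeA$, $\gFivePFourPlusIso$, $\gPthreeTwoIso$, $\gTwoEdgesIso$. Following Example \ref{ex:qvf}, for a fixed representative of $V_i$ we run over the vertices $b$ in the support of $F$. For each $b$ we identify the isomorphism type of $V_i\cup\{b\}$ and add $F_b$ to the corresponding orbit. Finally we multiply by the inclusion multiplicity $[O\!:\!V_i]$. We can get that multiplicity as $|V_i|\cdot(\#\text{ ways to extend})/|O|$, using the orbit sizes $12,60,60,60,30,30$ from the table. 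Alternatively, we count the induced $4$-vertex subgraphs of a representative of $O$ that are isomorphic to $V_i$. This step is finite bookkeeping.

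Having obtained the column vectors, we assemble the $6\times 5$ linear system
\[ (6,4,4,2,0,0)^T = \tfrac15\,M\,(6,0,8,20,16)^T \]
and verify it row by row. The stated coefficient vector is given, so we only need to check the system, not solve it. As a sanity check, we confirm that the orbit contributions balance under the total count: summing coefficients weighted by $|O|$ should agree on both sides.

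The main obstacle is the second step. We must correctly identify the induced subgraph types of $V\cup\{b\}$ inside the Petersen graph and get the multiplicities $[O\!:\!V]$ right. The distinctions that matter are between $\gFivePath$ and $\gFiveTreeA$, and between $\gPthreeTwoIso$ and $\gTwoEdgesIso$. These are easy to confuse when adding a vertex adjacent to an endpoint versus an interior vertex, so we would handle them by working in the Kneser model $K(5,2)$, where adjacency is disjointness of $2$-subsets and can be checked mechanically. A secondary subtlety is that $Q(\gPtwoIsotwo,f_2)$ appears twice with coefficients $8$ and $20$. This presumably denotes two different placements of $f_2$, i.e.\ two independent elements of $H(\gPtwoIsotwo)$. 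We would first pin down these two functions explicitly, for instance as $f_2$ and its image under the stabiliser-compatible reflection, before computing their columns.
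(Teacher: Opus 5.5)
Your method (compute the columns $Q(V,F)$, then solve a linear system in the six orbit contributions) is the paper's. As you set it up, though, the verification step fails, because your list of $4$-vertex orbits leaves out the one that is indispensable: two disjoint edges, $V=\gtwotwo$.

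\textbf{Why $\gtwotwo$ is needed.} The induced $5$-path $\gFivePath$ has coefficient $4$ in $\orbitsum_5$. Deleting a vertex from an induced $P_5$ leaves $P_4$, $P_3+K_1$ or $2P_2$. It never leaves an edge plus two isolated vertices, nor an independent $4$-set. So $Q(\gPtwoIsotwo,\cdot)$ and $Q(\gFourIso,\cdot)$ have no $\gFivePath$-entry. The spaces $H(\gPFour)$ and $H(\gPthreeIso)$ are one-dimensional, both spanned by $f_1$. A quick way to see this: in the Kneser model, $\mathrm{Im}(1\text{-}A)$ consists of the functions $F(ij)=\phi(i)+\phi(j)$ with $\phi:[5]\to\BQ$, and vanishing on $V$ leaves one free parameter. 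The resulting columns $Q(\gPFour,f_1)=5\,\gFiveCycle+2\,\gFiveTreeA-\gFivePFourPlusIso$ and $Q(\gPthreeIso,f_1)=\gFiveTreeA+2\,\gFivePFourPlusIso-2\,\gPthreeTwoIso$ have no $\gFivePath$-entry either. So no combination of your four $V$'s can produce the required coefficient $4$. The missing column is $Q(\gtwotwo,f_2)=\gFivePath$: each of the four support vertices of $f_2$ joins the two edges into a $5$-path.

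Your reading of the repeated $Q(\gPtwoIsotwo,f_2)$ as two independent elements of $H(\gPtwoIsotwo)$ cannot work. That space is also one-dimensional: $\phi$ is forced to be a multiple of $\delta_5$, which gives $f_2$. The term with coefficient $20$ is a misprint for $Q(\gtwotwo,f_2)$.

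\textbf{The printed coefficients are also inconsistent.} So you cannot just check the printed vector; you have to solve the system. The remaining columns are $Q(\gPtwoIsotwo,f_2)=\gFiveTreeA+2\,\gFivePFourPlusIso+2\,\gPthreeTwoIso$ and $Q(\gFourIso,f_3)=\gPthreeTwoIso$. For the independent $4$-set the kernel function is $f_3$, which is constant on the six complementary vertices; it is not a translate of $f_1$ or $f_2$. For the coefficients of $Q(\gPFour,f_1),\,Q(\gPthreeIso,f_1),\,Q(\gtwotwo,f_2),\,Q(\gPtwoIsotwo,f_2),\,Q(\gFourIso,f_3)$, the solutions are $\tfrac{1}{5}\bigl(6,\,8-t,\,20,\,t,\,16-4t\bigr)$. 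This is the one-parameter family the paper's proof exhibits. Taking $t=0$ gives
$\orbitsum_5=\tfrac{1}{5}\bigl(6\,Q(\gPFour,f_1)+8\,Q(\gPthreeIso,f_1)+20\,Q(\gtwotwo,f_2)+16\,Q(\gFourIso,f_3)\bigr)$.

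The displayed statement attaches the $0$ and the $8$ to the wrong terms. With $0\cdot Q(\gPthreeIso,f_1)+8\,Q(\gPtwoIsotwo,f_2)+16\,Q(\gFourIso,f_3)$, the $\gPthreeTwoIso$-row equals $\tfrac{32}{5}\neq 0$; making the row vanish would need $-16$ on the last term. Solving the system with the corrected set of $V$'s is what establishes $\orbitsum_5\in\ker\iota$.
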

\begin{proof}
Here, we compute five kernel sums, 
\begin{eqnarray}
\begin{tikzpicture}[scale=0.6,transform shape, baseline=(current bounding box.center), every node/.style={circle, draw, inner sep=1.2pt}]
  \node (o1) at ( 90:1.6) {$1$};
  \node[blue, thick, fill] (o2) at ( 18:1.6) {$1$};
  \node[blue, thick, fill] (o3) at (-54:1.6) {$0$};
  \node[blue, thick, fill] (o4) at (-126:1.6) {$0$};
  \node[blue, thick, fill] (o5) at (162:1.6) {$1$};
  \node (i1) at ( 90:0.9) {$\text{-}1$};
  \node (i2) at ( 18:0.9) {$0$};
  \node (i3) at (-54:0.9) {$1$};
  \node (i4) at (-126:0.9) {$1$};
  \node (i5) at (162:0.9) {$0$};
  \draw (o1)--(o2) (o5)--(o1);
  \draw[blue, very thick] (o2)--(o3)--(o4)--(o5);
  \draw (o1)--(i1);
  \draw (o2)--(i2);
  \draw (o3)--(i3);
  \draw (o4)--(i4);
  \draw (o5)--(i5);
  \draw (i1)--(i3)--(i5)--(i2)--(i4)--(i1);
\end{tikzpicture} \quad & Q(\gPFour, f_1) &= 5\,\gFiveCycle + 2\, \gFiveTreeA - \,\gFivePFourPlusIso \\
\begin{tikzpicture}[scale=0.6,transform shape, baseline=(current bounding box.center), every node/.style={circle, draw, inner sep=1.2pt}]
  \node (o1) at ( 90:1.6) {$1$};
  \node (o2) at ( 18:1.6) {$0$};
  \node[blue, thick, fill] (o3) at (-54:1.6) {$0$};
  \node[blue, thick, fill] (o4) at (-126:1.6) {$0$};
  \node[blue, thick, fill] (o5) at (162:1.6) {$1$};
  \node (i1) at ( 90:0.9) {$\text{-}1$};
  \node[blue, thick, fill] (i2) at ( 18:0.9) {$0$};
  \node (i3) at (-54:0.9) {$1$};
  \node (i4) at (-126:0.9) {$1$};
  \node (i5) at (162:0.9) {$0$};
  \draw (o1)--(o2)--(o3) (o5)--(o1);
  \draw[blue, very thick] (o3)--(o4)--(o5);
  \draw (o1)--(i1);
  \draw (o2)--(i2);
  \draw (o3)--(i3);
  \draw (o4)--(i4);
  \draw (o5)--(i5);
  \draw (i1)--(i3)--(i5)--(i2)--(i4)--(i1);
\end{tikzpicture} \quad& Q(\gPthreeIso, f_1) &= \,\gFiveTreeA+ 2\,\gFivePFourPlusIso  - 2\,\gPthreeTwoIso \\
\begin{tikzpicture}[scale=0.6,transform shape, baseline=(current bounding box.center), every node/.style={circle, draw, inner sep=1.2pt}]
  \node (o1) at ( 90:1.6) {$0$};
  \node (o2) at ( 18:1.6) {$1$};
  \node[blue, thick, fill] (o3) at (-54:1.6) {$0$};
  \node[blue, thick, fill] (o4) at (-126:1.6) {$0$};
  \node (o5) at (162:1.6) {$1$};
  \node (i1) at ( 90:0.9) {$0$};
  \node[blue, thick, fill] (i2) at ( 18:0.9) {$0$};
  \node (i3) at (-54:0.9) {$1$};
  \node (i4) at (-126:0.9) {$1$};
  \node[blue, thick, fill] (i5) at (162:0.9) {$0$};
  \draw (o1)--(o2)--(o3) (o4)--(o5)--(o1);
  \draw[blue, very thick] (o3)--(o4) (i2)--(i5);
  \draw (o1)--(i1);
  \draw (o2)--(i2);
  \draw (o3)--(i3);
  \draw (o4)--(i4);
  \draw (o5)--(i5);
  \draw (i1)--(i3)--(i5) (i2)--(i4)--(i1);
\end{tikzpicture} \quad & Q(\gtwotwo, f_2) &= \gFivePath
\\
\begin{tikzpicture}[scale=0.6,transform shape, baseline=(current bounding box.center), every node/.style={circle, draw, inner sep=1.2pt}]
  \node (o1) at ( 90:1.6) {$0$};
  \node (o2) at ( 18:1.6) {$1$};
  \node[blue, thick, fill] (o3) at (-54:1.6) {$0$};
  \node[blue, thick, fill] (o4) at (-126:1.6) {$0$};
  \node (o5) at (162:1.6) {$1$};
  \node[blue, thick, fill] (i1) at ( 90:0.9) {$0$};
  \node[blue, thick, fill] (i2) at ( 18:0.9) {$0$};
  \node (i3) at (-54:0.9) {$1$};
  \node (i4) at (-126:0.9) {$1$};
  \node (i5) at (162:0.9) {$0$};
  \draw (o1)--(o2)--(o3) (o4)--(o5)--(o1);
  \draw[blue, very thick] (o3)--(o4);
  \draw (o1)--(i1);
  \draw (o2)--(i2);
  \draw (o3)--(i3);
  \draw (o4)--(i4);
  \draw (o5)--(i5);
  \draw (i1)--(i3)--(i5)--(i2)--(i4)--(i1);
\end{tikzpicture} \quad& Q(\gPtwoIsotwo, f_2) &= \,\gFiveTreeA + 2\,\gFivePFourPlusIso  + 2\,\gPthreeTwoIso \\
\begin{tikzpicture}[scale=0.6,transform shape, baseline=(current bounding box.center), every node/.style={circle, draw, inner sep=1.2pt}]
  \node (o1) at ( 90:1.6) {$1$};
  \node[blue, thick, fill] (o2) at ( 18:1.6) {$0$};
  \node (o3) at (-54:1.6) {$1$};
  \node (o4) at (-126:1.6) {$1$};
  \node[blue, thick, fill] (o5) at (162:1.6) {$0$};
  \node (i1) at ( 90:0.9) {$1$};
  \node (i2) at ( 18:0.9) {$1$};
  \node[blue, thick, fill] (i3) at (-54:0.9) {$0$};
  \node[blue, thick, fill] (i4) at (-126:0.9) {$0$};
  \node (i5) at (162:0.9) {$1$};
  \draw (o1)--(o2)--(o3)--(o4)--(o5)--(o1);
  \draw (o1)--(i1);
  \draw (o2)--(i2);
  \draw (o3)--(i3);
  \draw (o4)--(i4);
  \draw (o5)--(i5);
  \draw (i1)--(i3)--(i5)--(i2)--(i4)--(i1);
\end{tikzpicture}
\quad & Q(\gFourIso,f_2) &= \gPthreeTwoIso
\end{eqnarray}

We observe that $H(\gFourStar) = \emptyset$, i.e. there are no invariant kernel functions in its complement, thus it doesn't produce any kernel relations. 

The above five equations in the six variables combine into the following linear system, with a one parameter family of solutions.
\begin{equation}
\left(\begin{matrix}
6 \\
4 \\
4\\
2\\
0\\
0
\end{matrix}\right) = 
\left(\begin{matrix}
5  & 0 & 0 & 0 &  0 \\
0  & 0 & 0 & 1 & 0\\
2  & 1 & 1 & 0 & 0\\
-1 & 2 & 2 & 0 & 0 \\
0  & 2 & -2& 0 & 1\\
0  & 0 & 0  & 0 & 0 \\
\end{matrix}\right)\frac{1}{5}
\left(\begin{matrix}
6\qquad\, \\
0\,\,+t \\
8\,\,-t \\
20 \,\qquad\,\\
16\,-4t
\end{matrix}\right).
\end{equation}
Thus, the degree $5$ orbit sum is in the image of the kernel elements.
\end{proof}

\subsubsection{Degree 7}

\[
\centering
\begin{tabular}{c|c|c|c}
Orbit $O$ & $|O|$ & $e(O)$ &  $w_{\bar O}$\\
\hline
$\overline{\gthreeIsoGhost}$ & 20 & 0  & -1 \\
$\overline{\gStarGhostThree}$ & 10 & 0 & -1 \\

$\overline{\gEdgeIsoThree}$ & 60 & 1  & 0\\

$\overline{\gPathThree}$ & 30 & 2   & 1\\
\end{tabular}
\]


\begin{proposition}\label{prop:degree7}
The degree 7 orbit sum lies in the kernel. Specifically,
\begin{eqnarray*}
\orbitsum_7   &=& -1\,\overline{\gStarGhostThree}-1\,\overline{\gthreeIsoGhost}+0\,\gEdgeIsoThree  +1\,\overline{\gPathThree} \\
&=&  Q(\overline{\gFourStar},f_1)-Q(\overline{\gFourIso},f_3).
\end{eqnarray*}

\end{proposition}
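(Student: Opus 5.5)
The plan mirrors the degree 3 and degree 5 computations, now working in the complementary picture: a $7$-subset $I$ is encoded by its $3$-element complement $\bar I$. The four $S_5$-orbits of $7$-subsets are therefore in bijection with the four orbits of $3$-subsets, and the weights in the table are $w_{\bar O}$ with $\bar O$ the complementary $3$-set. I will check these weights against \eqref{weightsdef}, rescaled by $\tfrac12$, as $w=2e-|J|+1$ on the $3$-vertex complements:
\begin{itemize}
\item a path has $e=2$, so $w=4-3+1=2$;
\item an edge plus an isolated vertex has $e=1$, so $w=1$;
\item each of the two independent-set types has $e=0$, so $w=-2$.
\end{itemize}
These do not yet match the table's $(1,0,-1,-1)$. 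So the first step is to reconcile the normalisation and confirm that the table's entries are correct. The entries are indexed by the $7$-set orbit $O$, with $w_{\bar O}$ evaluated on its $3$-vertex complement. The $7$-vertex induced subgraphs have $e=15-(\text{edges touching }\bar O)$, and I will recompute them honestly using that formula. I take the table as given for the rest of the plan.

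Next, I will compute the two kernel elements named in the statement. $Q(\overline{\gFourStar},f_1)$ uses $V$ the complement of a claw ($6$ vertices), with $f_1$ supported on the claw's four vertices. $Q(\overline{\gFourIso},f_3)$ uses $V$ the complement of the $4$-vertex configuration pictured, with $f_3$ supported there. In each case the key check is that $f$ lies in $H(V)$: it must be supported on $\bar V$ and invariant under the stabiliser $\Aut(\petersen)^V$. I will verify both against the pictures of $f_1$ and $f_3$, positioning the claw centre at the vertex where $f_1=-1$. Since $f_1,f_3\in\mathrm{image}(1\text{-}A)$ by their displayed preimages, the Corollary following the definition of $H(V)$ puts both $Q$'s in $\ker\iota$.

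To expand each $Q(V,F)=\sum_{b\in\bar V}F_b\,\ell_{V\cup b}$, I will classify the $3$-vertex complement $\bar V\setminus b$ for each $b$ in the support of $F$. For the claw complement with $f_1$, removing a leaf leaves a $3$-set consisting of the centre and two leaves, which is a path $\gPathThree$. Removing the centre leaves the three leaves, which are pairwise at distance $2$, giving the orbit $\gStarGhostThree$. With $f_1$'s signs ($-1$ on the centre, $+1$ on the leaves) this gives $Q(\overline{\gFourStar},f_1)=\overline{\gPathThree}-\overline{\gStarGhostThree}$, after accounting for the multiplicity $[O:V]$. I will do the same count for $f_3$ on the $4$-vertex set $\bar V$, reading off which $3$-subsets of $\bar V$ remain and how many times each appears. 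The expected result is $Q(\overline{\gFourIso},f_3)=\overline{\gthreeIsoGhost}$, up to the coefficient on $\overline{\gEdgeIsoThree}$, which must vanish.

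The main obstacle is this bookkeeping: getting the inclusion multiplicities $[O:V]$ right and confirming that the edge-plus-isolated orbit cancels. Subtracting the two expansions should reproduce $-\overline{\gStarGhostThree}-\overline{\gthreeIsoGhost}+\overline{\gPathThree}$ exactly. If the multiplicities come out off by a factor, I will fix the normalisation by comparing orbit sizes ($30$, $60$, $10$, $20$) with $|V|\cdot|\mathrm{supp}F|$ counts, using the double-counting identity $|V\text{-orbit}|\cdot\#\{b\}=|O|\cdot[O:V]$.
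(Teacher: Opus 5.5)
Your route is the paper's: write $\orbitsum_7$ as a combination of $Q(\overline{\gFourStar},f_1)$ and one further $Q$-element. Your expansion $Q(\overline{\gFourStar},f_1)=\overline{\gPathThree}-\overline{\gStarGhostThree}$, with every multiplicity equal to $1$, is correct. Two steps, however, do not go through as written.

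\textbf{The weights.} The first problem is arithmetic. For an edge plus an isolated vertex, $2e-|J|+1=2\cdot1-3+1=0$, not $1$. The correct values on the $3$-vertex complements, in the order ($\gPathThree$, $\gEdgeIsoThree$, $\gStarGhostThree$, $\gthreeIsoGhost$), are $(2,0,-2,-2)$.
\begin{itemize}
\item You cannot bypass this by taking the table as given. Neither kernel element contains the orbit $\overline{\gEdgeIsoThree}$, so the whole argument rests on that weight being zero. With your value $1$ the decomposition would not close.
\item Your alternative of recounting edges of the $7$-vertex subgraph computes $w_O$. That is the coefficient of the degree-$3$ monomial $\ell_{\bar O}$, which is the wrong quantity.
\item With correct arithmetic the weights are exactly twice the table's $(1,0,-1,-1)$. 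The degree-$7$ table is off by a factor $2$ from the normalisation $2e_J-|J|+1$ used in the other degrees. So the displayed identity holds only up to that overall factor, which is harmless for membership in $\ker\iota$.
\end{itemize}

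\textbf{The support of $f_3$.} The second problem is that $f_3$ is not supported on the $4$-vertex set. As drawn, $f_3$ equals $1$ on six vertices forming a perfect matching and $0$ on the remaining four. Those four form an independent $4$-set, one of the five stars $\{ij: j\neq i\}$ in $K(5,2)$. So for $V=\overline{\gFourIso}$, $f_3$ lives on $V$, not on $\bar V$, and the check $f_3\in H(V)$ fails.

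The element you need is $\mathbf{1}_{\bar V}=f_\Gamma-f_3$, with $f_3$ positioned so that its zero set is $\bar V$. It lies in $\mathrm{image}(1\text{-}A)$ because both $f_\Gamma$ and $f_3$ do. It spans $H(V)$, since the stabiliser of an independent $4$-set acts transitively on it.

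Then $Q(\overline{\gFourIso},f_\Gamma-f_3)=\sum_S \ell_{\bar S}\sum_{b\in S}\ell_b=\overline{\gthreeIsoGhost}$. The reason is that each $3$-subset of an independent $4$-set $S$ is independent with no common neighbour, and lies in a unique such $S$; the count $5\cdot 4=20$ matches the orbit size. In particular no $\overline{\gEdgeIsoThree}$ term can arise. The obstacle you flag is therefore vacuous once $\bar V$ is identified, but you do need to identify it and carry out this count rather than state the expected answer.

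With both corrections,
\[
\orbitsum_7=2\bigl(Q(\overline{\gFourStar},f_1)-Q(\overline{\gFourIso},f_\Gamma-f_3)\bigr)\in\ker\iota .
\]
The paper's ``$f_3$'' in this degree should be read as $f_\Gamma-f_3$.
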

\begin{proof}
There are only two $6$ orbits with kernel functions on their complement, and they produce the two relations
\begin{equation}
Q(\overline{\gFourStar},f_1) = -\overline{\gStarGhostThree} + \overline{\gPathThree}.
\end{equation}
\begin{equation}
Q(\overline{\gFourIso},f_3) = \overline{\gthreeIsoGhost}.
\end{equation}
\end{proof}

\subsubsection{Degree 9}

\begin{lemma}\label{prop:degree9}
The degree 9 orbit sum $\orbitsum_9$ vanishes identically.
\end{lemma}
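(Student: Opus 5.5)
The plan is to show that every coefficient in $\orbitsum_9$ is zero. This needs only the weight formula \eqref{weightsdef} (rescaled by $\tfrac12$ as in the NOTE), and no kernel functions $Q(V,F)$ at all.

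First I will enumerate $\CO_9$. A $9$-subset $I\subset\petersen$ is determined by its complement $\petersen/I$, which is a single vertex. Since $S_5=\Aut(\petersen)$ acts transitively on the ten vertices, $\CO_9$ consists of exactly one orbit $O$, with $|O|=10$. Hence
\[ \orbitsum_9 = w_{\bar O}\, L_O = w_{\bar O}\sum_{b\in\petersen}\ell_{\petersen/\{b\}} . \]

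Next I will evaluate the single weight $w_{\bar O}=w_{\petersen/I}$. Here $J=\petersen/I$ is one vertex, so $|J|=1$ and $e_J=0$. The rescaled formula gives
\[ w_J = 2e_J-|J|+1 = 0-1+1 = 0 . \]
Any overall sign $(-1)^{|J|}$ carried over from \eqref{eq:cvalues} is irrelevant, since the weight is already zero. Therefore $\orbitsum_9=0$ identically, so it lies trivially in $\ker(\iota)$.

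The only step needing care is the indexing convention. The weight attached to a degree-$k$ orbit is that of the \emph{complement}, as in $w_{\bar O}$, not that of $O$ itself. Evaluating on the $9$-subset instead would give $2\cdot 12-9+1=16$, which is wrong here.

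To confirm the convention I will check it against the degree-one table. There the orbit $\bullet$ is assigned $w_{\bar\bullet}=2(15-3)-9+1=16$, which is the weight of the $9$-vertex complement. The degree-nine case is the mirror image: the weight is that of the $1$-vertex complement, namely $0$. This matches the pattern already visible in the degree-five and degree-seven tables.
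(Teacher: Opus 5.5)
Your proposal is correct and takes essentially the same route as the paper. Both note that $\CO_9$ is the single orbit of complements of one vertex, and that the relevant weight is that of the one-vertex complement, $2\cdot 0-1+1=0$, so $\orbitsum_9$ vanishes identically; your explicit check that the weight is evaluated on the complement rather than on the $9$-subset is the right convention.
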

\begin{proof}
The only weight is $w(\bullet)=2(0)-1+1=0$, so there is no contribution at this degree.
\end{proof}

\subsubsection{Even degrees}

We note that the above cancellation does not happen in the even degrees. 

\begin{proposition}
The degree 2 orbit sum $\orbitsum_2$ does not lie in the kernel.
\end{proposition}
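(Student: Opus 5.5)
We would show that $\orbitsum_2$ does not vanish on the zero locus of the claw relations, which is the geometric shadow of the hook space. The first step is to recast membership in $\ker(\iota)$ as a vanishing condition. The map $\iota : \Sym(\petvertrep)\to\Sym(V^{(1)}_{\{3,2\}})$ is the quotient by the ideal generated by the linear forms $(1\text{-}A)G$. Since $A$ is the real symmetric adjacency matrix, a vector $\ell=(\ell_b)\in\mathbb{R}^{10}$ is annihilated by every linear form in $\mathrm{Im}(1\text{-}A)$ exactly when $\ell\perp\mathrm{Im}(1\text{-}A)$, that is, when $\ell\in\ker(1\text{-}A)$. This kernel is the $A$-eigenvalue-$1$ eigenspace $V^{(1)}_{\{3,2\}}$ of \eqref{eq:petersenrepdecomp}, which has dimension $5$. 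Consequently any polynomial in $\ker(\iota)$ vanishes identically when evaluated on this eigenspace. It therefore suffices to find a single real eigenvector $\ell$ with $A\ell=\ell$ and $\orbitsum_2(\ell)\neq 0$.

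The second step is to compute $\orbitsum_2$ explicitly. The $S_5$-orbits of $2$-subsets are the $15$ edges and the $30$ non-edges. We use the rescaled weights $w_J=(-1)^{|J|}(2e_J-|J|+1)$, where $J$ is the $8$-vertex complement, so the sign is $+$. The complement of an edge loses $3+3-1=5$ edges, leaving $e_J=10$ and hence $w=20-8+1=13$. The complement of a non-edge loses $6$ edges, leaving $e_J=9$ and hence $w=18-8+1=11$. Thus
\[ \orbitsum_2 = 13\sum_{a\sim b}\ell_a\ell_b + 11\sum_{a\not\sim b,\ a\neq b}\ell_a\ell_b. \]

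The third step is to evaluate this on the eigenspace. Any $\ell\in V^{(1)}_{\{3,2\}}$ is orthogonal to the constant eigenvector $f_\Gamma$, which has eigenvalue $3$, so $\sum_b\ell_b=0$. This gives
\[ \sum_{a\neq b}\ell_a\ell_b=\tfrac12\Bigl(\bigl(\textstyle\sum_b\ell_b\bigr)^2-|\ell|^2\Bigr)=-\tfrac12|\ell|^2, \]
where the sum runs over unordered pairs. Similarly $\sum_{a\sim b}\ell_a\ell_b=\tfrac12\ell^{T}A\ell=\tfrac12|\ell|^2$. The non-edge sum is the difference of these, namely $-|\ell|^2$. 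Substituting,
\[ \orbitsum_2(\ell)=\tfrac{13}{2}|\ell|^2-11|\ell|^2=-\tfrac92|\ell|^2, \]
which is nonzero for any nonzero real eigenvector. Such an eigenvector exists because the eigenspace is $5$-dimensional.

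The only real obstacle is the first step: justifying that evaluation on the eigenspace is a faithful test for $\ker(\iota)$. This follows from identifying the quotient $\Sym(\petvertrep)/(\mathrm{Im}(1\text{-}A))$ with polynomial functions on $\ker(1\text{-}A)$, which uses the orthogonal decomposition \eqref{eq:petersenrepdecomp}. In fact we need only one direction, namely that elements of the kernel ideal vanish there, and that direction is immediate. The weight computation should be double-checked against the degree-$3$ table as a consistency test.
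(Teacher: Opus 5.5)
Your proof is correct, and it takes a more direct route than the paper's. The paper works inside the three-dimensional space of $S_5$-invariant quadrics spanned by the edge sum, the non-edge sum and $\sum_b \ell_b^2$. It diagonalises the induced action of $A$ there (eigenvalues $1,4,9$) and recognises the eigenvalue-$4$ and eigenvalue-$9$ vectors as combinations of the kernel elements $Q(\bullet,f_\Gamma+f_1)$ and $(\sum_b\ell_b)^2$. It then writes $\orbitsum_2 = 5Q(\bullet,f_\Gamma+f_1)+9Q(\emptyset,f_\Gamma)^2-\tfrac34 v_1$ and concludes because $v_1$ is the eigenvalue-$1$ piece. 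You skip both the eigenbasis and the explicit kernel elements. You simply restrict to the common zero locus $\ker(1\text{-}A)$ of the linear generators, where $\sum_b\ell_b=0$ and $\ell^{T}A\ell=|\ell|^2$ reduce everything to $-\tfrac92|\ell|^2$. The two computations agree: on $\ker(1\text{-}A)$ one has $v_1=6|\ell|^2$, so $-\tfrac34 v_1$ restricts to exactly your $-\tfrac92|\ell|^2$. In fact your evaluation is what is needed to make rigorous the paper's final assertion that $v_1$ does not vanish in $\Sym^2(V_{32})$; being an $A$-eigenvector does not by itself show this. The paper's route buys structure: it expresses the kernel part of $\orbitsum_2$ in the same $Q(V,F)$ language used for the odd degrees and isolates the obstruction as a specific eigenvector. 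Your route buys a short, self-contained non-vanishing certificate that requires no knowledge of which $Q(V,F)$ span the degree-$2$ kernel.
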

\begin{proof}
We start from
\[ \orbitsum_2 = 13\,\gPathTwo + 11\,\gIsoTwo. \]
The only kernel function in degree 2 is
\[ Q(\bullet, f_\Gamma+f_1) = 2\, \gPathTwo+ 1\,\gIsoTwo. \]
Let 
\[ \bullet_2 :=  \sum_{b} \ell_b^2,\quad \bullet :=  \sum_{b} \ell_b =  2Q(\emptyset, \,f_\Gamma). \]
We note that the action of $A$ on a basis of degree 2 monomials is
\[
A \begin{pmatrix} \gPathTwo \\ \gIsoTwo \\ \bullet_2 \end{pmatrix} = \begin{pmatrix} 5 & 2 & 0 \\ 4 & 6 & 3 \\ 0 & 2 & 3 \end{pmatrix} \begin{pmatrix} \gPathTwo \\ \gIsoTwo \\ \bullet_2 \end{pmatrix}.
\]
This system admits three eigenvectors $v_\lambda$,
\[ v_1= 2\gPathTwo-2\gIsoTwo+3\bullet_2  \]
\[ v_4=  -4\gPathTwo+1\gIsoTwo+3\bullet_2  \]
\[ v_9= 2\gPathTwo+2\gIsoTwo+\bullet_2 \]
We see the kernel contains
\[ v_4 = 3 (\bullet)^2 - 5 Q(\bullet, f_\Gamma+f_1) \]
\[v_9 = (\bullet)^2  = 4Q(\emptyset, \,f_\Gamma)^2\]
Thus,  we have
\[ \orbitsum_2 = 5Q(\bullet, f_\Gamma+f_1)+9Q(\emptyset, \,f_\Gamma)^2-\tfrac{3}{4} v_1 \]
As the vector $v_1$ is in the kernel of $A\text{-}1$ and thus does not vanish in $\Sym^2(V_{32})$.

\end{proof}

\subsection{Summary}

We have thus shown that 
\begin{equation}
\bigstanleypoly \in \Sym^{10}(V^{(1)}_{\{3,2\}}[\bbeta])^{S_5 \times \BZ_2}.
\end{equation}

\section{hyperplane factorization}

We now utilize the extra $\BZ_2$ invariance discussed in the previous section to reveal an unexpected \emph{hyperplane factorization} phenomenon.

\subsection{Boundary data}

The hook space has various \emph{boundary hyperplanes} defined by the equations $\bh_b^\sA = 0$. Relative to these hyperplanes, we need a subtle notion of equivalence for elements of the hook space.

\begin{definition}
Consider two Stanley sums $\bmss_1,\bmss_2 \in \StS_{\mu\nu}^{\lambda}$.

We write 
\begin{equation}
\bmss_1 \simeq \bmss_2,
\end{equation}
if they become equal when pulled back to the hook space, i.e. $\iota^*\bmss_1 = \iota^* \bmss_2\in \StDR_{\mu\nu}^{\lambda}$.

We write
\begin{equation}
\bmss_1 \simeq_b^\sA \bmss_2,
\end{equation}
for a box $b$ and hook choice $\sA$, to indicate when
\[ \iota^*\bmss_1 =   \iota^*\bmss_2 \in \StDR_{\mu\nu}^{\lambda}/\bh_b^\sA \]
That is, they agree in the hook space upon further imposing the relation $\bh_b^\sA = 0$.
\end{definition}

It is easy to check the following result.
\begin{lemma}\label{lemma:propofsim}
If $\bmss_1 \simeq \bmss_2$ and $\bmss_1 \simeq_b^{\sA} \bmss_3$ then $\bmss_2 \simeq_b^{\sA} \bmss_3$.
If $\bmss_1 \simeq_b^\sA \bmss_2$, then $\bar\bmss_1 \simeq_b^{\bar\sA} \bar\bmss_2$.
\end{lemma}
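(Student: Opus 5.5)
The statement has two parts, and both follow directly from the definitions of $\simeq$ and $\simeq_b^\sA$ as equalities in quotient rings. I would work in the hook space $\StDR:=\StDR_{\mu\nu}^{\lambda}$, viewed as a commutative ring (a quotient of polynomials in the hook symbols and $\bbeta$), together with the quotient map $\pi_b^\sA:\StDR\to\StDR/\bh_b^\sA$. Then $\bmss_1\simeq_b^\sA\bmss_2$ means $\pi_b^\sA\iota^*\bmss_1=\pi_b^\sA\iota^*\bmss_2$.

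For the first claim, suppose $\iota^*\bmss_1=\iota^*\bmss_2$. Applying $\pi_b^\sA$ to both sides gives $\pi_b^\sA\iota^*\bmss_1=\pi_b^\sA\iota^*\bmss_2$. Combining this with the hypothesis $\pi_b^\sA\iota^*\bmss_1=\pi_b^\sA\iota^*\bmss_3$, transitivity of equality yields $\pi_b^\sA\iota^*\bmss_2=\pi_b^\sA\iota^*\bmss_3$, which is $\bmss_2\simeq_b^\sA\bmss_3$. This step is immediate.

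For the second claim I would use the conjugation symmetry. Let $\bar\cdot$ denote the operation on Stanley sums that flips every hook, so that $\bar\bmD$ is the diagram with all choices reversed, extended linearly. At the level of polynomials in the hook symbols, this corresponds to the substitution $\bh_b^\sA\mapsto\bh_b^{\bar\sA}$ on all symbols. The key observation is that this substitution preserves the kernel ideal defining $\StDR$, and so descends to a ring automorphism $\bar C$ of $\StDR$ with $\iota^*\bar\bmss=\bar C(\iota^*\bmss)$.

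\begin{itemize}
\item For the main triple this is the corollary on the $\BZ_2$ conjugation symmetry, in the $\ell$-basis form $C:\ell_b\mapsto-\ell_b$. Using $\bh_b^\sU+\bh_b^\sL=\pm\ell_b$ together with the relations $\bh^\sU-\bh^\sL=\bbeta$, one checks that $C$ interchanges $\bh^\sU_b$ and $-\bh^\sL_b$.
\item The signs are harmless. The form $\bmD'$ is a product of a fixed number of hook factors, so the overall sign is a global constant, and it is the same for all diagrams of a given triple.
\item More invariantly, the substitution is a hook automorphism in the sense of Definition~\ref{def:hookmap}, of the form $\bh^\sA\mapsto-\bh^{\bar\sA}$ up to global sign.
\end{itemize}

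With $\bar C$ in hand, I would check that it sends the ideal $(\bh_b^\sA)$ to $(\bh_b^{\bar\sA})$, since it maps the generator to $\pm\bh_b^{\bar\sA}$. Hence $\bar C$ induces an isomorphism $\StDR/\bh_b^\sA\to\StDR/\bh_b^{\bar\sA}$ that is compatible with the projections. Applying this isomorphism to $\pi_b^\sA\iota^*\bmss_1=\pi_b^\sA\iota^*\bmss_2$ gives $\pi_b^{\bar\sA}\bar C\iota^*\bmss_1=\pi_b^{\bar\sA}\bar C\iota^*\bmss_2$, that is, $\bar\bmss_1\simeq_b^{\bar\sA}\bar\bmss_2$.

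The only genuine point to verify, and so the main obstacle, is that conjugation is compatible with the kernel ideal and with the sign conventions. In particular, one must confirm that the relation $\bh^\sU-\bh^\sL-\bbeta$ is mapped into the ideal. This requires pairing the flip with $\bbeta\mapsto-\bbeta$, or equivalently using the $\ell$-basis description in which all relations are homogeneous linear in the $\ell_b$. Once that is settled, the second claim is formal.
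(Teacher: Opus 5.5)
Your argument is the one the paper leaves implicit; the paper only says the lemma is ``easy to check''. The first claim is transitivity after projecting to $\StDR/\bh_b^\sA$. The second is transport along the conjugation automorphism, and the global sign does not affect membership in $(\bh_b^{\bar\sA})$. The second claim is used only for $\{21,21,321\}$ with the family \eqref{eq:7paramwindowronly}, in the proof of Theorem~\ref{prop:hyperplanesimp}. There your proof is complete, because the flip is exactly the $\BZ_2$ symmetry $C:\ell_b\mapsto-\ell_b$ that the paper establishes.

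You should not, however, assert that the flip is a hook automorphism for every triple. Also, the crux is not the relation $\bh^\sU-\bh^\sL-\bbeta$, which is preserved automatically once the flip is paired with $\bbeta\mapsto-\bbeta$. The real requirement is that every window relation have zero $\bbeta$-component in the $\ell$-basis, and this fails in general.

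A concrete case is $\{1,1,1^2\}$ with the family $W_{m,n}$. The relation ${\bf r}_1=\bh^\sU_\lambda(d)-\bh^\sL_\mu(a)-\bh^\sU_\nu(b)$ equals $\tfrac12(\ell_d-\ell_a-\ell_b+\bbeta)$. Its flip $\bh^\sL_\lambda(d)-\bh^\sU_\mu(a)-\bh^\sL_\nu(b)$ evaluates to $(m+2+n\al)-(m+\al)-(1+n\al)=1-\al\neq 0$.

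In fact the second claim of the lemma itself fails there. The kernel sum $\bmk^{\bmB}_{{\bf r}_1}$ of Theorem~\ref{thm:kernelidentity} satisfies $\bmk^{\bmB}_{{\bf r}_1}\simeq 0$, hence $\bmk^{\bmB}_{{\bf r}_1}\simeq_a^{\sL}0$. Its conjugate, however, has image $\pm\bh_\lambda^{\bar{\bmB}_c}(c)\,\bbeta^{3}$ under $\iota^*$. This is nonzero modulo $\bh^\sU_\mu(a)$, since the hook space here is the polynomial ring in $\bh^\sU_\mu(a),\bh^\sU_\nu(b),\bh^\sU_\lambda(c),\bbeta$.

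So the lemma's second part should be read with the hypothesis that conjugation preserves the hook relations. That hypothesis is exactly what your proof uses, and it holds for the main triple.
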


Returning back to $\bigstanleypoly \in (\StDR_{21,21}^{321})^{S_5 \times \BZ_2}$. In terms of the above notation, the $\BZ_2$ invariance \eqref{thm:z2invs} is expressed as
\begin{equation}
\bigstanleysum \simeq \bar\bigstanleysum.
\end{equation}

The following property is perhaps the most surprising of this paper, and is the property that we later claim generalizes for all Jack
Littlewood-Richardson polynomials, see \eqref{conj:adjacency}.
\begin{theorem}\label{prop:hyperplanesimp}
For each box $b$ and hook choice $\sA$, there exists a single diagram $\simpterm(b,\sA) \in \StD_{21,21}^{321}$, so called {\emph{\bf boundary datum}}, such that
\[ \bigstanleypoly \simeq_{b}^{\sA} \simpterm(b,\sA) . \]
Specifically, for each box $b$ we have
\begin{equation}
 \simpterm(b,\bmX_b) = \bmX^{\claw(b)}
\end{equation}
and
\[ \simpterm(b,\bar\bmX_b)  = \overline{\simpterm(b,\bmX_b)} = \bar{ \bmX}^{\claw(b)}\]
\end{theorem}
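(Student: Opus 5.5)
The plan is to prove the statement only for $\sA=\bmX_b$, and then obtain the case $\sA=\bar\bmX_b$ from the $\BZ_2$ symmetry. By Theorem \ref{thm:z2invs} we have $\bigstanleysum\simeq\bar\bigstanleysum$. Suppose $\bigstanleysum\simeq_b^{\bmX_b}\bmX^{\claw(b)}$. The second part of Lemma \ref{lemma:propofsim} then gives $\bar\bigstanleysum\simeq_b^{\bar\bmX_b}\bar\bmX^{\claw(b)}$. The first part, applied with $\bmss_1=\bar\bigstanleysum$ and $\bmss_2=\bigstanleysum$, gives $\bigstanleysum\simeq_b^{\bar\bmX_b}\bar\bmX^{\claw(b)}$, which is the second claim.

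We may also reduce to a single box. The lifted $S_5$ action sends $\bmf_b$ to $\bmf_{g.b}$ and $\bmX^{\claw(b)}$ to $\bmX^{\claw(g.b)}$. It sends $\bh_b^{\bmX_b}$ to $\pm\bh_{g.b}^{\bmX_{g.b}}$, so it maps the hyperplane for $(b,\bmX_b)$ to the one for $(g.b,\bmX_{g.b})$. It also fixes $\bigstanleysum'$. Since $S_5$ is transitive on the vertices of $\petersen$, it therefore suffices to treat one box, say $b=a_2$, whose claw is $\{a_2,a_1,a_3,c_3\}$.

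For the core step, work with $\bmB=\bmX$ on the ten free boxes and use the decomposition \eqref{eq:721sum}. Let $Y=\claw(b)$. On the hyperplane $\bh_b^{\bmX_b}=0$, every diagram $\bmD$ with $\bmD_b=\bmX_b$ has the factor $\bh_b^{\bmX_b}$ in $\bmD'$, since diagrams contribute $\bmD_b$ on numerator boxes and the flipped choice on $\lambda$-boxes. Because $b$ is the point of agreement with $\bmX$, this factor vanishes. So only diagrams flipped at $b$ survive:
\[ \bigstanleysum\simeq_b^{\bmX_b} -2\,\bmX^{\{b\}}+\sum_{a\sim b}\bmX^{\{a,b\}}. \]
Compare this with the ten-term kernel sum $\bmk^\bmX_{\bmf_b}$ of Lemma \ref{cor:clawkernel}. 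By Theorem \ref{thm:kernelidentity} it satisfies $\bmk^\bmX_{\bmf_b}\simeq 0$. Its terms not flipped at $b$ are likewise killed modulo $\bh_b^{\bmX_b}$. The surviving terms (those with $b\in L$) are $-2\bmX^{\{b\}}+\sum_{a\sim b}\bmX^{\{b,a\}}+2\bmX^{Y\setminus\{a\}\text{-type}}\ldots-\bmX^{Y}$. To be precise, from the coefficient list these are $c_{\{b\}}=-2$, $c_{\{b,a\}}=+1$, $c_Y=-1$, together with the vanishing triples containing $b$. Subtracting, the restricted $\bigstanleysum$ should equal $\bmX^{Y}=\bmX^{\claw(b)}$ modulo $\bh_b^{\bmX_b}$ and the hook-space relations. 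The sign convention must be checked carefully, comparing the kernel function with $\bmk$ versus $-\bmk$ and using $\bar\bmB^{J}$ versus $\bmB^{L}$ notation. If the bookkeeping produces a sign, the correct combination to use is the one from \eqref{def:kernelfunction} whose $b$-flipped part matches $-2\bmX^{\{b\}}+\sum_a\bmX^{\{a,b\}}$.

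The main obstacle is this last identification. The restriction to the hyperplane is a restriction of polynomials in the hook space, and multiplying by $\bh_b$ factors does not obviously commute with taking the quotient by the claw relations. I would handle it by working directly with polynomials. Modulo $\bh_b^{\bmX_b}$, each surviving diagram has the common factor $\bh_b^{\bar\bmX_b}=\pm\beta$, and the residual $\beta$-multiple of $\bmf_b$ from \eqref{eq:kernelidentity} still lies in the ideal. So the difference
\[ \bigstanleysum-\bmX^{\claw(b)}-\bmk^\bmX_{\bmf_b} \]
is divisible by $\bh_b^{\bmX_b}$ term by term, and the result follows. If the $b$-flipped parts fail to match exactly, the fallback is a direct check in the $\ell$ basis at the single box $a_2$. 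Substitute $\ell_{a_2}=\pm\beta$ and reduce by the five independent hooks, as in the spanning proof of the proposition on $\bmf_b$. This is a finite polynomial identity.
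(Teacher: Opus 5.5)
Your proposal is correct and is essentially the paper's argument, viewed through conjugation. The paper applies the claw kernel function to $\bar\bigstanleysum$ on the hyperplane $\bh_b^{\bar\bmX_b}=0$ and then transports the result back to $\bigstanleysum$ using $\bigstanleysum\simeq\bar\bigstanleysum$ and the conjugation lemma. You apply the kernel function directly to $\bigstanleysum$ on $\bh_b^{\bmX_b}=0$, which needs no $\BZ_2$ at all, and invoke the $\BZ_2$ symmetry only for the opposite hyperplane; this makes explicit which half of the theorem actually depends on the non-manifest symmetry.

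Your sign hedging is unnecessary. The $b$-flipped part of $\bmk^{\bmX}_{\bmf_b}$ is exactly $-2\bmX^{\{b\}}+\sum_{a\sim b}\bmX^{\{a,b\}}-\bmX^{\claw(b)}$, so your divisibility argument goes through as written. The $S_5$ reduction to a single box is harmless but not needed, because this computation is uniform in $b$.
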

This theorem says that when restricted to any boundary hyperplane $\bh_b^\sA=0$, the sum of 26 diagrams in $\bigstanleysum$ collapses to a single diagram, \emph{up to the hook relations}. Note that \emph{none} of the individual boundary datum $\simpterm(b,\sA)$ are individually in the support in the sum $\bigstanleysum$, i.e. this is purely a hook space phenomena.

\begin{proof}
We first treat the boundary value at $b$ corresponding to the hook choice $\bar\bmX_b$, for which we need to show that $ \bigstanleysum \simeq_b^{\bar\bmX_b} \simpterm(b,\bar\bmX_b) := \bar \bmX^{\claw(b)}$. From the formula \eqref{eq:721sum}, the terms of the Stanley sum agreeing with $\bmX_b$ at the box $b$ are
\[
\bigstanleysum \simeq \bar \bigstanleysum \simeq_b^{\bar\bmX_b} \sum_{\bmD:\bmD_b = \bmX_b} c_\bmD \cdot \bar \bmD = -2 \bar \bmX^{\{b\}} + \sum_{a\sim b}\bar \bmX^{\{b,a\}}.
\]
We recall the half-kernel sum expression \eqref{def:kernelfunction} of
\[
k_f^0 := -2 \bar \bmX^{\{b\}}+\sum_{a \sim b} \bar \bmX^{\{b,a\}} -\bar\bmX^{\claw(b)}.
\]
Thus we have
\[
\bigstanleysum \simeq_b^{\bar\bmX_b} k_f^0+\bar\bmX^{\claw(b)}.
\]
Recall the full kernel function \eqref{def:kernelfunction}, 
\[
\bmk_f = k_f^0 - (k_f^0)^{\claw(b)},
\]
which vanishes in the hook space by Theorem~\ref{thm:kernelidentity}, $\bmk_f \simeq 0$. We can see that $\bmk_f \simeq_b^{\bmX_b} k_f^0$, and so by \eqref{lemma:propofsim} we can conclude that $k_f^0\simeq_b^{\bmX_b}0$. Therefore, modulo hook relations, the only surviving boundary contribution is the single diagram $\bar\bmX^{\claw(b)}$, which proves
\[
\bar\bigstanleysum \simeq_b^{\bar\bmX_b} \bar\bmX^{\claw(b)} =: \simpterm(b,\bar\bmX_b) .
\]

For the opposite hook choice, we use the extra conjugation symmetry. Since $\bigstanleysum \simeq \bar\bigstanleysum$, Lemma \ref{lemma:propofsim} implies
\[
\simpterm(b,\bmX_b)
=
\overline{\simpterm(b,\bar \bmX_b)}
=
\bmX^{\claw(b)}.
\]
\end{proof}

The proof of the above theorem demonstrates that one half of the boundary data follows quite easily from the structure of $\bigstanleysum$. However, determining the other half required use of the non-manifest $\BZ_2$ symmetry.

\begin{corollary}
The boundary data is equivariant under $S_5 \times \BZ_2$, i.e. 
\begin{equation}
g.\simpterm(b,\bmX_b) = \simpterm(g.b,g.\bmX_{b}).
\end{equation}
\end{corollary}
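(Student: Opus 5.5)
The plan is to reduce the corollary to the explicit formulas of Theorem~\ref{prop:hyperplanesimp}, $\simpterm(b,\bmX_b)=\bmX^{\claw(b)}$ and $\simpterm(b,\bar\bmX_b)=\bar\bmX^{\claw(b)}$, and then check that each generator of $S_5\times\BZ_2$ transports these formulas correctly. Since the action on diagrams is through hook automorphisms (which permute boxes and possibly flip hook choices), it suffices to verify equivariance on generators: the $S_5$ elements coming from $\Aut(\petersen)$, and the conjugation $C$.

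For $g\in S_5$, I first use that $g$ acts on boxes as a graph automorphism of $\petersen$. Hence it maps closed neighbourhoods to closed neighbourhoods, $g.\claw(b)=\claw(g.b)$. Next I use the lifted action $g:\bh_b^\sA\mapsto x_{g.b/b}\bh_{g.b}^{x_{g.b/b}\sA}$. The proof of $S_5$-invariance of $\bmX$ shows that $g.\bmX=\pm\bmX$; more precisely, $g$ sends the hook choice $\bmX_b$ at $b$ to $\bmX_{g.b}$ at $g.b$. Consequently $g.\bmX^{J}=\pm\bmX^{g.J}$ for any subset $J$, with the sign $\prod_b x_{g.b/b}$. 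This sign is $+1$ because the generators flip hooks in pairs, as in that lemma. Taking $J=\claw(b)$ gives $g.\bmX^{\claw(b)}=\bmX^{\claw(g.b)}$, and also $g.\bmX_b=\bmX_{g.b}$ as hook labels. This yields $g.\simpterm(b,\bmX_b)=\simpterm(g.b,g.\bmX_b)$. The same argument applied to $\bar\bmX$ handles the other half of the boundary data.

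For the conjugation $C$, which flips every hook, I have $C.\bmX^{\claw(b)}=\bar\bmX^{\claw(b)}$ and $C.\bmX_b=\bar\bmX_b$. These match exactly the second formula of the theorem. As a consistency check, I would note that this is compatible with Lemma~\ref{lemma:propofsim}: applying the group element to the relation $\bigstanleypoly\simeq_b^\sA\simpterm(b,\sA)$ and using the invariance of $\bigstanleypoly$ (the $S_5$-invariance lemma and Theorem~\ref{thm:z2invs}) gives $\bigstanleypoly\simeq_{g.b}^{g.\sA} g.\simpterm(b,\sA)$. So $g.\simpterm(b,\sA)$ is a valid boundary datum at $(g.b,g.\sA)$.

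The main obstacle is the sign bookkeeping. The hook automorphisms carry signs $x_{g.b/b}$, and the paper's $\simeq$ is an equality in the hook space. I must therefore confirm that $g$ sends a single diagram to exactly $+1$ times a diagram, rather than $-1$ times one. I would handle this by computing $\prod_b x_{g.b/b}$ explicitly on the generators $R_1,R_2,R_3,T$: these flip three or four pairs of boxes, so the product is even. If uniqueness of the boundary datum is required, I would add that the statement only concerns the specified representatives, so no uniqueness argument is needed.
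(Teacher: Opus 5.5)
Your proposal is correct and follows the paper's route. The paper's proof simply applies $g.\claw(b)=\claw(g.b)$ to the explicit formulas $\simpterm(b,\bmX_b)=\bmX^{\claw(b)}$ and $\simpterm(b,\bar\bmX_b)=\bar\bmX^{\claw(b)}$ of Theorem~\ref{prop:hyperplanesimp}. Your sign bookkeeping and your separate treatment of the conjugation $C$ make explicit what the paper leaves implicit, and the sign check needs no generator-by-generator computation: $\prod_b x_{g.b}x_b=\bigl(\prod_b x_b\bigr)^2=1$ for any permutation $g$ of the ten boxes.
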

\begin{proof}
This follows from $g.\claw(b) = \claw(g.b)$.
\end{proof}

\begin{figure}
\[
 \frac{\begin{ytableau}
*(boxL) \bullet   \\
*(boxL) \* & *(boxU) 
\end{ytableau}\,\, \begin{ytableau}
*(boxU)  \\
*(boxU)   & *(boxL) \* 
\end{ytableau} 
}{\begin{ytableau}
*(boxB)  \* \\
*(boxU)  & *(boxL)   \\
*(boxU)   & *(boxL) \* & *(boxB) \*  \\
\end{ytableau}}
 \frac{\begin{ytableau}
*(boxL)  \* \\
*(boxL)  \bullet & *(boxL) \*
\end{ytableau}\,\, \begin{ytableau}
*(boxU) \\
*(boxU)   & *(boxU)
\end{ytableau} 
}{\begin{ytableau}
*(boxB) \\
*(boxU) & *(boxU)  \* \\
*(boxU)   & *(boxU) & *(boxB)  \\
\end{ytableau}}
 \frac{\begin{ytableau}
*(boxU)   \\
*(boxL) \*  & *(boxL)\bullet 
\end{ytableau}\,\, \begin{ytableau}
*(boxL) \* \\
*(boxU)   & *(boxU)
\end{ytableau} 
}{\begin{ytableau}
*(boxB) \\
*(boxL) \* & *(boxL)   \\
*(boxU)   & *(boxU) & *(boxB) \\
\end{ytableau}}
 \frac{\begin{ytableau}
*(boxU)   \\
*(boxU)   & *(boxL) \*
\end{ytableau}\,\, \begin{ytableau}
*(boxL) \bullet \\
*(boxL) \*  & *(boxU)  
\end{ytableau} 
}{\begin{ytableau}
*(boxB)  \\
*(boxU)  & *(boxL)   \\
*(boxU)   & *(boxL)\*  & *(boxB)  \\
\end{ytableau}}
 \frac{\begin{ytableau}
*(boxU)   \\
*(boxU)   & *(boxU) 
\end{ytableau}\,\, \begin{ytableau}
*(boxL) \* \\
*(boxL) \bullet  & *(boxL)  \* 
\end{ytableau} 
}{\begin{ytableau}
*(boxB) \*  \\
*(boxU)  & *(boxU) \*  \\
*(boxU)   & *(boxU)  & *(boxB) \* \\
\end{ytableau}}
 \frac{\begin{ytableau}
*(boxL) \*  \\
*(boxU)   & *(boxU) 
\end{ytableau}\,\, \begin{ytableau}
*(boxU)  \\
*(boxL) \*  & *(boxL)  \bullet
\end{ytableau} 
}{\begin{ytableau}
*(boxB) \*  \\
*(boxL) \* & *(boxL)   \\
*(boxU)   & *(boxU) & *(boxB) \*  \\
\end{ytableau}}
\]
\[
\frac{\begin{ytableau}
*(boxU)  \\
*(boxU) & *(boxL) \*
\end{ytableau}\,\, \begin{ytableau}
*(boxU) \\
*(boxU) & *(boxL)\*  
\end{ytableau} 
}{\begin{ytableau}
*(boxB) \*  \\
*(boxL)  \bullet & *(boxL)  \\
*(boxL) \* & *(boxU) & *(boxB) \* \\
\end{ytableau}} 
 \frac{\begin{ytableau}
*(boxU)  \\
*(boxL)  \* & *(boxU)
\end{ytableau}\,\, \begin{ytableau}
*(boxU) \\
*(boxL)  \* & *(boxU)
\end{ytableau} 
}{\begin{ytableau}
*(boxB)  \\
*(boxU) & *(boxU) \bullet \\
*(boxL) \*  & *(boxU) & *(boxB)  \\
\end{ytableau}}
 \frac{\begin{ytableau}
*(boxU)  \\
*(boxU) & *(boxU)
\end{ytableau}\,\, \begin{ytableau}
*(boxU) \\
*(boxU) & *(boxU)
\end{ytableau} 
}{\begin{ytableau}
*(boxB)\\
*(boxL)\* & *(boxU)\*  \\
*(boxL) \bullet & *(boxL)\* & *(boxB) \\
\end{ytableau}}
\frac{\begin{ytableau}
*(boxL)\*  \\
*(boxU) & *(boxU)
\end{ytableau}\,\, \begin{ytableau}
*(boxL) \*\\
*(boxU) & *(boxU) 
\end{ytableau} 
}{\begin{ytableau}
*(boxB)  \\
*(boxU)   & *(boxL)  \\
*(boxL) \* & *(boxL)\bullet & *(boxB) \nonumber\\
\end{ytableau}} 
\]
\caption{\label{fig:boundarydatum}Ten of the boundary datum $\simpterm(b,\sA)$ of $\bigstanleypoly$. Here the box $b$ is represented by a solid dot ($\bullet$), and the 3 adjacent vertices in $\claw(b)$ are stars ($\*$). The other ten datum are compliments of these. }
\end{figure}

\begin{corollary}
The Jack Littlewood-Richardson polynomial $\bigstanleypoly \in \StDR$, when considered as a polynomial in the hook space variables variables $\ell_b \in V_{32} \subset V_\petersen$ , factors into a product of linear factors along each of the 20 lines $\ell_b = \pm \beta$.
\end{corollary}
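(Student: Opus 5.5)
The approach is to read the corollary off Theorem~\ref{prop:hyperplanesimp}, translated into the $\ell$ basis. By \eqref{eq:varsell321}, relative to $\bmX$ we have $x_b\bh_b^{\bmX_b} = \tfrac12(\ell_b+\beta)$ and $x_b\bh_b^{\bar\bmX_b} = \tfrac12(\ell_b-\beta)$. So the two boundary hyperplanes $\bh_b^{\sU}=0$ and $\bh_b^{\sL}=0$ at box $b$ are exactly the loci $\ell_b=-\beta$ and $\ell_b=+\beta$. Over the ten boxes of $\petersen$ this gives the twenty hyperplanes in the statement. We regard them inside the hook space $\Sym(V^{(1)}_{\{3,2\}}[\bbeta])$ via the image of $\ell_b$ under $\iota$, i.e.\ modulo the claw relations \eqref{def:lkernel}.

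\textbf{Step 1: restrict to a hyperplane.} Fix $b$ and $\sA$. The theorem gives $\bigstanleypoly \equiv \iota^*\simpterm(b,\sA)' \pmod{\bh_b^\sA}$. Here $\simpterm(b,\sA)$ is a single Stanley diagram, namely $\bmX^{\claw(b)}$ or its conjugate. Its polynomial form $\simpterm(b,\sA)'$ is by definition a product of hook symbols, one per box. In the $\ell$ basis each factor is $\pm\tfrac12(\ell_c\pm\beta)$, so
\[ \simpterm(b,\sA)' = \pm 2^{-10}\prod_{c\in\petersen}(\ell_c+\epsilon_c\beta), \qquad \epsilon_c\in\{\pm1\}, \]
with $\epsilon_c$ determined by whether $c\in\claw(b)$ and by $\sA$. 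Pulling back along $\iota$ sends each $\ell_c$ to its class in $V^{(1)}_{\{3,2\}}$, which is again a linear form. Hence $\bigstanleypoly$ restricted to $\ell_b=\mp\beta$ equals a product of ten linear forms. One of them, $\ell_b+\epsilon_b\beta$, is then a multiple of $\beta$, or vanishes if the sign matches the hyperplane.

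\textbf{Step 2: record the sign bookkeeping.} Applying $C$ from \eqref{eq:conjoperation} interchanges the two hyperplanes at $b$. Combined with the identity $\simpterm(b,\bar\bmX_b)=\overline{\simpterm(b,\bmX_b)}$, this means it suffices to check the $\bmX_b$ hyperplanes. The $\bar\bmX_b$ case then follows by conjugation. By the $S_5$-equivariance of the boundary data (the preceding corollary), it further suffices to check a single box $b$. I also note that the restriction is well defined: $\bh_b^\sA$ is a nonzero linear form on the hook space (its $\ell$-image is nonzero because the five free hooks are independent), so the quotient is again a polynomial ring.

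\textbf{Main obstacle.} The delicate point is interpretation rather than computation. The statement says the polynomial ``factors into a product of linear factors along each line'', and we must make sure this is genuinely what Theorem~\ref{prop:hyperplanesimp} provides. The congruence holds only modulo $\bh_b^\sA$ and only after passing through $\iota^*$. So the factorization lives in $\StDR/\bh_b^\sA$, a polynomial ring in four of the five independent $\ell$'s and $\beta$, and not in $\Sym(\petvertrep)$. I would make this explicit by choosing coordinates adapted to $b$: solve for one free hook in terms of $\ell_b$ and the others using the claw relations, substitute $\ell_b=\mp\beta$, and observe that each factor $\ell_c+\epsilon_c\beta$ remains linear. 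Step 1 already guarantees this, so the remaining work is bookkeeping of constants and signs.
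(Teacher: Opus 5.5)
Your proposal is correct and is essentially the paper's own (implicit) argument: the corollary is read off from Theorem~\ref{prop:hyperplanesimp}, since each boundary datum $\simpterm(b,\sA)$ is a single diagram, hence a product of ten hooks, each a linear form $\pm\tfrac12(\ell_c\pm\beta)$ in the hook space, while $\bh_b^{\bmX_b}=0$ and $\bh_b^{\bar\bmX_b}=0$ are exactly the loci $\ell_b=-\beta$ and $\ell_b=+\beta$. Your Step 2 reductions are unnecessary, since the theorem already covers all twenty pairs $(b,\sA)$, and the hedge in Step 1 can be dropped: because $b\in\claw(b)$ is flipped, the factor of the boundary datum at $b$ is always the hook opposite to the one cutting out the hyperplane, so it specializes to $\pm\beta$ and never vanishes.
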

We continue with this factorization property in section \eqref{sect:hyperplanesagain}.

\section{ Extension to $S_6$ }

In this section, we will demonstrate that the $S_5 \times \BZ_2$ symmetry group of the Jack Littlewood-Richardson polynomial $\bigstanleypoly$ has an enhancement to $S_6 \times \BZ_2$. For this we need to understand the structure of the exotic outer automorphism of $S_6$.

\subsection{The outer automorphism of $S_6$}
We draw heavily on the constructions in \cite{Howard:2008aa}.

A \emph{mystic pentagon} $P$ is a 2-coloring of the edges of $K_5$, such that the edges of one color form a 5-cycle (and thus the other colored edges do also). There are six distinct such pentagons, up to exchanging of the two colors.

$P$ determines a bijection between edges in $K_5 = K([6]/q)$, where, white edge $A$ is sent to black edge $B$ when $A$ and $B$ don't share any vertices. The remaining vertex is $e$. Such a bijection determines a Pentad $\Sigma_P$ consisting of the 5 synthemes $\{A/B/\{e,q\}\}$.

\begin{example}
The mystic pentagon $(1,2),(2,3),(3,4),(4,5),(1,5)$ corresponds to the pentad,
\begin{eqnarray*}
\{ (1,2) / (3,5) / (4,6) \} \\
\{ (2,3) / (1,4) / (5,6) \} \\
\{ (3,4) / (2,5) / (1,6) \} \\
\{ (4,5) / (1,3) / (2,6) \} \\
\{ (1,5) / (2,4) / (3,6) \}
\end{eqnarray*}
\end{example}

\subsection{A $\BZ_4$ action on $3$-subsets}

For a choice of mystic pentagon $P$ we construct a permutation $\rho_P \in \mathrm{Perm}\binom{[6]}{3}$ on the set of $3$-subsets of six elements. Let $q \in [6]$ be the vertex not included in $P$. 

For $T \in \binom{[6]}{3}$, either $q \in T$, or $q \in T^c$, so denote by $(-1)^{q \notin T} T$ the one that contains $q$. From this, define the duad
\[ d_T := ((-1)^{q \notin T} T)\setminus\{q\}.  \]
Note $d_T = d_{T^c}$. $d_T$ is a member of a unique syntheme $S \in \Sigma_P$, of the form
\[ S =\{d_T/d'/\{e,q\}\}. \]
We define $D(T) :=  \{q\} \cup d' \in \binom{[6]}{3}$. Then we see that $D(D(T)) = \{q\} \cup d_T = (-1)^{q \notin T} T$, and the only four 3-subsets that map to $S$ are $d^{-1}(S) = \{T,D(T),T^c,D(T)^c\}$. The map $\rho$ will cycle these four 3-subsets, the only question is in which direction. For a duad $d$, let $(-1)^{P(d)} = 1$ if $P|_d$ is white, and $(-1)$ otherwise. We define
\[ \rho_P(T) := (-1)^{q \notin T}(-1)^{P({d_T})} D(T). \]

\begin{corollary}
\[ \rho^2_P = c\]
Thus $\rho_P$ is order $4$, and splits up the 20 3-subsets into 5 4-cycles of the form $(T, U, T^c , U^c)$.
\end{corollary}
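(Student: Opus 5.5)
The plan is a direct computation of $\rho_P^2$. I will write $-X := X^c$ for a $3$-subset $X$, consistent with the notation $(-1)^{q\notin T}T$, and set $\epsilon_T := (-1)^{q\notin T}(-1)^{P(d_T)}$, so that $\rho_P(T) = \epsilon_T D(T)$. Three facts about the construction come first.
\begin{itemize}
\item $d_T$ depends only on the pair $\{T,T^c\}$, so $D(T)$ depends only on $d_T$.
\item $D(T)$ always contains $q$.
\item In the syntheme $S=\{d_T/d'/\{e,q\}\}$ of the pentad $\Sigma_P$, the duads $d_T$ and $d'$ have opposite colours in $P$. This holds because each syntheme was built as $\{A/B/\{e,q\}\}$ with $A$ white and $B$ black. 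Hence $(-1)^{P(d')} = -(-1)^{P(d_T)}$.
\end{itemize}

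Next, apply $\rho_P$ to $U:=\rho_P(T)=\epsilon_T D(T)$. Since $D(T)\ni q$, we have $q\notin U$ exactly when $\epsilon_T=-1$, so $(-1)^{q\notin U}=\epsilon_T$. Also $d_U = D(T)\setminus\{q\} = d'$, which lies in the same syntheme $S$, with partner duad $d_T$. Therefore $D(U)=\{q\}\cup d_T = (-1)^{q\notin T}T$, using the identity $D(D(T))=(-1)^{q\notin T}T$ already noted. Combining these gives
\[
\epsilon_U = \epsilon_T\,(-1)^{P(d')} = -\epsilon_T(-1)^{P(d_T)} = -(-1)^{q\notin T}.
\]
It follows that $\rho_P^2(T)=\epsilon_U D(U) = -(-1)^{q\notin T}(-1)^{q\notin T}T = -T = T^c$, so $\rho_P^2 = c$.

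Finally, since $T^c\neq T$, $\rho_P$ has order exactly $4$. Its orbit through $T$ is $(T,\rho_P T,T^c,(\rho_P T)^c)$, and these four sets are distinct: $\rho_P T$ has duad $d'\neq d_T$. The orbit coincides with the fibre $d^{-1}(S)=\{T,D(T),T^c,D(T)^c\}$. As the map $T\mapsto S$ onto the five synthemes of $\Sigma_P$ has fibres of size $4$, the $20$ three-subsets split into five such $4$-cycles.

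The only step needing care is the colour-alternation fact $(-1)^{P(d')}=-(-1)^{P(d_T)}$. It is the step that produces the overall minus sign, and it must be read off from how the pentad was built from the white/black bijection of the mystic pentagon. Everything else is bookkeeping of the sign convention $(-1)X=X^c$.
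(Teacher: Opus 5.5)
Your proof is correct and is essentially the direct verification from the definitions that the paper leaves implicit: the paper states the corollary, without further argument, right after noting $D(D(T))=(-1)^{q\notin T}T$ and that $\rho_P$ cycles the fibre $d^{-1}(S)=\{T,D(T),T^c,D(T)^c\}$. You correctly identify the one non-formal input, namely that the two duads of each syntheme in $\Sigma_P$ have opposite colours, which is exactly what forces $\rho_P^2=c$ rather than $\rho_P^2=\mathrm{id}$.
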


\begin{figure}[h]
\centering
\begin{tikzpicture}[
    every node/.style={draw, minimum size=0.9cm},
    arrow/.style={->, thick, >=stealth},
    syntheme/.style={draw=none, align=center}
]

\begin{scope}[shift={(0,0)}]
    \node (A1) at (0,2) {$126$};
    \node (B1) at (2,2) {$124$};
    \node (C1) at (2,0) {$345$};
    \node (D1) at (0,0) {$356$};
    \node[syntheme] at (1,1) {$(1\,2)$\\$(3\,5)$\\$(4\,6)$};
    \draw[arrow] (A1) -- (B1);
    \draw[arrow] (B1) -- (C1);
    \draw[arrow] (C1) -- (D1);
    \draw[arrow] (D1) -- (A1);
\end{scope}

\begin{scope}[shift={(4.5,0)}]
    \node (A2) at (0,2) {$136$};
    \node (B2) at (2,2) {$456$};
    \node (C2) at (2,0) {$245$};
    \node (D2) at (0,0) {$123$};
    \node[syntheme] at (1,1) {$(4\,5)$\\$(1\,3)$\\$(2\,6)$};
    \draw[arrow] (A2) -- (B2);
    \draw[arrow] (B2) -- (C2);
    \draw[arrow] (C2) -- (D2);
    \draw[arrow] (D2) -- (A2);
\end{scope}

\begin{scope}[shift={(9,0)}]
    \node (A3) at (0,2) {$146$};
    \node (B3) at (2,2) {$236$};
    \node (C3) at (2,0) {$235$};
    \node (D3) at (0,0) {$145$};
    \node[syntheme] at (1,1) {$(2\,3)$\\$(1\,4)$\\$(5\,6)$};
    \draw[arrow] (A3) -- (B3);
    \draw[arrow] (B3) -- (C3);
    \draw[arrow] (C3) -- (D3);
    \draw[arrow] (D3) -- (A3);
\end{scope}

\begin{scope}[shift={(2.25,-4.5)}]
    \node (A4) at (0,2) {$156$};
    \node (B4) at (2,2) {$135$};
    \node (C4) at (2,0) {$234$};
    \node (D4) at (0,0) {$246$};
    \node[syntheme] at (1,1) {$(1\,5)$\\$(2\,4)$\\$(3\,6)$};
    \draw[arrow] (A4) -- (B4);
    \draw[arrow] (B4) -- (C4);
    \draw[arrow] (C4) -- (D4);
    \draw[arrow] (D4) -- (A4);
\end{scope}

\begin{scope}[shift={(6.75,-4.5)}]
    \node (A5) at (0,2) {$256$};
    \node (B5) at (2,2) {$346$};
    \node (C5) at (2,0) {$134$};
    \node (D5) at (0,0) {$125$};
    \node[syntheme] at (1,1) {$(3\,4)$\\$(2\,5)$\\$(1\,6)$};
    \draw[arrow] (A5) -- (B5);
    \draw[arrow] (B5) -- (C5);
    \draw[arrow] (C5) -- (D5);
    \draw[arrow] (D5) -- (A5);
\end{scope}

\end{tikzpicture}
\caption{The five 4-cycles of $\rho_P$ on the 20 triples. Each square shows $T \to \rho(T) \to T^c \to \rho(T)^c$. Inside each cycle is the corresponding syntheme in $\Sigma_P$.}
\end{figure}

\subsubsection{The Johnson Graphs $J(n,k,i)$}

For an $n$-element set, $[n]$, consider the set of all $k$-subsets of elements, denoted $\binom{[n]}{k}$, and define the distance between two $k$-subsets as $d(T,T') := k - |T\cap T'|$. The set $\binom{[n]}{k}$ forms the vertices of the Johnson graph $J(n,k,i)$, with an edge $T \sim T'$ between two $k$-subsets $T,T'$ when $|T\cap T'| = i$, i.e. $T$ and $T'$ share $i$ elements, so $d(T,T') = 1$. Clearly, this distance agrees with the usual graph theoretic distance.

We will mainly be interested in the Johnson Graph, $\johnson := J(6,3) := J(6,3,2)$, 
which has $\binom{6}{3}=20$ vertices, and each vertex has $9$ neighbours differing by one element. 


\begin{theorem}[\protect{\cite[Theorem 9.1.2]{Brouwer:1989aa}}] 
\[ \Aut(J(2k,k)) = S_{2k} \times \BZ_2 \]
where $S_{2k}$ acts in the standard way on the set of $2k$ elements, and $\BZ_2$ is generated by conjugation $C$.
\end{theorem}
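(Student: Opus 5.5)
For $k\ge 2$ (the case of interest here is $k=3$, $\johnson=J(6,3)$), I would prove this by a clique argument that reduces to the classical fact $\Aut(J(n,m))=S_n$ for $n\neq 2m$. For $k=1$ the graph is a single edge and the statement degenerates, so I assume $k\ge2$.

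\emph{The easy containment.} Both $S_{2k}$ and $C:T\mapsto T^c$ preserve $|T\cap T'|$, so both act by automorphisms. The map $C$ commutes with every permutation. It is not itself a permutation: a permutation $\sigma$ with $\sigma(T)=T^c$ for every $k$-set $T$ would have to fix nothing and yet, for $k\ge2$, send $\{1,\dots,k\}$ and $\{1,\dots,k-1,k+1\}$ to complements that share $k-1$ elements, which forces a contradiction by counting images of the common $(k-1)$-set. Faithfulness of $S_{2k}$ is clear. Hence $S_{2k}\times\BZ_2\subseteq\Aut(J(2k,k))$.

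\emph{The reverse containment.}
\begin{itemize}
\item First classify the maximal cliques of $J(2k,k)$. There are two types: \emph{lower cliques} $\mathcal{L}_A=\{T: T\supset A\}$ for a $(k-1)$-set $A$, and \emph{upper cliques} $\mathcal{U}_B=\{T:T\subset B\}$ for a $(k+1)$-set $B$. Both have size $k+1$, and this equality of sizes is exactly what happens when $n=2k$.
\item Next compute intersections. Two distinct cliques of the same type meet in at most one vertex. A lower and an upper clique meet in $\{T: A\subset T\subset B\}$, which has $2$ or $0$ elements.
\item So any automorphism $\varphi$ permutes maximal cliques and preserves the relation ``meet in exactly two vertices''. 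Two same-type cliques never share two vertices, so $\varphi$ either preserves both types or swaps them.
\item Connectedness of the bipartite ``2-intersection'' graph between the types shows the choice is global. Since $C$ swaps the types ($C\mathcal{L}_A=\mathcal{U}_{A^c}$), after composing with $C$ if necessary I may assume $\varphi$ preserves lower cliques.
\end{itemize}

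\emph{Descending to $J(2k,k-1)$.} With $\varphi$ preserving lower cliques, it induces a bijection $\hat\varphi$ on $(k-1)$-sets via $\varphi(\mathcal{L}_A)=\mathcal{L}_{\hat\varphi(A)}$. Two $(k-1)$-sets are adjacent in $J(2k,k-1)$ exactly when their lower cliques share a vertex, namely the $k$-set $A\cup A'$. Hence $\hat\varphi\in\Aut(J(2k,k-1))$. Because $2k\neq 2(k-1)$, the classical theorem gives $\hat\varphi=\sigma$ for some $\sigma\in S_{2k}$. A $k$-set $T$ is the unique common vertex of the lower cliques $\mathcal{L}_A$ with $A\subset T$, so $\varphi(T)=\sigma(T)$. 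Therefore $\varphi\in S_{2k}$, or $\varphi\in S_{2k}\cdot C$ before the normalization.

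\emph{Main obstacle.} The delicate step is the rigidity input $\Aut(J(n,m))=S_n$ for $n\neq 2m$. If it is not simply cited, I would prove it by the same clique analysis: for $n\neq 2m$ the two clique types have different sizes $n-m+1$ and $m+1$, so they cannot be swapped. One then inducts down to $m=1$, where $J(n,1)=K_n$. The small cases need separate handling: $J(4,2)$, the octahedron, arises when descending from $k=3$ and must be checked so the induction does not land on a balanced case; here descending $J(6,3)\to J(6,2)$ is unbalanced, so this is fine. Finally, I would check the clique classification itself (no other maximal cliques of size $\ge3$), since a triangle in $J(n,m)$ must lie in one of the two families.
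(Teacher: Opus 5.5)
Your proof is correct. Note that the paper gives no argument of its own for this statement: it simply quotes \cite[Theorem 9.1.2]{Brouwer:1989aa}. What you have written is therefore a self-contained alternative, not a reconstruction of a proof in the text.

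\textbf{Comparison with the cited result.} Your route classifies the maximal cliques into the lower family $\mathcal{L}_A=\{T\supset A\}$ and the upper family $\mathcal{U}_B=\{T\subset B\}$. It then uses the fact that exactly in the balanced case both families have size $k+1$, and that the ``meet in two vertices'' graph between them is connected and bipartite. Here $\mathcal{L}_A\cap\mathcal{U}_B\neq\emptyset$ iff $A\subset B$, and $J(2k,k-1)$ is connected. After normalizing by $C$, you descend to $J(2k,k-1)$. This explains exactly where the extra $\BZ_2$ comes from: it is the only way the two clique families can be exchanged. The citation, by contrast, treats the balanced and unbalanced cases as a single black box.

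The price is that you still need the unbalanced statement $\Aut(J(n,m))=S_n$ for $2m<n$, which is the other half of the same cited theorem. Your inductive sketch of it works, and the worry you raise about landing on a balanced case is unfounded. Along the chain $J(2k,k-1)\to J(2k,k-2)\to\cdots\to J(2k,1)=K_{2k}$ one always has $2m<n$. So the lower cliques, of size $n-m+1>m+1$, are automatically preserved, and no balanced graph ever appears. The octahedron $J(4,2)$ arises only as the $k=2$ case of the theorem itself, which your main argument already handles.

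\textbf{Two small repairs.}
\begin{enumerate}
\item Your argument that $C\notin S_{2k}$ is garbled as written. The clean version: if $\sigma(T)=T^c$ for every $k$-set $T$, then $\sigma(i)$ must avoid every $k$-set containing $i$. For $k\ge 2$ this is impossible, since every $j\neq i$ lies in such a set together with $i$.
\item In the clique classification, state that every clique of size $\ge 3$ lies entirely in one family, not just that every triangle does. To see this, let $T_1,T_2$ be an edge, with $A=T_1\cap T_2$ and $B=T_1\cup T_2$. A vertex of $\mathcal{L}_A\setminus\{T_1,T_2\}$ and a vertex of $\mathcal{U}_B\setminus\{T_1,T_2\}$ meet in only $k-2$ elements, so they are not adjacent.
\end{enumerate}

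Your remark that the statement requires $k\ge 2$ is also correct: for $k=1$ the transposition in $S_2$ coincides with $C$. The paper's statement omits this hypothesis, though it is only ever applied with $k=3$.
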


\begin{figure}
\begin{equation}
\begin{tikzpicture}[scale=1.3, every node/.style={circle, draw, inner sep=1.2pt}]
  \node (o1) at ( 90:1.6) {$124$};
  \node (o2) at ( 18:1.6) {$134$};
  \node (o3) at (-54:1.6) {$ 135$};
  \node (o4) at (-126:1.6) {$ 235$};
  \node (o5) at (162:1.6) {$245$};
  \node[blue] (O1) at ( 126:1.6) {$246$};
  \node[blue] (O2) at ( 54:1.6) {$146$};
  \node[blue] (O3) at (-18:1.6) {$ 136$};
  \node[blue] (O4) at (-90:1.6) {$ 356$};
  \node[blue] (O5) at (198:1.6) {$256$};
  \node (i1) at ( 90:0.9) {$126$};
  \node (i2) at ( 18:0.9) {$346$};
  \node (i3) at (-54:0.9) {$156$};
  \node (i4) at (-126:0.9) {$236$};
  \node (i5) at (162:0.9) {$456$};
  \node[blue] (I1) at ( 126:0.9) {$234$};
  \node[blue] (I2) at ( 54:0.9) {$145$};
  \node[blue] (I3) at (-18:0.9) {$ 123$};
  \node[blue] (I4) at (-90:0.9) {$ 345$};
  \node[blue] (I5) at (198:0.9) {$125$};
  \draw (o1)--(o2)--(o3)--(o4)--(o5)--(o1);
  \draw[blue] (O1)--(O2)--(O3)--(O4)--(O5)--(O1);
  \draw (o1)--(i1);
  \draw (o2)--(i2);
  \draw (o3)--(i3);
  \draw (o4)--(i4);
  \draw (o5)--(i5);
  \draw[blue] (O1)--(I1);
  \draw[blue] (O2)--(I2);
  \draw[blue] (O3)--(I3);
  \draw[blue] (O4)--(I4);
  \draw[blue] (O5)--(I5);
  \draw (i1)--(i3)--(i5)--(i2)--(i4)--(i1);
  \draw[blue] (I1)--(I3)--(I5)--(I2)--(I4)--(I1);
\end{tikzpicture}
\end{equation}
\caption{The 2-coloring of the vertices of $J(6,3)$ that partitions it into two copies of the Petersen graph. The copies are exchanged under complementation. $\rho$ permutes the four elements on any line that goes through the origin, and exchanges the inner ring with the outer ring.}\label{fig:johnsontwopeterson}
\end{figure}

\begin{lemma}
\[ d(T,U) = 2 \leftrightarrow d(\rho(T),\rho(U)) = 1\]
Thus $\rho$ is not an automorphism of the Johnson graph, but rather implements an isomorphism between $J(6,3,2)$ and $J(6,3,1)$.
\end{lemma}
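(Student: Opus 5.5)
The plan is to reduce the statement to one direction, prove that direction by symmetry, and finish with a short case check against the table of 4-cycles in the figure.

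\textbf{Step 1: reduce to one implication.} The key input is the previous corollary, $\rho_P^2=c$. It gives $\rho(T^c)=\rho(\rho^2 T)=\rho^2(\rho T)=\rho(T)^c$, so $\rho$ commutes with complementation. Next, for $3$-subsets of $[6]$ we have $d(T,U^c)=3-d(T,U)$. So complementing one argument swaps distances $1$ and $2$ and fixes nothing at distance $3$.

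The pairs at distance $3$ are exactly the complementary pairs $\{T,T^c\}$. Since $\rho$ commutes with $c$, it maps these pairs to themselves. Hence $\rho$, acting as a bijection on unordered pairs of distinct vertices, preserves the set $\{d\in\{1,2\}\}$. It therefore suffices to show
\[ d(T,U)=1 \implies d(\rho T,\rho U)=2. \]
Indeed, suppose $d(T,U)=2$. Then $d(T,U^c)=1$, so the implication gives $d(\rho T,\rho(U)^c)=2$, which means $d(\rho T,\rho U)=1$. Conversely, suppose $d(\rho T,\rho U)=2$, and assume $d(T,U)\neq 2$. Distance $3$ is excluded, since then $\rho U=\rho(T)^c$ would have distance $3$. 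So $d(T,U)=1$, and the implication gives $d(\rho T,\rho U)=2$, which is consistent. The real content of the converse comes from counting: $\rho$ maps the $90$ distance-$1$ pairs injectively into the $90$ distance-$2$ pairs, so this map is a bijection. Together these facts give the stated equivalence.

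\textbf{Step 2: use the dihedral symmetry of $P$.} Let $G_P\cong D_5$ be the subgroup of $S_5\subset S_6$ fixing $q$ and preserving the colouring of the mystic pentagon; it contains the rotation $i\mapsto i+1 \pmod 5$. Every ingredient in the definition of $\rho_P$ is built canonically from $(P,q)$ and its colours: $d_T$, the syntheme in $\Sigma_P$, $D(T)$, the sign $(-1)^{q\notin T}$, and the sign $(-1)^{P(d_T)}$. So every colour-preserving $g\in G_P$ satisfies $g\rho_P=\rho_P g$. I would verify this carefully on the generators, in particular that the white/black sign rule is preserved, and adjust if a reflection swaps colours. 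Then $G_P\times\langle c\rangle$ acts on the $90$ edges of $J(6,3)$ commuting with $\rho$. This leaves only a handful of orbit representatives, which I would choose as edges $T\sim U$ with $T\ni q$ (using $c$) and $T$ among the few $G_P$-classes.

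\textbf{Step 3: check the representatives.} For each representative edge, read $\rho T$ and $\rho U$ off the 4-cycles in the figure and compute the distance. For example, $126\sim124$ maps to $124$ and $345$, and $d(124,345)=2$. Likewise $126\sim146$ maps to $124$ and $236$, with distance $2$.

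The main obstacle is Step 2: confirming the equivariance precisely, since the sign $(-1)^{P(d)}$ could break the symmetry for colour-reversing reflections, and enumerating the orbits correctly. If this becomes delicate, I would instead check all $90$ edges directly against the explicit table, which is a finite and mechanical verification.
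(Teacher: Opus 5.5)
Your proof is correct. The paper gives no proof of this lemma; it is stated right after the figure of the five $4$-cycles and is evidently meant to be read off from that table. Your proposal therefore supplies an argument where the paper offers only data.

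Both of your reductions are sound. The relation $\rho_P^2=c$ gives $\rho(T^c)=\rho(T)^c$. Combined with $d(T,U^c)=3-d(T,U)$, the single implication $d(T,U)=1\Rightarrow d(\rho T,\rho U)=2$ yields $d=2\Rightarrow d'=1$. The converse, $d(\rho T,\rho U)=1\Rightarrow d(T,U)=2$, is then immediate: $d(T,U)\notin\{0,3\}$, and $d(T,U)=1$ would force image distance $2$. Your ``Conversely'' paragraph starts from the wrong hypothesis ($d(\rho T,\rho U)=2$) and its counting is not needed, but nothing is lost.

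Your worry in Step 2 is unfounded. Reflections of the pentagon preserve its edge set, hence its colouring, so all of $D_5$ commutes with $\rho_P$. For example, with $r=(12345)$ one gets $\rho(r\cdot 126)=\rho(236)=235=r\cdot 124=r\rho(126)$. Within the stabiliser of the pentad, only the colour-swapping elements fail to commute. An example is $g=(1243)$, i.e.\ $i\mapsto 2i$. These satisfy $\rho g=g\rho^{-1}=gc\rho$, so they preserve distances too.

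Using $D_5\times\langle c\rangle$, every edge is equivalent to one at $T=126$ or $T=136$, and the $18$ checks do go through:
\begin{itemize}
\item the nine neighbours of $126$ map to $235,156,346,456,236,135,136,345,256$, each meeting $\rho(126)=124$ in exactly one element;
\item the nine neighbours of $136$ map to $235,134,126,124,236,135,136,125,234$, each meeting $\rho(136)=456$ in exactly one element.
\end{itemize}

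Compared with checking all $90$ edges against the table, which is what the paper implicitly relies on, your route does two things. It explains why the statement is symmetric under exchanging distances $1$ and $2$, and it cuts the verification to $18$ checks. The cost is a dependence on $\rho_P^2=c$, which the paper also only asserts. That relation does follow directly from the definition: $d_{\rho T}=d'$ has the opposite colour to $d_T$, and $D(\rho T)$ is whichever of $T,T^c$ contains $q$.
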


As noted in \cite{Howard:2008aa} section 1.2, there is a bijection between mystic pentagons and 2-colorings of $\johnson$ such that each $T$ and $T^c$ have different colors and every $K_4$ subgraph has two vertices of each color. The next construction agrees with this bijection.

\begin{corollary}
For $(ij)$ in $\binom{[5]}{2}$, the map 
\begin{equation}\label{eq:embedpettojohn}
\gamma : (ij) \mapsto \rho_P(ij6)
\end{equation} gives an embedding of the Petersen graph $\petersen$ into the Johnson graph $\johnson$. Composing with conjugation gives a complementary embedding $\gamma^c$, which provides a partition of the vertices $\johnson = \gamma(\petersen) \cup {\color{blue}{\gamma^c(\petersen)}}$. As we vary the choice of pentad $P$, we find all 12 embeddings $\petersen \hookrightarrow \johnson$.
\end{corollary}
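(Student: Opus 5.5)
The plan is to reduce everything to the preceding lemma, $d(T,U)=2 \leftrightarrow d(\rho_P(T),\rho_P(U))=1$, together with the relation $\rho_P^2=c$ (complementation). We use the Kneser model $\petersen=K(5,2)$, in which $(ij)\sim(kl)$ iff $\{i,j\}\cap\{k,l\}=\emptyset$.

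\emph{Step 1 (adjacency is preserved and reflected).} Put $T=\{i,j,6\}$ and $U=\{k,l,6\}$. Then $|T\cap U| = 1+|\{i,j\}\cap\{k,l\}|$.
\begin{itemize}
\item If $(ij)\sim(kl)$ in $\petersen$, the two duads are disjoint. So $|T\cap U|=1$, hence $d(T,U)=2$, and the lemma gives $d(\gamma(ij),\gamma(kl))=1$.
\item If the duads share exactly one element, then $d(T,U)=1\neq 2$. Since the lemma is an equivalence, $d(\gamma(ij),\gamma(kl))\neq 1$.
\end{itemize}
Injectivity of $\gamma$ is immediate because $\rho_P$ is a permutation of $\binom{[6]}{3}$. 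Hence $\gamma$ is an isomorphism of $\petersen$ onto an \emph{induced} subgraph of $\johnson$.

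\emph{Step 2 (the complementary copy and the partition).} Conjugation $C$ is an automorphism of $\johnson$ by the Brouwer theorem quoted above. Therefore $\gamma^c=C\circ\gamma$ is again an induced embedding. Since $c=\rho_P^2$ commutes with $\rho_P$, we have $\gamma^c(ij)=\rho_P(\{i,j,6\}^c)$. Thus $\gamma(\petersen)$ is the $\rho_P$-image of the ten triples containing $6$, and $\gamma^c(\petersen)$ is the $\rho_P$-image of the ten triples not containing $6$. As $\rho_P$ is a bijection, these two images are disjoint and together cover all twenty vertices of $\johnson$. This is the partition pictured in Figure~\ref{fig:johnsontwopeterson}.

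\emph{Step 3 (counting to twelve).} First, swapping the two colours of $P$ flips the sign $(-1)^{P(d_T)}$. This replaces $\rho_P$ by $\rho_P^{-1}=c\rho_P$, and hence exchanges $\gamma$ and $\gamma^c$. So the six pentagons, each taken with both colourings, give at most twelve image sets, coming in complementary pairs.

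To see that all twelve are distinct, I would use the natural $S_6$ action. The pentad $\Sigma_P$, and therefore $\rho_P$, is permuted by $S_6$ compatibly with the action on triples. The stabiliser in $S_6\times\BZ_2$ of a single image $\gamma(\petersen)$ should be $S_5$, of order $120$. This gives an orbit of size $1440/120=12$.

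\emph{Step 4 (completeness).} It remains to show that every induced Petersen subgraph of $\johnson$ lies in this orbit. This is the step I expect to be the main obstacle. My first approach would be the bijection of \cite{Howard:2008aa}, between mystic pentagons and $2$-colourings of $\johnson$ in which $T$ and $T^c$ receive different colours and every $K_4$ is balanced. The argument would show that for any induced copy of $\petersen$:
\begin{itemize}
\item it contains exactly one of $T$ and $T^c$ for each $T$;
\item it contains exactly two vertices of every $K_4$.
\end{itemize}
Both properties should follow from $\petersen$ being $3$-regular and triangle-free of girth $5$, compared against the local structure of $\johnson$, where each vertex has $9$ neighbours forming a $3\times3$ rook graph. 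Granting these two properties, the induced copy defines a colouring of the required type, hence comes from a pentagon and coincides with some $\gamma(\petersen)$ or $\gamma^c(\petersen)$.
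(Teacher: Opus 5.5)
Your proposal is correct and follows the same route as the paper, which gives no separate proof but presents the corollary as a consequence of three inputs: the lemma $d(T,U)=2\leftrightarrow d(\rho_P(T),\rho_P(U))=1$, the relation $\rho_P^2=c$, and the cited Howard et al.\ bijection between mystic pentagons and $2$-colourings of $\johnson$ in which complements get opposite colours and every $K_4$ is split $2+2$ --- these are exactly the inputs of your Steps 1, 2 and 4.

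The one place to tighten is Step 3, which does not prove distinctness on its own. Elements of $S_6$ that move $6$ change $q$, so the $S_6\times\BZ_2$-orbit of $\gamma(\petersen)$ need not consist of $q=6$ pentagon images. The orbit count therefore only shows there are at least twelve induced copies; here $|\mathrm{Stab}|\le 120$ because an element fixing $\gamma(\petersen)$ pointwise commutes with the central $C$ and so fixes all of $\johnson$. Distinctness of the twelve $q=6$ images then follows by combining this with Step 4: every induced copy is a $q=6$ image, and there are at most twelve of those. The two properties Step 4 needs come quickly. Exactly two vertices per $K_4$ follows from triangle-freeness and the double count $10\cdot 6=30\cdot 2$ over the thirty $K_4$'s. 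Exactly one of $T,T^c$ follows because $T$ and $T^c$ have no common neighbour in $\johnson$, whereas non-adjacent Petersen vertices always have one.
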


The stabiliser of this $\gamma(\petersen)$ is the $S_5=\Aut(\petersen)$ conjugated by $\rho$.

\begin{lemma}\label{lemma:intertwiningaction}
The embedding $\sigma_{ij} \mapsto s_{ij}$ of $\Aut(\petersen)=S_5 \hookrightarrow \Aut(\johnson)= S_6 \times \BZ_2$ that intertwines $\gamma$ takes the form
\[ s_{12}  = -(46)(15)(23),\]
\[ s_{23}  = -(56)(12)(34),\]
\[ s_{34}  = -(16)(23)(45),\]
\[ s_{45}  = -(26)(15)(34).\]
That is,
\[ s_{ij} \gamma = \gamma \sigma_{ij}. \]
This is the embedding $(\tau, \sgn)$, where $\tau$ is the exotic inclusion of $S_5 \hookrightarrow S_6$ determined by the pentad $P$.
\end{lemma}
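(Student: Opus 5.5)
The plan is to verify the intertwining relation $s_{ij}\gamma = \gamma\sigma_{ij}$ directly on the ten vertices of $\petersen$ for each of the four Coxeter generators $\sigma_{12},\sigma_{23},\sigma_{34},\sigma_{45}$ of $S_5$. Since these generate $\Aut(\petersen)=S_5$ acting on $\binom{[5]}{2}$, the relation then holds for all of $S_5$. Here $s_{ij}\in S_6\times\BZ_2$ is read as the complementation $C$ composed with the displayed permutation of $[6]$, both acting on $\binom{[6]}{3}$.

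\emph{Step 1: tabulate $\gamma$.} I first tabulate $\gamma(ij)=\rho_P(ij6)$ for the pentagon $P=(12),(23),(34),(45),(15)$, so that $q=6$. I read this off the five $4$-cycles of $\rho_P$ in the figure (equivalently, I compute it from the pentad in the example via $D(T)$ and the colour sign). This gives
\[
\begin{aligned}
&12\mapsto124,\quad 13\mapsto456,\quad 14\mapsto236,\quad 15\mapsto135,\quad 23\mapsto235,\\
&24\mapsto234,\quad 25\mapsto346,\quad 34\mapsto134,\quad 35\mapsto126,\quad 45\mapsto245.
\end{aligned}
\]
These are ten $3$-subsets, no two complementary, which matches the partition of Figure~\ref{fig:johnsontwopeterson}.

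\emph{Step 2: check the four generators.} For each $s_{ij}$ I apply the permutation to every $\gamma(kl)$, take the complement, and compare with $\gamma(\sigma_{ij}(kl))$. For example, $(46)(15)(23)$ sends $124\mapsto 356$, whose complement is $124=\gamma(12)=\gamma(\sigma_{12}(12))$. Likewise it sends $\gamma(13)=456\mapsto 164$, whose complement is $235=\gamma(23)$, as required. The remaining $38$ checks are of the same routine kind. A useful consistency check is that each $s_{ij}$ must map $\gamma(\petersen)$ to itself, since $\sigma_{ij}$ preserves $\petersen$. Because each displayed permutation is odd and the factor $C$ swaps the two Petersen copies, these two effects must combine to stabilise $\gamma(\petersen)$.

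\emph{Step 3: the structural identification.} Since $\gamma$ is injective and $\Aut(\johnson)$ acts faithfully on $\binom{[6]}{3}$, the assignment $\sigma_{ij}\mapsto s_{ij}$ extends to an injective homomorphism $S_5\to S_6\times\BZ_2$. Indeed, any relation among the $\sigma$'s transports through the intertwining to a relation holding on $\gamma(\petersen)$. It then holds on all of $\binom{[6]}{3}$ because the stabiliser of $\gamma(\petersen)$ acts on the complementary copy through $C$, which commutes with $S_6$.

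The $S_6$-component sends transpositions to products of three disjoint transpositions. This is the defining feature of the exotic embedding $\tau$, and it fixes no point of $[6]$. The $\BZ_2$-component is $C$ on each transposition, hence equals $\sgn$. Finally, $\tau$ is the one determined by $P$: it preserves the pentad $\Sigma_P$, as one checks on the generators. For instance, $(46)(15)(23)$ permutes the five synthemes listed in the example.

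The only real obstacle is bookkeeping, namely getting the orientation of $\rho_P$ right so that the table in Step~1 is correct; everything else is finite verification. I would cross-check the table against the Petersen adjacency, requiring that $\gamma(ij)$ and $\gamma(kl)$ share exactly one element when $\{i,j\}\cap\{k,l\}=\emptyset$, before doing Step~2.
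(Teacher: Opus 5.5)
Your overall strategy is sound. You check the four Coxeter generators on all ten vertices, then use the centrality of $C$ and the faithfulness of $\Aut(\johnson)$ on $\binom{[6]}{3}$ to obtain a well-defined injective homomorphism, and read off its two components. The paper states this lemma without proof, and this is the natural route. Your Step~3 is correct, including the pentad-preservation check and the identification of the $\BZ_2$-component with $\sgn$.

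However, your Step~1 table has a wrong entry, and as written Step~2 fails. In the cycle $156\to135\to234\to246\to156$ of the figure, $\gamma(24)=\rho_P(246)=156$, not $234$; you took the predecessor rather than the successor, which is exactly the orientation issue you flagged. The value $156$ agrees with the paper's table $\ell_{24}\mapsto +x_{156}$. It also agrees with Figure~\ref{fig:johnsontwopeterson}, where $156$ lies in the black copy and $234=156^c$ in the blue one. So your remark that your ten subsets match that partition is false.

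With your entry the generator check breaks at once. The permutation $(46)(15)(23)$ sends $\gamma(14)=236$ to $234$, whose complement is $156$. Your table would instead require $\gamma(\sigma_{12}(14))=\gamma(24)=234$. Moreover, $s_{12}$ sends your $234$ to $(236)^c=145$, which is not in your image set, so that set is not even $s_{12}$-stable.

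Your proposed cross-check would not have caught this, because its criterion is inverted. Start from $ij6\cap kl6=\{6\}$ for disjoint duads and apply the lemma $d(T,U)=2\leftrightarrow d(\rho T,\rho U)=1$. This shows that Petersen adjacency (disjoint duads) must map to adjacency in $J(6,3)=J(6,3,2)$, i.e. $|\gamma(ij)\cap\gamma(kl)|=2$ when $\{i,j\}\cap\{k,l\}=\emptyset$; for example $\gamma(12)\cap\gamma(34)=\{1,4\}$. Non-adjacent vertices map to $3$-subsets meeting in exactly one element. Under your stated criterion, the wrong entry $234$ passes, since it meets $456$, $135$, $126$ in one element each, while correct entries such as $124$ and $134$ fail.

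With $\gamma(24)=156$ and the correct adjacency test, all forty generator checks go through for the given $s_{12},s_{23},s_{34},s_{45}$, and the rest of your argument stands.
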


\subsection{Vertex Representations}

Let $\tilde \johnvertrep$ be the vertex representation of $\Aut(\johnson) = S_6\times \BZ_2$, i.e.  the vector space with a generator $y_T$ for each $3$-subset $T$. $\Aut(\johnson)$ acts on $\tilde \johnvertrep$ by $g.y_T =  y_{g.T}$. The adjacency matrix $ A_\johnson$ commutes with $\Aut(\johnson)$, under which we find
\begin{lemma}
Under $\Aut(\johnson) \times \BC[A_\johnson]$, we have
\begin{equation}\label{eq:VJdecomp}
\tilde \johnvertrep \cong V_{6,\id}^{(9)} \oplus V_{42,\id}^{(-1)} \oplus V_{33,\sgn}^{(-3)} \oplus V_{51,\sgn}^{(3)},
\end{equation}
with dimensions
\[ (20) =  (1) + (9) + (5) + (5).\]
\end{lemma}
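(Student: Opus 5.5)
The plan is to treat $\tilde\johnvertrep$ first as an $S_6$-module, then identify the $A_\johnson$-eigenvalue on each isotypic piece, and finally determine how the conjugation $C$ acts. The key point is that the decomposition under $S_6$ alone is multiplicity free. Both $A_\johnson$ and $C$ commute with $S_6$, so by Schur's lemma each acts as a scalar on every irreducible summand, and it suffices to compute these scalars.

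\emph{Step 1 ($S_6$-decomposition).} $\tilde\johnvertrep$ is the permutation module $M^{(3,3)}$ on $\binom{[6]}{3}$. By Young's rule, $M^{(3,3)}\cong\bigoplus_\lambda K_{\lambda,(3,3)}S^\lambda$. The only partitions $\lambda\trianglerighteq(3,3)$ are $(6),(5,1),(4,2),(3,3)$, each with Kostka number $1$. This gives dimensions $1+5+9+5=20$. To make the summands explicit, I will use the standard filtration $U_0\subset U_1\subset U_2\subset U_3=\tilde\johnvertrep$. Here $U_i$ is spanned by the functions $f_S(T)=[S\subseteq T]$ with $|S|=i$, and $U_i\cong M^{(6-i,i)}$, so that $U_i/U_{i-1}\cong S^{(6-i,i)}$.

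\emph{Step 2 ($A_\johnson$-eigenvalues).} I will quote the standard Johnson scheme spectrum (c.f.\ \cite[\S 9.1]{Brouwer:1989aa}). For $J(n,k)$, the eigenvalue on the $S^{(n-i,i)}$-component is $(k-i)(n-k-i)-i$. For $n=6$, $k=3$ this gives the following values.
\begin{itemize}
\item $i=0$: eigenvalue $9$.
\item $i=1$: eigenvalue $2\cdot 2-1=3$.
\item $i=2$: eigenvalue $1\cdot 1-2=-1$.
\item $i=3$: eigenvalue $-3$.
\end{itemize}
As a self-contained alternative, one can compute $A_\johnson f_S$ directly. A short count of how many neighbours of $T$ contain $S$ shows that $A_\johnson f_S\equiv\big((3-i)(3-i)-i\big)f_S \bmod U_{i-1}$. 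As a check, the eigenvalues sum to $9\cdot1+3\cdot5-1\cdot9-3\cdot5=0=\mathrm{tr}A_\johnson$.

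\emph{Step 3 (the $\BZ_2$ character).} This is the step where the labels $\id$ and $\sgn$ are decided, so I expect it to be the main point requiring care. Since $C$ commutes with $S_6$ and the decomposition is multiplicity free, $C$ acts on each summand by $\pm1$. I claim that this sign is $(-1)^i$. Conjugation sends $f_S$ to $T\mapsto[S\subseteq T^c]=[S\cap T=\emptyset]$. By inclusion--exclusion, this function equals $\sum_{R\subseteq S}(-1)^{|R|}f_R$. Hence $Cf_S\equiv(-1)^{i}f_S \bmod U_{i-1}$, and so $C$ acts by $(-1)^i$ on $U_i/U_{i-1}\cong S^{(6-i,i)}$. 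As a consistency check, $C$ has no fixed $3$-subsets, so $\mathrm{tr}\,C=0$; indeed $1-5+9-5=0$.

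Assembling the three steps gives the summands $V^{(9)}_{6,\id}$, $V^{(3)}_{51,\sgn}$, $V^{(-1)}_{42,\id}$ and $V^{(-3)}_{33,\sgn}$, with dimensions $1,5,9,5$. This is exactly \eqref{eq:VJdecomp}.
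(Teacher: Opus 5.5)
Your argument is correct. For the part of the lemma that is not a citation, it takes a different route from the paper.

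\textbf{The paper's route.} It quotes the spectrum and multiplicities from \cite{Brouwer:1989aa}. It then exhibits the two $5$-dimensional summands directly:
\begin{itemize}
\item $V_{51}$ as the image of $x\mapsto(x_T)_T$ with $x_T=\sum_{t\in T}x_t$, whose $\sgn$ label is visible from $x_T+x_{T^c}=0$;
\item $V_{33}$ as the span of the transported claws $\gamma(cl_b)$.
\end{itemize}

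\textbf{Your route.} You combine Young's rule with the inclusion filtration $U_0\subset U_1\subset U_2\subset U_3$, which is the classical description of the Johnson-scheme eigenspaces. You show that $A_\johnson$ and $C$ are both triangular for this filtration, via
\[ A_\johnson f_S=\big((3-i)^2-i\big)f_S+(4-i)\sum_{s\in S}f_{S\setminus\{s\}}, \qquad Cf_S=\sum_{R\subseteq S}(-1)^{|R|}f_R. \]
I checked both formulas. Multiplicity-freeness and Schur's lemma then pin down all four eigenvalues and all four $\BZ_2$-characters at once.

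\textbf{What each buys.} Your route gives a uniform, self-contained justification of every label. This includes the $\id$ labels on $V_6$ and $V_{42}$ and the $C$-oddness of the $(3,3)$ summand, which the paper leaves implicit. The paper's route gives concrete generators for the two $\sgn$ summands, and these are what is used later: the map $x\mapsto x_T$ underlies $\psi$, and the claw images tie $V_{32}$ on the Petersen side to the Johnson side.

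\textbf{One point to shore up.} You assert $U_i\cong M^{(6-i,i)}$ without comment. This is the standard full-rank property of the inclusion matrix of $i$-subsets versus $3$-subsets of $[6]$ for $i\le 3$. It deserves a citation, since it is what guarantees that the $S^{(6-i,i)}$ summand lies in $U_i$ and maps isomorphically onto $U_i/U_{i-1}$.
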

\begin{proof}
The spectrum and multiplicities are given in \protect{\cite[Theorems 9.1.2, 8.4.2-3]{Brouwer:1989aa}}. We describe each of these two $5$-dimensional sub-representations.
The map $\iota:V_{51} \to \johnvertrep$ is given by $\{x_i\}  \mapsto y_T = x_{T} := \sum_{t\in T} x_t$, which satisfies $x_{T} + x_{T^c} = 0$. 

The sub-representation $V_{33} \subset \johnvertrep$ is spanned by the elements
\begin{equation}\label{prop:v33inm3}
\gamma(cl_b) := y_{\gamma(b)} - \sum_{a\sim b} y_{\gamma(a)} 
\end{equation}
where $\gamma$ is the embedding \eqref{eq:embedpettojohn} of the Petersen graph into the Johnson graph.
 These are clearly invariant under $\gamma(\Aut(\petersen))$.

\end{proof}

We will consider the 10-dimensional anti-symmetric vertex representation, $\johnvertrep:= (\tilde \johnvertrep)_{\sgn}$, that is, we impose $y_T = - y_{T^c}$. Following from \eqref{eq:VJdecomp}, this is given as
\[ \johnvertrep \cong V_{33,\sgn}^{(-3)} \oplus V_{51,\sgn}^{(3)}. \]

\subsubsection{Twisting} 

The outer automorphism $\tau$ of $S_6$ twists its representation theory. The four $5$-dimensional representations $\lambda$ of $S_6$ have character tables entries for cycle types $\mu$ given by

\begin{center}
\begin{tabular}{c|ccc}
$\lambda$ $\backslash$ $\mu$ & $1^6$ & $21^4$ & $2^3$  \\
\hline 
$51$ & $5$ & $3$ & $-1$ \\ 
$21111$ & $5$ & $-3$ & $1$ \\ 
$222$ & $5$ & $-1$ & $3$ \\ 
$33$ & $5$ & $1$ & $-3$ \\ 
\end{tabular}
\end{center}
As $\tau$ sends an element of cycle type $(21^4)$ to one of type $(2^3)$, we find
\[ \tau : V_{33} \leftrightarrow V_{21111}, \quad  V_{222} \leftrightarrow V_{51} \]
On the other hand, twisting by the sign character induces
\[ \sgn : V_{51} \leftrightarrow V_{21111}, \quad  V_{222} \leftrightarrow V_{33} \]
and thus
\[ \sgn\otimes\tau : V_{33} \leftrightarrow V_{51}. \]
Consider the map $\tilde \rho \in \mathrm{End}(\johnvertrep)$ which acts by
\begin{equation}
\tilde\rho : x_T \mapsto x_{\rho(T)}, 
\end{equation}

\begin{lemma}
The map $\tilde \rho$ is an $S_6$-module isomorphism of $\johnvertrep$ with itself twisted by $\sgn \otimes \tau$.
\[ \tilde \rho : \johnvertrep \to \johnvertrep^{\sgn \otimes \tau}. \]
\end{lemma}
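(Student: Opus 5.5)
The claim to establish is that for every $g\in S_6$ and every $T$,
\[ \tilde\rho(g.x_T) = \tau(g)\,\sgn(g).\tilde\rho(x_T). \]
Here $g$ acts on $\johnvertrep$ by permuting $3$-subsets, and $y_T=-y_{T^c}$. Since $S_6$ is generated by transpositions, it suffices to prove the intertwining relation for a generating set. It is cleanest to use the exotic copy as well: $\tau$ maps transpositions to triple transpositions. So the identity to verify for a transposition $t=(ij)$ is
\begin{equation}\label{eq:rhointertwine}
\rho(t.T) = \epsilon_t(T)\,\tau(t).\rho(T), \qquad \epsilon_t(T)\in\{\pm1\},
\end{equation}
where $\pm$ means "equal, or equal to the complement". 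The sign has to come out uniformly as $\sgn(t)=-1$, so that the signs assemble into $\sgn\otimes\tau$. This matches Lemma~\ref{lemma:intertwiningaction}, where the $S_5$ generators carry an explicit minus sign, which is the complementation $C$.

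First I reduce to the $S_5$ fixing $q$ plus one extra generator. By Lemma~\ref{lemma:intertwiningaction} and the definition $\gamma(ij)=\rho_P(ij6)$, the map $\rho$ already intertwines $\sigma_{ij}$, acting naturally on the sets $\{ij6\}$, with $s_{ij}=-\tau(\sigma_{ij})$. I would strengthen this from the ten subsets containing $6=q$ to all $20$ subsets. This follows from the definition of $\rho_P$: it depends only on the duad $d_T$, on whether $q\in T$, and on the colour $P(d_T)$. It is also compatible with complementation, since $\rho(T^c)=\rho(T)^c$ (Corollary: $\rho^2=c$, and $d_T=d_{T^c}$).

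Next I verify equivariance under the stabiliser $S_5$ of $q$. This group permutes the duads of $[5]$ and acts on the mystic pentagon $P$. An element either preserves $P$ together with its colouring, or swaps the two colours. Its image under $\tau$ permutes the synthemes of the pentad $\Sigma_P$ compatibly. I will check directly that $D(gT)=\tau(g)D(T)$ up to complement, and that the colour flip $(-1)^{P(d)}$ accounts for the complement. The sign bookkeeping should reproduce exactly the minus signs in Lemma~\ref{lemma:intertwiningaction}.

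The remaining step is one transposition moving $q$, say $(56)$, and I expect this to be the main obstacle. It mixes the two cases $q\in T$ and $q\notin T$, and it changes which $5$-set $P$ lives on. My plan is to use the fact that the $\rho$-cycles are indexed by the synthemes of $\Sigma_P$. I will check on the five $4$-cycles in the figure that $(56)$ maps each cycle to the cycle of $\tau(56).S$, preserving orientation up to reversal. Reversal is the same as composing with $c=\rho^2$, which gives the sign. This is a finite check on $20$ subsets.

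Finally, $\tilde\rho$ is bijective, since it permutes basis vectors up to sign. So it is a module isomorphism. As a consistency check, it must swap $V_{33,\sgn}^{(-3)}$ and $V_{51,\sgn}^{(3)}$, as the twisting table predicts. This also agrees with the lemma that $\rho$ exchanges distance $1$ and distance $2$, hence exchanges the $A_\johnson$-eigenvalues $\pm3$.
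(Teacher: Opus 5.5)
Your overall strategy is sound: check on Coxeter generators that $\rho(gT)=\tau(g)\rho(T)$, up to complement exactly when $g$ is odd. It also aims at the right statement. The paper's argument only matches isotypic components, $\johnvertrep^{\sgn\otimes\tau}\cong V_{51}\oplus V_{33}\cong\johnvertrep$, which shows that some isomorphism exists but not that $\tilde\rho$ is one. However, both mechanisms you propose for doing the checks rest on false structural claims.

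\textbf{The $S_5$ step.} A general element of the point stabiliser $S_5$ does not preserve $P$ up to a colour swap. Only a subgroup of order $20$ does, since the six pentagons are permuted transitively. Already $(12)$ sends the $5$-cycle $\{12,23,34,45,15\}$ to $\{12,13,34,45,25\}$, which is neither colour class. So your colour-flip bookkeeping does not apply even to the generators $\sigma_{ij}$. This step survives only because you can cite Lemma~\ref{lemma:intertwiningaction}, together with your correct extension to all $20$ triples via $\rho(T^c)=\rho(T)^c$.

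\textbf{The $(56)$ step.} This is the real gap, and it is the step you single out. The transposition $(56)$ does not carry $\rho$-cycles to $\rho$-cycles. The cycle $126\to124\to345\to356$ of the syntheme $(12)(35)(46)$ is sent to $\{125,124,346,356\}$. That set meets both this cycle and the cycle $256\to346\to134\to125$ of $(34)(25)(16)$. Moreover, $\tau(56)=(14)(23)(56)$ sends $\{12/35/46\}$ to $\{15/26/34\}\notin\Sigma_P$, so ``the cycle of $\tau(56).S$'' is not even defined. Nothing in the claim predicts cycle preservation anyway. The identity relates $\rho\circ(56)$ to $\tau(56)\circ\rho$; it does not say that $(56)$ normalises $\langle\rho\rangle$.

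\textbf{What to check instead.} Verify the pointwise identity $\rho\bigl((56)T\bigr)=\bigl(u\,\rho(T)\bigr)^c$, with $u=(14)(23)(56)$ (the paper's $s_{56}$), for the ten $T\ni 6$. For example, $\rho(125)=256=\bigl(u(124)\bigr)^c$, where $124=\rho(126)$. This holds in all ten cases, and the complements then follow from $\rho c=c\rho$.

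You also need not know $u$ in advance. Suppose that for each generator $t$ the permutation $c\,\rho t\rho^{-1}$ of the $20$ triples is induced by some permutation of $[6]$. Then sending $g$ to $\rho g\rho^{-1}$, composed with $c$ when $g$ is odd, is automatically an injective homomorphism $S_6\to S_6$. It sends transpositions to triple transpositions, so it is the outer automorphism agreeing with Lemma~\ref{lemma:intertwiningaction} on $S_5$.
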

\begin{proof}
\[ \tilde \rho : \johnvertrep=V_{33}\oplus V_{51} \rightarrow \johnvertrep^{\sgn\otimes\tau}=V_{33}^{\sgn\otimes\tau}\oplus V_{51}^{\sgn\otimes\tau}=  V_{51}\oplus V_{33} \cong \johnvertrep \]
\end{proof}

\subsubsection{The Petersen Graph $K(5,2)$}

Consider the $2$-subsets $b$ of $5$ elements $[5]$. We form the basis $\ell_{b}$ for $b=\{i,j\}$. Recall the decomposition \eqref{eq:petersenrepdecomp}
\[ \petvertrep = \left( V_{5,\sgn}^{(3)} \oplus V_{41,\sgn}^{(-2)}\right) \oplus V_{32,\sgn}^{(1)}. \]

\begin{theorem}
The map $\psi : \petvertrep \to \johnvertrep$, defined by 
\[ \psi : \ell_{\{ij\}} \mapsto  x_{\rho(\{ij6\})} \]
\begin{eqnarray*}
  \ell_{12}  \to -x_{356}, &   \ell_{23}  \to -x_{146}, &   \ell_{34}  \to -x_{256}, \\
  \ell_{13}  \to +x_{456}, &   \ell_{24}  \to +x_{156}, &   \ell_{35}  \to +x_{126}, \\
  \ell_{14}  \to +x_{236}, &   \ell_{25}  \to +x_{346}, &   \ell_{45}  \to -x_{136}, \\
  \ell_{15}  \to -x_{246}. \\
\end{eqnarray*}
is an intertwining isomorphism of $S_5$ representations, twisted by $\tau \otimes \sgn$, i.e.
\begin{equation}
\psi(\sigma \ell_b) = \sgn(\sigma) \tau(\sigma) \psi(\ell_b).
\end{equation}
\[ s_{ij} := \psi\sigma_{ij}\psi^{-1} \]
\end{theorem}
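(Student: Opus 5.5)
The plan is to reduce the intertwining property to a check on generators, and to obtain bijectivity by a dimension count. Since $\psi$ is defined on a basis, it is linear. By the table it sends the ten basis vectors $\ell_b$ to $\pm$ ten distinct basis vectors $x_{\gamma(b)}$ of $\johnvertrep$. These $x_{\gamma(b)}$ range over the image of the Petersen embedding $\gamma$ of \eqref{eq:embedpettojohn}, which is one class of the partition $\johnson = \gamma(\petersen)\cup\gamma^c(\petersen)$. Because $\johnvertrep$ imposes $x_T = -x_{T^c}$, a set of representatives of the complementary pairs is a basis. The set $\gamma(\petersen)$ is exactly such a set: it contains exactly one of $T, T^c$ for each $T$. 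Hence $\psi$ maps a basis to a signed basis, so it is a linear isomorphism of $10$-dimensional spaces. No representation theory is needed for bijectivity.

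For equivariance, it suffices to verify $\psi(\sigma \ell_b) = \sgn(\sigma)\tau(\sigma)\psi(\ell_b)$ for the Coxeter generators $\sigma_{12},\sigma_{23},\sigma_{34},\sigma_{45}$ of $S_5$. Both sides define group actions of $S_5$ on $\petvertrep$ once transported by $\psi$, and $\sgn\otimes\tau$ is a homomorphism. For each generator I would use Lemma \ref{lemma:intertwiningaction}, which gives $s_{ij}\gamma=\gamma\sigma_{ij}$ with $s_{ij} = -\tau(\sigma_{ij})$. Here $-$ denotes composition with the conjugation $C$, and on $\johnvertrep$ this acts by $x_T\mapsto x_{T^c}=-x_T$, which is precisely the factor $\sgn(\sigma_{ij})=-1$.

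The identity to check is then
\[ \psi(\ell_{\sigma.b}) = x_{\gamma(\sigma.b)} = x_{s_{ij}\gamma(b)} = -\,x_{\tau(\sigma_{ij})\gamma(b)} = \sgn(\sigma_{ij})\,\tau(\sigma_{ij})\,\psi(\ell_b). \]
The one subtlety is the signs in the table. Lemma \ref{lemma:intertwiningaction} is stated on vertices (3-subsets) and not on signed basis vectors. So I must confirm that the sign attached to $\psi(\ell_b)$, namely the orientation $(-1)^{q\notin T}$ in $\rho_P$ that picks which of $\gamma(b)$ or its complement is written, is compatible with the permutation action. This reduces to checking that $\tau(\sigma_{ij})$ sends $\gamma(b)$ to $\gamma(\sigma b)^{c}$ exactly when the signs in the table differ by the required amount. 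For instance, $\sigma_{12}$ swaps $\ell_{13}\leftrightarrow\ell_{23}$. Applying $(46)(15)(23)$ to $456$ gives $156$, whose complement is $234$. On the other side, $-x_{146}$ should match $-x_{\tau(\sigma_{12})(456)}$ up to a sign.

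This finite sign bookkeeping over $4$ generators and $10$ basis vectors, $40$ checks in all, is the main obstacle. Errors in the sign conventions for $s_{ij}$ are easy to make. I would organise the check by $S_5$-orbit structure: each transposition fixes $\ell_{ij}$ and the three $\ell_{kl}$ disjoint from $\{i,j\}$, and swaps three pairs. I would verify the fixed ones and one representative pair carefully, and deduce the remaining pairs via the order-$2$ relation $s_{ij}^2=1$.

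Finally, the relation $s_{ij} := \psi\sigma_{ij}\psi^{-1}$ is just a restatement once equivariance is established.
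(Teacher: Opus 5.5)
Your argument is correct. Its core is the same fact the paper relies on, namely Lemma~\ref{lemma:intertwiningaction}: the set-level identity $s_{ij}\gamma=\gamma\sigma_{ij}$ with $s_{ij}=C\circ\tau(\sigma_{ij})$. The paper gives no separate proof beyond the branching remark $\johnvertrep^{\tau\otimes\sgn}\cong V_{51}\oplus V_{33}\to(V_5\oplus V_{41})\oplus V_{32}=\petvertrep$. That remark only shows the two representations are abstractly isomorphic. Your route (signed-basis bijectivity plus reduction to generators) actually proves that this particular $\psi$ intertwines, so it is the more complete argument.

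Three corrections.

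First, the sign ``subtlety'' you single out as the main obstacle does not exist. By definition $\psi(\ell_b)=x_{\gamma(b)}$ exactly, with $x$ indexed by all $3$-subsets subject to $x_{T^c}=-x_T$. The signs in the table only rewrite $x_{\gamma(b)}$ using the representative that contains $6$. So your chain $x_{\gamma(\sigma b)}=x_{s_{ij}\gamma(b)}=-x_{\tau(\sigma_{ij})\gamma(b)}$ is already the whole proof. The only checks left are the unsigned, set-level ones that make up Lemma~\ref{lemma:intertwiningaction}, and these do hold.

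Second, your sample computation is wrong. $(46)(15)(23)$ sends $456$ to $146$, not $156$. Hence $-x_{\tau(\sigma_{12})(456)}=-x_{146}=\psi(\ell_{23})$ holds exactly, and $C(146)=235=\gamma(23)$. With your value $156$ the check would fail outright, not merely up to sign.

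Third, $s_{ij}^2=1$ only supplies the reverse direction within each swapped pair. You still need to check one member of each of the three swapped pairs, plus the four fixed vertices, for each generator. A single representative pair is not enough.
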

This corresponds to the $S_6 \to S_5$ branching
\[ \johnvertrep^{ \tau\otimes\sgn} = V_{33}^{\tau\otimes\sgn} \oplus V_{51}^{\tau\otimes\sgn} = V_{51}\oplus V_{33} \rightarrow (V_{5} \oplus V_{41})\oplus V_{32} = \petvertrep \]

\subsubsection{Lift to $S_6$}

With the choice of intertwining representation as given in \eqref{lemma:intertwiningaction}, we find the 5th braid generator of $S_6$ to be
\[  s_{56} = -(65)(14)(23). \]

\begin{lemma}
The action of $s_{56}$ on $\johnvertrep$ descends to an action of $\sigma_{56} := \psi^{-1}s_{56}\psi$ on $\petvertrep$ as follows
\begin{equation}\label{img:actionofsigma}
\sigma_{56} :=
\begin{tikzpicture}[scale=1.2, every node/.style={circle, draw, inner sep=1.2pt}, baseline={([yshift=-1.0ex]current bounding box.center)}]
  \node[red, very thick] (o1) at ( 90:1.6) {$23$};
  \node (o2) at ( 18:1.6) {$45$};
  \node[red, very thick] (o3) at (-54:1.6) {$\color{red}{ 12}$};
  \node[red, very thick] (o4) at (-126:1.6) {$\color{red}{ 34}$};
  \node (o5) at (162:1.6) {$15$};
  \node[red, very thick] (i1) at ( 90:0.9) {$\color{red}{14}$};
  \node[red, very thick] (i2) at ( 18:0.9) {$\color{red}{ 13}$};
  \node (i3) at (-54:0.9) {$35$};
  \node (i4) at (-126:0.9) {$25$};
  \node[red, very thick] (i5) at (162:0.9) {$\color{red}{ 24}$};
  \draw (o1)--(o2)--(o3) (o4)--(o5)--(o1);
  \draw[<->, red, very thick] (o3)--(o4);
  \draw[<->, red, very thick] (o1)--(i1);
  \draw (o2)--(i2);
  \draw (o3)--(i3);
  \draw (o4)--(i4);
  \draw (o5)--(i5);
  \draw (i1)--(i3)--(i5)  (i2)--(i4)--(i1);
  \draw[<->, red, very thick] (i5)--   (i2);  
\end{tikzpicture}
\end{equation}
where the basis elements moved along the red arrow incur a sign.
E.g.
\[ \sigma_{56}\ell_{23} = -\ell_{14},\, \sigma_{56}\ell_{15} = \ell_{15}. \]

This action preserves the $5\oplus 5$ decomposition of $V_\petersen = (V_5 \oplus V_{41})\oplus V_{32}$. 
The $S_4 \subset S_5$ acting on $[4] \subset [5]$ commutes with the action of $\sigma_{56}$
  
\end{lemma}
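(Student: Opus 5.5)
The lemma is a finite verification, and I would organise it so that only the first step is a genuine computation; the remaining two claims follow structurally from it. I would work throughout with the explicit table defining $\psi$ and with $s_{56} = -(56)(14)(23)$. Here the leading sign means the permutation is composed with the conjugation $C$. On $\johnvertrep$ we have imposed $y_T = -y_{T^c}$, so $C$ acts as $x_T \mapsto x_{T^c} = -x_T$.

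\textbf{Step 1: the ten values of $\sigma_{56}$.} For each of the ten basis vectors $\ell_{ij}$, I would read off $\psi(\ell_{ij}) = \pm x_T$ from the table. I would then apply the permutation $(56)(14)(23)$ to the $3$-subset $T$, include the extra sign from $C$, and, if needed, rewrite the resulting subset or its complement as one of the ten subsets appearing in the image of $\psi$ (using $x_{T^c} = -x_T$). Finally I would apply $\psi^{-1}$.

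The target is the pattern in the picture. The four vertices $\ell_{15}, \ell_{25}, \ell_{35}, \ell_{45}$ should be fixed, and the remaining six should be exchanged in pairs with a sign:
\[ \ell_{12} \leftrightarrow -\ell_{34}, \qquad \ell_{14} \leftrightarrow -\ell_{23}, \qquad \ell_{13} \leftrightarrow -\ell_{24}. \]
The main obstacle is purely the sign bookkeeping: there are three independent sources of sign (the table of $\psi$, the conjugation in $s_{56}$, and the complement relation). I would do all ten cases uniformly, and cross-check by confirming that $\sigma_{56}^2 = 1$.

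\textbf{Step 2: the $5 \oplus 5$ decomposition is preserved.} By construction, $\sigma_{56}$ is transported through the intertwiner $\psi$ from the action of a group element $s_{56} \in \Aut(\johnson)$. Any group element preserves the isotypic summands $V_{33,\sgn}^{(-3)}$ and $V_{51,\sgn}^{(3)}$ of $\johnvertrep$, since these summands are exactly the eigenspaces of $A_\johnson$, which commutes with $\Aut(\johnson)$. Under the branching described after the theorem defining $\psi$, $\psi$ carries these two summands onto $V_5 \oplus V_{41}$ and onto $V_{32}$ respectively, so $\sigma_{56}$ preserves both.

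As an independent check using only Step 1, I would verify directly on the Petersen picture that the signed permutation $\sigma_{56}$ commutes with the adjacency matrix $A$. Equivalently, it maps each claw relation $\bmf_b$ to $\pm \bmf_{b'}$, which is again a local check on the ten claws.

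\textbf{Step 3: commutation with $S_4$.} Step 1 shows that $\sigma_{56}$ has a closed form. Writing $[4]\setminus\{i,j\}$ for the complementary pair,
\[ \sigma_{56}(\ell_{i5}) = \ell_{i5}, \qquad \sigma_{56}(\ell_{ij}) = -\ell_{[4]\setminus\{i,j\}} \quad \text{for } \{i,j\} \subset [4]. \]
A permutation $\pi \in S_4$ fixes $5$, and it satisfies $\pi\bigl([4]\setminus\{i,j\}\bigr) = [4]\setminus\{\pi(i),\pi(j)\}$. Hence $\pi$ commutes with both parts of this formula, and therefore with $\sigma_{56}$.
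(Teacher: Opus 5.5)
Steps 1 and 3 are correct. With $s_{56}$ acting as $x_T\mapsto -x_{(56)(14)(23)T}$, the ten values come out exactly as drawn. Your closed form $\sigma_{56}(\ell_{ij})=-\ell_{[4]\setminus\{i,j\}}$, $\sigma_{56}(\ell_{i5})=\ell_{i5}$ then makes the $S_4$-commutation immediate; the paper's proof uses that commutation but never verifies it.

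The main argument of Step 2 is also valid, but it takes a different route from the paper. The paper uses $S_4$ to reduce to the two claw types and computes $\sigma_{56}(\bmf_{15})=-\bmf_{25}-\bmf_{34}$ and $\sigma_{56}(\bmf_{24})=\bmf_{24}$. This shows the claw span $V_5\oplus V_{41}$ is invariant. $V_{32}$ is then invariant as its orthogonal complement, because $\sigma_{56}$ is a signed permutation.

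You instead transport the eigenspaces of $A_\johnson$ through $\psi$. This handles both summands at once and explains why the decomposition must be preserved. However, it leans on the paper's unproved branching statement about which $S_6$-summand goes to which side. You can make that self-contained as follows:
\begin{itemize}
\item $\gamma$ is an induced embedding.
\item For each of the six $a$ at distance $2$ from $b$ in $\petersen$ one has $|\gamma(a)\cap\gamma(b)|=1$. Hence $\gamma(a)^c\sim\gamma(b)$ in $\johnson$, with $x_{\gamma(a)^c}=-x_{\gamma(a)}$.
\item Therefore $\psi^{-1}A_\johnson\psi=2A+1-E$, where $E$ is the all-ones matrix.
\item This operator has eigenvalue $-3$ on $V_5\oplus V_{41}$ and $3$ on $V_{32}$.
\end{itemize}

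The ``independent check'' in Step 2 is false and should be dropped. $\sigma_{56}$ does not commute with $A$, and it does not send claws to $\pm$ claws. For instance, $\sigma_{56}(\bmf_{15})=\ell_{12}+\ell_{13}+\ell_{14}+\ell_{15}$ has four coefficients equal to $+1$, so it is not $\pm$ a single claw. Likewise, $\sigma_{56}$ does not preserve the line spanned by $\sum_b\ell_b$.

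This failure is forced. $V_5\oplus V_{41}$ is the irreducible $S_6$-module $V_{33}$, so the new generator must mix $V_5$ with $V_{41}$. What $\sigma_{56}$ actually commutes with is $2A-E$, not $A$. The correct local check is the paper's: claws are sent into the span of claws, not to individual claws.
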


\begin{proof}
For this, we look at the claw relation $cl_b := \ell_b-\sum_{a\sim b} \ell_a=0$ of any vertex $b$ and show that$\sigma$ maps it to the linear span of such claws, hence the subspace $V_{32}$ is invariant. Up to $S_4$ invariance, there are only two such claws. Either $b$ is fixed or $b$ is swapped by $\sigma$. Take the fixed vertex $(15)$ in the graph \ref{img:actionofsigma}. We find $\sigma (cl_{(15)}) = (15) + (14) + (13) + (12) = (15)- cl_{(25)} + (25) - (34) + (12) = -cl_{(25)} - cl_{(34)}$. For the non-fixed vertex $24$, we find $\sigma (cl_{(24)}) = cl_{(24)}$, which finishes the proof.
\end{proof}

In terms of the action on hooks, we have
\ytableausetup{boxsize=1.4em}
\[ \sigma_{56} = 
\frac{\begin{ytableau}
a_1 \\
\color{pGray}{ a_2} & a_3
\end{ytableau}\,\, \begin{ytableau}
\color{pGray}{b_1} \\
\color{pGray}{b_2} & \color{pGray}{b_2}
\end{ytableau} 
}{\begin{ytableau}
*(boxB)  \\
\color{pGray}{c_2}   & c_3 \\
c_4 & \color{pGray}{c_5} & *(boxB) \nonumber\\
\end{ytableau}} 
\to 
\frac{\begin{ytableau}
*(boxS)\bar b_3  \\
\color{pGray}{ a_2} &  *(boxS)\bar b_1
\end{ytableau}\,\, \begin{ytableau}
*(boxS) \bar a_3 \\
\color{pGray}{b_2} & *(boxS)\bar a_1 
\end{ytableau} 
}{\begin{ytableau}
*(boxB)  \\
\color{pGray}{c_2}   &  c_4 \\
 c_3  & \color{pGray}{c_5}  & *(boxB) \nonumber\\
\end{ytableau}} 
\]

\subsection{Extra invariance}

\begin{proposition}
$\bigstanleypoly \in \Sym^{10}(V_{32}[\bbeta])^{S_5\times \BZ_2}$ is invariant under $\sigma_{56} \in \mathrm{End}(V_{32})$.
\end{proposition}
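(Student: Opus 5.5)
To prove invariance under $\sigma_{56}$, I will use the hyperplane factorization of Theorem~\ref{prop:hyperplanesimp} as a uniqueness principle, rather than expand $\bigstanleypoly$ in orbit sums.

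First I record that $\sigma_{56}$ is a hook automorphism. By the preceding Lemma it preserves the span of the claw relations, so it preserves $V_{32}$. It acts on the $\ell_b$ by signed permutations, and it commutes with $\beta$ and with the conjugation $C$. Under \eqref{eq:varsell321} it therefore sends each hook variable $\bh_b^\sA$ to $\pm\bh_{b'}^{\sB}$. Consequently it permutes the twenty boundary hyperplanes $\ell_b=\pm\beta$, equivalently $\bh_b^\sA=0$.

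Next, set $\Delta:=\sigma_{56}\bigstanleypoly-\bigstanleypoly\in\Sym^{10}(V_{32}[\bbeta])$. The goal is to show $\Delta$ vanishes on every boundary hyperplane. For a hyperplane $\bh_b^\sA=0$, write $(b',\sB)$ for its preimage under $\sigma_{56}$. Theorem~\ref{prop:hyperplanesimp} gives $\sigma_{56}\bigstanleypoly \simeq_b^\sA \sigma_{56}\,\simpterm(b',\sB)$, and also $\bigstanleypoly\simeq_b^\sA \simpterm(b,\sA)$. So the key check is the equivariance
\[
\sigma_{56}\,\simpterm(b',\sB)\simeq_b^{\sA}\simpterm(b,\sA).
\]
Recall $\simpterm(b,\sA)=\bmX^{\claw(b)}$ or its conjugate. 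By \eqref{eq:varsell321}, $\simpterm(b,\bmX_b)$ is $2^{-10}\prod_{a\in\claw(b)}(\ell_a-\beta)\prod_{a\notin\claw(b)}(\ell_a+\beta)$, up to the sign conventions. Since $\sigma_{56}$ acts by signed permutations, the check reduces to a combinatorial one. It asks whether the sign-twisted image of $\claw(b')$ equals $\claw(b)$, up to a flip of the whole set compensated by conjugation. The discrepancy would be a product of linear forms that agree modulo the kernel and the hyperplane. Using the $S_4$ commuting with $\sigma_{56}$, only the two claw types from the preceding Lemma need checking: a fixed vertex such as $15$, and a swapped vertex such as $24$. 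At $24$ the claw is preserved exactly. At $15$ the image of the claw is not literally a claw, and I expect the main obstacle here. My first approach would be to use a kernel identity from Theorem~\ref{thm:kernelidentity}, namely $\sigma(cl_{15})=-cl_{25}-cl_{34}$ restricted to $\ell_{15}=\pm\beta$, to rewrite the image product as the boundary datum modulo $\bh_b^\sA$.

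Granting this, $\Delta$ is divisible by each of the twenty pairwise non-proportional linear forms $\bh_b^\sA$ in the polynomial ring $\Sym(V_{32}[\bbeta])$. These forms are non-proportional because the $\ell_b$ are pairwise independent in $V_{32}$. This polynomial ring is a UFD, so $\Delta$ is divisible by a product of degree $20$. But $\Delta$ has degree $10$, so $\Delta=0$. If the divisibility argument fails because of some degeneracy, I would fall back on a direct check: compute the even orbit sums $\orbitsum_{2k}$ modulo the kernel in the basis $\{a_1,a_3,b_1,\tilde b_3,c_3\}$, and verify invariance under the explicit signed permutation $\sigma_{56}$ of \eqref{img:actionofsigma}.
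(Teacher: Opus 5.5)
Your strategy is sound and genuinely different from the paper's. However, its crux is exactly the step you leave as ``granting this'': that $\sigma:=\sigma_{56}$ carries the boundary datum of each hyperplane to the datum of the image hyperplane. Your analysis of that step is also misdirected.

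\textbf{What goes wrong in your case analysis.} You rightly say the sign-twisted image of $\claw(b')$ is what matters. But you then read off $\sigma(cl_{24})=cl_{24}$ and $\sigma(cl_{15})=-cl_{25}-cl_{34}$. Those identities concern the linear claw \emph{relations}, not the vertex \emph{sets} $\claw(b)$ that index the data. For both vertex types the plain set image is wrong: $\sigma$ moves the hyperplanes at $24$ to those at $13$, while $\pi(\claw(24))=\claw(24)\neq\claw(13)$.

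A kernel identity could not absorb a mismatch either. On $\ell_b=\pm\beta$, the forms $\ell_a\pm\beta$ with $a\neq b$ remain pairwise non-proportional, because no three $\ell_a$ are dependent on $V_{32}$. By unique factorisation, two such products agree modulo $\bh_b^\sA$ only if their sign patterns coincide.

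\textbf{The correct check.} The sign patterns do coincide.
\begin{itemize}
\item Write $\sigma\ell_a=\epsilon_a\ell_{\pi a}$. Here $\pi$ fixes the four vertices $i5$ and sends $jk\subset[4]$ to $[4]\setminus jk$, and $\epsilon_a=-1$ exactly on the six $jk\subset[4]$.
\item Up to a common constant, the datum on $\ell_b=t\beta$ (with $t=\pm1$) is $E_{b,t}=\prod_a(\ell_a+t\chi_b(a)\beta)$, where $\chi_b=1$ on $\claw(b)$ and $-1$ off it.
\item We have $\prod_a\epsilon_a=1$, and $\sigma$ sends $\ell_b=t\beta$ to $\ell_{\pi b}=\epsilon_b t\beta$. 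Hence $\sigma E_{b,t}=E_{\pi b,\epsilon_b t}$ identically if and only if $\chi_{\pi b}(\pi a)=\epsilon_a\epsilon_b\chi_b(a)$ for all $a\neq b$. In words, $\pi$ preserves Petersen adjacency up to switching by $\epsilon$.
\end{itemize}
This is a three-case check.
\begin{itemize}
\item The vertices $i5$ and $j5$ are never adjacent, before or after, and $\epsilon_{i5}\epsilon_{j5}=1$.
\item $i5\sim jk$ iff $i\notin jk$ iff $i5\not\sim[4]\setminus jk$, which matches $\epsilon_{i5}\epsilon_{jk}=-1$.
\item $jk\sim lm$ iff $jk\cup lm=[4]$ iff their complements in $[4]$ are disjoint, which matches $\epsilon_{jk}\epsilon_{lm}=1$.
\end{itemize}
Equivalently, the paper's later description of the datum on $H_T$ as $J_T$ uses only Theorem~\ref{prop:hyperplanesimp} and is manifestly $S_6\times\BZ_2$-equivariant. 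With this in place your divisibility argument closes the proof; the twenty forms are indeed pairwise non-proportional.

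\textbf{Comparison with the paper.} The paper restricts to the $S_4$ commuting with $\sigma_{56}$ and splits $V_{32}=V_{22}\oplus V_{31}$. It then uses a trace to see that $\sigma_{56}$ acts as $(-1)\oplus 1$, which confines possible anti-invariant components to degrees $6,8,10$. These are killed by an $S_4$-orbit decomposition, two kernel functions, and two weight identities of the Stanley sum (one in degree $8$, one in degree $6$).

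Your route bypasses that bookkeeping. The extra symmetry becomes a formal consequence of the hyperplane factorisation (itself resting on the $\BZ_2$ invariance) plus a switching check. It also yields more: $\bigstanleypoly$ is determined by its restrictions to any $11$ of the $20$ hyperplanes. It therefore inherits every hook automorphism that permutes its boundary data, in particular all of $S_6\times\BZ_2$.

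What the paper's route buys in exchange is independence from Theorem~\ref{prop:hyperplanesimp}. It also identifies explicitly which weight identities of $\bigstanleysum$ force the enhancement.
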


\begin{proof}

Under the branching $S_5$ to this $S_4$, we have $V_{32} \to V_{22} \oplus V_{31}$, a $2\oplus 3$ split, on which $\sigma_{56}$ must act as block diagonal $\sigma_{56} = (-1)^a \oplus (-1)^b$. Since $V_{32} \cong V_{33}$ as an $S_6$ representation, $\mathrm{Tr}_{V_{33}}(\sigma_{56})=1=2(-1)^a + 3 (-1)^b$. Thus $\sigma_{56}$ acts as $(-1) \oplus (1)$.

Since $S_4$ acts as reflections on $V_{22}$, $\BC[V_{22}]^{S_4} \cong \BC[f_2,f_3]$ is a polynomial ring. By standard theory, $\BC[V_{31}]^{S_4} \cong \BC[e_2,e_3]$,  So
\[ \BC[V_{32}]^{S_4} = \BC[f_2,f_3^\sgn,e_2,e_3] \]
On which $\sigma_{56}$ acts as $-1$ on $f_3$, and the other generators are invariant. Thus, the only polynomials of degree 10 or less that are anti-invariant are
\[ f_3e_3 \{1, e_2, f_2, e_2^2, e_2f_2,f_2^2 \} \]
Thus, any anti-invariant must have at degree $6, 8$ or $10$. We prove that $\bigstanleypoly$ has no anti-invariant component in each of these degrees. 
First, in degree 10, $(\bigstanleypoly)_{(10)} = 2\prod_{b} \ell_b$, which under $\sigma_{56}$ produces a factor of $(-1)^6$, and hence is invariant.

In degree 8, we  split up the two $S_5$ orbits into $S_4$ orbits, then compute the action of $\sigma_{56}$ on those orbits.

\begin{figure}
\begin{tabular}{c|c|c|c|c|c}
$S_5$ orbit & $S_4$ orbits  \\
\hline & & \\
$\overline{\gPathTwo}$ & \geighta  & \geightii \\
       & $+\circlearrowright$ & $-\downarrow$ \\
\hline & & \\
$\overline{\gIsoTwo}$ & \geightaa & \geightiii  \\
       & $+\circlearrowright$ & $-\uparrow$ \\
\end{tabular}
\caption{\label{tab:deg6s4orbits}The two orbits under $S_5$ of eight vertices of the Petersen graph decomposed into their four orbits under $S_4$.  Under each orbit is the sign incurred under $\sigma_{56}$, and if two orbits are interchanged, which in this case the two in the last column are.}
\end{figure}

We show that 
\[ \tfrac{1}{2}(1\text{-}\sigma_{56}) \bigstanleypoly = \sum_{pp' \in P_2} \tfrac{1}{2}(w_{p}+w_{p'}) ( O_{p}+O_{p'} ) =: \sum_i \tfrac{1}{2}\, W_i \cdot \mathcal{O}_i \]
We write $e_I = \tfrac{1}{2}\, W_i \cdot \mathcal{O}_i$, distinguishing the sum of weights $W_i$ from the anti-invariant sum of orbits $\mathcal{O}_i$.

There are 2 invariant orbits, and one pair with sign $-1$. This one signed pair contributes $\tfrac{1}{2}(1\text{-}\sigma_{56})(\bigstanleypoly)_{(8)} = E_0$ where
\[ E_0 = \tfrac{1}{2}\left(w(\gPathTwo) + w(\gIsoTwo) \right) \left( \geightii + \geightiii \right) \]

We require $E_0$ to vanish, which can only happen via one of two ways: either the sum of weights $(W_0)$ vanishes, or the $S_4$-orbit sum $(\mathcal{O}_0)$ vanishes under the pullback to $V_{32}$. In degree 8, there are no kernel functions on $\petersen$ of the form $(1\text{-}A_\petersen)f$ that are supported on only two vertices. Thus, we cannot show that the orbit sums $\mathcal{O}_0$ vanish when pulled back to $V_{32}$. So, we are required to force the weight sum $W_0$ to vanish. This is
\[ W_0 := w(\gPathTwo) + w(\gIsoTwo) = 0\]
This holds for our specific weights given by \eqref{eq:cvalues}, $w(\gPathTwo) = 1$, $w(\gIsoTwo)=-1$, and so there is no anti-invariant contribution in degree 8.

In degree 6, we require more work. There are 6 $S_5$-orbits which split into $17$ $S_4$-orbits. 

Referring to \eqref{img:actionofsigma}, we split the set of vertices into 6 red and 4 black. Similarly, there are three red edges, and twelve black.

The first row, type "C", there is one choice for the color of the isolated vertex, and then what color the center of the 4-chain is. A total of four orbits. Similarly for the second row, type "E", there's two choices for the color of the center vertex is, and then two choices for the color of the isolated edge attached to that vertex. Total of four.

Of these, we see there are seven invariant orbits, one pair that exchanges with sign $(+1)$, and four pairs with sign $-1$. 

The $\sigma:= \sigma_{56}$ action on the $S_4$ orbits is displayed in figure \eqref{table:s4orbits}.

\begin{figure}
\begin{tabular}{c|c|c|c|c|c}
$S_5$ orbit & $S_4$ orbits  \\
\hline & & & & \\
\gsixC & \gsixCione & \gsixCii & & \gsixCaii & \gsixCiii \\
   60    & 12 $-\downarrow$ &  12 $-\downarrow$ & & 24 $+\downarrow$ & 12 $+\circlearrowright$\\
\hline & & & & \\
\gsixE & \gsixEi & \gsixEii & \gsixEiii & & \gsixEaaa\\
    60   & 12 $-\uparrow$ & 12 $-\uparrow$  & 24 $-\downarrow$ & & 12 $+\circlearrowright$\\
\hline & & & & \\
\gsixQ & & & \gsixQi & \gsixQaa & \gsixQiii \\
    60   & & &  24 $-\uparrow$ & 24 $+\uparrow$  & 12 $+\circlearrowright$ \\
\hline & & & &  \\
\gsixO & \gsixOi & \gsixOa & & & \\
   10    & 6 $+\circlearrowright$ &4 $-\downarrow$ & & \\
\hline & & & & \\
\gsixXi & \gsixXIi & \gsixXIa & & & \\
   5    & 1 $+\circlearrowright$&4 $-\uparrow$& & \\
\hline & & & & \\
\gsixH & \gsixHIi & \gsixHIii   & &\\
  15     & 3 $+\circlearrowright$ & 12 $+\circlearrowright$  & & 
\end{tabular}
\caption{\label{table:s4orbits}The six $S_5$-orbits of six vertices of the Petersen graph decomposed into their seventeen $S_4$-orbits. Under each orbit is (orbit size, sign incurred under $\sigma_{56}$, and if orbit interchange occurs.}
\end{figure}

The four contributions are $\tfrac{1}{2}(1\text{-}\sigma)(\bigstanleypoly)_{(6)} = E_1+E_2+E_3+E_4$, where
\begin{eqnarray}
E_1 &=& \tfrac{1}{2}\left(w(\gFourStar) + w(\gFourIso) \right) \left( \gsixOa +\gsixXIa \right)
\\
E_2 &=&  \tfrac{1}{2}\left(w(\gPthreeIso) + w(\gPtwoIsotwo) \right) \left( \gsixEi + \gsixCione \right)
\\
E_3 &=&  \tfrac{1}{2}\left(w(\gPthreeIso) + w(\gPtwoIsotwo) \right) \left( \gsixEii + \gsixCii \right)
\\
E_4 &=&  \tfrac{1}{2}\left(w(\gPFour) + w(\gPthreeIso) \right) \left( \gsixQi +\gsixEiii \right)
\end{eqnarray}
We require this to vanish, similarly to the degree 8 case. We will show that the last three orbit sums vanish, and one can easily check that the first three weight sums vanish.

In degree 6, we find only two $S_5$ kernel functions
\[Q_1 := \begin{tikzpicture}[scale=0.6,transform shape, baseline=(current bounding box.center), every node/.style={circle, draw, inner sep=1.2pt}]
  \node (o1) at ( 90:1.6) {$0$};
  \node (o2) at ( 18:1.6) {$1$};
  \node[blue, thick, fill] (o3) at (-54:1.6) {$0$};
  \node[blue, thick, fill] (o4) at (-126:1.6) {$0$};
  \node (o5) at (162:1.6) {$1$};
  \node[blue, thick, fill] (i1) at ( 90:0.9) {$0$};
  \node[blue, thick, fill] (i2) at ( 18:0.9) {$0$};
  \node (i3) at (-54:0.9) {$1$};
  \node (i4) at (-126:0.9) {$1$};
  \node[blue, thick, fill] (i5) at (162:0.9) {$0$};
  \draw (o1)--(o2)--(o3) (o4)--(o5)--(o1);
  \draw[blue, very thick] (o3)--(o4);
  \draw (o1)--(i1);
  \draw (o2)--(i2);
  \draw (o3)--(i3);
  \draw (o4)--(i4);
  \draw (o5)--(i5);
  \draw (i1)--(i3)--(i5)  (i2)--(i4)--(i1);
  \draw[blue, very thick] (i2)--(i5);
\end{tikzpicture} = 1\,\gsixE + 1\,\gsixC  + 0\,\gsixQ. \]

\[Q_2 := \begin{tikzpicture}[scale=0.6,transform shape, baseline=(current bounding box.center), every node/.style={circle, draw, inner sep=1.2pt}]
  \node (o1) at ( 90:1.6) {$1$};
  \node[blue, thick, fill] (o2) at ( 18:1.6) {$1$};
  \node[blue, thick, fill] (o3) at (-54:1.6) {$0$};
  \node[blue, thick, fill] (o4) at (-126:1.6) {$0$};
  \node[blue, thick, fill] (o5) at (162:1.6) {$1$};
  \node (i1) at ( 90:0.9) {$\text{-}1$};
  \node (i2) at ( 18:0.9) {$0$};
  \node (i3) at (-54:0.9) {$1$};
  \node (i4) at (-126:0.9) {$1$};
  \node[blue, thick, fill] (i5) at (162:0.9) {$0$};
  \draw (o1)--(o2) (o5)--(o1);
  \draw[blue, very thick] (o2)--(o3)--(o4)--(o5);
  \draw (o1)--(i1);
  \draw (o2)--(i2);
  \draw (o3)--(i3);
  \draw (o4)--(i4);
  \draw[blue, very thick] (o5)--(i5);
  \draw (i1)--(i3)--(i5)--(i2)--(i4)--(i1);
\end{tikzpicture} = 1\,\gsixE -1\,\gsixC  +2\,\gsixQ. \]

Since $\sigma$ commutes with $\iota^*$ (i.e. $\sigma$ preserves $V_{32}$), then $\iota^* Q_i = 0$ implies $\iota^*\tfrac{1}{2}(1\text{-}\sigma)Q_i = 0$. Upon splitting from $S_5$ to $S_4$, we look at the rows of table \ref{table:s4orbits}, to see
\[ \tfrac{1}{2}(1\text{-}\sigma)\gsixE \to \mathcal{O}_2 + \mathcal{O}_3 +\mathcal{O}_4, \quad \tfrac{1}{2}(1\text{-}\sigma)\gsixC \to \mathcal{O}_2 + \mathcal{O}_3, \quad \tfrac{1}{2}(1\text{-}\sigma)\gsixQ \to \mathcal{O}_4 \]
where, as before, $\mathcal{O}_i$ is the anti-invariant orbit pair in $e_I$. Hence, 
\[ Q_1 = 2\mathcal{O}_2 + 2\mathcal{O}_3 +\mathcal{O}_4,\quad Q_2= 3 \mathcal{O}_4. \]
Both of these combinations vanish in the pullback to $V_{32}$, i.e. 
\[ \iota^*\mathcal{O}_4 = 0= \iota^*(\mathcal{O}_2 + \mathcal{O}_3) \,\in \Sym^{6}(V_{32}). \]
Now, we have $W_2=W_3$ so our combination $E_2 + E_3 = \tfrac{1}{2}w_2(\mathcal{O}_2 + \mathcal{O}_3)$ vanishes, and so does $E_4 \sim \mathcal{O}_4$. Thus the last three orbit sums contributions vanish, $E_2+E_3+E_4=0$.

However, our orbit sum $\mathcal{O}_1$ does not vanish when restricted to $V_{32}$, so it is genuinely anti-invariant (i.e. proportional to $f_3e_3$), which means that the weight sum $w_1$ must vanish. So we find just a single equation in degree 6 that must be satisfied in order for $(\bigstanleypoly)_{(6)}$ to be invariant under $\sigma_{56}$, and that is
\[ W_1 := w(\gFourStar) + w(\gFourIso) = 0. \]
This holds for our specific weights given by \eqref{eq:cvalues}, $w(\gFourStar) = 3$, $w(\gFourIso)=-3$, and so $(\bigstanleypoly)_{(6)}$ is invariant under $\sigma_{56}$. Thus $\bigstanleypoly$ is invariant.
\end{proof}

\subsubsection{Boundary Hyperplanes Again}\label{sect:hyperplanesagain}
Let $T$ denote the 3-subsets of $[6]$, which carries the action of $S_6 \times \BZ_2$. Consider the 20 hyperplanes $H_T := x_T - \beta = 0$, which we easily see are permuted by $S_6 \times \BZ_2$. The polytope defined by these 20 hyperplanes contains the 12 points in the orbit of
\[ \tfrac{\beta}{3}\{ 5, -1,-1,-1,-1,-1\} \in V_{51} \]
Each such point lies on $10$ of the hyperplanes (the 10 choices of 3 $T$ that include $+5$, or the 10 choices that include three $+1$'s), and each hyperplane contains 6 such points.

\begin{lemma}
The polynomial $\bigstanleypoly$ has the following properties.
\begin{itemize}
\item $\bigstanleypoly \in \Sym^{10}(V_{51}[\bbeta])^{S_6 \times \BZ_2}$
\item For each triple $T$, consider the degree 10 polynomial $J_T:= \prod_{S:S\cap T \leq 1} H_S$.  Each $J_T$ is invariant under $x_S\to x_{S^c}$, $\beta \to -\beta$. Then, on each hyperplane $H_T=0$, we have a factorization
\[ \bigstanleypoly|_{H_T=0} = J_T|_{H_T=0} .\]
\end{itemize}
\end{lemma}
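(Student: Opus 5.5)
The plan is to transport everything already proved about $\bigstanleypoly$ on the Petersen side to the Johnson side through the intertwiner $\psi$. The first bullet is nearly formal. The proposition on extra invariance, together with the $S_5\times\BZ_2$ invariance of Theorem~\ref{thm:z2invs}, shows that $\bigstanleypoly$ is invariant under the group generated by $S_5$, $C$ and $\sigma_{56}$. I will argue that $\psi$ carries $V_{32}$ isomorphically onto the summand $V_{51}\subset\johnvertrep$ (via the branching displayed after the theorem defining $\psi$). Then $\psi\sigma_{ij}\psi^{-1}=s_{ij}$ for $1\le i<j\le 5$, together with $\psi\sigma_{56}\psi^{-1}=s_{56}$, realises the Coxeter generators of $S_6$ on $V_{51}$ up to the sign twist. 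Since $\bigstanleypoly$ has even total degree in the $x$'s once $\beta$ is included, and $C$ acts as $-1$, the sign twist $\sgn$ is absorbed by $C$. So the pushforward $\psi_*\bigstanleypoly$ is invariant under all of $S_6\times\BZ_2$.

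For the second bullet I reduce to a single hyperplane, using that $S_6\times\BZ_2$ permutes the $H_T$ transitively and also permutes the $J_T$ compatibly. Only one check is needed here: $g.J_T=J_{g.T}$, because the condition $|S\cap T|\le1$ is preserved by $S_6$. Under $C$, which sends $x_S\mapsto x_{S^c}=-x_S$, we need $\beta\to-\beta$ or else the invariance statement claimed for $J_T$. I will verify that invariance directly: $S\mapsto S^c$ maps $\{S:|S\cap T|\le1\}$ to $\{S:|S\cap T|\ge2\}$, and $H_S=x_S-\beta$ becomes $-(x_S-\beta)$ after $x\to-x$, $\beta\to-\beta$. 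For the representative hyperplane I will choose $T$ with $T=\psi(\ell_b)$ for a Petersen vertex $b$, so that $H_T$ is exactly $\ell_b=\beta$ (or $-\beta$) in hook language. By \eqref{eq:varsell321}, that is the boundary hyperplane $\bh_b^{\sA}=0$ for one of the two hook choices.

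On that hyperplane, Theorem~\ref{prop:hyperplanesimp} gives $\bigstanleypoly\simeq_b^{\sA}\simpterm(b,\sA)$, a single diagram $\bmX^{\claw(b)}$ or its conjugate. Its polynomial form $(\simpterm)'$ is a product of ten linear forms $\tfrac12(\ell_a\pm\beta)$ up to sign. Under $\psi$ each factor becomes $\pm\tfrac12(x_S\mp\beta)$, i.e. $\pm\tfrac12 H_S$ for some $S$, possibly after the complementation $x_S=-x_{S^c}$. I will then compute the ten triples $S$ produced from $\bmX^{\claw(b)}$ and check that they are exactly the ten $S$ with $|S\cap T|\le1$. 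Equivalently, all the resulting hyperplanes pass through the six polytope vertices lying on $H_T$. The overall constant $2^{-10}\cdot(\pm1)\cdot 2^{10}$ matches the leading coefficient $2\prod\ell_b$ after the weight rescaling; I will fix it by comparing the $\beta^{10}$ coefficient or a single evaluation point.

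The main obstacle is this last combinatorial identification, which requires matching the sign bookkeeping of $x_b$, the flip set $\claw(b)$, and the signs in the table for $\psi$. I would do it for one vertex, say $b=a_2$ with its claw, by explicit table lookup. Equivariance (the corollary on equivariance of boundary data, combined with $g.J_T=J_{g.T}$) then extends the identity to the twenty hyperplanes of the form $\ell_b=\pm\beta$. Since these are precisely the twenty $H_T$, no further cases remain.
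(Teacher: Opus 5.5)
Your argument works, but the factorization step takes a different route from the paper's. The paper does no table lookup and no symmetry reduction. It treats every $b$ at once: the ten factors of $\simpterm(b,\sA)$ are $H_{\gamma(a)}$ for $a\notin\claw(b)$ and $H_{\gamma^c(a)}$ for $a\in\claw(b)$. It then uses only two facts to conclude that these are exactly the $H_S$ with $|S\cap T|\le 1$ for $T=\gamma(b)$:
\begin{itemize}
\item $\gamma$ is an induced embedding of $\petersen$ into $\johnson$: adjacent vertices go to triples sharing two elements, non-adjacent ones to triples sharing one;
\item $|\gamma(a)^c\cap\gamma(b)|=3-|\gamma(a)\cap\gamma(b)|$.
\end{itemize}
This removes what you call the main obstacle, and it explains why the flip set is $\claw(b)$.

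Your version gets by with one finite check at $b=a_2$. In exchange it must verify that $\psi$ intertwines the $S_5\times\BZ_2$ action on the hyperplanes $\ell_b=\pm\beta$ with the action on the $H_T$, and that $\bigstanleypoly$ and $J_T$ transform compatibly. You do this correctly, including $C\colon J_T\mapsto J_{T^c}$. You also give arguments for the first bullet (combining $S_5$, $C$ and $\sigma_{56}$, with $C$ absorbing the sign twist) and for the stated invariance of $J_T$. The paper's proof leaves both of these implicit.

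Two side remarks in your proposal should be dropped.
\begin{itemize}
\item The ``equivalent'' criterion that the resulting hyperplanes all pass through the six polytope vertices on $H_T$ is false. Only $H_T$ itself contains all six; an $H_S$ with $|S\cap T|\le 1$ contains at most two of them.
\item Comparing $\beta^{10}$ coefficients after restricting to $H_T$ is not meaningful, since $\beta=x_T$ there. The constant is already fixed by the coefficient $1$ on the single diagram in Theorem~\ref{prop:hyperplanesimp}, together with the factors $\tfrac12$ in \eqref{eq:varsell321}.
\end{itemize}
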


\begin{proof}
By Theorem~\ref{prop:hyperplanesimp}, on each hook hyperplane $\bh_b^\sA = 0$ we have $\bigstanleypoly \simeq \simpterm(b,\sA)$, a product of $10$ hooks. Under the embedding $\gamma$, the hook $\bh_b^{\bmX_b}$ corresponds to $H_{\gamma(b)}$ and $\bh_b^{\bar\bmX_b}$ to $H_{\gamma^c(b)}$. The boundary datum $\bmX^{\claw(b)}$ is the diagram with hooks flipped on $\claw(b)$, so its $10$ factors are $H_{\gamma(a)}$ for $a\notin\claw(b)$ and $H_{\gamma^c(a)}$ for $a\in\claw(b)$. Setting $T := \gamma(b)$,

For $a\notin\claw(b)$: since $\gamma$ is a graph embedding, Petersen non-adjacency gives $|\gamma(a)\cap\gamma(b)|\neq 2$, and $a\neq b$ gives $\gamma(a)\neq\gamma(b)$, so $|\gamma(a)\cap T|\leq 1$.

For $a\in\claw(b)$: $|\gamma^c(a)\cap T| = |\gamma(a)^c\cap\gamma(b)| = 3 - |\gamma(a)\cap\gamma(b)| \leq 1$, since $a = b$ or $a\sim b$ gives $|\gamma(a)\cap\gamma(b)|\geq 2$.

\end{proof}

\subsubsection{Invariant Rings}
As the $S_5$ representation $V_{32}$ is not generated by reflections, the invariant ring $\BC[V_{32}]^{S_5}$ is not a free polynomial ring\footnote{From computations in Sage, it appears to be the case that
$C[V_{32}]^{S_5} =$$ (C[e_2,e_3,e_4,e_5, e_6] \otimes C[r_6,r_7,r_8,r_9,r_{15}])/ (r_i \cdot r_j = a_{ij}^0(e)+ a_{ij}^k(e) r_k)$, where the $e_i$ are the elementary polynomials pulled back from $V_{51}$.
}. 
Thus any expression for $\bigstanleypoly \in \Sym^{10}(V_{32}[\bbeta])^{S_5\times \BZ_2}$ in terms of  $\BC[V_{32}]^{S_5}$ would likely contain redundancies. However, now we have `untwisted' from $V_{32}$ to $V_{51}$, whose invariant ring is generated by elementary symmetric polynomials
\[ \BC[V_{51}]^{S_6 \times \BZ_2} = \BC[e_2,e_3,e_4,e_5,e_6]^{\BZ_2}. \]
\begin{observation}
In the invariant ring $\BC[V_{51}]^{S_6 \times \BZ_2}$, we find
\begin{align}
\bigstanleypoly &= 42\beta^{10} + 54 e_2 \beta^8 + (34 e_4 + 14 e_2^2) \beta^6 \nonumber \\
&\quad + (42 e_6 + 30 e_3^2 + 4 e_2 e_4 + 2 e_2^3) \beta^4 \\
&\quad + (-8 e_4^2 - 18 e_3 e_5 + 12 e_2 e_6 + 6 e_2 e_3^2 + 2 e_2^2 e_4) \beta^2 \nonumber\\
&\quad + (2 e_5^2 - 8 e_4 e_6 + 2 e_3^2 e_4 - 2 e_2 e_3 e_5 + 2 e_2^2 e_6).\nonumber
\end{align}
\end{observation}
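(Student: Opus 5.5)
The approach is to make the identification of Section~5 completely explicit and then determine finitely many unknown coefficients. Let $x_1,\dots,x_6$ be coordinates on $V_{51}$ with $\sum_i x_i = 0$, and write $x_T=\sum_{t\in T}x_t$. Through the isomorphism $\psi$, each variable $\ell_{\{ij\}}$ becomes the signed linear form $\pm x_{\rho(\{ij6\})}$ listed in the table of $\psi$, and the claw relations become identities among these forms. We substitute this into the expansion $2^{10}\bigstanleysum'=\sum_k \beta^{10-k}\orbitsum_k$, using the weights \eqref{weightsdef} and the normalisation of the Observation. This yields a homogeneous degree-$10$ polynomial in $x$ and $\beta$.

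By the preceding Proposition and Theorem~\ref{thm:z2invs}, this polynomial is invariant under $S_6$ and under $x\mapsto -x$. Its $\beta^{10-k}$ part is therefore an $S_6$-invariant of degree $k$ in $x$. It vanishes for odd $k$, which is also Theorem~\ref{thm:oddorbitsumvanish} seen in these coordinates. So we may write an ansatz in $e_2,\dots,e_6$ (with $e_1=0$) containing every monomial of weighted degree $k\in\{0,2,4,6,8,10\}$. These monomials include $e_2^4$, $e_2e_4^2$, $e_2^3e_4$, $e_2^5$, $e_2^2e_3^2$ and similar terms, whose coefficients should turn out to vanish. This leaves about twenty unknown rational coefficients.

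Next we fix the coefficients degree by degree, using cheap structural information first. The $\beta^{10}$ coefficient is $w_{\petersen}=42$ after rescaling. The $\beta^8$ coefficient comes from $\orbitsum_2=13\,\gPathTwo+11\,\gIsoTwo$: we write $\sum_b\ell_b^2$ and $\sum_{a\sim b}\ell_a\ell_b$ in terms of $p_2=\sum x_i^2=-2e_2$ and reduce modulo the kernel (as in the degree-$2$ computation, where $(\bullet)^2$ and $v_4$ lie in the kernel). This gives the coefficient $54$. The $\beta^0$ part is $2\prod_b\ell_b$, which up to sign is $2\prod_{T\ni 6}x_T$. Expanding this product in power sums and then in $e$'s gives the last line of the Observation.

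For the middle degrees $4,6,8$, the orbit-sum bookkeeping of Section~3 becomes heavy, so we determine the remaining coefficients by evaluation instead. We evaluate both sides at enough generic rational points $x$ with $\sum x_i=0$ to obtain a nonsingular linear system for the unknowns of each degree; the $\beta$-degree separates the systems. Points such as $x=(s,t,u,-s-t-u,0,0)$ and $x=(a,a,b,b,c,-2a-2b-c)$ give small, easily computed systems. As an independent consistency check, the hyperplane factorization lemma is used: restricted to $x_T=\beta$ for $T=\{1,2,3\}$, the resulting expression must equal $J_T$. This checks every line of the formula at once.

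The main obstacle is the degree-$6$ and degree-$8$ parts. There many $S_5$-orbits contribute, and the signs introduced by $\psi$ (and by $\rho$) must be tracked exactly, since a single sign error would break $S_6$-invariance. I would therefore first verify $S_6$-invariance numerically on the substituted polynomial before solving for the coefficients. In practice this step is best done by computer algebra, expanding $\prod_b(\ell_b\pm\beta)$ over the $26$ diagrams directly rather than through orbit sums.
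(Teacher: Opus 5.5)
Your plan is correct and is essentially what lies behind the paper's statement: the formula is recorded as an Observation with no argument beyond an explicit computation, namely substituting $\ell_{\{ij\}}\mapsto\pm x_{\rho(\{ij6\})}$ from $\psi$ into $2^{10}\bigstanleysum'$ and rewriting the result in $e_2,\dots,e_6$. Keep the normalisation consistent, though: the Observation uses the unrescaled weights $w_J=2(2e_J-|J|+1)$, so the degree-$2$ input is $26\,\gPathTwo+22\,\gIsoTwo$ (the rescaled $13\,\gPathTwo+11\,\gIsoTwo$ you quote gives $27e_2$, not $54e_2$), and the $\beta^0$ term is exactly $-2\prod_{T\ni 6}x_T$, since five of the ten entries of $\psi$ carry a minus sign.
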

We can also look at the expansion in monomial symmetric functions
\[ \bigstanleypoly = \sum_{\lambda:|\lambda|\leq 10} a_\lambda m_\lambda \beta^{10-|\lambda|}. \]
\begin{lemma}\label{lemma:monomialbound}
The monomial coefficient $a_\lambda$ can be non-zero only if $\lambda_1\leq 3$.
\end{lemma}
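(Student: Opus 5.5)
The plan is to read the bound off the explicit expression for $\bigstanleypoly$ in the invariant ring $\BC[V_{51}]^{S_6\times\BZ_2}$ given in the preceding Observation, so essentially no new structural input is needed beyond that computation. We regard $\bigstanleypoly$ as the polynomial in the free variables $x_1,\dots,x_6$ (and $\beta$) obtained by substituting the elementary symmetric polynomials $e_k=e_k(x_1,\dots,x_6)$ into that expression. This is the natural normal form, since $e_1$ does not appear, and the monomial expansion $\sum_\lambda a_\lambda m_\lambda\beta^{10-|\lambda|}$ refers to this representative.

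The key elementary fact we use is the following. The polynomial $e_k$ is a sum of squarefree monomials, so each variable occurs in it with exponent at most $1$. Hence in any product $e_{k_1}e_{k_2}\cdots e_{k_r}$ every variable $x_i$ occurs with exponent at most $r$. Consequently every $m_\lambda$ appearing in the monomial expansion of such a product has $\lambda_1\le r$.

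It then suffices to inspect the Observation term by term and check that every monomial in the $e_k$ has at most three factors.
\begin{itemize}
\item In degrees $0$ and $2$ we have $\beta^{10}$ and $e_2$.
\item In degree $4$ we have $e_4$ and $e_2^2$.
\item In degree $6$ we have $e_6$, $e_3^2$, $e_2e_4$ and $e_2^3$.
\item In degree $8$ we have $e_4^2$, $e_3e_5$, $e_2e_6$, $e_2e_3^2$ and $e_2^2e_4$.
\item In degree $10$ we have $e_5^2$, $e_4e_6$, $e_3^2e_4$, $e_2e_3e_5$ and $e_2^2e_6$.
\end{itemize}
Each of these has at most three factors. By linearity in the monomial basis, $a_\lambda\neq0$ forces $\lambda_1\le 3$.

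The only real obstacle is that this argument rests entirely on the correctness of the Observation, which was obtained by computer. The bound is not automatic from degree considerations: for instance $e_2^5$ or $e_2^3e_4$ would be perfectly admissible degree-$10$ invariants with four or more factors, and they contain $m_\lambda$ with $\lambda_1\ge4$. The vanishing of their coefficients is thus genuine information.

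If one wanted an independent check, I would verify the coefficients of the dangerous $m_\lambda$ with $\lambda_1\ge 4$ directly. The first step is to specialise the $\ell$-expansion $\sum_I w_{\petersen/I}\,\ell_I\,\beta^{10-|I|}$ along $\psi$, writing each $\ell_b$ as $\pm x_{\gamma(b)}$ with $x_T=\sum_{t\in T}x_t$, and to reduce modulo $e_1$. The second step is to extract, for instance, the coefficient of $x_1^4$ in each graded piece. Here I would use the $S_6\times\BZ_2$ invariance to reduce to a single variable, and the hyperplane factorisation $\bigstanleypoly|_{H_T=0}=J_T|_{H_T=0}$ to control the top-degree behaviour.
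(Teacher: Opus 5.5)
Your argument is correct, but it takes a different route from the paper's. The paper works upstream of the Observation. It views $\bigstanleypoly$ as a polynomial in the Johnson-graph linear forms $\psi(\ell_b)=\pm x_T$, and argues that since each such form involves only three of the six $x_i$, no $m_\lambda$ with $\lambda_1>3$ can occur.

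You instead take the $e_1$-free expansion in $e_2,\dots,e_6$ as the representative. You then use only that each $e_k$ is squarefree, so a product of $r$ of them has every exponent at most $r$, and every term of the Observation has at most three factors.

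Your version gives a complete, checkable deduction. It also makes explicit the choice of representative modulo $e_1$, which is genuinely needed: on $\sum x_i=0$ the $a_\lambda$ are not defined until a normal form is fixed, since adding a multiple of $e_1$ changes them.

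The paper's structural argument does not control exponents as written. A product of $k$ forms $x_T$ can contain $x_i^k$. With the representatives listed for $\psi$, all ten triples contain $6$, so the $\beta^0$ part (a multiple of $\ell_\petersen$) becomes a multiple of $\prod_{T\ni 6}x_T$, which contains $x_6^{10}$. The bound only emerges after rewriting in the $e_1$-free normal form, which is exactly the input your proof uses.

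If completed, the paper's approach would give a reason for the bound that does not depend on the computer-derived expansion. Your proof inherits the computational status of the Observation and, as you say, does not explain why only products of at most three $e_k$ appear.
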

\begin{proof}
We use the expression of $F$ in terms of the Johnson graph variables $y_T := x_{T_1}+x_{T_2}+x_{T_3}$ on the anti-symmetric quotient $\johnvertrep$ (where $y_T=-y_{T^c}$). Under the embedding $\psi:\petvertrep \to \johnvertrep$, the polynomial $F$ is written as a degree-10 polynomial in these $y_T$ variables. Each $y_T$ is a \emph{linear} combination of the $x_i$ with coefficients $0$ or $\pm 1$, and each $y_T$ involves exactly $3$ of the $6$ variables $x_i$.

The exponent of $x_i$ in a monomial $\prod_{j=1}^{k} y_{T_j}$ equals the number of $T_j$ containing $i$. Among the $10$ anti-symmetric representatives, element $i$ appears in exactly $\binom{5}{2}=10$ triples, but by $S_6$ symmetry the maximum exponent of any $x_i$ in the support of $F$ is constrained. The key observation is that in the $\ell$-variable formula, each factor $(\ell_b \pm \beta)$ contributes, via the map $\psi$, a linear form in at most $3$ of the $x_i$'s. Since $F$ is $S_6$-invariant and degree $10$, expanding in monomial symmetric functions $m_\lambda(x_1,\ldots,x_6)$ on $\sum x_i=0$, the leading part $\lambda_1$ cannot exceed $3$ because each linear factor $y_T$ distributes its contribution equally among $3$ variables.
\end{proof}

We can average the orbit sum \eqref{eq:s5orbitsum} over the full $\Aut(\johnson)$ to yield a manifestly $\Aut(\johnson)$-invariant Stanley sum, which thus is also a rule,
\begin{equation}\label{eq:s6orbitsum}
(\bigstanleysum)^{\Aut(\johnson)} =\frac{|\Aut(\petersen)|}{|\Aut(\johnson)|} \left( 7\tilde \CO_{\emptyset} -2 \tilde\CO_{\bullet} + 1 \tilde\CO_{\gPathTwo} \right).
\end{equation}
where before the $S_5$ orbits were in reference to a fixed Petersen graph, these $S_6$ orbits are referencing any of the $12 = \frac{|\Aut(\johnson)|}{|\Aut(\petersen)|}$ distinct embeddings of $\petersen \hookrightarrow \johnson$.

\begin{observation}\label{conj:monomialnonnegative}
The monomial coefficients satisfy $a_\lambda \geq 0$.
\end{observation}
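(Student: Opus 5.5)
The claim is that every monomial coefficient $a_\lambda$ in $\bigstanleypoly=\sum_{|\lambda|\le 10} a_\lambda m_\lambda\beta^{10-|\lambda|}$ is non-negative. My plan is to convert the explicit expansion from the preceding observation (in $e_2,\dots,e_6$) into the monomial basis. The subtlety is that we work on the hyperplane $\sum x_i=0$, so $m_\lambda$ with $e_1=0$ imposed is not a basis. I would first fix a normal form: compute in $\BQ[x_1,\dots,x_6]^{S_6}$ with $e_1$ formally set to zero. This means expressing each product of $e$'s as $\sum_\lambda M_{\mu\lambda} m_\lambda$ via the standard transition matrix ($e_\mu=\sum_\lambda M_{\mu\lambda}m_\lambda$, where $M_{\mu\lambda}$ counts $0$-$1$ matrices with row sums $\mu$ and column sums $\lambda$). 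The entries $M_{\mu\lambda}$ are manifestly non-negative. Any negative $a_\lambda$ can therefore only arise from the negative coefficients in the observation, namely $-8e_4^2$, $-18e_3e_5$ and $-2e_2e_3e_5$ in degree $8$ and $10$, and $-8e_4e_6$ in degree $10$.

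The work is then degree by degree, and only degrees $8$ and $10$ need checking. Degrees $0$, $2$, $4$ and $6$ have all-positive $e$-coefficients, so $a_\lambda\ge0$ there is immediate. In degree $8$ the relevant partitions have at most $6$ parts, and by Lemma~\ref{lemma:monomialbound} $\lambda_1\le 3$, so the candidates are $\lambda\in\{332,3311,3221,32111,2222,22211,221111\}$. For each of these I would tabulate $M_{44,\lambda}$, $M_{35,\lambda}$, $M_{26,\lambda}$, $M_{233,\lambda}$ and $M_{224,\lambda}$ by counting $0$-$1$ matrices, and check that $-8M_{44}-18M_{35}+12M_{26}+6M_{233}+2M_{224}\ge0$. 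In degree $10$ with $\lambda_1\le3$ and $\ell(\lambda)\le6$, the list is short ($3331$, $3322$, $33211$, $332111$, $32221$, $322111$, $22222$, $222211$, $3331$-type), and the same finite check applies to $2e_5^2-8e_4e_6+2e_3^2e_4-2e_2e_3e_5+2e_2^2e_6$.

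The main obstacle is that $m_\lambda$ expansions are not unique modulo $e_1$. A negative entry in one normal form might be removable by adding multiples of $e_1\cdot(\cdot)$, so the statement is only meaningful once the representation is fixed. I would take the canonical choice of the monomial expansion of the $e$-polynomial with no $e_1$ terms. This matches how the observation is presented, and Lemma~\ref{lemma:monomialbound}'s bound $\lambda_1\le3$ suggests it is the intended normal form. If some $a_\lambda$ came out negative there, my fallback is to compute directly in the $y_T$ variables. I would expand $\bigstanleypoly$ from the $\ell$-basis formula \eqref{eq:ellexp} transported by $\psi$, and organize the terms via the averaged Stanley sum \eqref{eq:s6orbitsum}, hoping the cancellations in $\orbitsum_k$ are visibly positive. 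Since the paper labels this an observation, I expect the honest proof to be exactly this finite, computer-checkable tabulation, with the degree-$10$ entry $\lambda=3322$ (where $-8M_{46}$ and $-2M_{235}$ both contribute) as the tightest case.
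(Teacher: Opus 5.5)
\textbf{Verdict.} Your proposal is correct, and it takes a different route from the paper.

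\textbf{Comparison with the paper.} The paper gives no argument for this statement. It is recorded as a computational observation, followed only by the stronger remark that each $S_6$-orbit sum $(-1)^{|J|}\tilde{\CO}_J$ in \eqref{eq:s6orbitsum} is monomial-nonnegative. The claim would follow from that remark termwise, because the coefficients $7,-2,1$ there carry exactly the signs $(-1)^{|J|}$.

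Your route is more direct. You start from the printed $e$-expansion and use that the $e\to m$ transition matrix ($0$-$1$ matrices) is nonnegative, so only the terms $-8e_4^2$, $-18e_3e_5$, $-8e_4e_6$ and $-2e_2e_3e_5$ need attention.

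You are right to flag that the $m_\lambda$ are linearly dependent on $\sum x_i=0$. Your $e_1$-free normal form is the natural one. It is also the form in which Lemma~\ref{lemma:monomialbound} holds automatically, since every $e$-monomial in the expansion has at most three factors, forcing $\lambda_1\le 3$.

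The two routes buy different things. The orbit-sum route yields a stronger, structural statement, but it needs a lift of each orbit sum off the hyperplane, which the paper never specifies. Your route is fully checkable by hand.

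\textbf{Carrying out the check.} The computation does go through, but fix a few details.

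\emph{Partition lists.} Your degree-$8$ list omits $311111$. Your degree-$10$ list contains $332111$, which has size $11$. The correct degree-$10$ list is $3331$, $3322$, $33211$, $331111$, $32221$, $322111$, $22222$, $222211$.

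\emph{The ``tightest case''.} This is misidentified. A two-factor product $e_ae_b$ has $M_{ab,\lambda}=0$ whenever $\lambda_1=3$, since a two-row $0$-$1$ matrix has column sums at most $2$. Also $M_{532,3322}=0$, because a $5$-subset cannot fit in four columns. So $a_{3322}=2M_{433,3322}=4$, with no cancellation at all. In degree $8$, every $\lambda$ with a part $3$ is automatically fine.

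\emph{Where the genuine cancellations occur.} The first place is the shapes with all parts at most $2$. There one finds $a_{2222}=76$, $a_{22211}=188$, $a_{221111}=478$ in degree $8$, and $a_{22222}=42$, $a_{222211}=104$ in degree $10$.

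The second place is the $-2e_2e_3e_5$ term, which is dominated by $2e_3^2e_4$ shape by shape. For $\lambda=33211,331111,32221,322111,22222,222211$ one has $M_{532,\lambda}=1,4,3,11,10,32$ against $M_{433,\lambda}=5,12,12,31,30,80$. This gives $a_{3331},a_{3322},a_{33211},a_{331111},a_{32221},a_{322111}=2,4,8,18,18,44$.

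All coefficients are nonnegative, so your fallback via the $y_T$ variables is not needed.
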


In fact, each of the $S_6$-orbit sums $(-1)^{|J|}\tilde\CO_{J}$ has monomial expansions with all non-negative coefficients.

\subsection{Coordinate transformation}

At last, we compose the map $V_{32} \to V_{51}$ to find
\begin{eqnarray}\label{eq:xcoords}
x_0 &=& \tfrac{1}{3}(-z_1-2z_2+z_3-z_4) \\
x_1 &=& \tfrac{1}{3}(2z_1+z_2+z_3+2z_4+3z_5) \\
x_2 &=& \tfrac{1}{3}(-z_1+z_2+z_3+2z_4) \\
x_3 &=& \tfrac{1}{3}(-z_1+z_2-2z_3-z_4) \\
x_4 &=& \tfrac{1}{3}(-z_1-2z_2-2z_3-z_4-3z_5) \\
x_5 &=& \tfrac{1}{3}(2z_1+z_2+z_3-z_4).
\end{eqnarray}
where $z_i = 2w_i + (1+\al)$.

\subsection{Stanley Non-negativity}

To conclude this section, we state a non-negativity condition that generalises a conjecture Stanley \cite[Conj 8.3]{Stanley:1989}. Label the vertices of $\petersen$ by
\begin{equation}
\begin{tikzpicture}[scale=1.1, every node/.style={circle, draw, inner sep=1.2pt}]
  \node (o1) at ( 90:1.6) {$c_4$};
  \node (o2) at ( 18:1.6) {$c_2$};
  \node[red, thick] (o3) at (-54:1.6) {$\color{red}{ b_3}$};
  \node[red, thick] (o4) at (-126:1.6) {$\color{red}{ a_1}$};
  \node (o5) at (162:1.6) {$c_5$};
  \node[red, thick] (i1) at ( 90:0.9) {$\color{red}{ c_3}$};
  \node[red, thick] (i2) at ( 18:0.9) {$\color{red}{ a_3}$};
  \node (i3) at (-54:0.9) {$b_2$};
  \node (i4) at (-126:0.9) {$a_2$};
  \node[red, thick] (i5) at (162:0.9) {$\color{red}{ b_1}$};
  \draw (o1)--(o2)--(o3) (o4)--(o5)--(o1);
  \draw[red, thick] (o3)--(o4);
  \draw (o1)--(i1);
  \draw (o2)--(i2);
  \draw (o3)--(i3);
  \draw (o4)--(i4);
  \draw (o5)--(i5);
  \draw (i1)--(i3)--(i5)  (i2)--(i4)--(i1);
  \draw[red, thick] (i5)--(i2);
\end{tikzpicture}
\end{equation}

Consider the following map $\phi$ from $\mathbb{R}[x_1,\ldots,x_5] \to V_\petersen$ given by

\begin{eqnarray*}
\ell_{\color{red}{a_1}}= && 2(1+x_1) + \beta \\
\ell_{\color{red}{a_3}}= && 2(1+x_2)+\beta \\
\ell_{\color{red}{b_1}}= && 2(1+x_3)+\beta \\
\ell_{\color{red}{b_3}}= && 2(1+x_4)+\beta\\
\ell_{\color{red}{c_3}}= && 2(1+x_5)+\beta \\
\ell_{b_2}= && \ell_{b_1}+\ell_{b_3} +\ell_{c_3} \\
\ell_{a_2}= && \ell_{a_1}+\ell_{a_3} +\ell_{c_3} \\
\ell_{c_2}= && -(\ell_{a_1}+\ell_{b_1} +\ell_{c_3}) \\
\ell_{c_5}= && -(\ell_{a_3}+\ell_{b_3} +\ell_{c_3}) \\
\ell_{c_4}= && -(\ell_{a_1}+\ell_{a_3}+\ell_{b_1}+\ell_{b_3} +\ell_{c_3}) 
\end{eqnarray*}
 We can verify that the image of $\phi$ lands in $V_{32}^{(1)}\subset V_\petersen$, i.e. $\ell_b = \sum_{a\sim b}\ell_a$ and hence $\sum_{b} \ell_b = 0$.
These coordinates ${x}$ measure the distance from 5 of the negative hyperplanes $H_{v}^{-} : \ell_{v}-\beta=0$.

\begin{observation}\label{obs:stanleynonneg}
The degree 10 polynomial $(-1)\bigstanleypoly(\phi(x),\beta)|_{\beta\to \al-1}$ has non-negative coefficients as a polynomial in $\{x\},\al$. 
\end{observation}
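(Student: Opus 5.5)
The statement is a non-negativity claim about an explicit degree-10 polynomial. My plan is to make it a finite but structured computation, organised so that the sign pattern becomes visible.

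\emph{Step 1: rewrite in the new coordinates.} I start from the closed form
\[ 2^{10}\bigstanleysum' = \sum_{I} w_{\petersen/I}\,\ell_I\,\beta^{10-|I|}, \]
with weights given by \eqref{weightsdef}. Equivalently, I start from the 26-term decomposition \eqref{eq:721sum} into products of factors $\tfrac12(\ell_b\pm\beta)$. Substituting $\phi$ is legitimate because its image lies in $V^{(1)}_{\{3,2\}}$, so $\bigstanleypoly(\phi(x),\beta)$ is well defined. Under $\phi$, each of the ten hook factors becomes an affine form in $x$ and $\beta$:
\begin{itemize}
\item At the five red vertices, $\tfrac12(\ell_v-\beta)=1+x_v$ and $\tfrac12(\ell_v+\beta)=1+x_v+\beta$.
\item At the five black vertices, the factors are sums or negated sums of these.
\end{itemize}
After setting $\beta=\al-1$, the red factors become $1+x_v$ and $x_v+\al$, which are manifestly positive. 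For black vertices, for instance, $\tfrac12(\ell_{a_2}\mp\beta)$ expands to $3+x_1+x_2+x_5+(\text{const})\beta$. The $\beta$-constants are generally not non-negative after $\beta\to\al-1$; this is exactly the source of the cancellations to be controlled.

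\emph{Step 2: use a positive representative of $\bigstanleypoly$.} Rather than expanding the signed sum $7\bmX-2\sum\bmX^{\{b\}}+\sum\bmX^{\{a,b\}}$ directly, I would exploit the freedom to add hook-space relations.
\begin{itemize}
\item The $\BZ_2$ invariance of Theorem~\ref{thm:z2invs} lets me average $\bigstanleysum$ with $\bar\bigstanleysum$.
\item The kernel sums $\bmk_{\bmf_b}$ of Lemma~\ref{cor:clawkernel} vanish under $\phi$.
\item The $S_6$ enhancement lets me use the averaged sum \eqref{eq:s6orbitsum}.
\end{itemize}
The aim is a representative $\sum c_\bmD\bmD$ in which every diagram with $c_\bmD<0$ is paired against positive diagrams, so that the difference factors. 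Each such pair, $\bmD-\bmD^{J}$, shares all but $|J|$ factors. The differing part is a telescoping sum of terms of the form
\[ \bigl(\tfrac12(\ell+\beta)-\tfrac12(\ell-\beta)\bigr)\cdot(\text{products})=\beta\cdot(\text{products}). \]
A factor $\beta=\al-1$ is itself not positive, so these pieces must be grouped further. I would group each $\beta$ with a compensating red factor via $x_v+\al=(1+x_v)+\beta$.

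\emph{Step 3: use the boundary structure.} Theorem~\ref{prop:hyperplanesimp} and the factorisation $\bigstanleypoly|_{H_T}=J_T|_{H_T}$ give the following. On each coordinate hyperplane $x_v=-1$ (i.e.\ $H^-_v$), the polynomial equals a single product of ten affine factors, which I can check directly has non-negative coefficients, up to the overall sign. I would then expand $\bigstanleypoly(\phi(x))$ as a polynomial in the shifted variables $u_v=1+x_v$. This gives a Taylor/inclusion–exclusion decomposition over the faces $\{u_v=0: v\in S\}$, whose restrictions are controlled by the boundary data. The remaining interior part is divisible by $u_1\cdots u_5$, and by degree counting it is a degree-5 residual. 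I would compute that residual explicitly, using the $S_4$ symmetry commuting with $\sigma_{56}$ to reduce the number of monomials.

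\emph{Main obstacle.} The main obstacle is converting back from $u$ to $x$ and from $\beta$ to $\al$: shifts of the form $u=1+x$ preserve positivity, but the substitution $\beta=\al-1$ does not. I expect the final step to be a direct symbolic expansion, done by computer or by hand using the $S_4$ symmetry, of the residual. If the structural route fails, I would verify non-negativity coefficient by coefficient from the explicit form in the $\{e_i\}$ given in the Observation. I would organise that check by the grading in $\al$, and use Lemma~\ref{lemma:monomialbound} to bound which monomials can occur.
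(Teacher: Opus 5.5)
\textbf{Verdict: there is a genuine gap.} The structural part of your proposal would fail at Step 3, and what survives is only the direct expansion.

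\textbf{Step 3 rests on a false premise, and it carries the whole structural argument.}
\begin{itemize}
\item On $x_v=-1$ you have $\tfrac12(\ell_v-\beta)=0$, i.e.\ $\bh_v^{\bar\bmX_v}=0$. The boundary datum there is $\bar\bmX^{\claw(v)}$, whose factor at $v$ is $\bh_v^{\bmX_v}=\pm\beta$ on that hyperplane. In your coordinates, $\tfrac12(\ell_v+\beta)=1+x_v+\beta$ simply becomes $\beta$.
\item So $\bigstanleypoly(\phi(x),\al-1)|_{x_v=-1}=\pm(\al-1)R$, with $R\neq 0$ the product of the other nine factors. This vanishes at $\al=1$ with all other variables equal to $1$, so it must have a negative coefficient.
\item Setting a variable to $0$ preserves coefficientwise non-negativity. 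Hence $\mp\bigstanleypoly$ is \emph{not} non-negative as a polynomial in $(u,\al)$ with $u_v=1+x_v$.
\item Your plan (control the faces $u_v=0$, bound an interior residual, then shift $u=1+x$) therefore cannot succeed. Non-negativity in $x$ is strictly weaker than in $u$, and here it genuinely depends on working in $x$. The red factor $\tfrac12(\ell_v+\beta)$ is $x_v+\al$ but $u_v+\al-1$. For example, the combination $2t_v-\beta$ (with $t_v=\tfrac12(\ell_v+\beta)$) equals $2x_v+\al+1$, which is non-negative in $x$, but $2u_v+\al-1$ in $u$.
\item Independently, an inclusion--exclusion over faces carries alternating signs, so face-wise positivity would not propagate to the whole polynomial anyway.
\end{itemize}

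\textbf{Step 1 misplaces the difficulty.} After $\phi$ and $\beta\to\al-1$, every hook factor of every diagram is $\pm$ an affine form with non-negative coefficients; they are just the window hook lengths. For example:
\begin{itemize}
\item $\tfrac12(\ell_{a_2}\mp\beta)\mapsto 2+x_1+x_2+x_5+\al,\ 1+x_1+x_2+x_5+2\al$;
\item $\tfrac12(\ell_{c_4}\pm\beta)\mapsto -(3+\textstyle\sum_i x_i+2\al),\ -(2+\sum_i x_i+3\al)$.
\end{itemize}
The minus sign occurs exactly at $c_2,c_4,c_5$, which is where the overall $(-1)$ comes from. All cancellation comes from the signed coefficients of the Stanley sum.

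\textbf{A better starting point.} The $K^{\bar\bmX}$-values (equivalently the weights \eqref{weightsdef}) collapse the 26 terms to
\[ \bigstanleysum'=2\prod_c t_c-\beta\sum_b\prod_{c\neq b}t_c+\beta^2\sum_{\{a,b\}\in\Theta}\prod_{c\neq a,b}t_c,\qquad t_c:=\tfrac12(\ell_c+\beta). \]
The only sign sources are then the explicit prefactors $-\beta=1-\al$ and $\beta^2=(1-\al)^2$, together with the parity of how many of $c_2,c_4,c_5$ are omitted from each product. That is the natural place to start any hand argument.

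\textbf{Step 2 is not yet an argument.} Averaging with $\bar\bigstanleysum$, adding kernel sums and pairing diagrams never actually produces a positive representative as written.

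\textbf{What remains is the finite expansion.} That is all the paper offers: it records the statement as an Observation without proof, i.e.\ as the outcome of an explicit expansion. If you fall back on the $e_i$-expansion, note that Lemma~\ref{lemma:monomialbound} concerns monomial symmetric functions in the $V_{51}$ coordinates. It does not concern the $\phi$-coordinates $x_1,\dots,x_5$, so it does not restrict which monomials occur here.
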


We can expand out to
\begin{eqnarray*}
\ell_{\color{red}{a_1}}= && 2m_1+1 + (2n_1+1)\al \\
\ell_{\color{red}{a_3}}= && 2m_2+1+(2n_2+1)\al \\
\ell_{\color{red}{b_1}}= && 2m_3+1+\al \\
\ell_{\color{red}{b_3}}= && 2r_2+1+(2n_4+1)\al\\
\ell_{\color{red}{c_3}}= && 2r_1+1+\al 
\end{eqnarray*}

From the observation \eqref{obs:stanleynonneg}, it follows that \emph{all} the Stanley coefficients of the whole family are non-negative polynomials in $\alpha$, which was one of the motivating conjecture of Stanley.

\begin{conjecture}[Stanley1989] The expression
\[ \jackjLR:=(-1)[\bigstanleypoly(\phi(\{m,n,r\}),\al-1)]\] is a non-negative polynomial in $\al$ for all $\{m,n,r\} = (m_1,m_2,m_3,n_1,n_2,n_4,r_1,r_2) \in \BZ_{\geq0}^8$ with $r_2 n_4=0$.
\end{conjecture}

However, of course, this remains a conjecture as we have not proven that our Stanley Sum $\bigstanleysum$ captures the Jack LR coefficients.

In a similar direction, if we write
\begin{eqnarray*}
\ell_{a_1}= && y_1+\beta \\
\ell_{a_3}= && y_2+\beta \\
\ell_{b_1}= && y_3+\beta \\
\ell_{b_3}= && y_4+\beta\\
\ell_{c_3}= && y_5+\beta, 
\end{eqnarray*}
then we find a different form of non-negativity

\begin{conjecture}
 $\rho(\{y\},\beta) = (-1)F(\phi(\{y\}),\beta)$ has non-negative coefficients as a homogeneous polynomial in $y_i,\beta$.  Furthermore, it is degree 7 in $\beta$.
\end{conjecture}

\section{Discussion}\label{section:discussion}

\subsection{Hyperplane simplification}

In theorem \eqref{prop:hyperplanesimp} we proved that, when working in the hook space, the sum of 26 Stanley diagrams that constitute the Jack LR polynomial $\bigstanleysum$ simplify down to a \emph{single} Stanley diagram $\simpterm(b,\sA)$ along each of the 20 hyperplanes $\bh_b^{\sA}  = 0$. That is, $\bigstanleysum \simeq_{b}^{\sA}\simpterm(b,\sA)$.

One such example of this is given by the longest hook of the triple, $b=c_4=(0,0) \in \lambda=\{321\}$, for which we saw in figure \eqref{fig:boundarydatum} is

\ytableausetup{boxsize=1.0em}

\begin{equation}\label{eq:stdfor321c4u}
\bigstanleysum \simeq_{c_4}^{\sU}\simpterm(c_4,\sU) := \frac{
\begin{ytableau}
*(boxU)  \\
*(boxU)  & *(boxU)
\end{ytableau}\,\, \begin{ytableau}
*(boxU) \\
*(boxU)   & *(boxU)
\end{ytableau} }{
\begin{ytableau}
*(boxU)  \\
*(boxL) & *(boxU)  \\
*(boxL) c_4 & *(boxL) & *(boxL)
\end{ytableau}}.
\end{equation}

We propose a conjectural explanation for this hyperplane simplification result, which we will develop further in a subsequent work \cite{Mickler2026b}, however we give an outline here. We begin by observing that for the triple $\{21,21,321\}$, there is an \emph{adjacent} triple of partitions that contains an \emph{equal} hook, namely $\{21,21,2211\}$, and the \emph{upper} hook for $c_4=b=(0,0) \in 2211$. We find,
\[ h_{321}^{\sL}(c_4) = h_{2211}^{\sU}(b) = 3+2\al. \] 

This is because the two partitions $\lambda_1 = 321$ and $\lambda_2=2211$ differ by a simple box move,
\[ \begin{ytableau}
 \\
*(boxS)  \\
*(boxS)   & *(boxS) \\
*(boxS) b & *(boxS)  \\
\end{ytableau} 
\longrightarrow
\begin{ytableau}
  \none[] \\
*(boxS)  \\
*(boxS)   & *(boxS) \\
*(boxS) b & *(boxS) &  \\
\end{ytableau} 
\]

Moving a single box in one of the three partitions (i.e. Young adjacency) is a simple form of adjacency of two triples of partitions\footnote{There are more complicated versions which we'll explain in the follow up work}.

We computed earlier (example \ref{example:g21212211}) that the Jack LR rule (now with Schur LR coefficient $c=1$) for this adjacent triple is
\begin{equation}
\JackjLR_{21,21}^{2211} = \frac{\begin{ytableau}
*(boxU)  \\
*(boxU)  & *(boxU)
\end{ytableau}\,\, \begin{ytableau}
*(boxU) \\
*(boxU)   & *(boxU)
\end{ytableau} }{
\begin{ytableau}
*(boxU)  \\
*(boxU)  \\
*(boxU)   & *(boxU) \\
*(boxU) b & *(boxU) \\
\end{ytableau}}.
\end{equation}
We observe now that there exists a map $\psi$ between the hooks of these adjacent triples, in the \emph{form} of a hook morphism as in \eqref{def:hookmap}, which is the identity map on $\mu$ and $\nu$, and on the $\lambda_i$ is given by
\begin{equation}\label{eq:hookmap2}
 \psi: \bh_{\lambda_2}^\sA(a) \mapsto \pm \bh_{\lambda_1}^{\pm\sA}(a'),
 \end{equation}
given by,
\ytableausetup{boxsize=1.4em}
\begin{equation}\label{fig:basicpivot}
\psi:\,\,\begin{ytableau}
*(boxU) a \\
*(boxU) c \\
*(boxU) d  & *(boxU) e\\
*(boxU) b & *(boxU) f\\
\none[\lambda_2] 
\end{ytableau}
\,\,\longrightarrow\,\,
\begin{ytableau}
\none  \\
*(boxU) a \\
*(boxL) \bar f  & *(boxU) e \\
*(boxL) \bar b & *(boxL) \bar c & *(boxL) \bar d\\
\none[\lambda_1] 
\end{ytableau}.
\end{equation}
That is, $\psi$ maps between these two Stanley diagrams,
\[ \psi : \JackjLR_{21,21}^{2211} \to \simpterm(c_4,\sU). \]
Although, this map \emph{does not} preserve the relations between these hooks. 
But, rather, there exists a modification $\tilde \psi$ of $\psi$ given by,
\begin{eqnarray*}\label{eq:localmap}
\bh^\sU_{\lambda_2}(a)  \mapsto &   \bh^\sL_{\lambda_1}(a)&\qquad \,\,\,\\ 
\bh^\sU_{\lambda_2}(c)  \mapsto&  -\bh^\sL_{\lambda_1}(c)&{\color{red}{+\bh^\sL_{\lambda_1}(b)}}\\ 
\tilde\psi:\qquad \bh^\sU_{\lambda_2}(d)  \mapsto&  -\bh^\sL_{\lambda_1}(d)&{\color{red}{+\bh^\sL_{\lambda_1}(b)}}\\
\bh^\sU_{\lambda_2}(e)  \mapsto&  \bh^\sU_{\lambda_1}(e)&\qquad \,\,\,\\\
\bh^\sU_{\lambda_2}(f)  \mapsto&  -\bh^\sL_{\lambda_1}(f)&{\color{red}{+\bh^\sL_{\lambda_1}(b)}}\\
\bh^\sU_{\lambda_2}(b)  \mapsto& -\bh^\sL_{\lambda_1}(b)& {\color{red}{+\bh^\sL_{\lambda_1}(b)}}.
\end{eqnarray*}
such that 
\[ [\tilde\psi(\bh_a)] = [\bh_a] , \,\, \forall a\neq b\]
\[ [\tilde\psi(\bh_b)] = 0 \]
Crucially, this map is only of the form \eqref{eq:hookmap2} modulo ${\color{red}{\bh^\sU_{\lambda_1}(b)}}$.

For example $\tilde \psi$ induces an equality of \emph{hook lengths} modulo $3+2\alpha = h^\sU_{\lambda_2}(b)$. e.g. for box $d$, 
\[ \tilde \psi : 2+2\al \mapsto -(1+0\al) \mod 3+2\al. \] 
We now formalize this construction.

\begin{definition}
A \emph{\bf{hook correspondence}} between two adjacent triples $T_2\to T_1$ (via a pivot at $b \in \sigma_1 \cap \sigma_2$) is a linear map $\tilde \psi$ between hook variables such that
\begin{itemize}
\item
$\tilde\psi$ has the form $\bh_{T_2}^\sA(a) \mapsto \pm \bh_{T_1}^{\pm\sA}(a')$ modulo $\bh_{\sigma_2}^\sL(b)$. 
\item
$\tilde\psi: \bh_{\sigma_2}^\sA(b) \mapsto 0$.
\item
$\tilde\psi$ preserves the image of the evaluation map, 
\[ [\tilde\psi(\bh_a)] = [\bh_a] , \,\, \forall a \neq b\]
\end{itemize}
\end{definition}

In other words, when working on the hyperplane $\bh^\sU_{\lambda_2}(b)=0$, we find a bijection of the hooks (with flips) that preserves evaluation. 

\subsubsection{General correspondences}

The above example is of a simple form, which we describe as an action on Stanley diagrams. Given partition $\lambda$, pick two outer corners $a,b \in \addset_\lambda$, let $d = a \vee b$. The two partitions $\lambda+a$, $\lambda+b$ thus differ by a single box move. We will describe a hook correspondence $\psi_d$ on Stanley diagrams from $\lambda+a$ to $\lambda+b$, which acts as identity on the other two partitions ($\mu,\nu$), and as identity on the row-column set of $a$ and $b$.

\ytableausetup{boxsize=0.9em}
\begin{equation}
\psi_d :\begin{ytableau}
*(boxS) &*(boxS)&*(boxS) a\\
 &&*(boxS)     \\
 &&*(boxS)     \\
 &&*(boxS)  &&  \\
 &&*(boxS)   && & \\
*(boxS) &*(boxS)&*(boxU) d &*(boxS)&*(boxS)  &*(boxS)&\none[b] \\
 &&*(boxS)   && &&*(boxS)& \\
 &&*(boxS)   &&& &*(boxS)& \\
\end{ytableau} \to 
\begin{ytableau}
*(boxS) &*(boxS)&\none[a]\\
 &&*(boxS)     \\
 &&*(boxS)     \\
 &&*(boxS)  &&  \\
 &&*(boxS)   && & \\
*(boxS) &*(boxS)&*(boxL) d &*(boxS)&*(boxS)  &*(boxS)&*(boxS)b \\
 &&*(boxS)   && &&*(boxS)& \\
 &&*(boxS)   &&& &*(boxS)& \\
\end{ytableau}
\end{equation}
Note $h^\sU_{\lambda+a}(d) = h^\sL_{\lambda+b}(d)$. The rule for $\psi_d$ is as follows. If a box is to the left of $d$, its hook assignment is swapped with the box on the corresponding other row. Similarly for those boxes beneath $d$, they are exchanged with the box in the other column. For the remaining highlighted boxes something more subtle occurs. If $b$ is in a row with a outer corner (i.e. a place where a box can be added to $\lambda$), then the hook is sent to the box beneath that outer corner, and it is flipped (lower$\leftrightarrow$upper). If $b$ is not in a row with an outer corner, then it moves down one. Next, if a box is to the right of $b$, then if it is a column with an inner corner (i.e. where a box can be removed) then it is swapped with the box to the left of that outer corner (and flipped). If there is no such inner corner, it is moved to the right. All other boxes (i.e. non-highlighted) remain unchanged.
The final outcome is
\begin{equation}
\psi_d :\begin{ytableau}
*(boxS) \scriptstyle  10 &*(boxS) \scriptstyle 11 &*(boxS)\scriptstyle  1\\
 &&*(boxS) \scriptstyle 2    \\
 &&*(boxS) \scriptstyle 3    \\
 &&*(boxS) \scriptstyle 4 &&  \\
 &&*(boxS) \scriptstyle 5  && & \\
*(boxS) \scriptstyle 12 &*(boxS) \scriptstyle 13 &*(boxU) \scriptstyle 6 &*(boxS) \scriptstyle 7 &*(boxS) \scriptstyle 8  &*(boxS) \scriptstyle 9  \\
 &&*(boxS)\scriptstyle 14   && &&*(boxS) \scriptstyle 16& \\
 &&*(boxS)\scriptstyle 15   &&& &*(boxS) \scriptstyle 17& \\
\end{ytableau} \to 
\begin{ytableau}
*(boxS) \scriptstyle 12 &*(boxS) \scriptstyle 13&\none[]\\
 &&*(boxS) \scriptstyle 1    \\
 &&*(boxS) \scriptstyle 2    \\
 &&*(boxS) \scriptstyle \bar{8}  &&   \\
 &&*(boxS) \scriptstyle \bar{9}  &&  & \\
*(boxS)\scriptstyle 10 &*(boxS)\scriptstyle 11&*(boxL)\scriptstyle  \bar{6} &*(boxS) \scriptstyle \bar{3}&*(boxS)\scriptstyle  7  &*(boxS) \scriptstyle \bar{4}&*(boxS)\scriptstyle  \bar{5} \\
 &&*(boxS) \scriptstyle 16  && &&*(boxS)\scriptstyle 14 & \\
 &&*(boxS) \scriptstyle 17  &&& &*(boxS)\scriptstyle 15& \\
\end{ytableau}
\end{equation}
\ytableausetup{boxsize=1.0em}
The above map is always a hook correspondence, as is demonstrated by the following lemma.
\begin{lemma}
For $a,b \in \addset_\mu$, with $a<b$. Let $x := h_{\mu+a}^{\sU}(a \vee b)=h_{\mu+b}^{\sL}(a \vee b)$.
For $c \in \addset_\mu$, we have,
if $a<c<b$
\[  h_{\mu+b}^\sL(b \vee c) + h_{\mu+a}^{\sU}(a\vee c) = x \]
if $c<a$
\[  h_{\mu+b}^\sL(b \vee c) - h_{\mu+a}^{\sL}(a\vee c) = x \]
if $b<c$
\[  -h_{\mu+b}^\sU(b \vee c) + h_{\mu+a}^{\sU}(a\vee c) = x \]

For $c \in \remset_\mu$, we have, if  $a<c<b$ 
\[ h_{\mu+b}^\sU(a\vee c) + h_{\mu+a}^\sL(b\vee c) = x \] 
if $c<a$
\[  h_{\mu+b}^\sU(a\vee c) - h_{\mu+a}^\sU(b\vee c) = x \] 
if $b<c$
\[ -h_{\mu+b}^\sL(a\vee c) + h_{\mu+a}^\sL(b\vee c) = x \] 

\end{lemma}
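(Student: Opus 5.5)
The plan is to reduce every identity to a linear statement about the coordinates of the corners of $\mu$. Write each addable corner $c\in\addset_\mu$ and each removable corner $c\in\remset_\mu$ as a cell $(i_c,j_c)$ (row, column). Order the corners so that $a<b$ means $a$ lies strictly higher and further left, i.e. $i_a>i_b$ and $j_a<j_b$ in English-reading terms; this matches the pictures of $\psi_d$. Take $a\vee b$ to be the cell in the row of one corner and the column of the other that lies inside the partition. Its $\al$-hooks are then given by the standard formulas
\[ h^\sU(s)=l(s)+\al\,(a(s)+1),\qquad h^\sL(s)=l(s)+1+\al\,a(s), \]
where $a(s)$ is the arm and $l(s)$ the leg.

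The first step is to record that for any cell $s$ in the row of a corner $c_1$ and the column of a corner $c_2$, both arm and leg are differences of corner coordinates, up to a fixed shift. An addable corner sits one step outside $\mu$, a removable one sits on its boundary. Adding the box $a$ (resp. $b$) changes only the arm of the cells in row $i_a$ and the leg of the cells in column $j_a$, each by exactly one. Consequently every hook appearing in the lemma has the form
\[ (\pm\Delta i + \epsilon) + \al(\pm\Delta j+\epsilon'), \]
with $\Delta i,\Delta j$ differences of corner coordinates and $\epsilon,\epsilon'\in\{0,1\}$ determined by upper/lower and by whether the added box lies in that row or column. In particular
\[ x=h^\sU_{\mu+a}(a\vee b)=(i_a-i_b)+\al\,(j_b-j_a), \]
up to the fixed offsets. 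Checking first that this value agrees with $h^\sL_{\mu+b}(a\vee b)$ confirms that the conventions are consistent.

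The second step is the case split. For $c\in\addset_\mu$ there are three positions: $a<c<b$, $c<a$, and $b<c$. In each case I compute $h_{\mu+b}(b\vee c)$ and $h_{\mu+a}(a\vee c)$ in the stated upper/lower choice as affine functions of $(i_a,i_b,i_c,j_a,j_b,j_c)$. The $c$-coordinates then cancel in the stated signed sum, leaving exactly $(i_a-i_b)+\al(j_b-j_a)$ with the same offsets as $x$. The position of $c$ relative to $a$ and $b$ decides which of $b\vee c$ and $a\vee c$ uses the row or the column of $c$. This in turn decides whether the two hooks add or subtract, which is why the signs differ across the three cases. The removable-corner cases $c\in\remset_\mu$ go the same way. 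The only differences are that the cells $a\vee c$ and $b\vee c$ now sit in the column of one addable corner and the row of $c$, and that the roles of upper and lower are swapped relative to the addable case.

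I expect no conceptual difficulty. The main obstacle is bookkeeping: fixing once and for all the meaning of $a<b$ and of $a\vee c$, the shift between addable and removable corners, and the effect of the single added box on arm or leg. These must be chosen so that the $+1$ offsets line up. I would guard against errors by first verifying all six identities on the example $\mu=21$ with $\mu+a=321$-type configurations already used in the paper (e.g. $x=3+2\al$). Only then would I write the general affine computation, one case at a time.
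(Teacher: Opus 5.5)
The paper states this lemma without proof, so there is no argument of its own to compare routes with. A direct arm/leg computation in corner coordinates is the natural approach, and for the three cases $c\in\addset_\mu$ it works as you describe. With $a=(i_a,j_a)$, $b=(i_b,j_b)$, $i_a>i_b$, $j_a<j_b$ and row $1$ the longest, one has $x=(i_a-i_b)+\al(j_b-j_a)$ with no offsets, and each signed sum collapses to $x$.

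The gap is the sentence ``the removable-corner cases go the same way''. Carried out, the computation does \emph{not} give the stated identities. For $c\in\remset_\mu$ the real change is a reversed pairing: the $\mu+b$ hook is taken at $a\vee c$ and the $\mu+a$ hook at $b\vee c$. For either hook type $\sA$ one then finds
\[ h^{\sA}_{\mu+b}(a\vee c)-h^{\sA}_{\mu+a}(b\vee c)=\begin{cases}-x, & c<a,\\ +x, & b<c,\end{cases} \]
which is the opposite of what is claimed. For example, take $\mu=321$, $a=(2,3)$, $b=(1,4)$, $c=(3,1)$. Then $x=h^{\sU}_{331}(1,3)=1+\al$, but $h^{\sU}_{421}(2,1)-h^{\sU}_{331}(1,1)=(1+2\al)-(2+3\al)=-x$.

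In the middle removable case the stated sum equals $x+\delta+\delta'\al$. Here $\delta=1$ if $c$ lies in the column of $a$, since then $a$ lengthens the leg of $b\vee c$ in $\mu+a$. Likewise $\delta'=1$ if $c$ lies in the row of $b$, since then $b$ lengthens the arm of $a\vee c$ in $\mu+b$. This is exactly the ``single added box'' effect you flagged but did not track. It already occurs in the paper's running example: $\mu=221$, $a=(4,1)$, $b=(1,3)$, $x=3+2\al$, $c=(3,1)$ gives $h^{\sU}_{321}(3,1)+h^{\sL}_{2211}(1,1)=\al+(4+\al)=x+1$.

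So, with the conventions that make $x$ well defined and the addable identities true, the uniform ``the $c$-coordinates cancel'' argument cannot prove the removable part as written. You would need two corrections:
\begin{enumerate}
\item interchange the hypotheses $c<a$ and $b<c$ in the two outer removable identities;
\item either read $a<c<b$ as strict comparability in both row and column, which excludes removable corners sharing the column of $a$ or the row of $b$, or add the correction $\delta+\delta'\al$.
\end{enumerate}
Your planned sanity check on $\{21,21,321\}$ would have caught the second point under the boundary (content) order; note that the base partition there is $\mu=221$, not $21$. With these corrections, the case-by-case affine computation you outline is a complete proof of the amended statement.
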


Including the previous example \eqref{fig:basicpivot}, there are four such hook correspondences of this form (up to basic symmetries) for $\{21,21,321\}$, which are given by

\ytableausetup{boxsize=1.1em}
\[
\frac{\begin{ytableau}
*(boxU) \\
*(boxU) & *(boxU)
\end{ytableau}\,\, \begin{ytableau}
*(boxU) \\
*(boxU) & *(boxU) 
\end{ytableau} 
}{\begin{ytableau}
*(boxU) a \\
*(boxU) c \\
*(boxU) d  & *(boxU) e\\
*(boxU) {b} & *(boxU) f
\end{ytableau}}
\,\longrightarrow\,
\frac{\begin{ytableau}
*(boxU) \\
*(boxU) & *(boxU)
\end{ytableau}\,\, \begin{ytableau}
*(boxU) \\
*(boxU) & *(boxU) 
\end{ytableau} 
}{
\begin{ytableau}
\none  \\
*(boxU) a \\
*(boxL) \bar f  & *(boxU) e \\
*(boxL) \bar b & *(boxL) \bar c & *(boxL) \bar d
\end{ytableau}
}
\]
\[  
\frac{\begin{ytableau}
*(boxU) \\
*(boxU) & *(boxL)
\end{ytableau}\,\, \begin{ytableau}
*(boxU) \\
*(boxU) & *(boxL) 
\end{ytableau} 
}{\begin{ytableau}
*(boxL) a\\
*(boxU) c \\
*(boxU) b \\
*(boxU) d & *(boxL) e & *(boxU) f
\end{ytableau}} 
\,\longrightarrow\,
\frac{\begin{ytableau}
*(boxU) \\
*(boxU) & *(boxL)
\end{ytableau}\,\, \begin{ytableau}
*(boxU) \\
*(boxU) & *(boxL) 
\end{ytableau} 
}{\begin{ytableau}
\none \\
*(boxL) a \\
*(boxL) \bar b  & *(boxL) \bar c\\
*(boxL)  e & *(boxU)  d & *(boxU) f
\end{ytableau}} 
\]
\[ 
\frac{\begin{ytableau}
*(boxL) \\
*(boxU) & *(boxU)
\end{ytableau}\,\, \begin{ytableau}
*(boxL) \\
*(boxU) & *(boxU) 
\end{ytableau} 
}{\begin{ytableau}
*(boxL)a  & *(boxL)c\\
*(boxU)d   & *(boxU) e \\
*(boxU)f  & *(boxU)b  
\end{ytableau}} 
\,\longrightarrow\,
\frac{\begin{ytableau}
*(boxL) \\
*(boxU) & *(boxU)
\end{ytableau}\,\, \begin{ytableau}
*(boxL) \\
*(boxU) & *(boxU) 
\end{ytableau} 
}{\begin{ytableau}
*(boxU)f \\
*(boxU)d & *(boxL) c \\
*(boxL)a & *(boxL) \bar b & *(boxL) \bar e
\end{ytableau}} 
\]
\[
\frac{\begin{ytableau}
\none \\
 *(boxU) b &  *(boxU)a &  *(boxL)c
\end{ytableau}\,\, \,\,\begin{ytableau}
\none \\
*(boxU) \\
*(boxU)   & *(boxU)
\end{ytableau} 
}{\begin{ytableau}
*(boxU) \\
*(boxU) & *(boxU)   \\
*(boxU) & *(boxU) & *(boxL)  
\end{ytableau}}
\,\longrightarrow\,
\frac{\begin{ytableau}
*(boxL)  \bar a \\
*(boxL)  \bar b & *(boxL) c
\end{ytableau}\,\, \begin{ytableau}
*(boxU) \\
*(boxU)   & *(boxU)
\end{ytableau} 
}{\begin{ytableau}
*(boxU) \\
*(boxU) & *(boxU)   \\
*(boxU)   & *(boxU) & *(boxL)  
\end{ytableau}}
\]

On the left hand side, we have $c=1$ diagrams, whose structure conforms with the strong form of the Stanley conjecture, on the right we have the boundary datum $\simpterm(b,\sA)$ for the box $b$ (see figure \eqref{fig:boundarydatum}).

\subsubsection{Extension to window families}
Let $b=(0,0)$, and $\windowfamily'$ be the window family for $\{21,21,2211\}$ defined by \eqref{eq:window21212211}. For each $W' \in \windowfamily'$, we claim there exists a window $W \in \windowfamily$ of $\{21,21,321\}$ and a hook correspondence $\tilde\psi_{W',W} : W' \to W$ relative to $b$.
This data defines a map 
\begin{equation}
 \StDR_{21,21}^{2211}(\windowfamily')/\{ \bh_{c_4}^\sU \} \to \StDR_{21,21}^{321}(\windowfamily)/\{ \bh_{c_4}^\sL \}
\end{equation}

Thus, in summary, we interpret the simplification \eqref{eq:stdfor321c4u} in terms of the hook correspondences $\tilde \psi$ inducing a local equivalence between Jack LR coefficients,
\begin{equation}\label{eq:2121321er}
\tilde \psi : \JackjLR_{21,21;2211} \mod \bh_{2211}^{\sU}(b)\,\,\, \mapsto \,\,\, \JackjLR_{21,21;321}  \mod \bh_{321}^{\sL}(b).
\end{equation}

We conjecture this property holds in general.

\begin{conjecture}[To appear in \cite{Mickler2026b}]\label{conj:adjacency}
Let $(\mu_1\nu_1\lambda_1)$ and $(\mu_2\nu_2\lambda_2)$ be \emph{adjacent} triples of partitions corresponding to a pivot at $b$. There exists a family of hook correspondences $\tilde\psi$ between window families $\windowfamily', \windowfamily$ which induce a map
\begin{equation}
\{\tilde\psi\} : \StDR_{\mu_1\nu_1}^{\lambda_1}(\windowfamily')/\{ \bh_{b}^\sU \} \to \StDR_{\mu_2\nu_2}^{\lambda_2}(\windowfamily)/\{ \bh_{b'}^\sL \}
\end{equation}
under which we have the equivalence
\begin{equation}\label{eq:gmapstogp}
\tilde\psi : \JackjLR_{\mu_1\nu_1;\lambda_1} \mod\bh_{\sigma_1}^{\sA}(b)\,\,\, \mapsto \,\,\, \JackjLR_{\mu_2\nu_2;\lambda_2} \mod \bh_{\sigma_2}^{\sA}(b)
\end{equation}
\end{conjecture}

\subsection{Consequences for LR coefficients}

Conjecture \eqref{conj:adjacency} has a concrete manifestation at the level of regular LR coefficients\footnote{P. Alexandersson has calculated many of these coefficients in \cite{Alexandersson:aa} } of the adjacent pair of triples of the previous example. These are given by
\begin{eqnarray}
g_{21,21;321}(\al)  &=&   6\al^4(1+2\al)(2+\al)(2+11\al+2\al^2), \\
g_{21,21;2211}(\al) &=&  4 \al^4(1 + 2\al)^2(3 + \al)(4 + \al).
\end{eqnarray}

Now, the relation of Jack LR polynomials of \eqref{eq:2121321er} becomes a more direct equivalence of Jack LR functions, 
\begin{equation}\label{eq:stanleymodeq}
g_{21,21;2211}(\al) \equiv \, g_{21,21;321}(\al) \mod h_{321}^{\sL}(c_4).
\end{equation}
In other words the \emph{difference} of these polynomials
\[g_{21,21;321}(\al)-g_{21,21;2211}(\al) = 2 \al^4 (1 + 2 \al) {\color{red} (3 + 2 \al)} (-4 + 6 \al + \al^2) \]
is divisible by $h_{321}^{\sL}(c_4)={\color{red} (3+2\al)}$. 
Indeed, we have
\[ g_{21,21;321}(-3/2)=g_{21,21;2211}(-3/2) = \tfrac{1215}{4}. \]

We conjecture that this divisibility holds in general. Conjecture \ref{conj:adjacency} immediately implies the following statement, which is more directly verifiable.

\begin{conjecture}\label{conj:congruence}
For two triples of partitions $(\mu_1,\nu_1,\lambda_1)$, $(\mu_2,\nu_2,\lambda_2)$ that differ by a single box move (a `pivot') in one of the pairs in the triples, say at $b_1 \in \sigma_1$, $b_2 \in \sigma_2$, so that
\[ h_{\sigma_1}^\sU({b_1}) = h_{\sigma_2}^{\sL}({b_2}),\]
then we have 
\begin{equation}\label{eq:adjmod}
 g_{\mu_1\nu_1;\lambda_1}(\al) \equiv \, g_{\mu_2\nu_2;\lambda_2}(\al) \mod h_{\sigma_1}^\sU({b_1}). 
\end{equation}
\end{conjecture}

This above conjecture is the strongest evidence we have for the approach to understanding Jack LR coefficients developed in this work.

\begin{example}
We consider two pivots for the $c=3$ Jack LR
\begin{eqnarray*}
& g_{{3 2 1}, {2 2 1}; {4 3 2 1 1}} =48 \al^5 (2 + \al)(1 + 2\al)^2 \times &\\
&(120 + 1220\al + 5574\al^2 + 12443\al^3 + 13849\al^4 + 7655\al^5 + 2073\al^6 + 254\al^7 + 12\al^8)&. 
\end{eqnarray*}

First at the box $b=(0,0)$ in $\lambda_1=43211$, for which $h_{\lambda_1}^\sL = {\color{blue}(5+3\al)}$. This pivots to the $c=1$ coefficient
\[g_{{3 2 1}, {2 2 1}; {3 3 2 1 1 1}}=96\al^5(1 + \al)(3 + \al)^2(4 + \al)(1 + 2\al)^4(5 + 2\al)(2 + 3\al). \]
The difference is
\[ 48 \al^5 (1 + 2 \al)^2 {\color{blue}(5+3\al)} (-96 - 
   556 \al - 750 \al^2 + 676 \al^3 + 2155 \al^4 + 
   1596 \al^5 + 501 \al^6 + 70 \al^7 + 4 \al^8) .\]

Secondly we pivot at the box $b=(0,0)$ in $\mu$, for which  $h_\mu^\sL(b)={\color{red} (3 + 2 \al)}$, which pivots to the coefficient
\begin{eqnarray*}
 &g_{{2211}, {221}; {43211}} = 128\al^5(1 + \al)(3 + \al)(4 + \al)(1 + 2\al)(2 + 3\al) \times &\\
&  (12 + 131\al + 321\al^2 + 294\al^3 + 97\al^4 + 9\al^5).&
\end{eqnarray*}

The difference $g_{{3 2 1}, {2 2 1}; {4 3 2 1 1}}-g_{{2211}, {221}; {43211}}$ is
\begin{eqnarray*}
&-16 \al^5 (1 + 2 \al) {\color{red} (3 + 2 \al)} \times& \\
& (-528 - 7360 \al - 26336 \al^2 - 35740 \al^3 - 11003 \al^4 + 16523 \al^5 + 15493 \al^6 + 5025 \al^7 + 690 \al^8 + 36 \al^9).&
\end{eqnarray*}
\end{example}

The equations \eqref{eq:adjmod} give a large family of compatibility relations between all Jack LR coefficients. However, these alone are not sufficient to completely determine them all, although it is the case that this conjecture strongly constrains the Strong form of the Stanley conjecture \eqref{conj:strongstanley2}. Only at the level of Jack LR \emph{polynomials} are these local compatibilities sufficient to determine the polynomials entirely.

\subsection{Extension to Shifted Jacks}
In \cite{AlexanderssonFeray:2019}, the Stanley conjectures were extended to products of \emph{shifted} Jack functions, $J^{\#}_\lambda(\al)$, which are non-homogenous extensions of the Jack functions $J_\lambda(\al)$. The products of shifted Jacks are captured by the shifted Jack LR coefficients\footnote{The notation of \cite{AlexanderssonFeray:2019} is $\tilde d_{\mu\nu}^{\lambda}$ for our $g_{\mu\nu;\lambda}$.} $g_{\mu\nu;\lambda}$, where now $|\lambda|\leq |\mu|+|\nu|$.  
\begin{conjecture}
Conjecture \eqref{conj:congruence} holds for shifted Jack Littlewood-Richardson coefficients.
\end{conjecture}

We have no conceptual explanation for this observation, other than its connection to regular Jack LR coefficients. We have not been able to determine if any windowing properties hold in the shifted case.

\begin{example}
We reference the tables \cite{Alexandersson:aa}, 
\begin{eqnarray*}
&&g_{{3 2 1}, {2 2 2};{4 3 3 1}}=\\
&&\qquad48 \al^4 (2 + \al)^2 (3 + \al)^2 (1 + 2 \al)^2 (1 + 3 \al) (2 + 3 \al)^2 (24 + 171 \al + 284 \al^2 + 116 \al^3),\\
&&g_{{3 2 1}, {2 2 2}; {4 3 2 2}}=\\
&&\qquad288 \al^5 (2 + \al)^3 (3 + \al)^2 (1 + 2 \al)^2 (2 + 3\al) (3 + 4\al)^2 (2 + 11 \al + 2 \al^2).
\end{eqnarray*}
Their difference is 
\[ -48 \al^4 {\color{red} (1 + \al)} (2 + \al)^2 (3 + \al)^2 (1 + 2 \al)^2 (2 + 3 \al) (-48 -294 \al - 157 \al^2 + 480 \al^3 + 492 \al^4 + 192 \al^5)\]
which has a factor of $\color{red} (1+a)$ which is the pivot hook $h_{4 3 3 1}^{L}(b) = h_{4 3 2 2}^{U}(b)$ at the box $b=({3,2})$.
\end{example}

\begin{example}
It was proven \cite{AlexanderssonFeray:2019} that in the shifted case the Jack LR coefficient may include a negative power of $\alpha$. Our conjecture holds even in those cases.
\begin{eqnarray}
 g_{{2 2 1 1}, {2 2 1 1};{3 2 1 1}} &=&  768(1 + \al)^6(3 + \al)^2(4 + \al)^2(1 + 2\al). \\
g_{{2 2 1 1}, {2 2 1 1};{2 2 111}} &=& 384\al^{-1}(1 + \al)^6(4 + \al)^2(5 + \al)(6 + \al)(3 + 2\al)^2.
\end{eqnarray}
With difference
\[ -1152\al^{-1} (1 + \al)^6{\color{red} (2 + \al)} (4 + \al)^2 (45 + 51 \al + 10 \al^2), \]
which contains the hook $h_{{3 2 1 1}}^\sL(b) = h_{{2 2 111}}^\sU(b) = {\color{red}(4+2\alpha)}$ for $b=(0,0)$ (once the factor of two is accounted for).
\end{example}

\subsection{Extension to Macdonald Polynomials}

Consider $P_\mu(q,t)$ the integral Macdonald functions. We define the Macdonald Littlewood-Richardson coefficients $g_{\mu\nu}^{\lambda}(q,t)$, by
\[ J_\mu(q,t) \cdot J_\nu(q,t) = \sum_{\lambda} g_{\mu\nu}^{\lambda}(q,t) J _\lambda(q,t). \]
As before, we set
\[ g_{\mu\nu;\lambda}(q,t) := g_{\mu\nu}^{\lambda}(q,t) \langle J_{\lambda} \rangle^2_{q,t}. \]

The \emph{modified} Macdonald LR coefficients are
\begin{equation}\label{modmlr}
 \tilde g_{\mu\nu;\lambda}(q,t) := t^{n(\mu)+n(\nu) -n(\lambda) }g_{\mu\nu;\lambda}(q,t^{-1})
 \end{equation}

We suspect that the structure of Macdonald Littlewood-Richardson coefficients should exhibit all of the same properties as those for Jacks.

\begin{conjecture}
Let $(\mu_1,\nu_1,\lambda_1)$, $(\mu_2,\nu_2,\lambda_2)$ be two triples of partitions  that differ by a single box move (a `pivot') in one of the pairs in the triples, say at $b_1 \in \sigma_1$, $b_2 \in \sigma_2$, so that the $(q,t)$-hook lengths\footnote{$h_{\mu}^\sL(b)(q,t) = 1-q^{arm_\mu(b)}t^{1+leg_\mu(b)}$.} agree
\[ h_{\sigma_1}^\sU({b_1}) = h_{\sigma_2}^{\sL}({b_2}) = 1-q^y t^x.\]
Let $ \tilde g_{\mu\nu;\lambda}(q,t)$ be the modified Macdonald Littlewood-Richardson coefficient \eqref{modmlr}.
If $x\neq y$, then we have a simple ratio of evaluations at the zero of the common hook $(q=s^x, t=s^{-y})$, 

\begin{equation}\label{eq:macadjmod2}
\lim_{\substack{  {q\to s^x}\\{t\to s^{-y}}}} \left( \frac{\,\,  \tilde g_{\mu_1\nu_1;\lambda_1}(q,t^{-1})}{ \tilde g_{\mu_2\nu_2;\lambda_2}(q,t^{-1})}\right) \in \mathbb{Q}^\times.
\end{equation} 

\end{conjecture}

Note: only with the normalization of $t^{n(\mu)+\ldots}$ coming from \eqref{modmlr} that the right hand side is a constant, a requirement not present in the Jack case \eqref{conj:congruence}. If $x=y$ then there can be a mismatch of the order of vanishing, but we expect that this case can be included with correct normalizations. We also expect that with normalizations the RHS can made to be $1$.

\begin{example}
We have 
\begin{eqnarray*}
&\tilde g_{21,21;321}(q,t^{-1}) =& t^{2} (t - 1)^4(q - 1)^4(qt^2 - 1)(q^2t - 1) \times \\
&&(2q^5t^5 + q^5t^4 + q^4 t^5 - q^5 t^3 + 4q^4t^4 - q^3t^5 - q^4t^3 - q^3t^4 - \\
&&   3q^4t^2 + 4q^3t^3 - 3q^2t^4 - q^4t - q^3t^2 - q^2t^3 -   qt^4 - 3q^3t + \\
&& 4q^2t^2 - 3qt^3 - q^2t - qt^2 - q^2 +  4qt - t^2 + q + t + 2),
\end{eqnarray*}
\[\tilde g_{21,21;2211}(q,t^{-1}) =t^{5} (t + 1)^2(t - 1)^4(q - 1)^4(q^2t - 1)^2(qt^3 - 1)(qt^4 - 1). \]
For the pivot hook $h^\sL_{321}(0,0) = h^\sU_{2211}(0,0) = 1- q^2 t^3$, we find
\begin{eqnarray}
&&  \tilde g_{21,21;321}(s^3,s^{2})=\tilde g_{21,21;2211}(s^3,s^{2}) = \nonumber \\
&& s^{-10}(1-s^5)(1-s^3)(1-s^2)^4(1+s^2)^2(1-s^3)^4(1-s^4)^2.\nonumber
 \end{eqnarray}
\end{example}

\section{Appendix}

\subsection{Root systems}

\newcommand{\fh}{\mathfrak{h}}

Speculatively, we rephrase the hyperplane factorization properties in terms of the root system $A_5$. Let $\fh = V_{5,1}$, the root vectors are $\alpha_{ij}=e_i-e_j \in \fh$.

The automorphism group of the Root system $A_5$ is given by the Weyl group $W(A_5)=S_6$ adjoined the $\BZ_2$ automorphism of the Dynkin diagram ($-w_0$). Thus 

The $\binom{6}{3}$ weights of $\wedge^3$ are given by $\mu_T := \tfrac{1}{2}\left(\sum_{i\in T} e_i - \sum_{i \in T^c} e_i\right)$. We have
\[ \mu_{abi} - \mu_{abj} = \alpha_{ij}, \]
reflecting that $\wedge^3$ is a minuscule representation.
\begin{corollary}
The weight graph of the representation $\wedge^3$ is $J(6,3)$.
\end{corollary}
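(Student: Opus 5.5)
The plan is to identify both vertex sets and then check that the edge relations agree; this is a direct computation with the explicit weights $\mu_T$, and I expect no serious obstacle. I take the weight graph of a representation to be the graph whose vertices are its weights, with two weights adjacent when their difference is a root $\pm\alpha_{ij}$.

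First, the vertices. The $\binom{6}{3}=20$ weights $\mu_T$ of $\wedge^3$ are pairwise distinct, because $T$ is recovered from the positive coordinates of $\mu_T$. Each occurs with multiplicity one, since the weight vector $e_{t_1}\wedge e_{t_2}\wedge e_{t_3}$ is unique up to scalar. So $T\mapsto \mu_T$ is a bijection from $\binom{[6]}{3}$, the vertex set of $\johnson=J(6,3)$, to the vertex set of the weight graph.

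Second, the edges. For two $3$-subsets $T,S$, the coordinates indexed by $T\cap S$ and by $(T\cup S)^c$ cancel in $\mu_T-\mu_S$. Each $i\in T\setminus S$ contributes $\tfrac12-(-\tfrac12)=1$, and each $i\in S\setminus T$ contributes $-1$. Hence
\[ \mu_T-\mu_S=\sum_{i\in T\setminus S}e_i-\sum_{j\in S\setminus T}e_j, \]
with $|T\setminus S|=|S\setminus T|=k:=3-|T\cap S|$.

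Now compare with the roots. Every root $\alpha_{ij}=e_i-e_j$ has exactly one coordinate $+1$, one coordinate $-1$, and all other coordinates $0$. The vector $\mu_T-\mu_S$ has exactly $k$ coordinates equal to $+1$ and $k$ equal to $-1$. It is therefore a root if and only if $k=1$, that is $|T\cap S|=2$, which is exactly the adjacency $d(T,S)=1$ in $J(6,3,2)$. In that case, writing $T=\{a,b,i\}$ and $S=\{a,b,j\}$, we recover the identity $\mu_{abi}-\mu_{abj}=\alpha_{ij}$ noted above.

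Finally, I will remark that the bijection is equivariant for $S_6\times\BZ_2$. The Weyl group $S_6$ permutes coordinates, matching its action on $3$-subsets. The diagram automorphism $-w_0$ acts on weights by $\mu\mapsto-\mu$, and $\mu_T\mapsto-\mu_T=\mu_{T^c}$ is exactly the conjugation $C$. This is consistent with $\Aut(J(6,3))=S_6\times\BZ_2$.
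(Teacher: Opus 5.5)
Your proposal is correct and follows the paper's route. The paper simply records the identity $\mu_{abi}-\mu_{abj}=\alpha_{ij}$; your computation of $\mu_T-\mu_S$ also proves the converse (the difference is a root only when $|T\cap S|=2$) and checks distinctness and multiplicity one, which the paper leaves implicit. One small slip in your closing remark: $-w_0$ acts by $\mu\mapsto -w_0\mu$, not $\mu\mapsto-\mu$, but since $w_0\in S_6$ the generated group is the same and $-1$ still realizes the complementation $C$.
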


The local factorization property for $F=\bigstanleypoly$ takes the form
\[F\big|_{\langle \mu, x\rangle = \beta} = W_\mu\big|_{\langle \mu, x\rangle = \beta} \]
where
\[W_\mu(x,\beta) := \prod_{\nu \in \bar{N}(\mu)} \bigl(\langle \nu, x \rangle + \beta\bigr)\]

\begin{lemma}

\begin{itemize}
\item[1)]If $d(T,U)=1$, then the two local factorisations agree on the intersection of the weight hyperplanes.

\item[2)]If $d(T,U)>1$, then the two local factorisations vanish on the intersection of the weight hyperplanes.

\end{itemize}
\end{lemma}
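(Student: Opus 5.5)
The plan is to work in the Johnson-graph coordinates of \S\ref{sect:hyperplanesagain} and reduce both parts to elementary combinatorics of $3$-subsets. Under the dictionary $\mu_T \leftrightarrow T$, the weight hyperplane $\langle \mu_T, x\rangle = \beta$ is the hyperplane $H_T : x_T - \beta = 0$, with $x_T = \sum_{t\in T} x_t$ on $\sum_i x_i = 0$. The product $W_{\mu_T}$ over $\nu \in \bar N(\mu_T)$ should then be the polynomial $J_T = \prod_{S : |S\cap T|\le 1} H_S$ from the preceding Lemma, i.e.\ the product of the ten $H_S$ with $d(S,T)\ge 2$ (namely $T^c$ and the nine $S$ at distance $2$). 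Matching these conventions, including the sign relating $\langle\nu,x\rangle+\beta$ to $x_S-\beta$ via $\nu=-\mu_{S^c}$ and $x_{S^c}=-x_S$, is the only genuinely fiddly point. I would settle it first by checking one $T$ explicitly. After that, the Lemma reads: on $H_T\cap H_U$, $J_T$ and $J_U$ agree if $d(T,U)=1$, and both vanish if $d(T,U)>1$.

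For part 2), suppose $d(T,U)\ge 2$, i.e.\ $|T\cap U|\le 1$. Then $U$ itself indexes one of the factors of $J_T$, so $H_U \mid J_T$ and $J_T$ vanishes on $H_U$, hence on $H_T\cap H_U$. The condition is symmetric in $T$ and $U$, so $H_T\mid J_U$ and $J_U$ vanishes there as well.

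For part 1), write $T=\{a,b,i\}$ and $U=\{a,b,j\}$ with $i\neq j$. On $H_T\cap H_U$ we have $x_T=x_U$, hence $x_i=x_j$.
\begin{itemize}
\item The transposition $\tau=(ij)\in S_6$ swaps $T$ and $U$ and preserves intersection sizes, so $S\mapsto \tau S$ is a bijection from $\{S:|S\cap T|\le1\}$ to $\{S:|S\cap U|\le1\}$.
\item Consequently $J_U=\tau\cdot J_T$, where $\tau$ acts on $V_{51}$ by permuting the coordinates $x_i,x_j$; it also commutes with the constraint $\sum x_k=0$.
\item Each linear factor satisfies $H_{\tau S}(x)=H_S(\tau x)$. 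On the locus $x_i=x_j$ we have $\tau x=x$, so $H_{\tau S}=H_S$ there, and therefore $J_U=J_T$ on $H_T\cap H_U$.
\end{itemize}

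As a consistency check, this agrees with the preceding Lemma: $F=\bigstanleypoly$ restricts to both $J_T$ and $J_U$ on the intersection, so they must coincide there. Part 1) thus follows independently of, and is compatible with, the hyperplane factorization. The main obstacle I expect is not the combinatorics but the bookkeeping identification of $\bar N(\mu)$ and its signs with the index set $\{S:|S\cap T|\le1\}$ of $J_T$. If $\bar N(\mu)$ were instead defined by $d\ge2$ on the complementary copy of the Petersen graph, I would redo the same $\tau$-argument with $T^c$ in place of $T$; it goes through unchanged because complementation commutes with $\tau$.
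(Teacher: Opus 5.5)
Your proposal is correct and is essentially the paper's proof: your transposition $(ij)$ is exactly the reflection $s_{\alpha}$ with $\alpha=\mu_T-\mu_U=\alpha_{ij}$, which fixes $x$ on the intersection (since $x_i=x_j$ there) and carries $\bar N(\mu_T)$ onto $\bar N(\mu_U)$, and your part 2) is the same observation that $d(T,U)>1$ means $U$ indexes a factor of $W_{\mu_T}$. Your convention (factors $x_S-\beta$, hyperplanes $x_T=\beta$) differs from the one in the paper's proof (factors $\langle\nu,x\rangle+\beta$, hyperplanes $\mu_T(x)=-\beta$) only by $\beta\mapsto-\beta$, so the sign bookkeeping you flagged does not affect the argument.
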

\begin{proof}
For part $(1)$ if $\mu_T \sim \mu_U$, then there exists a root $\alpha$ such that $\mu_T-\mu_U=\alpha$, ie $s_\alpha(\mu_T) = \mu_U$. For $x$ in the intersection of hyperplanes $\mu_T(x)=-\beta=\mu_U(x)$, we find $\alpha(x)=0$ and so $s_\alpha(x) = x$. Thus 
\[\prod_{\mu' \in \bar{N}(\mu)} \bigl(\langle \mu', x \rangle + \beta\bigr)\]
$\bar N(s_\alpha(\mu)) = s_\alpha \bar N(\mu)$

For part (2), we use the fact that for $J(6,3)$ we have $d(T,U)>1 \leftrightarrow U \in \bar N(\mu_T)$. 
\end{proof}

\begin{observation}
For $\rho = (-5,-3,-1,1,3,5)$ which is twice the Weyl vector of $A_5$, we have
\[ F(t\rho, \beta) = 42 \prod_{i=1}^{6} (\beta+ t\rho_i)^{q_i} \]
where $q = (1,2,2,2,2,1)$.
\end{observation}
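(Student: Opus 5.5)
The most direct route is to substitute $x = t\rho$ into the expansion of $\bigstanleypoly$ in the invariant ring $\BC[V_{51}]^{S_6\times\BZ_2}$ given in the Observation above, and compare with the claimed product. First, $t\rho$ lies in $V_{51}$ because $\sum_i \rho_i = 0$, so the expansion applies. Second, the elementary symmetric polynomials satisfy $e_k(t\rho) = t^k e_k(\rho)$. Since $\rho = -\rho$ up to permutation, all odd $e_k(\rho)$ vanish. The even ones can be read off from
\[ \prod_{i=1}^6 (1+\rho_i z) = (1-z^2)(1-9z^2)(1-25z^2), \]
which gives $e_2(\rho) = -35$, $e_4(\rho) = 259$ and $e_6(\rho) = -225$.

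Substituting, every term containing $e_3$ or $e_5$ drops out. This leaves an even polynomial in $t$, homogeneous of degree $10$ in $(t,\beta)$:
\[ 42\beta^{10} - 1890\,t^2\beta^8 + \bigl(34\cdot 259 + 14\cdot 35^2\bigr)t^4\beta^6 + \cdots . \]
On the other side, because $q$ is symmetric under $\rho_i \mapsto -\rho_i$, the claimed product collapses to
\[ 42(\beta^2-t^2)^2(\beta^2-9t^2)^2(\beta^2-25t^2), \]
whose total degree is $\sum_i q_i = 10$, as required. Expanding in $u = t^2/\beta^2$ reduces the problem to matching six coefficients. For example, the $u$-coefficient on the right is $42(-2-18-25) = -1890 = 54\cdot(-35)$, which agrees. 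The remaining four coefficients in $u^2,\dots,u^5$ are routine arithmetic.

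The main obstacle is that this argument rests entirely on the correctness of the $e$-expansion in the earlier Observation, which was found by computer. To make the proof self-contained I would add an independent structural check based on the hyperplane factorization lemma. On the line $x = t\rho$, the hyperplane $H_T$ is met at $t = \beta/\rho_T$, where $\rho_T = \sum_{i\in T}\rho_i$. The candidate roots $t = \pm\beta, \pm\beta/3, \pm\beta/5$ come from triples $T$ with $\rho_T \in \{\pm1,\pm3,\pm5\}$. At such a point, $\bigstanleypoly = J_T$, so $\bigstanleypoly$ vanishes there exactly when some $S$ with $|S\cap T|\le 1$ satisfies $\rho_S = \rho_T$. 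This can be checked case by case. Upgrading vanishing to the multiplicity $q_i = 2$ is harder: I would try writing $\bigstanleypoly = J_T + H_T G$ for two distinct triples $T, T'$ with the same value $\rho_T = \rho_{T'}$, and comparing the two expressions along the line. Because this multiplicity step is delicate, the coefficient comparison above is the actual proof, and the hyperplane argument serves only as a consistency check on the roots.
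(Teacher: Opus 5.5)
Your proof is correct. The paper gives no argument for this observation; it sits in a speculative appendix and is presumably a computer check. Substituting into the $e$-expansion is therefore the natural route, and the arithmetic you deferred does go through. The right-hand side is $42\beta^{10}(1-45u+618u^2-3130u^3+4581u^4-2025u^5)$. Substituting $e_2=-35t^2$, $e_4=259t^4$, $e_6=-225t^6$, $e_3=e_5=0$ gives coefficients $25956$, $-131460$, $192402$, $-85050$ for $u^2,\dots,u^5$, which are $42$ times $618$, $-3130$, $4581$, $-2025$.

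The hyperplane route you demoted to a consistency check actually closes in a few lines. It then gives a proof that does not depend on the computer-found expansion.

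Take $p=t_0\rho$ with $t_0=\beta/3$. The three triples with $\rho_T=3$ are those on which $\rho$ takes the values $\{5,-3,1\}$, $\{3,1,-1\}$, $\{5,3,-5\}$. They pairwise share exactly one element, so each $J_{T_i}$ contains the two factors $H_{T_j}$, $j\neq i$, both vanishing at $p$. Hence $F|_{H_{T_i}}=J_{T_i}|_{H_{T_i}}$ vanishes to order at least $2$ at $p$, so $dF_p$ kills the direction space of each $H_{T_i}$. Any two of these are distinct hyperplanes, whose directions span $V_{51}$. Therefore $dF_p=0$, and $F$ vanishes to order at least $2$ at $p$, in particular along your line.

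The same works at $t_0=\beta$, using the triples with values $\{5,-3,-1\}$, $\{5,1,-5\}$, $\{3,1,-3\}$. By complementation it also works at $-\beta/3$ and $-\beta$. At $\pm\beta/5$ the two relevant triples meet in one element, which gives a zero. So $t\mapsto F(t\rho,\beta)$ has degree at most $10$, at least $10$ zeros counted with multiplicity, and value $42\beta^{10}$ at $t=0$. This forces the claimed product.

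Compared with your coefficient check, this route uses only two inputs: the hyperplane factorization lemma (derived in the paper from Theorem~\ref{prop:hyperplanesimp} together with the $\BZ_2$ and $S_6$ invariance), and the $\beta^{10}$ coefficient. It also explains the exponents $q$. The multiplicity at $t_0=\beta/\rho_T$ equals the number of triples $S$ with $|S\cap T|\le 1$ and $\rho_S=\rho_T$.
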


The image of the positive lattice in Dynkin coordinates, i.e. $\langle \alpha_i,x(w) \rangle$ for $x(w)$ given by \eqref{eq:xcoords}, is

\[ \al_0 = -4(1+\al) -2(w_1+w_2+w_4+w_5) \]
\[ \al_1 = 2(1+\al) + 2(w_1+w_5)\]
\[ \al_2 = 2(1+\al) + 2(w_3+w_4)\]
\[ \al_3 = 2(1+\al) + 2(w_2+w_5) \]
\[ \al_4 = -4(1+\al) - 2(w_1+w_2+w_3+w_5) \]

It is an interesting question whether similar functions exist for other root systems with a choice of minuscule representation. 

\begin{example}
For the $A_3$ root system, the function 
\[ f_{\omega_2} := 6 \beta^5 + 8 e_2 \beta^3 + (2e_2^2 - 8 e_4)\beta, \] 
is $W(A_3)=S_4$ invariant, anti-invariant under $(x,\beta)\to(-w_0x,-\beta)$, and satisfies the hyperplane factorization formula for the weight graph of $V_{\omega_2} = \wedge^2(\BC^4)$, 
\end{example}

Calculations show that a (somewhat trivial) solution exists for $\{A_n, \omega_{\mathrm{std}}\}$ for all $n$, and non-trivially for $\{B_n, \omega_{\mathrm{spin}}\}$ for $n=3,4,5$.

\subsection{Box labels}

The $S_6 \times \BZ_2$ action on the boxes of $21,21,321$ (where $\tilde b_3 = \{b_3,\bar c_1,c_6\}$) is given by the following embedding into $J(6,3)$, and identifying $T = -T^c$.

\begin{equation}
\ytableausetup{boxsize=1.7em}
 \frac{\begin{ytableau}
 145 \\
 135 & 235  
\end{ytableau} \,\,\, \begin{ytableau}
025  \\
 012 & 124 
\end{ytableau}
}{\begin{ytableau}
*(boxB) \\
234 & 013 \\
034 & 045 & *(boxB)  
\end{ytableau}}
\end{equation}

\section{Acknowledgements}
The author would like to thank Per Alexandersson and Arun Ram for helpful conversations during the development of this work. Claude Opus 4.6 was used to assist with the generation of some of the Mathematica code and Latex typesetting for this project.

\bibliographystyle{amsplain}
\bibliography{./final}
\end{document}